\documentclass[12pt]{article}
\usepackage{mathptmx}

\usepackage[affil-it]{authblk}
      \usepackage{latexsym}
         \usepackage[reqno, namelimits, sumlimits]{amsmath}
         \usepackage{amssymb, amsfonts}
         \usepackage{amsthm}
	 \usepackage{marginnote}
	 \usepackage{comment}
	 \usepackage{enumerate}

	 \usepackage{color}
	 \frenchspacing
	 \usepackage{esint} 
	 \usepackage{graphicx}
	 \usepackage[margin=1in]{geometry}

	 \theoremstyle{plain}

 \newtheorem{theorem}{Theorem}[section]
 
 \newtheorem{definition}[theorem]{Definition}
 
 \newtheorem{lemma}[theorem]{Lemma}

 \newtheorem{cor}[theorem]{Corollary}
 \newtheorem{pro}[theorem]{Proposition}

 \theoremstyle{definition}
  \newtheorem{remark}[theorem]{Remark}
 \newcommand{\R}{\mathbb{R}}
  
  \newcommand{\N}{\mathbb{N}}
    \newcommand{\Z}{\mathbb{Z}}
  \newcommand{\cK}{\mathcal K}

    \newcommand{\cF}{\mathcal F}
    
     \newcommand{\cS}{\mathcal S}
     
     \newcommand{\cX}{\mathcal X}
     \newcommand{\cY}{\mathcal Y}

		\newcommand{\cZ}{\mathcal Z}
		
	\newcommand{\cD}{\mathcal D}

	\newcommand{\la}{\langle}
	\newcommand{\ra}{\rangle}
 \newcommand{\Bp}{\dot{B}^{s_p}_{p,\infty}}
 \newcommand{\norm}[1]{\lVert #1 \rVert}
  \renewcommand{\:}{\colon}
\newcommand{\bnorm}[1]{\left\lVert #1 \right\rVert}
\newcommand{\upto}{\uparrow}
\newcommand{\dto}{\downarrow}

\newcommand{\wto}{\rightharpoonup} 
\newcommand{\wstar}{\overset{\ast}{\rightharpoonup}}
\newcommand{\into}{\hookrightarrow}

 \newcommand{\loc}{{\rm loc}}

  \newcommand{\p}{\partial}
 \let\div\relax
 \DeclareMathOperator{\div}{div}
  \DeclareMathOperator{\curl}{curl}
  
 \let\tilde\relas
 \newcommand{\tilde}[1]{\widetilde{#1}}
 \newcommand{\bP}{\mathbb{P}}
\DeclareMathOperator*{\esssup}{ess\,sup}
\renewcommand{\mathring}[1]{\overset{\circ}{#1}}

\newcommand{\I}{{\rm I}}
\newcommand{\II}{{\rm II}}
\newcommand{\III}{{\rm III}}
\newcommand{\IV}{{\rm IV}}
\newcommand{\BMO}{{\rm BMO}}
\newcommand{\VMO}{{\rm VMO}}

\begin{document}

\title{Global weak Besov solutions of the Navier-Stokes equations and applications}

\author{Dallas Albritton\footnote{School of Mathematics, University of Minnesota, Minneapolis, MN. \\Email address: \texttt{albri050@umn.edu}}, Tobias Barker\footnote{OxPDE, Mathematical Institute, University of Oxford, Oxford, UK. \\Email address: \texttt{tobiasbarker5@gmail.com}}}

\date{\today}
\maketitle

\begin{abstract}
	We introduce a notion of global weak solution to the Navier-Stokes equations in three dimensions with initial values in the critical homogeneous Besov spaces $\dot B^{-1+\frac{3}{p}}_{p,\infty}$, $p > 3$. These solutions satisfy a certain stability property with respect to the weak-$\ast$ convergence of initial conditions. To illustrate this property, we provide applications to blow-up criteria, minimal blow-up initial data, and forward self-similar solutions. Our proof relies on a new splitting result in homogeneous Besov spaces that may be of independent interest.
\end{abstract}

\tableofcontents

\setcounter{equation}{0}
\numberwithin{equation}{section}
\numberwithin{figure}{section}

\section{Introduction}

In this paper, we investigate certain classes of global-in-time weak solutions of the incompressible Navier-Stokes equations in three dimensions:
\begin{equation}\label{NSEintro}\tag{NSE}
	\left.
	\begin{aligned}
	\partial_{t}v- \Delta v+v\cdot\nabla v &=-\nabla q \\
	\quad \div v &= 0
\end{aligned}
\right\rbrace \text{ in } \R^3 \times \R_+.
\end{equation}
In the recent paper~\cite{sereginsverakweaksols}, G. Seregin and V. {\v S}ver{\'a}k introduced a notion of \emph{global weak $L^3$ solution} to the Navier-Stokes equations which enjoys the following property. Given a sequence of global weak $L^3$ solutions with initial data $u_0^{(n)} \rightharpoonup u_0$ in $L^3$, there exists a subsequence converging in the sense of distributions to a global weak $L^3$ solution with initial data~$u_0$. This property, known as  \emph{weak-$\ast$ stability}, plays a distinguished role in the regularity theory of the Navier-Stokes equations. For example, such sequences of solutions arise naturally when zooming in on a potential singularity of the Navier-Stokes equations, as in the papers~\cite{sereginl3,sereginh1/2}\footnote{In these papers, Seregin also investigated weak-$\ast$ stability in the context of local Leray solutions, which were discovered by Lemari{\'e}-Rieusset~\cite{lemarie2002}.} by Seregin and~\cite{escauriazasereginsverak} by Escauriaza, Seregin, and {\v S}ver{\'a}k.

The main idea in \cite{sereginsverakweaksols} is to decompose a solution of the Navier-Stokes equations as
\begin{equation}
	v = V + u,
	\label{vdecompintro}
\end{equation}
where $V$ is the linear evolution of the initial data $u_0 \in L^3$,
\begin{equation}
	V(x,t) := \int_{\R^3} \Gamma(x-y,t) u_0(y) \, dy,
	\label{Vdefintro}
\end{equation}
and $u$ is a perturbation belonging to the global energy space
\begin{equation}
	u \in L^\infty_t L^2_x \cap L^2_t \dot H^1_x(Q_T) \text{ for all } T > 0.
	\label{}
\end{equation}
Here, $\Gamma \: \R^3 \times \R_+ \to \R$ denotes the heat kernel in three dimensions, and 
\begin{equation}
	Q_T := \R^3 \times ]0,T[, \quad 0 < T \leq \infty,
\label{}
\end{equation} denotes a parabolic cylinder. It is reasonable to expect that solutions of the form $v = V + u$ enjoy weak-$\ast$ stability, since the linear evolution $V$ is continuous in many nice topologies with respect to weak convergence of initial data, while the correction term $u$ is ``merely a perturbation''. 

In the paper~\cite{barkersereginsverakstability}, Barker \emph{et al.} created a notion of \emph{global weak $L^{3,\infty}$ solution} that contains the solutions in~\cite{sereginsverakweaksols} as well as the scale-invariant solutions investigated by Jia and {\v S}ver{\'a}k in~\cite{jiasverakselfsim} as a special case. These solutions exhibit some interesting phenomena. For instance, global weak $L^{3,\infty}$ solutions exist even when a local-in-time mild solution is not known to exist\footnote{We mention that for divergence-free initial data in $L^{3,\infty}$ , there exists an associated global-in-time Lemari{\'e}-Rieusset local energy solution of the Navier-Stokes equations~\cite{lemarie2002}.} (unlike in the $L^3$ case). It appears that such solutions may be non-unique even from the initial time, see the examples of the forward self-similar solutions computed by Guillod and {\v S}ver{\'a}k in~\cite{guillodsverak}. On the other hand, weak-$\ast$ stability continues to hold in spite of the conjectured non-uniqueness. The authors of~\cite{barkersereginsverakstability} also showed that global weak $L^{3,\infty}$ solutions provide a natural class in which to investigate \emph{minimal blow-up initial data}. {\v S}ver{\'a}k also mentioned the possibility of investigating the \emph{radius of smoothness} (resp. \emph{uniqueness}) associated to each initial data $u_0 \in L^{3,\infty}$. This is the maximal time such that each global weak $L^3$ solution with prescribed initial data $u_0$ is smooth (resp. unique).

Recently, the second author proposed in the paper~\cite{barkerB4} to investigate notions of solution in critical spaces $X$ that generalize the solutions described above. Namely, one desires a notion of \emph{global $X$ solution} that satisfies a weak-$\ast$ stability property when, for example,
\begin{equation}
	X = \dot H^{\frac{1}{2}}, L^3, L^{3,\infty}, \dot B^{-1+\frac{3}{p}}_{p,\infty}, \BMO^{-1}, \dot B^{-1}_{\infty,\infty}, \quad 3 < p < \infty.
	\label{}
\end{equation}
The second author established the existence of \emph{global $\dot B^{-\frac{1}{4}}_{4,\infty}$ solutions} with the decomposition $v = V + u$ utilized in previous works. Moreover, he proved that under natural hypotheses, $\dot B^{-\frac{1}{4}}_{4,\infty}$ is the largest critical space in which such a decomposition is viable. Therefore, a notion of global $X$ solution for the critical homogeneous Besov spaces $X = \dot B^{-1+\frac{3}{p}}_{p,\infty}$ with $4 < p < \infty$ must be based on a new structure.\footnote{While critical spaces are not strictly necessary for weak-$\ast$ stability (see p. 5 of the second author's paper \cite{barkerweakstrong}, for example), they are convenient for the applications we have in mind.}

\textit{In this paper, we develop a notion of \emph{global weak Besov solution} of the Navier-Stokes equations associated to initial data in the critical homogeneous Besov spaces $\dot B^{-1+\frac{3}{p}}_{p,\infty}(\R^3)$, $3 < p < \infty$.}



In Section~\ref{sec:weakbesovsols}, we prove the following results.
Let $3 < q \leq p < \infty$, and $0 < T \leq \infty$. We include forcing terms of the form $\div F$ with $F \in \cF_q(Q_T)$, defined as the space of locally integrable functions $F \: Q_T \to \R^{3\times3}$ such that
	\begin{equation}
	\norm{F}_{\cF_q(Q_T)} := \sup_{t \in ]0,T[} t^{1-\frac{3}{2q}} \norm{F(\cdot,t)}_{L^q(\R^3)} < \infty.
		\label{Fqdefintro}
	\end{equation}


\begin{theorem}[Existence]
	\label{existenceintro}
	Let $u_0 \in \dot B^{-1+\frac{3}{p}}_{p,\infty}(\R^3)$ be a divergence-free vector field and $F \in \cF_q(Q_\infty)$. There exists a global weak Besov solution $v$ with initial data $u_0$ and forcing term $\div F$.
\end{theorem}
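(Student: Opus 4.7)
The plan is to construct the solution as
\[
v = V + A + w,
\]
where $V$ is a small global mild solution of \eqref{NSEintro} arising from a ``rough but small'' piece of $u_0$, $A$ is the heat extension of a complementary ``tame'' piece of $u_0$ lying in a subcritical space, and $w$ is a perturbation in the Seregin--{\v S}ver{\'a}k energy class $L^\infty_t L^2_x \cap L^2_t \dot H^1_x$ with zero initial data. This three-term decomposition sidesteps the obstruction of~\cite{barkerB4} to the direct decomposition $v = e^{t\Delta}u_0 + u$ when $p > 4$.

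\textbf{Splitting and mild solution.} Given $\epsilon > 0$ to be fixed, the first step is to invoke the new Besov splitting result announced in the abstract to write $u_0 = a + b$ with $a$ and $b$ divergence-free, $\|b\|_{\dot B^{-1+3/p}_{p,\infty}} \leq \epsilon$, and $a$ in some tame space such as $L^3 \cap \dot B^{-1+3/p}_{p,\infty}$, so that the Seregin--{\v S}ver{\'a}k framework applies to $a$. For $\epsilon$ sufficiently small, one then constructs a global mild solution $V$ of \eqref{NSEintro} with initial data $b$ and forcing $\div F$ via a Koch--Tataru / Kato-type contraction in a suitable Kato class, yielding quantitative smallness $\sup_{t > 0} t^{\frac12 - \frac{3}{2p}}\|V(t)\|_{L^p} \les \epsilon$ together with comparable bounds at other scales dictated by the critical scaling.

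\textbf{Perturbation equation and energy bounds.} Setting $A := e^{t\Delta}a$ and $U := V + A$, one sees that $w := v - U$ must satisfy
\[
\p_t w - \Delta w + \div(w \otimes w) + \div(w \otimes U + U \otimes w) = -\nabla \tilde q - \div(U \otimes U - V \otimes V), \quad \div w = 0,
\]
with $w|_{t=0}=0$. One constructs $w$ by Galerkin truncation; the crucial a priori bound comes from the energy identity, in which the drifts $w \otimes V + V \otimes w$ are absorbed into the dissipation using smallness of $V$ in critical norms; the drifts $w \otimes A + A \otimes w$ are controlled by the Ladyzhenskaya--H{\"o}lder interpolation familiar from~\cite{sereginsverakweaksols}; and the source term $U \otimes U - V \otimes V$ is handled via Young's inequality using the $L^5(Q_\infty)$ bound on $A$ (from $a \in L^3$ and Strichartz) together with the Kato-class bound on $V$. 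This yields global $L^\infty_t L^2_x \cap L^2_t \dot H^1_x$ estimates on $w$ and hence a global weak $w$.

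\textbf{Assembly and main obstacle.} Setting $v := V + A + w$ and using linearity, one verifies that $v$ solves \eqref{NSEintro} distributionally, attains $u_0$ as initial data, and satisfies the remaining structural properties required by the definition of a global weak Besov solution (Section~\ref{sec:weakbesovsols}). The principal difficulty is the Besov splitting lemma itself: the space $\dot B^{-1+3/p}_{p,\infty}$ is non-separable, so smooth approximation alone does not suffice, and producing $u_0 = a + b$ with both arbitrarily small critical Besov norm on $b$ and uniform subcritical integrability on $a$ will likely proceed via a Littlewood--Paley frequency truncation at a carefully chosen scale together with heat-kernel bounds trading critical Besov regularity for subcritical integrability. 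Once this splitting is in hand, the remaining perturbation analysis is a refinement of techniques already present in~\cite{sereginsverakweaksols, barkersereginsverakstability, barkerB4}.
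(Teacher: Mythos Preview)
Your proposal has a genuine gap at the very first step. You ask for a decomposition $u_0 = a + b$ with $a$ in a tamer space (such as $L^3$) and $\|b\|_{\dot B^{-1+3/p}_{p,\infty}} \leq \epsilon$ for arbitrary $\epsilon>0$. But this is a density statement for a nice subspace in $\dot B^{-1+3/p}_{p,\infty}$, and it \emph{fails}: the space $\dot B^{s}_{p,\infty}$ is non-separable, and Schwartz functions (hence $L^3$ vector fields) are not dense in it for the norm topology. A $(-1)$-homogeneous divergence-free field lies in $\dot B^{-1+3/p}_{p,\infty}$ but is at positive distance from every smoother/more integrable space. Consequently the ``small global mild solution $V$ from $b$'' is not available, and the rest of the scheme collapses. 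You correctly flag this step as the principal difficulty, but your suggested fix---frequency truncation---produces pieces that remain bounded, not small, in the critical norm. Indeed, the paper's splitting result (Lemma~\ref{splitlem}) yields only the \emph{persistency} property $\|\tilde g^N\|_{\dot B^{s_p}_{p,\infty}}, \|\bar g^N\|_{\dot B^{s_p}_{p,\infty}} \leq C\|g\|_{\dot B^{s_p}_{p,\infty}}$; neither piece is small in the critical norm, and for generic data cannot be.

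The paper's route is quite different. Rather than building a mild solution from part of the data, one subtracts the $k$th \emph{Picard iterate} $P_k(u_0,F)$ and observes (Lemma~\ref{finiteenergyforcingintro}) that the resulting source $F_k = P_k\otimes P_k - P_{k-1}\otimes P_{k-1}$ lies in $L^2(Q_T)$ once $k \geq \lceil p/2\rceil - 2$, by repeated use of the self-improvement of the bilinear form $B$. This makes the correction $u=v-P_k$ an energy-class object without any smallness assumption. Existence (Corollary~\ref{existencecor}) is then obtained indirectly: approximate $u_0,F$ by smooth data (for which suitable Leray--Hopf solutions exist and are weak Besov solutions by Proposition~\ref{pro:lerayhopfisweakbesov}), and pass to the limit using weak-$\ast$ stability (Proposition~\ref{pro:stability}). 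The Besov splitting lemma enters only later, in the proof of the uniform decay estimate (Proposition~\ref{improveddecayprop}) that underlies stability---and there it is used not to make a critical norm small, but to peel off an $L^2$ piece so that the remaining coefficient is \emph{subcritical}, enabling a Gronwall argument.
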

\begin{theorem}[Weak--$\ast$ stability]
	\label{thm:stabilityintro}
	Suppose that $(v^{(n)})_{n \in \N}$ is a sequence of global weak Besov solutions with initial data $u_0^{(n)}$ and forcing terms $\div F^{(n)}$, respectively. Furthermore, suppose that 
	\begin{equation}
		u_0^{(n)} \wstar u_0 \text{ in } \dot B^{-1+\frac{3}{p}}_{p,\infty}(\R^3), \quad F^{(n)} \wstar F \text{ in } \cF_q(Q_\infty).
		\label{}
	\end{equation} Then there exists a subsequence converging strongly in $L^3_\loc(Q_\infty)$ to a global weak Besov solution $v$ with initial data $u_0$ and forcing term $\div F$.
\end{theorem}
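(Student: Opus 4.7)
The plan is to parallel the strategy of~\cite{sereginsverakweaksols,barkersereginsverakstability}, adapted to the new splitting established earlier in the paper. Each global weak Besov solution $v^{(n)}$ comes with a decomposition $v^{(n)} = V^{(n)} + u^{(n)}$, where $V^{(n)}$ is the linear-type part produced from $(u_0^{(n)}, F^{(n)})$ by the splitting, and $u^{(n)}$ lies in the energy class $L^\infty_t L^2_x \cap L^2_t \dot H^1_x$ on every slab $Q_T$. The goal is to pass to the limit separately in the two pieces: strongly (in a suitable space-time topology) for $V^{(n)}$ using continuity of the splitting under the weak-$\ast$ convergence of the data, and compactly for $u^{(n)}$ via the energy inequality and Aubin--Lions.

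First, the uniform boundedness principle yields $\sup_n \bigl(\|u_0^{(n)}\|_{\dot B^{-1+3/p}_{p,\infty}} + \|F^{(n)}\|_{\cF_q(Q_\infty)}\bigr) < \infty$. Invoking the new splitting theorem, I would extract a subsequence along which $V^{(n)} \to V$ in the ambient topologies built into the definition of global weak Besov solution (in particular strongly in $L^3_\loc(Q_\infty)$), where $V$ is the linear part associated with $(u_0, F)$. From the perturbed system satisfied by $u^{(n)}$,
\begin{equation*}
\partial_t u^{(n)} - \Delta u^{(n)} + \nabla q^{(n)} = -\div\bigl(u^{(n)} \otimes u^{(n)} + V^{(n)} \otimes u^{(n)} + u^{(n)} \otimes V^{(n)} + V^{(n)} \otimes V^{(n)} + F^{(n)}\bigr),
\end{equation*}
the energy inequality provides uniform bounds on $u^{(n)}$ in $L^\infty_t L^2_x \cap L^2_t \dot H^1_x(Q_T)$, together with a uniform bound on $\partial_t u^{(n)}$ in a suitable negative-order Sobolev space. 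Aubin--Lions then extracts a further subsequence with $u^{(n)} \to u$ strongly in $L^2_\loc(Q_\infty)$, upgraded to $L^3_\loc(Q_\infty)$ by interpolation with the Ladyzhenskaya-type bound in $L^{10/3}_{t,x,\loc}$.

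Combining the two convergences, the quadratic terms pass to the distributional limit, so $v := V + u$ solves~\eqref{NSEintro} with forcing $\div F$, and the perturbation energy inequality transfers to $u$ by weak lower semicontinuity. It then remains to verify that $v$ satisfies the full definition of a global weak Besov solution with initial data $u_0$, and in particular attains $u_0$ at $t = 0$ in the required sense.

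The main obstacle, I expect, will be twofold. First, the continuity of the new splitting under weak-$\ast$ convergence is the technical heart of the argument: one needs $V^{(n)} \to V$ locally strongly enough to control the mixed quadratic terms involving $V^{(n)}$ and $u^{(n)}$, which for $p > 4$ is precisely the regime where the naive heat-flow decomposition fails. Second, attaining the initial datum requires uniform smallness of $u^{(n)}$ as $t \dto 0$; otherwise mass could concentrate at the initial time and spoil the candidate limit. This should follow from the quantitative near-initial behavior built into the splitting, together with the local energy identity for $u^{(n)}$, echoing the treatment in~\cite{barkersereginsverakstability}.
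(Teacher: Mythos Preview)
Your overall strategy matches the paper's: decompose $v^{(n)}$ into an explicit part depending only on the data plus an energy-class correction, prove convergence of the explicit part separately, obtain uniform energy bounds on the correction, and extract via Aubin--Lions. However, several details are off or missing.

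First, the equation you write for $u^{(n)}$ corresponds to the decomposition $v^{(n)} = S(t)u_0^{(n)} + u^{(n)}$ (essentially $k=0$), with forcing $V^{(n)}\otimes V^{(n)}$. But as you yourself note, for $p>4$ this forcing is not in $L^2(Q_T)$ and the energy inequality does not close. The paper's decomposition is $v^{(n)} = P_k^{(n)} + u^{(n)}$ with $k = k(p) = \lceil p/2 \rceil - 2$, and the forcing in the $u^{(n)}$-equation is $F_k^{(n)} = P_k^{(n)}\otimes P_k^{(n)} - P_{k-1}^{(n)}\otimes P_{k-1}^{(n)}$, which \emph{is} in $L^2(Q_T)$ by Lemma~\ref{finiteenergyforcingintro}. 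The Picard iterates $P_k^{(n)}$ are nonlinear in the data, so ``linear-type'' is misleading; their weak-$\ast$ stability is proved inductively as a separate lemma (Lemma~\ref{Picardconvlem}).

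Second, you write that ``the energy inequality provides uniform bounds on $u^{(n)}$,'' but this is precisely where the decay property (Proposition~\ref{improveddecaypropintro}) enters: the lower-order term $\int P_k^{(n)}\otimes u^{(n)}:\nabla u^{(n)}$ in the global energy inequality is not a priori integrable from $t=0$ without first knowing $\|u^{(n)}(\cdot,t)\|_{L^2}\leq C t^{1/4}$. The paper establishes this decay via the Besov splitting of Lemma~\ref{splitlem} and a Calder\'on-type argument \emph{before} running Gronwall. You mention the decay only for initial-data attainment, but it is already required for the uniform bounds themselves.

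Third, the definition of weak Besov solution requires the \emph{local} energy inequality~\eqref{vlocalenergyineq} for $v$, not merely a global inequality for $u$. The paper passes to the limit in the local energy inequality written for $u^{(n)}$ (to avoid the term $F^{(n)}:\nabla(\varphi v^{(n)})$, which only converges weakly), and then transfers back to $v$ via Remark~\ref{otherdirectionlocalenergy}.
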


\begin{theorem}[Weak-strong uniqueness]
	\label{weakstrongintro}
	There exists a constant $\varepsilon_0 := \varepsilon_0(p,q) > 0$ such that for all $u_0 \in \dot B^{-1+\frac{3}{p}}_{p,\infty}(\R^3)$ divergence-free and $F \in \cF_q(Q_T)$ satisfying
	\begin{equation}
		\norm{u_0}_{\dot B^{-1+\frac{3}{p}}_{p,\infty}(\R^3)} + \norm{F}_{\cF_q(Q_T)} \leq \varepsilon_0,
	\end{equation}
	there exists a unique weak Besov solution on $Q_T$ with initial data $u_0$ and forcing term $\div F$. This solution belongs to $L^\infty_\loc(\R^3 \times \R_+)$.
\end{theorem}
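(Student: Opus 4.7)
The plan is to produce the desired solution as a mild solution via Picard iteration in a critical Kato-type space, verify that it fits the weak Besov framework of Section~\ref{sec:weakbesovsols}, and then establish uniqueness against an arbitrary weak Besov solution via an energy perturbation argument that closes a borderline logarithmic Gronwall inequality using the smallness of the data.

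For existence, I would run the fixed-point iteration in the scale-invariant Kato space
\begin{equation*}
X_T := \Bigl\{v : [v]_{X_T} := \sup_{0<t<T} t^{(1-3/p)/2}\|v(t)\|_{L^p(\R^3)} < \infty\Bigr\}
\end{equation*}
for the Duhamel equation
\begin{equation*}
\bar v(t) = V(t) - \int_0^t e^{(t-s)\Delta}\bP\div(\bar v\otimes\bar v)(s)\,ds,
\end{equation*}
where $V$ denotes the Stokes evolution of $(u_0,\div F)$. The heat characterization of $\dot B^{-1+\frac{3}{p}}_{p,\infty}$ together with the definition of $\cF_q$ gives $[V]_{X_T}\lesssim \varepsilon$, and a standard Cannone-type bilinear estimate then produces a unique small fixed point $\bar v$ with $[\bar v]_{X_T}\lesssim\varepsilon$. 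Parabolic smoothing upgrades $\bar v$ to $L^\infty_\loc(\R^3\times\R_+)$ with $\|\bar v(t)\|_{L^\infty}\lesssim \varepsilon t^{-1/2}$. Writing $\bar v = V + \bar u$, the $X_T$-bound combined with parabolic $L^q$-estimates places $\bar u$ in the global energy class $L^\infty_t L^2_x\cap L^2_t\dot H^1_x$ with $\bar u(\cdot,t)\to 0$ strongly in $L^2$ as $t\downarrow 0$; the remaining structural axioms of a weak Besov solution follow directly from the mild formulation.

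For uniqueness, let $v=V+u$ be any weak Besov solution with the same data on $Q_T$ and set $w := u-\bar u\in L^\infty_t L^2_x \cap L^2_t\dot H^1_x$, so that $w(\cdot,0)=0$ strongly in $L^2$. Subtracting the perturbation equations for $u$ and $\bar u$ and testing against $w$ gives the energy identity
\begin{equation*}
\tfrac12\|w(t)\|_{L^2}^2 + \int_0^t\|\nabla w\|_{L^2}^2\,ds = -\int_0^t\int_{\R^3}(w\cdot\nabla)\bar v\cdot w\,dx\,ds,
\end{equation*}
and an integration by parts together with H\"older against $\|\bar v\|_{L^p}\lesssim\varepsilon t^{-(1-3/p)/2}$, Sobolev interpolation $\|w\|_{L^{2p/(p-2)}}\lesssim \|w\|_{L^2}^{1-3/p}\|\nabla w\|_{L^2}^{3/p}$, and Young (absorbing $\|\nabla w\|_{L^2}^2$) yield the critical inequality
\begin{equation*}
E(t) := \|w(t)\|_{L^2}^2 \leq A\int_0^t s^{-1}E(s)\,ds, \qquad A := C\varepsilon^{2p/(p-3)}.
\end{equation*}
The main obstacle is that $s^{-1}$ is non-integrable at $0$, so classical Gronwall fails; logarithmic Gronwall instead yields $E(t)\leq E(t_0)(t/t_0)^A$ for any $0<t_0<t$, and this must be combined with a quantitative short-time rate $E(t_0)\lesssim \varepsilon^4 t_0^\sigma$ for some $\sigma>0$, extractable from the defining energy inequality for $u$ and $\bar u$ together with sharp time-weighted $L^q$ bounds on $V$ (the $V\otimes V$ source term being the critical one). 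Choosing $\varepsilon_0$ small enough to ensure $A<\sigma$ and passing $t_0\downarrow 0$ then forces $E\equiv 0$, hence $v=\bar v$ on $Q_T$.
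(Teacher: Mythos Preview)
Your overall architecture (construct a small mild solution, then run an energy comparison) matches the paper's, but both halves contain gaps that correspond precisely to the paper's two main technical ingredients.

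On existence, the assertion that $\bar u := \bar v - V$ lies in the global energy class is false for $p>4$. Indeed $\bar u = -B(\bar v,\bar v)$ with $\bar v\in\cK_p(Q_T)$, so the source $\bar v\otimes\bar v$ lives only in $L^{p/2}$-based Kato spaces, and parabolic estimates yield no $L^\infty_tL^2_x$ control once $p/2>2$. This is exactly the obstruction discussed around \eqref{Vrequirementintro}, and it is why the weak Besov framework subtracts the higher Picard iterate $P_{k(p)}$ (with $k(p)=\lceil p/2\rceil-2$) rather than $V=P_0$; the correction then lands in $\cK_2(Q_T)$ by the iterated self-improvement of the bilinear term (Lemma~\ref{forcinglem}, and Step~1B of Proposition~\ref{pro:strongexist}).

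On uniqueness, the decisive claim is the short-time rate $E(t_0)\lesssim t_0^{\sigma}$ for the \emph{arbitrary} weak Besov solution, and this is not extractable from its energy inequality. Definition~\ref{weaksol} only guarantees $\|u(\cdot,t)\|_{L^2}\to 0$ qualitatively. When you run the energy inequality from $t_1$, the lower-order term $\int P_k\otimes u:\nabla u$ produces a coefficient $s^{-1}$ (from $\|P_k(s)\|_{L^\infty}\sim s^{-1/2}$ being exactly critical); the resulting Gronwall factor $(t/t_1)^{C\varepsilon^2}$ diverges as $t_1\downarrow 0$, and absorbing it would require a polynomial rate on $\|u(t_1)\|_{L^2}$---precisely what you are trying to prove. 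Smallness of $\varepsilon$ alone does not break this circularity: a continuity argument of the type ``let $t_0$ be the first time $y(t)>2Bt^{1/2}$'' fails because nothing prevents $t_0=0$ (e.g.\ $y(t)=1/|\log t|$ satisfies $y(0)=0$ yet $y(t)/t^{1/2}\to\infty$). The paper supplies the missing rate separately as Proposition~\ref{improveddecayprop}, whose proof hinges on the Besov splitting of Lemma~\ref{splitlem}: one writes $u_0=\tilde u_0+\bar u_0$ with $\tilde u_0\in L^2$ and $\bar u_0$ in a \emph{subcritical} Besov space, so that the associated Picard iterate decays like $t^{-(1-\delta)/2}$ in $L^\infty$; this coefficient \emph{is} integrable and Gronwall closes. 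Once the decay $\|u(t)\|_{L^2}^2\lesssim t^{1/2}$ is in hand, the paper's endgame is the same as yours: with $Z:=\sup_s s^{-1/2}\|w(s)\|_{L^2}^2$ now known to be finite, the energy inequality for $w$ gives $Z\le C\varepsilon^2 Z$, forcing $Z=0$ for $\varepsilon$ small (Theorem~\ref{weakstronguniqueness}). A minor additional point: since the generic weak Besov solution satisfies only an energy \emph{inequality}, one cannot ``test against $w$'' directly; the inequality for $w$ is obtained by combining the inequality for $u$ with the equality for $\bar u$ via the weak--strong pairing identity, as in the proof of Theorem~\ref{weakstronguniqueness}.
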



\emph{The second half of this paper is dedicated to applications of global weak Besov solutions.} Namely, we provide applications to certain critical problems concerning blow-up criteria, minimal blow-up initial data, and forward self-similar solutions. We present these results at the end of the introduction. The reader interested only in applications is invited to skip to~Section~\ref{sec:applicationsintro}.

To motivate our notion of solution, it is instructive to write the perturbed Navier-Stokes system satisfied by the correction term in the decomposition $v = V + u$ used in the previous works~\cite{sereginsverakweaksols,barkersereginsverakstability,barkerB4}:
	\begin{equation}
		\left.
		\begin{aligned}
		\p_t u - \Delta u + (u+V) \cdot \nabla u + u \cdot \nabla V &= - \nabla q - \div V \otimes V \\
		\div u &= 0
	\end{aligned}
	\right\rbrace \text{ in } \R^3 \times \R_+
		\label{equationforu}
	\end{equation}
	with zero initial condition. The associated global energy inequality is
	\begin{equation}\label{globalL3energyinequalityintro}
 \|u(\cdot,t)\|_{L^{2}}^2+2\int\limits_0^t\int\limits_{\mathbb{R}^3} |\nabla u|^2\,dx\, dt' \leq 2\int\limits_{0}^t\int\limits_{\mathbb{R}^3} (V\otimes u+V\otimes V):\nabla u \,dx\, dt'.
 \end{equation}
 In order for the RHS of \eqref{globalL3energyinequalityintro} to make sense, we require that
 \begin{equation}
	 V \in L^4_{t,\loc} L^4_x(\R^3 \times \R_+).
	 \label{Vrequirementintro}
 \end{equation}
As demonstrated by the second author in~\cite{barkerB4}, the quantitative scale-invariant version of \eqref{Vrequirementintro} is
$\norm{u_0}_{\dot B^{-\frac{1}{4}}_{4,\infty}} \leq M$, due to the caloric characterization of Besov spaces. Roughly speaking, the forcing term should belong to an $L^2$-based space, whereas $V \otimes V$ may only belong to spaces with integrability $\geq \frac{p}{2}$ for initial data $u_0 \in \dot B^{-1+\frac{3}{p}}_{p,\infty}$. When $p \gg 1$, the obstacle is sometimes interpreted as ``slow decay at spatial infinity.''

	The notion of global weak Besov solution developed in this paper is based on the decomposition
	\begin{equation}
	 v = P_k(u_0) + u,
	 \label{besovdecompintro}
 \end{equation}
 where $P_k(u_0)$ is the \emph{$k$th Picard iterate}, $k \geq 0$, defined by
 \begin{equation}
	P_0(u_0)(\cdot,t) := S(t)u_0,
 \end{equation}
 \begin{equation}
	 P_{k+1}(u_0)(\cdot,t) := S(t)u_0 - B(P_{k},P_{k}), \quad k \geq 0,
	 \label{}
 \end{equation}
 and $B$ is the bilinear term in the integral formulation of the Navier-Stokes equations (see \eqref{StPdivdef} for the precise definition):
		\begin{equation}\label{Bdefintro}
			B(u,v)(\cdot,t):= \int_0^t S(t-s) \bP \div u \otimes v(\cdot,s) \, ds \, dx.
 \end{equation}
 The papers \cite{sereginsverakweaksols,barkersereginsverakstability,barkerB4} utilized the decomposition \eqref{besovdecompintro} with $k=0$. Observe that if $v$ solves \eqref{NSEintro}, then $u = v-P_{k}$ solves
\begin{equation}\label{vminuspicard1}
	\left.
	\begin{aligned}
\partial_{t} u-\Delta u+P_{k} \cdot\nabla u+u\cdot\nabla P_{k}+u\cdot\nabla u&=-\nabla p-\textrm{div}\,F_{k} \\
\div u &= 0
\end{aligned}
\right\rbrace \text{ in } \R^3 \times \R_+
\end{equation}
with initial condition $u(\cdot,0) = 0$, where the forcing term $F_k(u_0)$, $k \geq 0$, is defined by
\begin{equation}\label{Fkdefinition}
F_{k}(u_0) := P_{k} \otimes P_{k} -P_{k-1} \otimes P_{k-1},
\end{equation}
and we use the convention that $P_{-1}(u_0) = 0$. One expects the correction $u$ to belong to the energy class if $F_k$ belongs to $L^2(\R^3 \times ]0,T[)$.
	
	Here is our key observation:

	\begin{lemma}[Finite energy forcing]
		\label{finiteenergyforcingintro}
Let $p \in ]3,\infty[$ and $u_0 \in \dot B^{-1+\frac{3}{p}}_{p,\infty}(\R^3)$ be a divergence-free vector field with $\norm{u_0}_{\dot B^{-1+\frac{3}{p}}_{p,\infty}(\R^3)} \leq M$. Then for all integers $k \geq k(p) := \lceil \frac{p}{2} \rceil - 2$, the forcing term $F_k(u_0)$ satisfies
	\begin{equation}
	\norm{F_k(u_0)}_{L^2(\R^3 \times ]0,T[)} \leq T^{\frac{1}{4}} C(k,M,p).
		\label{}
	\end{equation}
\end{lemma}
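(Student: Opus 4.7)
The plan is to exploit the caloric characterization of critical homogeneous Besov spaces via Kato-type norms. Define
\begin{equation*}
\|u\|_{\cX_q} := \sup_{t > 0} t^{(1-3/q)/2}\|u(\cdot,t)\|_{L^q(\R^3)}, \qquad q \in [p, \infty].
\end{equation*}
The embedding $\dot B^{-1+3/p}_{p,\infty}\hookrightarrow \dot B^{-1+3/q}_{q,\infty}$ for $q \geq p$, together with the caloric characterization of homogeneous Besov spaces, yields $\|P_0\|_{\cX_q} \leq C(p,q) M$. Combining the smoothing bound $\|S(t-s)\bP\div f\|_{L^r}\les (t-s)^{-\frac{1}{2} - \frac{3}{2}(1/q - 1/r)}\|f\|_{L^q}$, H\"older in space, and a Beta integral identity gives the standard bilinear estimate
\begin{equation*}
\|B(u,v)\|_{\cX_r} \les \|u\|_{\cX_{q_1}}\|v\|_{\cX_{q_2}}, \qquad \tfrac{1}{q_1} + \tfrac{1}{q_2} = \tfrac{1}{q},\ \ r \geq q,
\end{equation*}
provided the underlying Beta integral converges. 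The hypothesis $p > 3$ is precisely what makes this work in the basic case $q_1 = q_2 = p$, $r = p$.

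The crux is an induction on $k$ showing that the remainders $R_k := P_k - P_{k-1}$ (with $P_{-1} := 0$) belong to $\cX_{p/(k+1)}$ with norm $\leq C(k, p, M)$. The base case $R_0 = P_0 \in \cX_p$ is immediate. For the inductive step, apply the bilinear estimate to the telescoping identity $R_{k+1} = -B(R_k, P_k) - B(P_{k-1}, R_k)$ with indices $q_1 = p/(k+1)$, $q_2 = p$, and output $r = p/(k+2)$; the resulting Beta integral $B(\tfrac{1}{2}, \tfrac{3(k+2)}{2p})$ is always finite, so the induction closes. A parallel induction, again using $p > 3$, shows that $P_k \in \cX_q$ for every $q \in [p, \infty]$; in particular $R_k \in \cX_p$ by difference.

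To conclude, decompose $F_k = R_k \otimes P_k + P_{k-1} \otimes R_k$ and apply H\"older in space with $R_k \in L^{p/(k+1)}$ and $P_k, P_{k-1} \in L^{q'}$, where $q' := \tfrac{2p}{p - 2(k+1)}$ is the H\"older conjugate determined by $\tfrac{k+1}{p} + \tfrac{1}{q'} = \tfrac{1}{2}$. The two time weights combine exactly to $t^{-1/4}$, giving $\|F_k(\cdot, t)\|_{L^2} \leq C(k, p, M)\, t^{-1/4}$. The compatibility condition $q' \geq p$ translates into $k \geq \tfrac{p-2}{2}$, which is precisely $k \geq k(p) = \lceil p/2 \rceil - 2$. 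Squaring and integrating in time over $]0, T[$ yields $\|F_k\|_{L^2(\R^3 \times ]0,T[)}^2 \leq 2 C^2 T^{1/2}$, hence the claimed bound. For $k$ so large that $k + 1 \geq p/2$ (so $q'$ is no longer admissible), one replaces $R_k \in \cX_{p/(k+1)}$ by an interpolated estimate $R_k \in \cX_r$ for some $r \in [p/(k+1), p]$ and picks the H\"older conjugate accordingly.

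The main obstacle is the careful bookkeeping of scaling indices through the induction and the verification that the Beta integrals converge at each stage. The sharp threshold $k(p) = \lceil p/2 \rceil - 2$ emerges precisely from requiring the H\"older conjugate $q'$ to satisfy $q' \geq p$, so that $P_k \in \cX_{q'}$ is available from the Besov embedding and caloric characterization.
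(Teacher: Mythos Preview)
Your approach is essentially the paper's: both exploit the self-improvement of the bilinear term via an induction on the Kato-space integrability of the remainders $R_\ell = P_\ell - P_{\ell-1}$, combined with the telescoping identity $F_k = R_k\otimes P_k + P_{k-1}\otimes R_k$ and H\"older. The paper's Lemma~2.2 (the forcing lemma) runs exactly this scheme.

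The one substantive difference is how the range $k\ge k(p)$ is handled. The paper begins with a clean reduction: given $k\ge k(p)$, replace $p$ by $p(k):=2k+4\ge p$ via the embedding $\dot B^{s_p}_{p,\infty}\hookrightarrow\dot B^{s_{p(k)}}_{p(k),\infty}$. With this choice the induction indices $q(k,\ell)=p(k)/(\ell+2)$ run from $k+2$ down to exactly $2$ at $\ell=k$, so every step lives in a legitimate Lebesgue space and the $L^2$ bound on $F_k$ drops out with no case analysis. Your treatment of large $k$ (``replace $R_k\in\cX_{p/(k+1)}$ by an interpolated estimate'') presumes the induction has already reached step $k$, but once $p/(k+1)<1$ the space $\cX_{p/(k+1)}$ is no longer available and the induction as written stalls. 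The fix is either the paper's $p\mapsto 2k+4$ reduction, or equivalently: stop the descent once $R_{\ell_0}\in\cX_2$ is reached (this happens by $\ell_0=k(p)+1$), then propagate $R_m\in\cX_2$ for $m>\ell_0$ using $B:\cX_2\times\cX_\infty\to\cX_2$. With that adjustment your argument is complete.
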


The proof of Lemma~\ref{finiteenergyforcingintro} is based on a self-improvement property of the bilinear term $B$. Heuristically, if a vector field $V$ belongs to an $L^p$-based space, then $B(V,V)$ belongs to an $L^{\frac{p}{2}}$-based space (as well as the original space). For instance, let $u_0 \in \dot B^{-\frac{1}{2}}_{6,\infty}(\R^3)$. Then $V := Su_0$ belongs to an $L^6$-based space, and $F_1(u_0)$ satisfies
\begin{equation}
	F_1 = -B(V,V) \otimes V - V \otimes B(V,V) + B(V,V) \otimes B(V,V).
	\label{}
\end{equation}
Since $B(V,V)$ belongs to an $L^3$-based space (and an $L^6$-based space), an application of H{\"o}lder's inequality implies that $F_1$ belongs to an $L^2$-based space. The same reasoning applies \emph{mutatis mutandis} with the inclusion of a forcing term $\div F$ with $F$ belonging to $\cF_q(Q_T)$ with $q \in ]3,p]$.\footnote{See~\eqref{Fqdefintro} or \eqref{katoforcing1}-\eqref{katoforcing2} for the relevant definition.}
The self-improvement property of $B$ was already exploited in the papers \cite{gallagherasymptotics,gallagherkochplanchonbesov}. The phenomenon that $F_1$ is a higher order term is already present in the Picard iterates for the ODE $\dot x = ax^2$, $x(0) = x_0$, where $a, x_0 \in \R$.  



Here is our main definition:

  
\begin{definition}[Weak Besov solution]\label{weaksol}
	Let $T > 0$, $u_0 \in \BMO^{-1}(\R^3)$ be a divergence-free vector field, and $F \in \cF(Q_T)$.\footnote{The requirements $u_0 \in \BMO^{-1}(\R^3)$ and $F \in \cF(Q_T)$ ensure that the Picard iterates $P_k(u_0,F)$ are well-defined. We refer the reader to~\eqref{BMO-1def}, \eqref{cFQTdef}, and~\eqref{Picarditeratedef1}-\eqref{Picarditeratedef2} for the respective definitions.}

We say that a distributional vector field $v$ on $Q_T:=\mathbb{R}^3\times ]0,T[$ is a \emph{weak Besov solution} to the Navier-Stokes equations on $Q_T$ with initial data $u_0$ and forcing term $\div F$ if there exists an integer $k \geq 0$ such that the following requirements are satisfied.
	
	First, there exists a pressure $q \in L^{\frac{3}{2}}_\loc(Q_T)$ such that $v$ satisfies the Navier-Stokes equations on $Q_T$ in the sense of distributions:
	\begin{equation}
		\p_t v - \Delta v + v \cdot \nabla v  = - \nabla q + \div F, \qquad \div v = 0.
	\end{equation}
Second, $v$ may be decomposed as
	\begin{equation}
		v = u + P_k(u_0,F),
	\end{equation}
	where $u \in L^\infty_t L^2_x \cap L^2_t \dot H^1_x(Q_T)$ and $u(\cdot,t)$ is weakly $L^2$-continuous on $[0,T]$. Furthermore, we require that $\lim_{t \dto 0} \norm{u(\cdot,t)}_{L^2(\R^3)} = 0$ and $F_\ell \in L^2(Q_T)$ for all integers $\ell \geq k$. Finally, we require that $(v,q)$ satisfies the following local energy inequality for every $t \in ]0,T[$ and all non-negative test functions $\varphi \in C^\infty_0(Q_\infty)$:
		\begin{equation}\label{vlocalenergyineq}
		\int_{\R^3} \varphi(x,t) |v(x,t)|^2 \, dx + 2 \int_0^t \int_{\R^3} \varphi |\nabla v|^2 \, dx \, dt'  $$ $$ \leq \int_0^t \int_{\R^3} |v|^2 (\p_t \varphi + \Delta \varphi) + v (|v|^2+2q) \cdot \nabla \varphi - 2F : \nabla (\varphi v) \, dx \, dt'.
	\end{equation}

We say that the weak Besov solution is \emph{based on the $k$th Picard iterate} if the above properties are satisfied for a given integer $k \geq 0$.

We say that $v$ is a \emph{global weak Besov solution} (or weak Besov solution on $Q_\infty := \R^3 \times \R_+$) with initial data $u_0$ and forcing term $\div F$ if there exists an integer $k \geq 0$ such that for all $T>0$, $v$ is a weak Besov solution on $Q_T$ based on the $k$th Picard iterate with initial data $u_0$ and forcing term $\div F$.
\end{definition}

Let us explain the requirement that $F_\ell \in L^2(Q_T)$ for all $\ell \geq k$. Its role is to ensure that $v$ is also a weak Besov solution based on the $\ell$th Picard iterate for all $\ell \geq k$, see Proposition~\ref{pro:orderofpicarditerate}. In other words, one may always raise the order of the Picard iterate. Similarly, one may lower the Picard iterate depending on the regularity of the initial data. 
Hence, our notion of weak Besov solution in Definition~\ref{weaksol} is not overly sensitive to the order of the Picard iterate.



Before turning to applications, we present another key ingredient in our arguments: a decay property for the correction term near the initial time.

\begin{pro}[Decay property]\label{improveddecaypropintro}
	Let $p \in ]3,\infty[$, $q \in ]3,p]$, and $k \geq k(p) := \lceil \frac{p}{2} \rceil - 2$. Assume that $v$ is a global weak Besov solution based on the $k$th Picard iterate with initial data $u_0$ and forcing term $\div F$. Let $\norm{u_0}_{\dot B^{-1+\frac{3}{p}}_{p,\infty}(\R^3)} + \norm{F}_{\cF_q(Q_\infty)} \leq M$. Then
\begin{equation}\label{improveddecayeqintro}
	\norm{u(\cdot,t)}_{L^2(\R^3)} \leq C(k,M,p,q) t^{\frac{1}{4}}.
	\end{equation}
\end{pro}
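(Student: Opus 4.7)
The plan is to derive a global energy inequality for the perturbation $u = v - P_k$ with zero initial condition and to close a Gr\"onwall-type estimate using Lemma~\ref{finiteenergyforcingintro} for the forcing and Kato-type bounds for the drift generated by $P_k$.

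The starting point is the local energy inequality~\eqref{vlocalenergyineq} for $(v,q)$. I would substitute $v = P_k + u$ and use the identity $\p_t P_k - \Delta P_k = -\bP\div(P_{k-1}\otimes P_{k-1}) + \div F$ together with the smoothness of $P_k$ on $\R^3 \times \R_+$ to cancel the $P_k$-only contributions. Choosing test functions $\varphi$ that approximate the indicator of $\R^3 \times \,]0,t[$ and passing to the limit using the weak $L^2$-continuity of $u$ and $u(\cdot,0) = 0$, and exploiting $\div u = \div P_k = 0$ to kill $\int u \cdot (u\cdot\nabla u)$ and $\int u\cdot(P_k\cdot\nabla u)$, one obtains
\begin{equation*}
\tfrac{1}{2}\norm{u(\cdot,t)}_{L^2}^2 + \int_0^t \norm{\nabla u}_{L^2}^2 \, ds \leq \int_0^t\!\!\int_{\R^3}\bigl(P_k\otimes u:\nabla u + F_k:\nabla u\bigr)\,dx\,ds.
\end{equation*}
By Lemma~\ref{finiteenergyforcingintro} and Cauchy--Schwarz/Young, the forcing term is bounded by $\tfrac{1}{4}\int_0^t\norm{\nabla u}_{L^2}^2\,ds + C(k,M,p,q)\,t^{1/2}$, the external datum $F \in \cF_q(Q_\infty)$ entering the constant through the analogous Kato-type bound.

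The main obstacle is the drift term $\int_0^t\!\int P_k\otimes u:\nabla u$. The Picard iterate $P_k$ lives only in scale-invariant Kato spaces with $\norm{P_k(s)}_{L^r} \les s^{-(1-3/r)/2}$ for $r \geq p$, which is exactly critical for closing the energy estimate: a naive H\"older--Gagliardo--Nirenberg--Young bound produces a Gr\"onwall coefficient of order $s^{-1}$, only logarithmically nonintegrable at the origin. To bypass this I would split $P_k = V + W_k$ with $V = S(t)u_0$ and $W_k := P_k - V$, the latter being a sum of iterated bilinear terms. The self-improvement property of $B$ that underpins Lemma~\ref{finiteenergyforcingintro} --- namely, that $B$ sends $\cK_r \times \cK_r$ into $\cK_{r'}$ for $r'$ strictly smaller than $r$ in a suitable range --- together with the assumption $k \geq k(p)$ places $W_k$ in strictly subcritical Kato spaces, so its contribution to the drift is absorbable into $\tfrac{1}{4}\int_0^t\norm{\nabla u}_{L^2}^2$ via the standard H\"older--Gagliardo--Nirenberg--Young chain. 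For the critical head $V$, the relevant information is the weak-Lorentz improvement $V \in L^{2p/(p-3),\infty}_t L^p_x$, which suffices to close the critical-scaling energy inequality via Lorentz duality; equivalently, a Littlewood--Paley decomposition $V = V_{\mathrm{low}} + V_{\mathrm{high}}$ with $V_{\mathrm{low}} \in L^\infty_{t,x}$ bounded and $V_{\mathrm{high}}$ of arbitrarily small critical Besov norm achieves the same end.

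Combining the three controls yields an inequality of the form $\norm{u(\cdot,t)}_{L^2}^2 \leq \int_0^t g(s)\norm{u(\cdot,s)}_{L^2}^2\,ds + C(k,M,p,q)\,t^{1/2}$, where $\int_0^T\! g(s)\,ds$ is finite and controlled by $C(k,M,p,q)$ uniformly in $T$ (small part of $V$ absorbed, bounded part of $V$ and $W_k$ producing the Gr\"onwall coefficient). Applying Gr\"onwall's inequality together with $u(\cdot,0) = 0$ then produces $\norm{u(\cdot,t)}_{L^2}^2 \leq C(k,M,p,q)\,t^{1/2}$, which is the claimed decay~\eqref{improveddecayeqintro}.
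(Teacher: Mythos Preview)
Your handling of the drift term $\int_0^t\!\int P_k\otimes u:\nabla u$ has a genuine gap; neither half of the decomposition $P_k = V + W_k$ works. For $W_k = P_k - S(t)u_0$: the self-improvement of $B$ lowers the spatial integrability index (e.g.\ $B(\cK_p,\cK_p)\subset\cK_{p/2}$) but \emph{preserves the critical Kato scaling} $\norm{W_k(s)}_{L^r}\lesssim s^{-(1-3/r)/2}$, so the Gr\"onwall coefficient remains $\sim s^{-1}$. The condition $k\ge k(p)$ was designed to put $F_k$ into $L^2$, not to make $P_k-P_0$ subcritical; indeed the first summand $P_1-P_0=-B(P_0,P_0)$ lies only in $\cK_r$ for $r\ge p/2$, all of which are scale-invariant. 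For $V = S(t)u_0$: the Littlewood--Paley splitting fails precisely because the third index of $\dot B^{s_p}_{p,\infty}$ is $\infty$. The high-frequency norm $\norm{\sum_{j>J}\dot\Delta_j u_0}_{\dot B^{s_p}_{p,\infty}}=\sup_{j>J}2^{js_p}\norm{\dot\Delta_j u_0}_{L^p}$ need not tend to zero as $J\to\infty$, so $V_{\rm high}$ cannot be made small in the critical norm; this is exactly the obstruction the paper identifies around~\eqref{globalL3inftydemonstration}. The Lorentz refinement $V\in L^{2p/(p-3),\infty}_t L^p_x$ only places the Gr\"onwall coefficient in $L^{1,\infty}_t$, which is still nonintegrable. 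There is also a circularity: the global energy inequality for $u$ from $t=0$ (Corollary~\ref{cor:uglobalenergyineqinitialtime}) is established only \emph{after} the decay, since the drift term is not known to be integrable down to $t=0$ beforehand.

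The paper's route is to split the \emph{data}, not the Picard iterate. Lemma~\ref{splitlem} writes $u_0=\tilde u_0+\bar u_0$ with $\tilde u_0\in L^2$ and $\bar u_0$ in a genuinely subcritical space $\dot B^{s_{p_2}+\delta_2}_{p_2,p_2}$ (finite third index, $\delta_2>0$); this is a non-diagonal splitting that is not available from naive frequency truncation. Then $\bar P_k:=P_k(\bar u_0,\bar F)\in\cK^{-1+\delta}_\infty$ yields an integrable Gr\"onwall coefficient, while $E_k:=P_k-\bar P_k$ has finite kinetic energy inherited from $\tilde u_0\in L^2$ (Lemma~\ref{ENkest}). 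An energy inequality for $w_k:=u+E_k=v-\bar P_k$ (Lemma~\ref{lem:wNkenergyineq}) then closes by Gr\"onwall, and the bound transfers to $u=w_k-E_k$.
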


The proof is given in Section~\ref{uniformdecayestimates}. Notably,~\eqref{improveddecayeqintro} is used to obtain the global energy inequality for the correction term starting from the initial time, see Corollary~\ref{cor:uglobalenergyineqinitialtime}. A similar issue is already present in Seregin's paper~\cite{sereginlerayhopfweaklyconverging} for sequences of weak Leray-Hopf solutions with initial data converging to zero weakly in $L^2$.
 
To illustrate the main issue in Proposition~\ref{improveddecaypropintro}, we consider the special case $u_0 \in L^{3,\infty}$ with zero forcing term and decompose the solution as $v = V + u$ as in the paper~\cite{barkersereginsverakstability}. That is, $k=0$ in Definition~\ref{weaksol}.
Heuristically, the energy of the correction term should originate entirely in
 the forcing term $- \div V \otimes V$. Let $\norm{u_0}_{L^{3,\infty}} \leq M$. 
Since
	\begin{equation}
		\int_{\R^3} \int_0^t |V \otimes V|^2 \, dx \,dt \leq C t^{\frac{1}{2}} M^2 \text{ for all } t > 0,
\label{VotimesVest}
\end{equation}
we expect the following \emph{a priori} estimate:
 \begin{equation}\label{globalL3aprioriintro}
 \|u(\cdot,t)\|_{L^{2}}^2+2\int\limits_0^t\int\limits_{\mathbb{R}^3} |\nabla u|^2 \, dx \, dt'\leq t^{\frac 1 2} C(M).
 \end{equation}
When $u_0 \in L^3$, the proof of \eqref{globalL3aprioriintro} is via a Gronwall-type argument that does not extend to the more general case. For instance, consider the following estimate for the lower order term in the global energy inequality:
 \begin{equation}
	 \left\lvert \int_0^T \int_{\R^3} V \otimes u : \nabla u \, dx \,dt' \right\rvert \leq C \norm{V}_{L^\infty_t L^{3,\infty}_x(Q_T)} \norm{u}_{L^2_t L^{6,2}_x(Q_T)} \norm{\nabla u}_{L^2(Q_T)},
	 \label{globalL3inftydemonstration}
 \end{equation}
 where the quantity $\norm{V}_{L^\infty_t L^{3,\infty}_x(Q_T)}$ is not ``locally small'' unless $M \ll 1$.

 In the paper~\cite{barkersereginsverakstability}, this issue is overcome using splitting arguments\footnote{Related splitting arguments have also previously been used by Jia and \v{S}ver\'{a}k in~\cite{jiasverakminimal}, in
order to show estimates near the initial time for Lemari\'e-Rieusset local energy solutions of the Navier-Stokes equations with $L^{3}$ initial data.} inspired by C. P. Calder\'{o}n~\cite{calderon90} and a decomposition of initial data in Lorentz spaces. In the present work, we require the following new decomposition of initial data in Besov spaces.\footnote{This lemma was obtained by the second author in \cite[Proposition 1.5]{barkerB4}, which will not be submitted for journal publication.} 

\begin{lemma}[Splitting of initial data]\label{splitlem}
Let $p \in ]d,\infty[$. There exist $p_2 \in ]p,\infty[$, $\delta_{2} \in ]0,-s_{p_2}[$, $\gamma_1, \gamma_2 > 0$, and $C > 0$, each depending only on $p$, such that for each divergence-free vector field $g \in \Bp(\R^d)$ and $N>0$, there exist divergence-free vector fields
$\bar{g}^{N}\in \dot{B}^{s_{p_2}+\delta_{2}}_{p_2,p_2}(\mathbb{R}^d)\cap \dot{B}^{s_{p}}_{p,\infty}(\mathbb{R}^d)$ and $\widetilde{g}^{N}\in L^2(\mathbb{R}^d)\cap \dot{B}^{s_{p}}_{p,\infty}(\mathbb{R}^d) $ with the following properties:
\begin{equation}\label{Vdecomp1}
g= \widetilde{g}^{N} + \bar{g}^{N},
\end{equation}
\begin{equation}\label{barV_2est}
\|\widetilde{g}^{N}\|_{L^2(\R^d)}\leq C N^{-\gamma_{2}} \norm{g}_{\Bp(\R^d)},
\end{equation} 
\begin{equation}\label{barV_1est}
\|\bar{g}^{N}\|_{\dot{B}^{s_{p_2}+\delta_{2}}_{p_2,p_2}(\R^d)}\leq C N^{\gamma_{1}} \norm{g}_{\Bp(\R^d)}.
\end{equation}
Furthermore, 
\begin{equation}\label{barV_1est.1}
\|\widetilde{g}^{N}\|_{\dot{B}^{s_p}_{p,\infty}(\R^d)}, \|\bar{g}^{N}\|_{\dot{B}^{s_p}_{p,\infty}(\R^d)}\leq C \norm{g}_{\Bp(\R^d)}.
\end{equation}
\end{lemma}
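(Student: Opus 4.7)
The plan is to construct the splitting via a Littlewood-Paley decomposition at frequency scale $2^{J} \sim N$. Let $\Delta_j$ denote the standard Littlewood-Paley blocks and set $J := \lfloor \log_{2} N \rfloor$. A natural first candidate is
\[
	\bar{g}^{N} := \sum_{j \leq J} \Delta_j g, \qquad \widetilde{g}^{N} := \sum_{j > J} \Delta_j g,
\]
each of which is automatically divergence-free when $g$ is, since $\Delta_j$ commutes with the Leray projection $\bP$; if any later refinement disturbs this, one projects with $\bP$ at the end, which is bounded on $\dot{B}^{s}_{p,q}$ for $1 < p < \infty$ and on $L^2$, so all estimates change only by constants. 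The $\Bp$ bounds \eqref{barV_1est.1} on both halves are then immediate from the Littlewood-Paley characterization of $\Bp$.

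For the subcritical Besov bound \eqref{barV_1est} on $\bar{g}^{N}$, Bernstein's inequality combined with the identity $s_p - d(1/p - 1/p_2) = s_{p_2}$ yields
\[
\|\Delta_j g\|_{L^{p_2}} \lesssim 2^{jd(1/p - 1/p_2)} \|\Delta_j g\|_{L^p} \lesssim 2^{-j s_{p_2}} \|g\|_{\Bp}.
\]
Substituting into the $\dot{B}^{s_{p_2}+\delta_2}_{p_2,p_2}$ norm and summing over $j \leq J$ produces the geometric bound $\|\bar{g}^{N}\|_{\dot{B}^{s_{p_2}+\delta_2}_{p_2,p_2}}^{p_2} \lesssim \sum_{j \leq J} 2^{j p_2 \delta_2} \|g\|_{\Bp}^{p_2} \lesssim N^{p_2 \delta_2} \|g\|_{\Bp}^{p_2}$, which gives $\gamma_1 = \delta_2$. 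Picking $p_2$ slightly larger than $p$ and $\delta_2 \in (0, -s_{p_2})$ small makes the series converge as $j \to -\infty$ and keeps the target space subcritical.

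The main obstacle is the $L^2$ bound \eqref{barV_2est} on $\widetilde{g}^{N}$: a generic $g \in \Bp$ need not lie in $L^2$ at all, so the raw high-frequency projection $\sum_{j > J} \Delta_j g$ need not belong to $L^2$. I would address this by reinterpreting the desired decomposition as a $K$-functional splitting for the couple $(L^2, \dot{B}^{s_{p_2}+\delta_2}_{p_2,p_2})$ and establishing the embedding $\Bp \hookrightarrow (L^2, \dot{B}^{s_{p_2}+\delta_2}_{p_2,p_2})_{\theta, \infty}$ with $\theta = \theta(p, p_2, \delta_2) \in (0,1)$ determined by matching the critical homogeneity $-1$ of $\Bp$ against the endpoint homogeneities $-d/2$ of $L^2$ and $-1 + \delta_2$ of the target Besov space. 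Once this embedding is in hand, the bound $K(\tau, g) \lesssim \tau^{\theta} \|g\|_{\Bp}$ provides, for each $\tau > 0$, a concrete splitting satisfying $\|\widetilde{g}^{N}\|_{L^2} + \tau \|\bar{g}^{N}\|_{\dot{B}^{s_{p_2}+\delta_2}_{p_2,p_2}} \lesssim \tau^{\theta} \|g\|_{\Bp}$; choosing $\tau$ as the power of $N$ that recovers $\|\bar{g}^{N}\|_{\dot{B}^{s_{p_2}+\delta_2}_{p_2,p_2}} \lesssim N^{\gamma_1}$ simultaneously forces the $L^2$-side bound to decay as $N^{-\gamma_2}$ with $\gamma_2 = \theta\gamma_1/(1-\theta) > 0$. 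Verifying the endpoint embedding is the technical heart of the argument and reduces to a careful blockwise estimate exploiting the scaling gap between $\Bp$ and the subcritical target space; the resulting explicit splitting can then be matched to the Littlewood-Paley candidate above up to harmless corrections.
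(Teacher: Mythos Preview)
Your frequency-cutoff construction of $\bar{g}^N$ and the Bernstein estimate giving \eqref{barV_1est} are fine, and you correctly identify that the high-frequency projection $\sum_{j>J}\Delta_j g$ need not lie in $L^2$. The problem is your proposed fix. Reformulating the goal as the embedding $\dot B^{s_p}_{p,\infty}\hookrightarrow (L^2,\dot B^{s_{p_2}+\delta_2}_{p_2,p_2})_{\theta,\infty}$ is not progress: the $K$-functional bound $K(\tau,g)\lesssim\tau^\theta\|g\|_{\dot B^{s_p}_{p,\infty}}$ is \emph{equivalent} to the existence of a uniform splitting satisfying \eqref{barV_2est}--\eqref{barV_1est}, so you have restated the target rather than reduced it. Your remark that the embedding ``reduces to a careful blockwise estimate'' is the step that carries all the content, and pure frequency-block arguments cannot supply it---you have already observed that they fail on the $L^2$ side. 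The paper in fact flags exactly this point: real interpolation of Besov spaces with different integrability indices does not in general return a Besov space, so the embedding cannot be read off from standard identifications.

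There is a second gap. Even granting the embedding, the abstract $K$-functional only asserts the \emph{existence} of some splitting at each $\tau$; it gives no control of either piece in the original space $\dot B^{s_p}_{p,\infty}$, so the persistency bounds \eqref{barV_1est.1} are unaccounted for. The paper's construction supplies both missing ingredients simultaneously by combining the frequency cutoff with a \emph{physical-space} level-set splitting of each block: one writes $\Delta_j g = (\Delta_j g)\chi_{\{|\Delta_j g|>c\lambda_j\}} + (\Delta_j g)\chi_{\{|\Delta_j g|\le c\lambda_j\}}$ with a carefully chosen $j$-dependent threshold $\lambda_j$. This ``diagonal'' cut is what moves the integrability index from $p$ to $2$ and to $p_2$, and because it is a pointwise decomposition of each $\Delta_j g$ it automatically satisfies $\|\cdot\|_{L^p}\le\|\Delta_j g\|_{L^p}$ blockwise, which yields persistency. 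The full splitting is then: horizontal frequency cut at scale $K$, diagonal physical-space cut of the low-frequency part, and a Sobolev embedding to recombine two of the three resulting pieces.
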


The most notable feature of Lemma~\ref{splitlem} is that the summability index $q$ is reduced from $\infty$ in both terms of the decomposition. Therefore, the splitting is not a simple ``diagonal splitting" that could be obtained via complex interpolation of Besov spaces. Moreover, it does not appear to obviously follow from the abstract real interpolation theory, since Besov spaces are not real interpolation spaces of Besov spaces except in special cases (such as when the integrability index $p$ is kept constant), see \cite[Section 4]{krepkogorskii} for an example. Further discussion and the proof of a general splitting result, Proposition~\ref{besovsplittingpro}, are contained in Section~\ref{sec:appendixsplitting}. 

\setcounter{figure}{\value{theorem}}
	\begin{figure}[h!]
		\centering
		\includegraphics[width=3.25in]{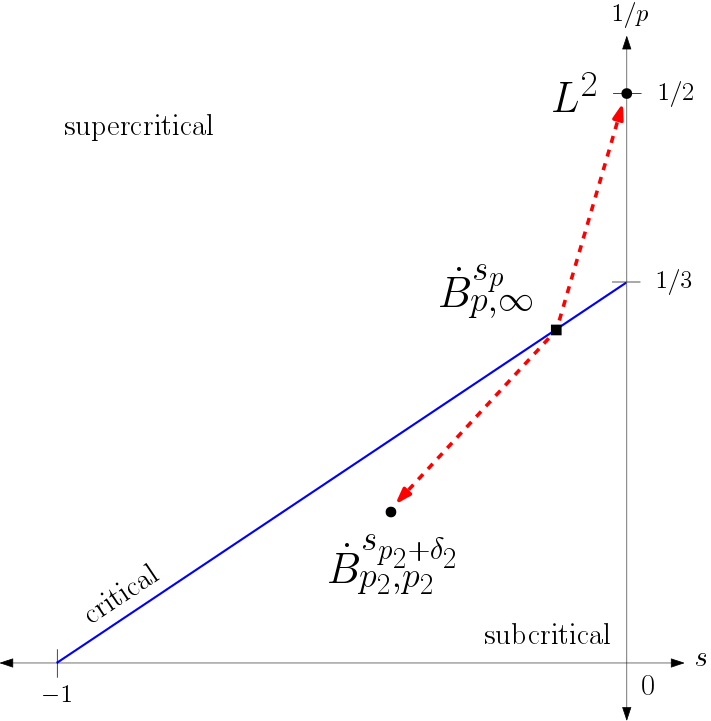}
	\caption{Illustration of Lemma~\ref{splitlem} with $d=3$. The initial data $g \in \dot B^{s_p}_{p,\infty}$ is split along the dashed red lines into $\tilde{g} \in L^2$ and $\bar{g} \in \dot B^{s_{p_2}+\delta_2}_{p_2,p_2}$.}
\label{fig:splittingfigure}
\end{figure}
\addtocounter{theorem}{1}

\subsection{Applications}\label{sec:applicationsintro}
The second part of the paper is focused on applications of global weak Besov solutions 
to three problems concerning the three-dimensional Navier-Stokes equations.
\subsubsection{Blow-up criteria}


Our first application is an improved blow-up criterion for the Navier-Stokes equations in critical spaces:

\begin{cor}[$\dot B^{-1+\frac{3}{p}}_{p,\infty}$ blow-up criterion]\label{blowupBesovintro}
Let $T^* > 0$, $u_0 \in L^\infty(\R^3)$ be a divergence-free vector field, and $F \in L^\infty_t L^q_x(\R^3 \times ]0,T^*[)$ for some $q \in ]3,\infty[$. Suppose that $v \in L^\infty(\R^3 \times ]0,T[)$ is a mild
solution of the Navier-Stokes equations on $\R^3 \times ]0,T[$ with initial data $u_0$ and forcing term $\div F$ for
all $T \in ]0,T^*[$. Suppose that there exists a
	sequence of times $t_n \upto T^*$ such that
	\begin{equation}\label{controlonsequenceintro}
		\sup_{n \in \N} \norm{v(\cdot,t_n)}_{\dot B^{-1+\frac{3}{p}}_{p,\infty}(\R^3)} < \infty
	\end{equation}
for some $p \in ]3,\infty[$.
	Finally, assume that $v(\cdot,T^*)$ satisfies
	\begin{equation}
		\sqrt{T^* - t_n} v(\sqrt{T^* - t_n} (\cdot - x^*),T^*) \wstar 0 \text{ in } \cD'(\R^3),
		\label{blowupassumptionintro}
	\end{equation}
for some $x^* \in \R^3$. Then $v$ is regular at $(x^*,T^*)$.\footnote{We say that $v$ is \emph{regular} at $(x^*,T^*) \in \R^{3+1}$ if $v \in L^\infty(B(x^*,R) \times ]T^*-R^2,T^*[)$ for some $R > 0$. Otherwise, it is \emph{singular} at $(x^*,T^*)$.} If~\eqref{blowupassumptionintro} is verified for all $x^* \in \R^3$, then $v \in L^\infty(\R^3 \times ]0,T^*[)$.
\end{cor}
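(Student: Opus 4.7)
The plan is to argue by contradiction, following the rescaling-and-blow-up strategy of Escauriaza-Seregin-\v{S}ver\'ak~\cite{escauriazasereginsverak} and its refinements in~\cite{sereginl3,barkersereginsverakstability}, with global weak Besov solutions (and the stability Theorem~\ref{thm:stabilityintro}) replacing the $L^3$ or $L^{3,\infty}$ solutions used in those works. Suppose for contradiction that $v$ is singular at $(x^*, T^*)$. Set $\lambda_n := \sqrt{T^* - t_n}$ and introduce the natural zoom around $(x^*,T^*)$:
\begin{equation*}
v^{(n)}(y,s) := \lambda_n v(x^* + \lambda_n y,\, t_n + \lambda_n^2 s), \qquad F^{(n)}(y,s) := \lambda_n^2 F(x^* + \lambda_n y,\, t_n + \lambda_n^2 s),
\end{equation*}
each a bounded mild solution on $\R^3 \times [0,1[$, with $\lambda_n$ chosen exactly so that $s=0$ corresponds to $t = t_n$ and $s=1$ to $t = T^*$. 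Scale invariance of the critical Besov norm together with~\eqref{controlonsequenceintro} yields $\norm{v^{(n)}(\cdot,0)}_{\dot B^{-1+\frac{3}{p}}_{p,\infty}} \leq M$ uniformly, while a direct scaling computation gives $\norm{F^{(n)}}_{\cF_q(Q_1)} \les \lambda_n^{2-3/q}\norm{F}_{L^\infty_t L^q_x} \to 0$, since $q > 3$. By weak-$\ast$ compactness, extract $u_0^\infty$ with $v^{(n)}(\cdot,0) \wstar u_0^\infty$ in $\dot B^{-1+\frac{3}{p}}_{p,\infty}(\R^3)$ along a subsequence.

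First I would verify that each bounded mild solution $v^{(n)}$ is in fact a weak Besov solution on $Q_1$ in the sense of Definition~\ref{weaksol}: the Picard-iterate decomposition holds trivially for smooth bounded solutions, the requirement $F_\ell^{(n)} \in L^2(Q_1)$ for $\ell$ large enough follows from the uniform Besov bound via Lemma~\ref{finiteenergyforcingintro}, and the local energy inequality is routine. Applying Theorem~\ref{thm:stabilityintro} (adapted to the finite interval $Q_1$), along a further subsequence $v^{(n)} \to v^\infty$ strongly in $L^3_\loc(Q_1)$, where $v^\infty$ is a weak Besov solution on $Q_1$ with initial data $u_0^\infty$ and zero forcing. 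The blow-up hypothesis~\eqref{blowupassumptionintro} reads, in the rescaled variables, $v^{(n)}(\cdot,1) \wstar 0$ in $\cD'(\R^3)$; combining this with the weak-$L^2$ continuity in time of weak Besov solutions promotes the property to $v^\infty(\cdot,1) = 0$ in $\cD'(\R^3)$.

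The closing step is the classical Escauriaza-Seregin-\v{S}ver\'ak argument. On the exterior region $\{|y|>R\} \times (1-\tau,1]$ for $R$ large and $\tau$ small, the Picard iterate $P_k(u_0^\infty,0)$ is smooth with adequate spatial decay (thanks to heat-kernel estimates and the self-improvement of the bilinear term exploited in Lemma~\ref{finiteenergyforcingintro}), while the correction term lies in the energy class with the quantitative decay of Proposition~\ref{improveddecaypropintro}; together these give enough regularity of $v^\infty$ at spatial infinity to apply the backward uniqueness theorem, and combined with $v^\infty(\cdot,1)=0$ this forces $v^\infty \equiv 0$ on that exterior region. A Carleman-type unique continuation argument then propagates the vanishing to $v^\infty \equiv 0$ on some slab $\R^3 \times (s_0,1]$. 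On the other hand, the singularity of $v$ at $(x^*,T^*)$ yields via the Caffarelli-Kohn-Nirenberg $\varepsilon$-regularity criterion, at a suitable scale, a uniform lower bound on the $L^3$-mass of $v^{(n)}$ over a compact subset of $Q_1$ contained in the vanishing slab of $v^\infty$; the strong $L^3_\loc$ convergence transfers this to a lower bound on $v^\infty$ over the same set, contradicting $v^\infty \equiv 0$ there. The hardest part I expect is engineering enough spatial decay of $v^\infty$ to legitimately invoke backward uniqueness together with the Carleman unique continuation, and selecting the scale in the $\varepsilon$-regularity step so that the captured mass falls strictly inside the vanishing slab. The global conclusion $v \in L^\infty(\R^3 \times ]0,T^*[)$ follows from the pointwise regularity at every $x^* \in \R^3$ by a standard covering argument.
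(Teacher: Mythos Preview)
Your approach is correct and essentially matches the paper's: contradiction via rescaling, weak-$\ast$ stability of weak Besov solutions to pass to the limit, the hypothesis~\eqref{blowupassumptionintro} forcing $v^\infty(\cdot,1)=0$, then the Escauriaza--Seregin--\v{S}ver\'ak backward uniqueness/unique continuation argument (packaged in the paper as Proposition~\ref{backwarduniqueness}) contradicting persistence of the singularity (Lemma~\ref{persistenceofsingularity}). The only point needing more care is your last sentence: passing from regularity at every $(x^*,T^*)$ to $v \in L^\infty(Q_{T^*})$ is not a mere covering argument since $\R^3$ is noncompact---the paper handles this via a separate lemma (Lemma~\ref{bddatinfinity}) establishing boundedness of weak Besov solutions near spatial infinity.
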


Corollary~\ref{blowupBesovintro} is a special case of Theorem~\ref{blowupBesov}, which may be regarded as a quantitative version of the corollary. For comparison, the following weaker criterion was obtained without forcing term by the first author in~\cite{albrittonblowupcriteria}:
\begin{equation}
\lim_{t \upto T^*} \norm{v(\cdot,t)}_{\dot B^{-1+\frac{3}{p}}_{p,q}(\R^3)} = \infty \text{ with } p,q \in ]3,\infty[.
\label{}
\end{equation}
We also mention the preceding works \cite{escauriazasereginsverak,sereginh1/2,sereginl3,phuclorentz,barkersereginhalfspace,kenigkoch,gallagherkochplanchonl3,gallagherkochplanchonbesov} in this direction.\footnote{Corollary~\ref{blowupBesovintro} without forcing term appeared in the recent preprint \cite{barkerB4} of the second author which is not intended to be submitted for publication.} Specifically, our methods are based on the rescaling procedure and backward uniqueness arguments introduced by Escauriaza, Seregin, and {\v S}ver{\'a}k in~\cite{escauriazasereginsverak} and further developed by Seregin in~\cite{sereginh1/2,sereginl3}.

The requirement \eqref{blowupassumptionintro} states that the blow-up profile $v(\cdot,T^*)$ vanishes in the limit of the rescaling procedure. This assumption excludes, for example, the situation that $v(\cdot,T^*)$ is scale-invariant, in which case zooming on the singularity would provide no new information. If $v(\cdot,T^*)$ belongs to the closure of Schwartz functions in $\dot B^{-1}_{\infty,\infty}(\R^3)$, then \eqref{blowupassumptionintro} is automatically satisfied. See Section~\ref{sec:blowupBesov} for further remarks.
	 
	 The reason for the restriction $q > 3$ on the forcing term is to ensure that the maximal time of existence is indicated by the formation of a singularity. Note, for instance, that solutions of the equation $\Delta u = \div F$ in $\R^d$ with $F$ belonging to $L^d(\R^d)$
	 may not be locally bounded when $d \geq 2$.
	 
Finally, let us mention that the ``concentration+rigidity'' roadmap of Kenig and Merle~\cite{kenigmerleradial} was utilised by Koch and Kenig in~\cite{kenigkoch}, and subsequently by Koch, Gallagher and Planchon in \cite{gallagherkochplanchonl3}-\cite{gallagherkochplanchonbesov} to show the following. Namely, if the maximal time of existence $T_{\rm max}(u_0)$ is finite, then
\begin{equation}\label{limsupinfinity}
\limsup_{t\uparrow T_{\rm max}(u_0)} \norm{v(\cdot,t)}_{X}=\infty
\end{equation}
for $X= \dot{H}^{\frac{1}{2}}(\mathbb{R}^3)$ \cite{kenigkoch}, $L^{3}(\mathbb{R}^3)$\cite{gallagherkochplanchonl3}\footnote{The result for $L_{3}(\mathbb{R}^3)$ and $\dot{H}^{\frac{1}{2}}(\mathbb{R}^3)$ was first obtained in  \cite{escauriazasereginsverak}.}, and $\dot{B}^{-1+\frac{3}{p}}_{p,q}(\mathbb{R}^3)$ ($3<p,q<\infty$) \cite{gallagherkochplanchonbesov}. The approach of the aforementioned papers relies on profile decompositions of sequences of elements bounded in the above spaces $X$. In \cite[Remark 3.1]{bahouricohenkoch}, it is conjectured that profile decompositions fail for the continuous embedding $\dot{B}^{-1+\frac{3}{p}}_{p,\infty}(\mathbb{R}^3)\hookrightarrow \dot{B}^{-1+\frac{3}{p'}}_{p',\infty}(\mathbb{R}^3)$ ($3<p\leq p^{'}$). Thus, it seems challenging to use the approach in \cite{kenigkoch} and \cite{gallagherkochplanchonbesov}-\cite{gallagherkochplanchonl3} to obtain Corollary~\ref{blowupBesovintro}.

	 \subsubsection{Minimal blow-up problems}\label{minimalblowupintro}


Our second application is to minimal blow-up questions in the context of global weak Besov solutions. The existence of minimal blow-up initial data was first proven by Rusin and {\v S}ver{\'a}k in~\cite{rusinsverakminimal} in the class of mild solutions with initial data belonging to $\dot{H}^{\frac{1}{2}}$ (provided that such solutions may form singularities in finite time). Analogous results were established for~$L^{3}$ by Jia and {\v S}ver{\'a}k in~\cite{jiasverakminimal} and for~$\dot{B}^{-1+\frac{3}{p}}_{p,q}$, $p, q \in ]3,\infty[$ by Gallagher, G.~Koch, and Planchon in~\cite{gallagherkochplanchonbesov}.

	While local-in-time mild solutions are not known to exist for each solenoidal initial data in $L^{3,\infty}$ or $\dot B^{-1+\frac{3}{p}}_{p,\infty}$, the minimal blow-up data problem in these spaces may be reformulated for certain classes of weak solutions. This was originally observed by the second author, Seregin, and {\v S}ver{\'ak} in~\cite{barkersereginsverakstability} for global weak $L^{3,\infty}$ solutions. We now formulate the problem for global weak Besov solutions:
		\begin{definition}[Critical space]\label{criticalspacedef}
		Let $(\cX, \norm{\cdot}_{\cX})$ be a Banach space consisting of divergence-free distributional vector fields on $\R^3$. We say that $\cX$ is a \emph{critical space} provided that
		\begin{enumerate}[(i)]
			\item $\cX$ is continuously embedded in $\dot B^{-1}_{\infty,\infty}(\R^3)$,
			\item $\cX$ and $\norm{\cdot}_{\cX}$ are invariant under spatial translation and the scaling symmetry of the Navier-Stokes equations, and
			\item $\bar{B}^{\cX} := \{ u_0 \in \cX : \norm{u_0}_{\cX} \leq 1 \}$ is sequentially compact in the topology of distributions.
	\end{enumerate}
\end{definition}

Let $\cX$ be a critical space which is embedded into $\dot B^{-1+\frac{3}{p}}_{p,\infty}(\R^3)$ for some $p \in ]3,\infty[$. By Theorem~\ref{weakstrongintro}, there exists $\rho>0$ satisfying
	 \begin{itemize}
\item[] \textit{(small data implies smooth)} $u_0 \in \cX$ and $\|u_0\|_{\mathcal{X}}<\rho$ implies that any global weak Besov solution with initial data $u_0$ has no singular points
\end{itemize}
Then the following quantity is well defined:
\begin{itemize}
	\item[] $\rho_{\cX} := \sup(\lbrace \rho > 0 : $ for all $u_0 \in \cX$ satisfying $\norm{u_0}_{\cX} \leq \rho$, any global weak Besov solution with initial condition $u_0$ has no singular points$\rbrace )$.
\end{itemize}
Under the assumption that $\rho_{\cX} < \infty$, one may ask whether the above supremum is attained: Does there exist a global weak Besov solution $\tilde{v}$ with initial data $\tilde{u}_0 \in \cX$ such that $\tilde{v}$ has a singular point and $\|\tilde{u}_0\|_{\mathcal{X}}=\rho_{\cX}$? Such $\widetilde{u_0}$ is referred to as \emph{minimal blow-up initial data}. We answer this question in the affirmative below:

\begin{cor}[Minimal blow-up data]\label{cor:minimalblowupdataintro}
Let $\cX$ be a critical space continuously embedded into $\dot B^{-1+\frac{3}{p}}_{p,\infty}(\R^3)$ for some $p \in ]3,\infty[$. Suppose that $\rho_{\cX} < \infty$. Then there exists a solenoidal vector field $\widetilde{u_0} \in \cX$ with $\norm{\widetilde{u_0}}_{\cX} = \rho_{\cX}$ such that $\widetilde{u_0}$ is initial data for a singular global weak Besov solution. The set of such $\widetilde{u_0}$ is sequentially compact (modulo spatial translation and the scaling symmetry of the Navier-Stokes equations) in the topology of distributions.
\end{cor}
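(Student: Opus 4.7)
The plan is to produce minimal blow-up initial data via a compactness argument applied to a minimizing sequence, following the strategy developed by Rusin--{\v S}ver{\'a}k, Jia--{\v S}ver{\'a}k, and Barker, Seregin, and {\v S}ver{\'a}k for related problems. By the definition of $\rho_{\cX}$ as a supremum, for each integer $n \geq 1$ I choose a divergence-free $u_0^{(n)} \in \cX$ with $\|u_0^{(n)}\|_{\cX} \leq \rho_{\cX} + \tfrac{1}{n}$ and an associated global weak Besov solution $v^{(n)}$ that possesses a singular point $(x^{(n)}, T^{(n)}) \in \R^3 \times \,]0,\infty[$. Using the translation and scaling invariance built into Definition~\ref{criticalspacedef}(ii) together with the scaling symmetry of the Navier--Stokes equations, I would rescale and translate to normalize the singular point to $(0,1)$; this preserves $\|u_0^{(n)}\|_{\cX} \leq \rho_{\cX} + \tfrac{1}{n}$.

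Next, I would extract convergent subsequences. Since $\cX$ embeds continuously into $\dot B^{-1+\frac{3}{p}}_{p,\infty}(\R^3)$, the normalized initial data are bounded there, so Banach--Alaoglu yields $u_0^{(n)} \wstar \widetilde{u_0}$ in $\dot B^{-1+\frac{3}{p}}_{p,\infty}(\R^3)$ along a subsequence. Sequential compactness of the closed ball of radius $\rho_{\cX} + \varepsilon$ in $\cX$ under the distribution topology (Definition~\ref{criticalspacedef}(iii)), combined with $\varepsilon \dto 0$, places the limit in $\cX$ with $\|\widetilde{u_0}\|_{\cX} \leq \rho_{\cX}$. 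Applying Theorem~\ref{thm:stabilityintro} (with zero forcing) and passing to a further subsequence yields $v^{(n)} \to \widetilde{v}$ strongly in $L^3_{\loc}(Q_\infty)$, where $\widetilde{v}$ is a global weak Besov solution with initial data $\widetilde{u_0}$.

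The main obstacle is showing that $\widetilde{v}$ retains a singular point at $(0,1)$. I would argue by contradiction: if $\widetilde{v}$ were regular at $(0,1)$, then since global weak Besov solutions satisfy the local energy inequality~\eqref{vlocalenergyineq} and are therefore suitable weak solutions in a neighborhood of $(0,1)$, the Caffarelli--Kohn--Nirenberg scale-invariant quantity
\begin{equation}
	\frac{1}{r^2} \int_{1-r^2}^{1} \int_{B(0,r)} \bigl( |\widetilde{v}|^3 + |\widetilde{q}|^{\frac{3}{2}} \bigr) \, dx \, dt
\end{equation}
would fall below the $\varepsilon$-regularity threshold for $r$ sufficiently small. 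The strong $L^3_{\loc}$ convergence $v^{(n)} \to \widetilde{v}$, together with the corresponding $L^{\frac{3}{2}}_{\loc}$ convergence of the pressures $q^{(n)} \to \widetilde{q}$ recovered from the pressure equation, would then transfer this smallness to $v^{(n)}$ for all sufficiently large $n$, contradicting the prescribed singularity of $v^{(n)}$ at $(0,1)$.

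To upgrade $\|\widetilde{u_0}\|_{\cX} \leq \rho_{\cX}$ to equality, note that strict inequality would, by the very definition of $\rho_{\cX}$ and the small-data remark preceding the corollary, force \emph{every} global weak Besov solution with initial data $\widetilde{u_0}$ to be regular, contradicting the previous paragraph. The sequential compactness (modulo translation and scaling) of the set of minimal blow-up data follows from the same scheme: any sequence of such data supplies a singular global weak Besov solution for each element, and the compactness argument above produces a translated and rescaled subsequence whose limit, by the same chain of implications, is again a minimal blow-up datum.
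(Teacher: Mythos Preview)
Your proposal is correct and follows essentially the same approach as the paper, which derives the corollary from a more general result (Theorem~\ref{thm:minimalblowupperturbations}) by specializing to $u_0 = 0$; the underlying argument---minimizing sequence, normalization of the singular point via translation and scaling, weak-$\ast$ stability (Proposition~\ref{pro:stability}), and persistence of singularity (Lemma~\ref{persistenceofsingularity}, proved as in \cite{rusinsverakminimal})---is the same. Your immediate normalization of the singular point to $(0,1)$ is exactly the simplification available when the reference datum is zero, collapsing the paper's case distinction.
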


Our more general result is Theorem \ref{thm:minimalblowupperturbations}, which treats the problem of minimal blow-up perturbations of global solutions, thus generalizing the work~\cite{rusinminimalperturbations} of Rusin
for $\dot H^{\frac{1}{2}}$ initial data. On the other hand, Corollary~\ref{cor:minimalblowupdataintro} already contains the previously known results for $\cX = \dot H^{\frac{1}{2}}, L^3$, and $\dot B^{-1+\frac{3}{p}}_{p,q}$ with $p, q \in ]3,\infty[$ due to weak-strong uniqueness for global weak Besov solutions. While Corollary~\ref{cor:minimalblowupdataintro} only asserts the sequential compactness in the topology of distributions, we may upgrade to convergence in norm if the critical space is uniformly convex, as in the examples above. For $\cX = \dot B^{-1+\frac{3}{p}}_{p,\infty}$, compactness is in the weak-$\ast$ topology. A minor point is that our approach also accounts for any possible changes in the set of minimal blow-up initial data under renormings of the critical space.

	\subsubsection{Self-similar solutions}\label{selfsimintro}

Our final application concerns forward-in-time self-similar solutions of the Navier-Stokes equations. A locally integrable vector field $v \: \R^3 \times \R_+ \to \R^3$ is \emph{discretely self-similar} with scaling factor $\lambda > 1$ ($\lambda$-DSS) provided that
	\begin{equation}\label{eq:ssintro}
	v(x,t) = \lambda v(\lambda x,\lambda^2 t) \text{ for a.e. } (x,t) \in \R^3 \times \R_+.
	\end{equation}
	The vector field is \emph{self-similar} (scale-invariant) if the relation \eqref{eq:ssintro} is verified for all $\lambda > 0$. We consider also the analogous definition for vector fields on~$\R^3$ and for distributional vector fields.

Although self-similar solutions of the Navier-Stokes equations have a rich history going back to Leray~\cite{leray}\footnote{In~\cite{leray}, Leray posed the question of whether backward self-similar solutions of the Navier-Stokes equations exist. These were subsequently ruled out in~\cite{necas} and~\cite{tsai}.}, the existence of large-data forward self-similar solutions was settled only recently by Jia and {\v S}ver{\'a}k in~\cite{jiasverakselfsim}. These solutions have important consequences for the potential non-uniqueness of weak Leray-Hopf solutions, as investigated in~\cite{jiasverakillposed,guillodsverak}. While the solutions in~\cite{jiasverakselfsim} correspond to scale-invariant data in $C^\alpha_\loc(\R^3 \setminus \{0\})$, there is also an abundant literature on the existence of (discretely) self-similar solutions evolving from rough initial data~\cite{tsaidiscretely,korobkovtsai,bradshawtsaiII,lemarie2016,bradshawtsairot,bradshawtsaibesov,wolfchael2loc}. In particular, we are interested in the paper \cite{bradshawtsaibesov} of Bradshaw and Tsai, which established the existence of discretely self-similar solutions associated to initial data $u_0 \in \dot B^{-1+\frac{3}{p}}_{p,\infty}(\R^3)$, $p \in ]3,6[$. Our final application is the following extension of their work:

	\begin{theorem}[Existence of (discretely) self-similar solutions]\label{ssexistintro}
	Suppose $u_0 \in \dot B^{-1+\frac{3}{p}}_{p,\infty}(\R^3)$ is a divergence-free vector field for some $p \in ]3,\infty[$. If $u_0$ is $\lambda$--DSS for some scaling factor $\lambda > 1$, then there exists a $\lambda$--DSS global weak Besov solution with initial data $u_0$. If $u_0$ is scale-invariant, then there exists a scale-invariant global weak Besov solution with initial data $u_0$.
\end{theorem}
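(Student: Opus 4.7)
My plan is to build symmetry-preserving approximations and pass to the limit via the weak-$\ast$ stability result, Theorem~\ref{thm:stabilityintro}. The key structural observation is that both the heat semigroup $S(t)$ and the bilinear operator $B$ defined in \eqref{Bdefintro} are equivariant under the Navier--Stokes rescaling $(x,t) \mapsto (\lambda x, \lambda^2 t)$ with weight $\lambda$. Consequently, if $u_0$ is $\lambda$-DSS, then each Picard iterate $P_k(u_0)$ satisfies $P_k(u_0)(x,t) = \lambda P_k(u_0)(\lambda x, \lambda^2 t)$ and each forcing term $F_k(u_0)$ from \eqref{Fkdefinition} is $\lambda$-DSS with the natural scaling $F_k(x,t) = \lambda^2 F_k(\lambda x, \lambda^2 t)$. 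Therefore the perturbed system~\eqref{vminuspicard1} for the correction $u = v - P_k(u_0)$ has $\lambda$-DSS coefficients and forcing, and Theorem~\ref{ssexistintro} reduces to producing, within the energy class of Definition~\ref{weaksol}, a $\lambda$-DSS weak solution $u$ with zero initial data.

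To construct such $u$, I would first approximate $u_0$ by a sequence $u_0^{(n)}$ of divergence-free $\lambda$-DSS vector fields with improved decay: restrict $u_0$ to a fundamental annulus $\{1 \leq |x| < \lambda\}$, mollify, reapply the Leray projector (which preserves the DSS structure since it is a zero-order Fourier multiplier), and extend $\lambda$-DSS-equivariantly to $\mathbb{R}^3 \setminus \{0\}$. This yields $u_0^{(n)} \in \dot B^{-1+\frac{3}{p}}_{p,\infty}(\R^3) \cap L^{3,\infty}(\R^3)$ with $\|u_0^{(n)}\|_{\dot B^{-1+\frac{3}{p}}_{p,\infty}}$ uniformly bounded and $u_0^{(n)} \wstar u_0$. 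For each $n$, I would then build a $\lambda$-DSS global weak Besov solution $v^{(n)}$ by working in the similarity variables $y = x/\sqrt{t}$, $\tau = \log t$, $u(x,t) = t^{-1/2} w(y,\tau)$, where the $\lambda$-DSS condition becomes $2\log\lambda$-periodicity of $w$ in $\tau$; the perturbed system transforms into a $\tau$-periodic problem for $w$ that I would solve by a Galerkin scheme combined with the Schauder fixed-point theorem applied to the Poincar\'e map on a bounded invariant ball in $L^2$, with the decay estimate of Proposition~\ref{improveddecaypropintro} supplying the uniform energy bound. Applying Theorem~\ref{thm:stabilityintro} to $(v^{(n)})$ produces a subsequence converging strongly in $L^3_{\loc}(Q_\infty)$ to a global weak Besov solution $v$ with initial data $u_0$, and strong $L^3_{\loc}$ convergence allows the pointwise identity $v^{(n)}(x,t) = \lambda v^{(n)}(\lambda x, \lambda^2 t)$ to pass to the limit, so that $v$ is itself $\lambda$-DSS.

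For the scale-invariant case, $u_0$ is $\lambda$-DSS for every $\lambda > 1$, and I would either specialize the similarity-variable construction to $\tau$-independent $w$---reducing the perturbed system to an elliptic problem on $\mathbb{R}^3$ solvable by Galerkin plus Brouwer---or run the DSS construction along a sequence $\lambda_m \downarrow 1$ with a countable dense subset of $]1,\infty[$, then apply Theorem~\ref{thm:stabilityintro} with fixed initial data $u_0$ and a diagonal extraction so that the limit is $\lambda_m$-DSS for every $m$, hence scale-invariant. The main obstacle is verifying the uniform-in-$n$ hypotheses needed to invoke Theorem~\ref{thm:stabilityintro}, in particular controlling the $L^\infty_t L^2_x \cap L^2_t \dot H^1_x$ norm of the correction $u^{(n)}$ on $Q_T$ up to $t = 0$; this relies on the decay property of Proposition~\ref{improveddecaypropintro} together with the splitting Lemma~\ref{splitlem} applied to $u_0^{(n)}$ in annuli adapted to the DSS structure, which together ensure that the energy of $u^{(n)}$ vanishes at a fractional rate as $t \to 0^+$ uniformly in $n$, so that the local energy inequality \eqref{vlocalenergyineq} passes to the limit and the limit remains a weak Besov solution based on the $k$th Picard iterate.
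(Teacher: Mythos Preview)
Your proposal is correct and follows the same architecture as the paper: approximate $u_0$ by $\lambda$-DSS (resp.\ scale-invariant) divergence-free data $u_0^{(n)} \in L^{3,\infty}$, obtain $\lambda$-DSS (resp.\ scale-invariant) global weak Besov solutions $v^{(n)}$ for each, and pass to the limit via weak-$\ast$ stability, the DSS relation surviving $L^3_{\loc}$ convergence. The only difference is that the paper imports both the approximation and the $L^{3,\infty}$-existence steps as black boxes from Bradshaw--Tsai (stated here as Lemma~\ref{approxlem} and Proposition~\ref{bradshawthm}) rather than re-deriving them via the similarity-variable Galerkin/Schauder scheme you sketch; your final paragraph also overworks the stability step, since the only hypothesis of Theorem~\ref{thm:stabilityintro} is $u_0^{(n)} \wstar u_0$ in $\dot B^{s_p}_{p,\infty}$ (zero forcing here)---Proposition~\ref{improveddecaypropintro} and Lemma~\ref{splitlem} are ingredients in the \emph{proof} of that theorem, not hypotheses you must re-verify.
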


To prove Theorem~\ref{ssexistintro}, we approximate $u_0 \in \dot B^{-1+\frac{3}{p}}_{p,\infty}(\R^3)$ by a sequence of (discretely) self-similar initial data belonging to the Lorentz space~$L^{3,\infty}(\R^3)$. The proof is completed by applying the weak-$\ast$ stability property to an associated sequence of (discretely) self-similar solutions whose existence was established in~\cite{bradshawtsaiII}.


\section{Preliminaries}\label{sec:preliminaries}

\subsection{Function spaces}\label{sec:functionspaces}

Let $d, m \in \N$. We begin by recalling the definition of the \emph{homogeneous Besov spaces} $\dot B^s_{p,q}(\R^d;\R^m)$. Our treatment follows \cite[Chapter 2]{bahourichemindanchin}. There exists a non-negative radial function $\varphi \in C^\infty(\R^d)$ supported on the annulus $\{ \xi \in \R^d : 3/4 \leq |\xi| \leq 8/3 \}$ such that
\begin{equation}
	\sum_{j \in \Z} \varphi(2^{-j} \xi) = 1, \quad \xi \in \R^3 \setminus \{0\}.
	\label{}
\end{equation}
The homogeneous Littlewood-Paley projectors $\dot \Delta_j$ are defined by
\begin{equation}
	\dot \Delta_j f = \varphi(2^{-j} D) f, \quad j \in \Z,
	\label{}
\end{equation}
for all tempered distributions $f$ on $\R^d$ with values in $\R^m$. The notation $\varphi(2^{-j}D) f$ denotes convolution with the inverse Fourier transform of $\varphi(2^{-j}\cdot)$ with $f$.

 Let $p,q \in [1,\infty]$ and $s \in ]-\infty,d/p[$.\footnote{The choice $s=d/p$, $q=1$ is also valid.} The homogeneous Besov space $\dot B^s_{p,q}(\R^d;\R^m)$ consists of all tempered distributions $f$ on $\R^d$ with values in $\R^m$ satisfying
	\begin{equation}
		\norm{f}_{\dot B^s_{p,q}(\R^d;\R^m)} := \bnorm{ \big( 2^{js} \norm{ \dot \Delta_j f}_{L^p} \big)_{j \in \Z} }_{\ell^q} < \infty
		\label{}
	\end{equation}
	and such that $\sum_{j \in \Z} \dot \Delta_j f$ converges to $f$ in the sense of tempered distributions on $\R^d$ with values in $\R^m$. In this range of indices, $\dot B^{s}_{p,q}(\R^d;\R^m)$ is a Banach space. When $s \geq 3/p$ and $q > 1$, the spaces must be considered \emph{modulo polynomials}, see Section~\ref{sec:appendixsplitting}. Note that other reasonable choices of the function $\varphi$ defining $\dot \Delta_j$ lead to equivalent norms. In general, Besov spaces may also be characterized as real interpolation spaces of Bessel potential spaces, see~\cite{berghlofstrom,lemarie2002}. For now, we only consider $d=m=3$.


We now recall a particularly useful property of Besov spaces, i.e., their characterization in terms of the heat kernel. For all $s \in ]-\infty,0[$, there exists a constant $c := c(s) > 0$ such that for all tempered distributions $f$ on $\R^3$,
		\begin{equation}
			c^{-1} \sup_{t > 0} t^{-\frac{s}{2}} \norm{S(t) f}_{L^p(\R^3)} \leq \norm{f}_{\dot B^s_{p,\infty}(\R^3)} \leq c \sup_{t > 0} t^{-\frac{s}{2}} \norm{S(t) f}_{L^p(\R^3)},
			\label{besovequivalentnorm}
		\end{equation}
		where we use the notation 
		\begin{equation}
			(Sf)(\cdot,t) = S(t) f = \Gamma(\cdot,t) \ast f, \quad t > 0,
			\label{}
		\end{equation}
		and $\Gamma \: \R^3 \times \R_+ \to \R$ is the heat kernel in three dimensions.
		This motivates the definition of the \emph{Kato spaces} $\cK^s_p(Q_T)$ with parameters $s \in \R$, $p \in [1,\infty]$, and $0 < T \leq \infty$. The Kato spaces consist of all locally integrable vector fields $u \: Q_T \to \R^3$ satisfying
			\begin{equation}
			\norm{u}_{\cK^s_p(Q_T)} := \esssup_{t \in ]0,T[} t^{-\frac{s}{2}} \norm{u(\cdot,t)}_{L^p(\R^3)} < \infty.
				\label{}
			\end{equation}
We abbreviate 
\begin{equation}
	\cK_p(Q_T) := \cK^{s_p}_p(Q_T), \quad s_p := -1+3/p.
\label{}
\end{equation} Therefore, for all $p \in ]3,\infty]$, there exists a constant $c := c(p) > 0$ such that
			\begin{equation}
				c^{-1} \norm{Su_0}_{\cK_p(Q_\infty)} \leq \norm{u_0}_{\dot B^{s_p}_{p,\infty}(\R^3)} \leq c \norm{Su_0}_{\cK_p(Q_\infty)},
				\label{Bpequivalentnorm}
			\end{equation}
			for all vector fields $u_0 \in \dot B^{s_p}_{p,\infty}(\R^3)$.  As demonstrated in~\cite{cannone} and~\cite{planchon}, the characterization~\eqref{Bpequivalentnorm} is particularly well suited for constructing mild solutions of the Navier-Stokes equations.

Next, for all $0 < T \leq \infty$, consider the space $\cX_T$ consisting of all locally integrable functions $u$ on $Q_T$ such that
\begin{equation}
\norm{u}_{\cX_T} := \esssup_{t \in ]0,T[} t^{\frac{1}{2}} \norm{u(\cdot,t)}_{L^\infty(\R^3)} + \sup_{x \in \R^3} \sup_{R \in ]0,\sqrt{T}[} R^{-\frac{3}{2}} \norm{u}_{L^2(B(x,R) \times ]0,R^2[)} < \infty.
	\label{}
\end{equation}
This is the largest space on which the bilinear operator $B$ is known to be bounded, see the paper~\cite{kochtataru} of H. Koch and D. Tataru. We use the following Carleson measure 
characterization of $\norm{\cdot}_{\BMO^{-1}(\R^3)}$. Namely, for all tempered distributions $f$ on $\R^3$, we define
\begin{equation}
	\norm{f}_{\BMO^{-1}(\R^3)} := \norm{Sf}_{\cX_\infty}.
	\label{BMO-1def}
\end{equation}
The space $\BMO^{-1}(\R^3)$ consists of all tempered distributions on $\R^3$ with finite $\BMO^{-1}$ norm.

		Let us clarify the relationships between various function spaces of initial data. The Lorentz space $L^{3,\infty}(\R^3)$ is continuously embedded into $\dot B^{s_p}_{p,\infty}(\R^3)$ for all $p \in ]3,\infty]$. This may be proven using \eqref{besovequivalentnorm} and Young's convolution inequality for Lorentz spaces. Next, the Bernstein inequalities for frequency-localized functions imply an analogue of the Sobolev embedding theorem for homogeneous Besov spaces. Finally, regarding $\BMO^{-1}(\R^3)$, H{\"o}lder's inequality gives
		\begin{equation}
		\sup_{x \in \R^3} \sup_{R > 0} R^{-\frac{3}{2}} \norm{S(t) f}_{L^2(B(x,R) \times ]0,R^2[)} \leq c_p \sup_{t > 0} t^{-\frac{s_p}{2}} \norm{S(t) f}_{L^p(\R^3)},
			\label{}
		\end{equation}
	when $p \in ]2,\infty[$ and $f$ is tempered distribution on $\R^3$.
	These relationships are summarized below:
			\begin{equation}
			L^{3,\infty}(\R^3) \into \dot B^{s_{p_1}}_{p_1,\infty}(\R^3) \into \dot B^{s_{p_2}}_{p_2,\infty}(\R^3) \into \BMO^{-1}(\R^3) \into \dot B^{-1}_{\infty,\infty}(\R^3), $$ $$ 3 < p_1 < p_2 < \infty.
			\end{equation}
The above continuous embeddings are strict.			
	
	We now present a useful interpolation inequality for Kato spaces. Let $0 < T \leq \infty$ and $u \: Q_T \to \R^3$ be a locally integrable vector field. The interpolation inequality is
\begin{equation}
	\norm{u}_{\cK^s_p(Q_T)} \leq \norm{u}^\theta_{\cK^{s_1}_{p_1}(Q_T)} \norm{u}^{1-\theta}_{\cK^{s_2}_{p_2}(Q_T)},
	\label{Kinterp}
\end{equation}
provided that $s_1, s_2 \in \R$, $p_1,p_2 \in [1,\infty]$, $\theta \in ]0,1[$, and
\begin{equation}
	s := \theta s_1 + (1-\theta) s_2, \quad \frac{1}{p} := \frac{\theta}{p_1} + \frac{1-\theta}{p_2}.
	\label{}
\end{equation}
A common scenario is $u \in L^\infty_t L^2_x(Q_T) \cap \cK_{p_2}(Q_T)$ with $p_2 \in ]4,\infty[$. Observe that $L^\infty_t L^2_x(Q_T) = \cK^0_2(Q_T)$, so \eqref{Kinterp} implies $u \in \cK^s_4(Q_T)$ with $s = -1/2 + 1/(2(p_2-2))$. Hence, we obtain
	\begin{equation}
		\norm{u}_{L^4(Q_T)} \leq c(p_2) T^{\frac{1}{4(p_2-2)}} \norm{u}_{L^\infty_t L^2_x(Q_T)}^{\theta}\norm{u}_{\cK_{p_2}(Q_T)}^{1-\theta}.
		\label{}
	\end{equation}
This embedding fails when $p_2 = \infty$.
			
\subsection{Linear estimates}\label{linearestimatessec}
Our next goal is to present certain estimates for the time-dependent Stokes system in Kato spaces. The Leray projector $\bP$ onto divergence-free vector fields is the Fourier multiplier with matrix-valued symbol $I - \xi \otimes \xi/|\xi|^2$. The operators $\{ S(t) \bP \div \}_{t \geq 0}$ are convolution operators with the gradient of the Oseen kernel, see \cite[Chapter 11]{lemarie2002}. Specifically, there exists a smooth function $G \: \R^3 \to \R^{3\times 3}$ satisfying 
			\begin{equation}
				|\nabla^\ell G(x)| \leq \frac{c(\ell)}{(1+|x|^2)^{\frac{3+\ell}{2}}}, \quad x \in \R^3, \, 0 \leq \ell \in \Z,
				\label{}
			\end{equation}
			\begin{equation}
				(S(t) \bP \div F)_i := \sum_{j,k=1}^3 \frac{1}{t^2} \frac{\p G_{ij}}{\p x_k}\big( \frac{\cdot}{\sqrt{t}} \big) \ast F_{jk}, \quad t > 0, \, 1 \leq i \leq 3,
				\label{StPdivdef}
			\end{equation}
			for matrix-valued functions $F \: \R^3 \to \R^{3 \times 3}$.
			Let us define a space of forcing terms in analogy with the Kato spaces. For all $s \in \R$, $p \in [1,\infty]$, and $0 < T \leq \infty$, the space $\cF^s_p(Q_T)$ consists of all locally integrable matrix-valued functions $F \: Q_T \to \R^{3\times3}$ such that
			\begin{equation}
				\norm{F}_{\cF^s_p(Q_T)} := \esssup_{t \in ]0,T[} t^{-\frac{s}{2}}\norm{F(\cdot,t)}_{L^p(\R^3)} < \infty.
				\label{katoforcing1}
			\end{equation}
	We often abbreviate
			\begin{equation}
				\cF_p(Q_T) := \cF^{s_p'}_p(Q_T), \quad s_p' := -2+3/p.
				\label{katoforcing2}
			\end{equation}
In analogy with $\cX_T$, we also define the space $\cY_T$ consisting of all locally integrable $F \: Q_T \to \R^{3\times3}$ such that
\begin{equation}
\norm{F}_{\cY_T} := \esssup_{t\in]0,T[} t \norm{F(\cdot,t)}_{L^\infty(\R^3)} + \sup_{x \in \R^3} \sup_{R \in ]0,\sqrt{T}[} R^{-3} \norm{F}_{L^1(B(x,R) \times ]0,R^2[)} < \infty.
	\label{}
\end{equation}
Finally, our admissible class of forcing terms in Definition~\ref{weaksol} is
\begin{equation}
	\cF(Q_T) := \cY_T \cup \left[\cup_{q \in ]3,\infty[} \cF_q(Q_T)\right].
	\label{cFQTdef}
\end{equation}

The following estimates were proven by the first author in~\cite[Lemma 4.1]{albrittonblowupcriteria}:
\begin{lemma}[Estimates in Kato spaces]\label{lem:katoest}
	Let $0 < T \leq \infty$, $1 \leq p_1 \leq p_2 \leq \infty$, such that
	\begin{equation}\label{eq:Oseenscaling}
		s_2 - \frac{3}{p_2} = 1 + s_1 - \frac{3}{p_1}.
	\end{equation}
	In addition, assume the conditions
	\begin{equation}\label{eq:Oseenconds}
		s_1 > -2, \qquad \frac{3}{p_1} - \frac{3}{p_2} < 1.
	\end{equation}
	(For instance, if $p_2 = \infty$, then the latter condition is satisfied when $p_1 > 3$. If $p_1 = 2$, then the latter condition is satisfied when $p_2 < 6$.) Then
	\begin{equation}\label{eq:Oseenestimate}
	\bnorm{\int_0^t  S(t-\tau) \bP \div F(\cdot,\tau) \, d\tau}_{\cK^{s_2}_{p_2}(Q_T)} \leq C(s_1,p_1,p_2) \norm{F}_{\cF^{s_1}_{p_1}(Q_T)},
\end{equation}
for all distributions $F \in \cF^{s_1}_{p_1}(Q_T)$, and the solution $u$ to the corresponding heat equation belongs to $C(]0,T];L^{p_2}(\R^3))$. Let $k, l \geq 0$ be integers. If we further require that
\begin{equation}
	t^{\alpha+\frac{|\beta|}{2}} \p_t^\alpha \nabla^\beta  F \in \cF^{s_1}_{p_1}(Q_T),
	\label{}
\end{equation}
for all integers $0 \leq \alpha \leq k$ and multi-indices $\beta \in (\N_0)^3$ with $|\beta| \leq l$,
then we have
\begin{equation}
	\bnorm{t^{k+\frac{l}{2}} \p_t^k \nabla^l \int_0^t  S(t-\tau) \bP \div F(\cdot,\tau) \, d\tau}_{\cK^{s_2}_{p_2}(Q_T)} \leq $$ $$
	\leq C(k,l,s_1,p_1,p_2) \Big( \sum_{\alpha=0}^k \sum_{\beta=0}^l \norm{t^{\alpha+\frac{|\beta|}{2}} F}_{\cF^{s_1}_{p_1}(Q_T)} \Big),
\end{equation}
and the spacetime derivatives $\p_t^k \nabla^l u$ of the solution $u$ belong to $C(]0,T];L^{p_2}(\R^3))$.
\end{lemma}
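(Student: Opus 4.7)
The plan is to exploit the explicit kernel representation \eqref{StPdivdef}. Setting
\[ K(x,t) := \frac{1}{t^2} \nabla G(x/\sqrt{t}), \]
the pointwise decay bound on $\nabla G$ combined with parabolic scaling gives $\norm{K(\cdot,t)}_{L^r(\R^3)} \leq C t^{-\frac{1}{2} - \frac{3}{2}(1 - \frac{1}{r})}$ for every $r \in [1,\infty]$. Once this kernel estimate is in hand, the core estimate \eqref{eq:Oseenestimate} reduces to a Young-plus-Beta-function computation.

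First I would fix $t \in ]0,T[$ and apply Young's convolution inequality $L^r \ast L^{p_1} \into L^{p_2}$ with $1/r = 1 + 1/p_2 - 1/p_1$ to obtain
\[ \norm{S(t-\tau)\bP\div F(\cdot,\tau)}_{L^{p_2}} \leq C (t-\tau)^{-\frac{1}{2} - \frac{3}{2}\left(\frac{1}{p_1} - \frac{1}{p_2}\right)} \norm{F(\cdot,\tau)}_{L^{p_1}}. \]
Substituting $\norm{F(\cdot,\tau)}_{L^{p_1}} \leq \tau^{s_1/2} \norm{F}_{\cF^{s_1}_{p_1}(Q_T)}$, integrating in $\tau$, and rescaling $\tau = t\sigma$, the scaling identity \eqref{eq:Oseenscaling} collects the resulting powers of $t$ into precisely $t^{s_2/2}$, while the remaining $\sigma$-integral is a Beta integral whose finiteness is equivalent to \eqref{eq:Oseenconds}: $s_1 > -2$ handles integrability near $\sigma = 0$ and $3/p_1 - 3/p_2 < 1$ handles integrability near $\sigma = 1$. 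Taking the essential supremum in $t$ then yields \eqref{eq:Oseenestimate}. Time continuity in $C(]0,T]; L^{p_2}(\R^3))$ follows by density: one checks it for $F$ smooth and compactly supported (where the solution is classical) and extends using the quantitative estimate just proved on each subinterval $[\delta, T]$, $\delta > 0$.

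For the weighted derivative estimate I would split the Duhamel integral at $\tau = t/2$. On $[0,t/2]$, I place all derivatives on the kernel, which costs a factor $(t-\tau)^{-\alpha - |\beta|/2} \simeq t^{-\alpha - |\beta|/2}$, absorbed by the prefactor $t^{\alpha + |\beta|/2}$ and reducing the piece to the base estimate applied to $F$. On $[t/2, t]$, I would instead transfer derivatives onto $F$: spatial derivatives by integration by parts, and time derivatives via the identity
\[ \p_t \int_0^t S(t-\tau) g(\cdot,\tau) \, d\tau = g(\cdot,t) + \int_0^t S(t-\tau) \p_\tau g(\cdot,\tau) \, d\tau \]
(together with $\p_t S = \Delta S$), iterated to produce terms involving $\p_\tau^\alpha \nabla^\beta F$ directly; the weights $\tau^{\alpha + |\beta|/2}$ are then exactly those controlled by hypothesis, and the base estimate closes the bound.

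The main obstacle will be the bookkeeping in the derivative statement: each application of $\p_t$ must be expanded via the Duhamel identity and reorganized so that either the kernel absorbs the weight on $[0,t/2]$ or all derivatives land cleanly on $F$ on $[t/2,t]$, with the scaling exponents still matching after the weight redistribution. The $\cF^{s_1}_{p_1} \to \cK^{s_2}_{p_2}$ base estimate itself is a classical Oseen-kernel computation and should present no difficulty once the pointwise bound on $K$ is in place.
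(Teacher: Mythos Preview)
The paper does not prove this lemma at all; it cites \cite[Lemma 4.1]{albrittonblowupcriteria} for the proof. Your sketch is the standard argument and is correct: the Young-plus-Beta-function computation using the pointwise decay of the Oseen kernel is exactly how \eqref{eq:Oseenestimate} is obtained, and the conditions \eqref{eq:Oseenconds} arise precisely as the two integrability constraints you identify. The split-at-$t/2$ scheme for the weighted derivative estimate is likewise the expected route; just be careful that when you differentiate the Duhamel integral in time the boundary term is $S(0)\bP\div F(\cdot,t)$, which is not directly in $L^{p_2}$, so in practice one first handles one spatial derivative via the kernel (or works with $\p_t S = \Delta S$ and keeps the $\div$ inside), and the bookkeeping you flag as the main obstacle is exactly where the care is needed.
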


Let us define operators $L$, $B$ by
\begin{equation}
L(F)(\cdot,t) := \int_0^t S(t-s) \bP \div F(\cdot,s) \, ds, \quad B(U,V) := L(U \otimes V),
\label{}
\end{equation}
for certain matrix-valued functions $F$ and vector fields $U,V$ on $Q_T$. Lemma~\ref{lem:katoest} (see also p. 526 of \cite{cannone}, for example) asserts that for all $p \in ]3,\infty[$,
	\begin{equation}
		L \: \cF_p(Q_T) \to \cK_p(Q_T), \quad B \: \cK_p(Q_T) \times \cK_p(Q_T) \to \cK_p(Q_T),
		\label{}
	\end{equation}
boundedly and with operator norms independent of $0 < T \leq \infty$. As was demonstrated in~\cite{kochtataru}, $L$ and $B$ are also bounded as operators $L \: \cY_T \to \cX_T$ and $B \: \cX_T \times \cX_T \to \cX_T$ with norms independent of $T \in ]0,\infty]$. This leads to the following important consequence. If $u_0 \in \dot B^{s_p}_{p,\infty}(\R^3)$ is a divergence-free vector field and $F \in \cK_q(Q_T)$ with $p \in ]3,\infty[$ and $q \in ]3,p]$, then the Picard iterates $P_k(u_0,F)$ are well defined for all $k \geq 0$:
\begin{equation}
	P_0(u_0,F)(\cdot,t) := S(t) u_0 + L(F)(\cdot,t),
	\label{Picarditeratedef1}
\end{equation}
\begin{equation}
	P_{k+1}(u_0,F) := P_0(u_0,F) - B(P_{k},P_k), \quad k \geq 0,
	\label{Picarditeratedef2}
\end{equation}
and $P_{-1}(u_0,F) := 0$. For simplicity, we often omit the dependence on $u_0$ and $F$.
Here, $P_0$ belongs to $\cK_p(Q_T) \cap \cX_T$, and
	\begin{equation}
		\norm{P_0}_{\cK_p(Q_T)} + \norm{P_0}_{\cX_T} \leq C(p,q) (\norm{u_0}_{\dot B^{s_p}_{p,\infty}(\R^3)} + \norm{F}_{\cF_q(Q_T)}).
		\label{}
	\end{equation}
	Supposing now that $\norm{P_0(u_0,F)}_{\cK_p(Q_T)} + \norm{P_0(u_0,F)}_{\cX_T} \leq M$, we obtain
	\begin{equation}
		\norm{P_k}_{\cK_p(Q_T)} + \norm{P_k}_{\cX_T} \leq C(k,M,p,q),
		\label{Picardestimatefrequent}
	\end{equation}
where the constant is a polynomial in $M$ with no constant term and degree depending only on $k$. Lemma~\ref{lem:katoest} also has the following consequence regarding vector fields with finite kinetic energy. Namely, for all $p \in ]3,\infty]$,
\begin{equation}
	\label{bilinearenergyestimate}
	\norm{B(U,V)}_{L^\infty_t L^2_x(Q_T)} \leq c(p) \min( \norm{U}_{L^\infty_t L^2_x(Q_T)} \norm{V}_{\cK_p(Q_T)}, \norm{V}_{L^\infty_t L^2_x(Q_T)} \norm{U}_{\cK_p(Q_T)}).
\end{equation}
This is useful in our construction of strong solutions in Section~\ref{sec:strongsols}.

We now exploit the self-improvement property of the bilinear term described in the introduction to prove a version of Lemma~\ref{finiteenergyforcingintro}. Throughout the paper, we define for $k\geq 0$:
\begin{equation}
F_{k}(u_0,F):= P_{k} \otimes P_{k} -P_{k-1} \otimes P_{k-1}.
\end{equation}
		
	\begin{lemma}[Finite energy forcing]\label{forcinglem}
	Let $T>0$, $u_0 \in \BMO^{-1}(\R^3)$ be a divergence-free vector field, and $F \in \cF(Q_T)$. Suppose that
\begin{equation}
\norm{P_0(u_0,F)}_{\cK_{p}(Q_T)} + \norm{P_0(u_0,F)}_{\cX_T} \leq M
\label{}
\end{equation}
for some $p \in ]3,\infty[$. Then for all $k \geq k(p) := \lceil \frac{p}{2} \rceil - 2$, we have
	\begin{equation}
		F_k(u_0,F) \in L^2(Q_T).
	\end{equation}
	\begin{equation}
	P_{k+1}(u_0,F) - P_k(u_0,F) \in C([0,T];L^2(\R^3)) \cap L^2_t \dot H^1_x(Q_T),
		\label{Pkassertion}
	\end{equation}
	In addition, the following bounds are satisfied:
	\begin{equation}\label{Fkbound}
		 \norm{F_k}_{\cF_2(Q_T)} + \norm{P_{k+1} - P_k}_{\cK_2(Q_T)} \leq C(k,M,p),
	\end{equation}
\begin{equation}\label{Fkboundconsequence}
	\norm{F_k}_{L^2(Q_T)} + \norm{P_{k+1} - P_k}_{L^\infty_t L^2_x(Q_T)}  \leq T^{\frac{1}{4}} C(k,M,p).
\end{equation}
\end{lemma}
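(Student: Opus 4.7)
The strategy is to exploit iteratively the self-improvement property of the bilinear operator $B$: each application of $B$ gains a factor of integrability relative to its tensor-product argument. The organizing identities are the telescoping decomposition
\begin{equation*}
F_k = W_k \otimes P_k + P_{k-1} \otimes W_k
\end{equation*}
together with the recursion
\begin{equation*}
W_{j+1} = -B(W_j, P_j) - B(P_{j-1}, W_j) \quad \text{for } j \geq 1,
\end{equation*}
where $W_j := P_j - P_{j-1}$, $W_0 = P_0$, and $W_1 = -B(P_0, P_0)$. From the Koch--Tataru estimate $B \colon \cX_T \times \cX_T \to \cX_T$ and the standard bound $B \colon \cK_p(Q_T) \times \cK_p(Q_T) \to \cK_p(Q_T)$, an easy induction on $j$ yields $P_j \in \cX_T \cap \cK_p(Q_T)$ with norms depending only on $j, M, p$; the interpolation inequality~(\ref{Kinterp}) then upgrades this to $P_j \in \cK_r(Q_T)$ for every $r \in [p, \infty]$, since $\cX_T \hookrightarrow \cK^{-1}_\infty(Q_T)$.

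Next, by induction on $j$, I would show $W_j \in \cK_{p/(j+1)}(Q_T)$ for $1 \leq j \leq k(p) + 1$. The base case $W_1 = -B(P_0, P_0)$ uses $P_0 \otimes P_0 \in \cF_{p/2}(Q_T)$ (by H\"older from $P_0 \in \cK_p$, with the matching identity $s_{p/2}' = 2 s_p$) and Lemma~\ref{lem:katoest} applied with $p_1 = p_2 = p/2$. For the inductive step, H\"older with $W_j \in \cK_{p/(j+1)}$ and $P_j, P_{j-1} \in \cK_p$ places each summand of the recursion in $\cF_{p/(j+2)}(Q_T)$, after which Lemma~\ref{lem:katoest} with $p_1 = p_2 = p/(j+2)$ gives $W_{j+1} \in \cK_{p/(j+2)}(Q_T)$.

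For the base case $k = k(p)$ of the lemma, I would apply H\"older to $F_k = W_k \otimes P_k + P_{k-1} \otimes W_k$ with the exponent $s \in [p, \infty]$ determined by $1/s = 1/2 - (k+1)/p$. The admissibility $s \in [p, \infty]$ is precisely the range $2(k+1) \leq p \leq 2(k+2)$, which is picked out by $k = \lceil p/2 \rceil - 2$. The resulting scaling gives $\|F_k(\cdot, t)\|_{L^2(\R^3)} \lesssim t^{-1/4}$, hence $F_k \in \cF_2(Q_T)$ with a norm depending only on $k, M, p$; integrating in time yields $\|F_k\|_{L^2(Q_T)} \lesssim T^{1/4}$. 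A further application of Lemma~\ref{lem:katoest} to $W_{k+1} = -L(F_k)$ with $p_1 = p_2 = 2$, $s_1 = -1/2$, provides $W_{k+1} \in \cK_2(Q_T)$.

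For $k > k(p)$ the argument closes by upward induction: once $W_{k+1} \in \cK_2(Q_T)$ and $P_k, P_{k+1} \in \cK_\infty^{-1}(Q_T)$ (from $\cX_T$) are in hand, H\"older with exponent pair $(2, \infty)$ applied to $F_{k+1} = W_{k+1} \otimes P_{k+1} + P_k \otimes W_{k+1}$ produces $F_{k+1} \in \cF_2(Q_T)$, and Lemma~\ref{lem:katoest} then delivers $W_{k+2} \in \cK_2(Q_T)$. The assertion $W_{k+1} \in C([0,T]; L^2(\R^3)) \cap L^2_t \dot H^1_x(Q_T)$ together with the bound $\|W_{k+1}\|_{L^\infty_t L^2_x}^2 + 2\|\nabla W_{k+1}\|_{L^2(Q_T)}^2 \leq \|F_k\|_{L^2(Q_T)}^2$ is the standard parabolic energy estimate applied to $(\p_t - \Delta)W_{k+1} = -\bP \div F_k$ with $W_{k+1}(0) = 0$ and $F_k \in L^2(Q_T)$. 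The main obstacle is the combinatorial bookkeeping in the base case: verifying that the iterative integrability gain $p/(j+1)$ for $W_j$ meets the $L^2$ threshold in the pairing with $P_k \in \cK_s$ exactly at the step $k = \lceil p/2 \rceil - 2$, neither sooner nor later.
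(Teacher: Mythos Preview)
Your proof is correct and follows essentially the same route as the paper: both arguments use the telescoping identity $F_\ell = W_\ell \otimes P_\ell + P_{\ell-1} \otimes W_\ell$ together with the recursion $W_{\ell+1} = -L(F_\ell)$ and the self-improvement of $B$ in Kato spaces via Lemma~\ref{lem:katoest}. The only organizational difference is the placement of the interpolation with $\cX_T$: the paper first reduces to the endpoint $p = p(k) := 2k+4$ (so that the exponent $q(k,\ell) = p(k)/(\ell+2)$ lands exactly at $2$ when $\ell = k$), whereas you keep $p$ general throughout and instead interpolate $P_k \in \cK_p \cap \cX_T$ into $\cK_s$ with $1/s = 1/2 - (k+1)/p$ at the final H\"older pairing; both achieve the same effect.
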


\begin{proof}[Proof of Lemma \ref{forcinglem}]
	
Define $p(k) := 2k+4$. By interpolation, we may assume without loss of generality that $p = p(k)$ in the statement. That is, $\norm{P_0(u_0,F)}_{\cK_{p(k)}(Q_T)} \leq M$. For all integers $0 \leq \ell \leq k$, we define $q(k,\ell) := \frac{p(k)}{\ell+2}$. We will prove inductively that
	\begin{equation}\label{Fkest}
		\norm{F_\ell}_{\cF_{q(k,\ell)}(Q_T)} + \norm{P_{\ell+1} - P_\ell}_{\cK_{q(k,\ell)}} \leq C(k,M).
	\end{equation}
	H{\"o}lder's inequality immediately implies that $\norm{F_0}_{\cF_{\frac{p(k)}{2}}(Q_T)} \leq M^2$. Since $P_1 - P_0 = -L(F_0)$, the estimate $\norm{P_1 - P_0}_{\cK_{\frac{p(k)}{2}}(Q_T)} \leq C(k) M^2$ follows from Lemma \ref{lem:katoest}. Let us now assume that we have the estimate \eqref{Fkest} for some $0 \leq \ell < k$. We observe the identity
	\begin{equation}
		F_{\ell+1} = P_{\ell+1} \otimes (P_{\ell+1} - P_{\ell}) + (P_{\ell+1} - P_{\ell}) \otimes P_\ell,
		\label{}
	\end{equation}
	so that $\norm{F_{\ell+1}}_{\cF_{q(k,\ell+1)}(Q_T)} \leq C(k,M)$ due to H{\"o}lder's inequality and \eqref{Picardestimatefrequent}. Now recall that $P_{\ell+2} - P_{\ell+1} = -L(F_{\ell+1})$. Lemma \ref{lem:katoest} implies
	\begin{equation}
		\norm{P_{\ell+2} - P_{\ell+1}}_{\cK_{q(k,\ell+1)}(Q_T)} \leq C(k,M).
		\label{}
	\end{equation}
	This completes the induction. It is clear that \eqref{Fkbound} follows from \eqref{Fkest} with $\ell=k$, and \eqref{Fkboundconsequence} is obtained from \eqref{Fkbound} by H{\"o}lder's inequality. Lastly, \eqref{Pkassertion} concerning $P_{k+1} - P_k$ follows from the classical energy estimate for the Stokes equations and that $F_k \in L^2(Q_T)$.
\end{proof}

Finally, we prove a linear estimate concerning solutions of the time-dependent Stokes equations in the space $L^\infty_t (\dot B^{s_p}_{p,\infty})_x(Q_T)$. By Young's convolution inequality, all tempered distributions $f$ on $\R^3$ satisfy
\begin{equation}
\norm{Sf}_{L^\infty_t (\dot B^{s_p}_{p,\infty})_x(Q_\infty)} \leq c \norm{f}_{\dot B^{s_p}_{p,\infty}(\R^3)}
	\label{}
\end{equation}
for a constant $c > 0$ independent of $p \in [1,\infty]$.
	Let us show that for all $q \in [1,\infty[$ and $p \in [q,\infty]$, there exists a constant $c := c(p,q) > 0$ such that for all $0 < T \leq \infty$ and $F \in \cF_q(Q_T)$,
\begin{equation}
\sup_{t \in ]0,T[} \bnorm{ L(F)(\cdot,t) }_{\dot B^{s_p}_{p,\infty}(\R^3)} \leq c(p,q) \norm{F}_{\cF_q(Q_T)}.
	\label{timespacebesovest}
\end{equation}
After extending $F$ by zero, it suffices to consider $T=\infty$. By Sobolev embedding for Besov spaces, we need only consider the case $p=q$. For $t, \tau > 0$,
\begin{equation}
\label{katoestlinearzerodata}
\tau^{-\frac{s_p}{2}}\norm{S(\tau)L(F)(\cdot,t)}_{L^{p}(\mathbb{R}^3)}\leq \tau^{-\frac{s_p}{2}}\int\limits_{0}^{t+\tau}\norm{S(t+\tau-s)\mathbb{P}\textrm{div}\,F(\cdot,s)}_{L^{p}(\mathbb{R}^3)} ds$$$$
\leq \frac{c(p)\tau^{\frac{-s_p}{2}}}{(t+\tau)^{\frac{-s_p}{2}}}\norm{F}_{\cF_{p}(Q_{\infty})}\leq c(p)\norm{F}_{\cF_{p}(Q_{\infty})}.
\end{equation}
By the heat flow characterisation of Besov norms with negative upper index, we see that~\eqref{katoestlinearzerodata} implies~\eqref{timespacebesovest}.
We refer the reader to \cite[Lemma 8]{auscher} for the analogous estimate on $L \: \cY_T \to L^\infty_t \BMO^{-1}_x(Q_T)$.


\section{Weak Besov solutions}\label{sec:weakbesovsols}
This section contains the general theory of the weak Besov solutions introduced in Definition~\ref{weaksol}.

\subsection{Basic properties}\label{sec:basicproperties}

First, let us describe the singular integral representation of the pressure used throughout the paper.
\begin{remark}[Associated pressure]\label{pressurermk}
Let $v$ be as in Definition~\ref{weaksol} with pressure $q \in L^{\frac{3}{2}}_\loc(Q_T)$, $T > 0$. There exists a constant function of time $c \in L^1_\loc(]0,T[)$ such that for a.e. $(x,t) \in Q_T$,
	\begin{equation}
		q(x,t) = (-\Delta)^{-1} \div \div v \otimes v - F + c(t).
		\label{pressureequalityc}
	\end{equation}
	Since the Navier-Stokes equations and local energy inequality \eqref{vlocalenergyineq} do not depend on the choice of constant, we will assume in the sequel that $c \equiv 0$. The resulting pressure is known as the \emph{associated pressure}.
\end{remark}

\begin{proof}
For now, assume that $u_0 \in \dot B^{s_p}_{p,\infty}(\R^3)$ is divergence-free and $F \in \cF_q(Q_T)$ for some $p \in ]3,\infty[$ and $q \in ]3,p]$. By the Calder{\'o}n-Zygmund estimates, certainly $\tilde{q} \in L^1_\loc(Q_T)$. If $(v,\tilde{q})$ solves the Navier-Stokes equations with forcing term $\div F$ in the sense of distributions, then $\nabla q = \nabla \tilde{q}$, which implies~\eqref{pressureequalityc}. Therefore, to complete the proof, we need only verify that $(v,\tilde{q})$ is a solution.

\emph{1. Picard iterate}.
Let $\pi_k$ denote the pressure associated to the $k$th Picard iterate,
	\begin{equation}
		\pi_k(u_0,F) := (-\Delta)^{-1} \div \div [P_{k-1} \otimes P_{k-1} - F].
		\label{}
	\end{equation}
	By the Calder{\'o}n-Zygmund estimates, $\pi_k \in L^\infty_{t,\loc} L^p_x(Q_T) + L^\infty_{t,\loc} L^q_x(Q_T)$. Recall that the $k$th Picard iterate is constructed as a solution of the heat equation
	\begin{equation}
		\p_t P_k - \Delta P_k = \bP \div [F-P_{k-1} \otimes P_{k-1}] \text{ in } Q_T.
		\label{Pkheateqnpressure}
	\end{equation}
	Since $-\nabla \pi_k = (I-\bP) \div [F-P_{k-1} \otimes P_{k-1}]$, we may add this term back into~\eqref{Pkheateqnpressure} to obtain the time-dependent Stokes equations with RHS $\div [F - P_{k-1} \otimes P_{k-1}]$. Hence, $\pi_k$ is a valid pressure for the $k$th Picard iterate.
	
	\textit{2. Correction term}. Next, let $p$ denote the pressure associated to the correction term,
	\begin{equation}
		p := (-\Delta)^{-1} \div \div [u \otimes u + P_k \otimes u + u \otimes P_k + F_k].
		\label{}
	\end{equation}
	The Calder{\'o}n-Zygmund estimates imply that $p \in L^{\frac{3}{2}}(Q_T) + L^2_{t,\loc} L^2_x(Q_T)$. Recall that $u \in L^\infty_t L^2_x \cap L^2_t \dot H^1_x(Q_T)$ is a distributional solution of
	\begin{equation}
		\left.
		\begin{aligned}
		\p_t u - \Delta u &= - \nabla \tilde{p} - \div \tilde{F} \\
		\div u &= 0
		\end{aligned}
		\right\rbrace \text{ in } Q_T,
		\label{equationsolvedbytildep}
	\end{equation}
	where $\tilde{F} = u \otimes u + P_k \otimes u + u \otimes P_k + F_k$ for some pressure $\tilde{p}$ in the class of tempered distributions. As in Step~1, if $u$ solves the heat equation with RHS $- \div \bP \tilde{F}$, then $p$ is a valid pressure for $u$, i.e., $(u,p)$ solves \eqref{equationsolvedbytildep}.
	
	The multiplier associated to the Leray projector $\bP$ is not smooth at the origin, so we truncate it. Let $\varphi \in C^\infty_0(\R^3)$ such that $\varphi \equiv 1$ outside $B(2)$ and $\varphi \equiv 0$ inside $B(1)$. Consider the operators $T_\varepsilon := \varphi(D/\varepsilon)$ and $\bP_\varepsilon := \bP T_\varepsilon$ for all $\varepsilon > 0$. Applying $\bP_\varepsilon$ to \eqref{equationsolvedbytildep}, we obtain
	\begin{equation}
		(\p_t - \Delta) (T_\varepsilon u) = - \bP_\varepsilon  \div \tilde{F} \text{ in } Q_T.
		\label{}
	\end{equation}
	In the limit $\varepsilon \dto 0$, we have $T_\varepsilon u \to u$ and $\bP_\varepsilon \div \tilde{F} \to \bP \div \tilde{F}$ in the sense of tempered distributions. Hence, the desired heat equation is satisfied.\footnote{This method is valid under quite mild assumptions on $u$ and $F$. Certainly some assumptions are necessary in order to exclude certain ``parasitic'' solutions $u = c(t)$, $p = -c'(t) \cdot x$, $F = 0$ of the time-dependent Stokes equations. For such solutions, $\widehat{u}$ is supported at the origin in frequency space, so $T_\varepsilon u \to 0$ as $\varepsilon \dto 0$.

A different method is to take the curl of the time-dependent Stokes equations with RHS $\div F$ and initial data $u_0$ and compare it to the curl of the solution of the heat equation with RHS $\bP \div F$ and initial data $u_0$. By well-posedness for the heat equation, the two vorticities are equal, and hence their velocities are equal according to the Biot-Savart law (that is, under mild assumptions).}

	
	\emph{3. Conclusion}. Let $\tilde{q} = \pi_k + p$. Combining Steps~1 and~2 for $\pi_k$ and $p$ gives that $(v,\tilde{q})$ is a distributional solution of the Navier-Stokes equations on $Q_T$ with forcing term $\div F$.

In the general case $u_0 \in \BMO^{-1}(\R^3)$ divergence-free and $F \in \cF(Q_T)$, the above analysis remains valid with the following caveat. Since the $k$th Picard iterate only belongs to the $L^\infty$-based space $\cX_T$, $(-\Delta)^{-1} \div \div P_k \otimes P_k$ may only belong to $L^\infty_{t,\loc} \BMO_x(Q_T)$. Hence, $\pi_k(u_0,F)$ and $q$ may only be well defined up to the addition of a constant function of time $c \in L^1_\loc(]0,T[)$. Even so, we still refer to ``the associated pressure'' with the understanding that our computations will not rely on the particular choice of representative.
\end{proof}

Our next order of business is the following proposition:
\begin{pro}[Energy inequalities for $u$]\label{pro:vlocalglobal}
	Let $T>0$ and $v$ be a weak Besov solution on $Q_T$ as in Definition \ref{weaksol}. Let $p := q - \pi_k$. Then $u$ obeys the \emph{local energy inequality} for
every $t \in ]0,T]$ and all non-negative test functions $0 \leq \varphi \in C^\infty_0(Q_\infty)$:
\begin{equation}\label{ulocalenergyineq}
\int_{\R^3} \varphi(x,t) |u(x,t)|^2 \, dx + 2 \int_0^t \int_{\R^3} \varphi |\nabla u|^2 \, dx \, dt' \leq $$ $$ \leq \int_0^t \int_{\R^3} |u|^2 (\p_t \varphi + \Delta \varphi) + [|u|^2(u+ P_k) + 2pu] \cdot \nabla \varphi + 2 [P_k \otimes u + F_k]: \nabla (\varphi u) \, dx \, dt'.
\end{equation}
Hence, $u$ satisfies the \emph{global energy inequality}
	\begin{equation}\label{uglobalenergyineq}
		\norm{u(\cdot,t_2)}_{L^2(\R^3)}^2 + 2 \int_{t_1}^{t_2} \int_{\R^3} |\nabla u(x,t')|^2 \, dx \, dt' \leq $$ $$ \leq \norm{u(\cdot,t_1)}_{L^2(\R^3)}^2 + 2 \int_{t_1}^{t_2} \int_{\R^3} [(P_k \otimes u) + F_k] : \nabla u \, dx \, dt'
	\end{equation}
for almost every $t_1 \in ]0,T[$ and all $t_2 \in ]t_1,T]$.
\end{pro}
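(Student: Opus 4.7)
The plan is to derive the local energy inequality for $u$ by combining the hypothesized local energy inequality for $v$ with an energy identity for the Picard iterate $P_k$ and a cross-energy identity coupling $u$ and $P_k$. Writing $v = u + P_k$ and expanding $|v|^2 = |u|^2 + 2u\cdot P_k + |P_k|^2$, $v\otimes v = u\otimes u + P_k \otimes u + u\otimes P_k + P_k \otimes P_k$, and $q = p + \pi_k$, the three contributions combine so that the surviving terms on the right-hand side are exactly those of \eqref{ulocalenergyineq}. In particular, $F_k = P_k\otimes P_k - P_{k-1}\otimes P_{k-1}$ emerges as the algebraic difference between the $-F + P_k\otimes P_k$ leftover from the $v$-LEI and the $-F + P_{k-1}\otimes P_{k-1}$ appearing in the $P_k$ identity.

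For the $P_k$ energy identity, recall that $P_k$ solves a linear Stokes-type equation with pressure $\pi_k$ and forcing $-\div[P_{k-1}\otimes P_{k-1} - F]$, and by Lemma~\ref{lem:katoest} is smooth on $Q_T\cap\{t>0\}$. Testing by $P_k\varphi$ and integrating by parts yields
\[
\int_{\R^3}\varphi(t)|P_k|^2\,dx + 2\int_0^t\!\!\int_{\R^3}\varphi|\nabla P_k|^2 = \int_0^t\!\!\int_{\R^3}|P_k|^2(\p_t\varphi+\Delta\varphi) + 2\pi_k P_k\cdot\nabla\varphi + 2[P_{k-1}\otimes P_{k-1} - F]:\nabla(\varphi P_k).
\]
For the cross-identity, test \eqref{vminuspicard1} with $P_k\varphi$. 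At the regularity $u \in L^\infty_t L^2_x \cap L^2_t \dot H^1_x$, first mollify $u$ in time, derive the identity for the mollification (using smoothness of $P_k$), and pass to the limit via strong $L^2(Q_T)$ convergence together with the $\cK_p\cap\cX_T$ bounds on $P_k$ and the bound $F_k\in L^2(Q_T)$ from Lemma~\ref{forcinglem}.

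Adding twice the cross-identity to the $P_k$ identity and subtracting the result from the local energy inequality for $v$, the left-hand side collapses exactly to $\int\varphi|u|^2 + 2\iint\varphi|\nabla u|^2$, while on the right-hand side the decompositions of $2qv\cdot\nabla\varphi$ and $|v|^2 v\cdot\nabla\varphi$ produce cross-pieces that cancel against the two identities, leaving precisely the right-hand side of \eqref{ulocalenergyineq}. For the global inequality, insert $\varphi(x,t) = \chi_R(x)\eta_\varepsilon(t)$ into \eqref{ulocalenergyineq}, where $\chi_R$ is a spatial cutoff and $\eta_\varepsilon$ is a piecewise-linear approximation of $\chi_{]t_1,t_2]}$. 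Sending $R\to\infty$, the terms carrying $\nabla\chi_R$ vanish thanks to $u \in L^\infty_t L^2_x \cap L^4_{t,x}$ (by interpolation), $P_k \in \cK_p(Q_T)\cap\cX_T$, $F_k \in L^2(Q_T)$, and the Calder\'{o}n--Zygmund bound on $p$. Then letting $\varepsilon\downarrow 0$, the term $\iint |u|^2\p_t\eta_\varepsilon$ converges to $\norm{u(\cdot,t_2)}_{L^2}^2 - \norm{u(\cdot,t_1)}_{L^2}^2$ at Lebesgue points $t_1$, using weak $L^2$-continuity at $t_2$.

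The main obstacle is the rigorous justification of the cross-identity. Although linear in $u$, it involves the transport and stretching contributions $P_k \cdot \nabla u \cdot P_k$ and $u\cdot\nabla P_k\cdot P_k$ together with the pressure cross-term $\pi_k u$, which require a time-mollification of $u$ (in the spirit of Caffarelli--Kohn--Nirenberg) to manipulate legally at the given regularity. A secondary technicality is ensuring the $t=0$ boundary contributions vanish, which uses $\lim_{t\downarrow 0}\norm{u(\cdot,t)}_{L^2}=0$ from Definition~\ref{weaksol} together with the weak $L^2$ behavior of $P_k(\cdot,t)$ against $u$ as $t\downarrow 0$. Once these identities are secured, the algebraic reduction described in the first paragraph is routine.
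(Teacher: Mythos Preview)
Your approach is essentially the same as the paper's. Both combine the local energy inequality for $v$, an energy equality for $P_k$, and a cross-identity; you organize the cross-term as a $u$–$P_k$ coupling (testing \eqref{vminuspicard1} against $P_k\varphi$), while the paper uses the $v$–$P_k$ weak-strong identity \eqref{eq:weakstronglocalid}. These are algebraically equivalent via $v\cdot P_k = u\cdot P_k + |P_k|^2$, and the subsequent reduction to the right-hand side of \eqref{ulocalenergyineq} goes through in either bookkeeping. The paper's choice is marginally cleaner because the $v$-equation is the plain Navier--Stokes system rather than the perturbed system \eqref{vminuspicard1}, so fewer lower-order terms have to be tracked in the cross-identity. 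For the global inequality, the paper evaluates the local inequality at $t=t_2$ and cuts off only near $t_1$, whereas you cut off at both endpoints; both versions work, yours relying on weak $L^2$-continuity and lower semicontinuity of the norm at $t_2$.

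One correction: the claim $u\in L^4_{t,x}(Q_T)$ is not a consequence of the energy class $L^\infty_t L^2_x \cap L^2_t \dot H^1_x$; the correct interpolation gives $u\in L^{10/3}(Q_T)$ (and $u\in L^3(Q_T)$ since $T<\infty$). This is harmless, as $L^3$ integrability suffices for the cubic term $|u|^2 u\cdot\nabla\chi_R$, and the remaining $\nabla\chi_R$ terms are controlled by $u\in L^\infty_t L^2_x$, $P_k\in L^\infty(\R^3\times]t_1-\varepsilon,T[)$, $p\in L^{3/2}(Q_T)+L^2_{t,\loc}L^2_x(Q_T)$, and $F_k\in L^2(Q_T)$. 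Also, since $\varphi\in C^\infty_0(Q_\infty)$ vanishes near $t=0$, there is no $t=0$ boundary contribution to worry about in the local inequality; the hypothesis $\lim_{t\downarrow 0}\norm{u(\cdot,t)}_{L^2}=0$ is not needed at this stage.
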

\begin{remark}\label{otherdirectionlocalenergy}
	By adapting the proof of Proposition~\ref{pro:vlocalglobal}, it is also possible to show the following. Let $v$ be as in Definition~\ref{weaksol} \emph{except} that $v$ is not assumed to satisfy the local energy inequality \eqref{vlocalenergyineq}. Instead, assume that $u$ satisfies its corresponding local energy inequality \eqref{ulocalenergyineq}. Then $v$ satisfies \eqref{vlocalenergyineq}. In particular, $v$ is a weak Besov solution on $Q_T$. This fact will be useful in Proposition~\ref{pro:stability}.
\end{remark}

The proof is based on an identity that appears in the classical proof of weak-strong uniqueness and is useful in obtaining an energy inequality for the difference of two solutions of the Navier-Stokes equations. Let $f,g \in C^\infty_0(\overline{Q_\infty})$ and $0 \leq t_1 < t_2 < \infty$. Then
\begin{equation}\label{eq:weakstrongid}
	\int_{\R^3} f(x,t_2)\cdot g(x,t_2) \, dx + 2 \int_{t_1}^{t_2} \int_{\R^3} \nabla f : \nabla g \, dx \, dt - \int_{\R^3} f(x,t_1)\cdot g(x,t_1) \, dx = $$ $$ = \int_{t_1}^{t_2} \int_{\R^3} (\p_t f - \Delta f)\cdot g + f\cdot (\p_t g - \Delta g) \, dx \, dt.
\end{equation}
We also need an analogous identity for the local energy inequality:
\begin{equation}\label{eq:weakstronglocalid}
	\int_{\R^3} \varphi(x,t) f \cdot g(x,t) \, dx + 2 \int_0^t \int_{\R^3} \varphi \nabla f : \nabla g \, dx \, dt' = $$ $$ = \int_0^t \int_{\R^3}  (\p_t \varphi + \Delta \varphi) f\cdot g + \varphi (\p_t f - \Delta f )\cdot g + \varphi f\cdot (\p_tg - \Delta g) \, dx \,dt',
\end{equation}
for all $\varphi \in C^\infty_0(Q_\infty)$ and $t > 0$.
These identities may be obtained for a larger class of functions by approximation.
\begin{proof}
	\textit{1. Local energy inequality for $u$}.
Recall from Definition \ref{weaksol} that $v$ is assumed to satisfy the local energy inequality \eqref{vlocalenergyineq} for every $t \in ]0,T]$ and all $0 \leq \varphi \in C^\infty_0(Q_\infty)$.
Using aforementioned properties of $P_{k}$ and $F$, together with the fact that Riesz transforms are Calder\'{o}n-Zygmund singular integral operators, we see that $P_{k}$, $P_{k-1}\otimes P_{k-1}$, $F$ and $\pi_{k}$ all belong to $L^{2}_{\loc}(Q_{T})$. Using a mollification argument in the same spirit as in \cite[p. 160-161]{sereginnotes}, one can show that $P_k$ satisfies the local energy equality
\begin{equation}\label{Pklocalenergyineq}
	\int_{\R^3} \varphi(x,t) |P_k(x,t)|^2 \, dx + 2 \int_0^t \int_{\R^3} \varphi |\nabla P_k|^2 \, dx \,dt' = $$ $$
	= \int_0^t \int_{\R^3} |P_k|^2 (\p_t \varphi + \Delta \varphi) + 2 \pi_k P_k \cdot \nabla \varphi + 2 [P_{k-1} \otimes P_{k-1} - F] : \nabla (P_k \varphi) \, dx \, dt'.
\end{equation}
Next, one combines the identities $|u|^2 = |v|^2 - 2 v \cdot P_k + |P_k|^2$ and $|\nabla u|^2 = |\nabla v|^2 - 2 \nabla v : \nabla P_k + |\nabla P_k|^2$ with the local energy estimates \eqref{vlocalenergyineq} and \eqref{Pklocalenergyineq} to obtain
\begin{equation}\label{uintermediatelocalenergy}
	\int_{\R^3} \varphi(x,t) |u(x,t)|^2 \, dx + 2 \int_{0}^t \int_{\R^3} \varphi |\nabla u|^2 \, dx \, dt' \leq $$ $$ \leq \int_0^t \int_{\R^3} [|v|^2 + |P_k|^2] (\p_t \varphi + \Delta \varphi) + 2 [vq + P_k \pi_k] \cdot \nabla \varphi \, dx \,dt' + $$ $$ + \int_0^t \int_{\R^3} |v|^2 v \cdot \nabla \varphi + 2 P_{k-1} \otimes P_{k-1} : \nabla (P_k \varphi) - 2 F : \nabla ((v+P_{k})\varphi) \, dx \,dt' + $$ $$
	- 2 \int_{\R^3} \varphi(x,t) v \cdot P_k(x,t) \, dx - 4 \int_{0}^t \int_{\R^3} \varphi \nabla v : \nabla P_k \, dx \, dt'.
\end{equation}
According to the weak-strong identity \eqref{eq:weakstronglocalid}, we may write
\begin{equation}\label{weakstrongidconsequence}
	\int_{\R^3} \varphi(x,t) v \cdot P_k(x,t) \, dx + 2 \int_{0}^t \int_{\R^3} \varphi \nabla v : \nabla P_k \, dx \, dt' = $$ $$ = \int_0^t \int_{\R^3} (\p_t \varphi + \Delta \varphi) v \cdot P_k - \nabla q \cdot P_k \varphi - \nabla \pi_k \cdot \varphi v \, dx \,dt' + $$ $$ + \int_0^t \int_{\R^3} v \otimes v : \nabla (P_k \varphi) + P_{k-1} \otimes P_{k-1} : \nabla (\varphi v) - F : \nabla ((v+P_{k})\varphi ) \, dx \, dt'.
\end{equation}
Substitute \eqref{weakstrongidconsequence} into the final line of \eqref{uintermediatelocalenergy} and collect various terms:
\begin{equation}
	\int_{\R^3} \varphi(x,t) |u(x,t)|^2 \, dx + 2 \int_{0}^t \int_{\R^3} \varphi |\nabla u|^2 \, dx \, dt' \leq $$ $$ \leq \int_0^t \int_{\R^3} |u|^2 (\p_t \varphi+ \Delta \varphi) + 2 pu \cdot \nabla \varphi  + \I \,dx \,dt',
\end{equation}
\begin{equation}
	\I :=  |v|^2 v \cdot \nabla \varphi - 2 v \otimes v : \nabla (P_k \varphi) - 2 P_{k-1} \otimes P_{k-1} : \nabla (\varphi u).
\end{equation}
We now add and subtract $2 P_k \otimes P_k : \nabla (\varphi u) = 2 P_k \otimes (v-u) : \nabla (\varphi u)$ in the expression for $\I$ in order to introduce the forcing term $F_k$:
\begin{equation}
	\I = 2 [ P_k \otimes u + F_k] : \nabla (\varphi u) + \II,
	\label{}
\end{equation}
\begin{equation}\label{IIearlydef}
	\II := |v|^2 v \cdot \nabla \varphi - 2 v \otimes v : \nabla (P_k \varphi) - 2 P_k \otimes v : \nabla (\varphi u).
\end{equation}
Expanding $|v|^2 = |u|^2 + 2 u \cdot P_k + |P_k|^2$ in \eqref{IIearlydef} gives
\begin{equation}
	\II = |u|^2 (u + P_k) \cdot \nabla \varphi + \III,
	\label{}
\end{equation}
\begin{equation}
	\III := [2u \cdot P_k + |P_k|^2] v \cdot \nabla \varphi - 2 v \otimes v : \nabla (P_k \varphi) - 2 P_k \otimes v : \nabla (\varphi u).
	\label{IIIearlydef}
\end{equation}
Clearly,  the first and third terms of $\III$ are integrable. Let us now demonstrate that  $\III$ is integrable by showing that the second term $v \otimes v : \nabla (P_k \varphi)$ is integrable. Recall that\begin{equation}\label{Fspacerecall}
F\in\cF_{q}(Q_{T})\subset L^{\infty}_{t,\textrm{loc}}L^{q}_{x}(Q_{T}).
\end{equation}
Second, from (\ref{Picardestimatefrequent}) we infer that
\begin{equation}\label{Pkspace}
P_{k}, \, P_{k}\otimes P_{k}\in L^{\infty}_{t,\textrm{loc}}L^{p}_{x}(Q_{T}).
\end{equation}
Using (\ref{Picarditeratedef1})-(\ref{Picarditeratedef2}) and (\ref{Fspacerecall})-(\ref{Pkspace}), together with maximal regularity results for the heat equation, we infer that \begin{equation}\label{gradPkspace}
\nabla P_{k}\in \bigcap_{1<s<\infty}(L^{s}_{t,\textrm{loc}}L^{q}_{x}(Q_{T})+L^{s}_{t,\textrm{loc}}L^{p}_{x}(Q_{T})).
\end{equation}
Using that $v\in (L^\infty_t L^2_x)_\loc(Q_{T}),$ $\nabla v\in L^{2}_{\textrm{loc}}(Q_{T})$, and multiplicative inequalities, we see that $v\in L^{\frac{10}{3}}_{\textrm{loc}}(Q_{T})$. From this fact and (\ref{gradPkspace}), we infer that $v \otimes v : \nabla (P_k \varphi)$ is integrable.

It remains to prove that $\III$ integrates to zero. Expanding $v \otimes v = P_k \otimes v + u \otimes v$ in \eqref{IIIearlydef} and rearranging, we obtain
\begin{equation}
	\int_0^t \int_{\R^3} \III \, dx \,dt'  = \int_0^t \int_{\R^3}  |P_k|^2 v \cdot \nabla \varphi - 2 P_k \otimes v : \nabla (P_k \varphi) \, dx \,dt ' + $$ $$ \int_0^t \int_{\R^3} [2u \cdot P_k] v \cdot \nabla \varphi - 2 u \otimes v : \nabla (P_k \varphi) - 2 P_k \otimes v : \nabla (\varphi u) \, dx \,dt'.
\end{equation}
This last expression vanishes, so we have verified that $u$ satisfies the local energy inequality \eqref{ulocalenergyineq}.

\textit{2. Global energy inequality for $u$}. The global energy inequality \eqref{uglobalenergyineq} will follow from the local energy inequality \eqref{ulocalenergyineq} with a special choice of test function. Let $0 \leq \psi \in C^\infty_0(\R^3)$ such that $\psi \equiv 1$ in $B(1)$ and ${\rm supp}(\psi) \subset B(2)$. Fix $0 < t_1 < t_2 \leq T$. For each $\varepsilon, R > 0$, we define Lipschitz functions
\begin{equation}
\eta_{\varepsilon}(t) := \frac{1}{2\varepsilon} \int_{-\infty}^t \chi_{]t_1-\varepsilon,t_1+\varepsilon[}(t') \, dt', \quad t \in \R,
	\label{}
\end{equation}
\begin{equation}
	\Phi_{\varepsilon,R}(x,t) := \eta_\varepsilon(t) \psi(x/R), \quad (x,t) \in \R^{3+1}.
\end{equation}
Technically, $\Phi_{\varepsilon,R}$ is neither smooth nor compactly supported in $Q_T$, but by approximation we may use it as a test function in the local energy inequality \eqref{ulocalenergyineq} when $0 < \varepsilon < \min(t_1,T-t_1)/2$. For $0 < \varepsilon < \min(t_1,T-t_1, t_2-t_1)/2$, this gives
\begin{equation}\label{globalenergywroteout}
	\int_{\R^3} \psi(x/R) |u(x,t_2)|^2 \, dx + 2 \int_0^{t_2} \int_{\R^3} \Phi_{\varepsilon,R} |\nabla u|^2 \, dx \,dt' \leq $$ $$
	\leq \frac{1}{2\varepsilon} \int_{t_1-\varepsilon}^{t_1+\varepsilon} \int_{\R^3}  \psi(x/R) |u|^2 \,dx \,dt' + 2 \int_0^{t_2} \int_{\R^3} \Phi_{\varepsilon,R} [P_k \otimes u + F_k] : \nabla u  \, dx \,dt' + $$ $$
	+ \frac{1}{R^2}\int_0^{t_2} \eta_\varepsilon\int_{\R^3} |u|^2 \Delta \psi(x/R) \, dx \,dt' + $$ $$ + \frac{1}{R} \int_0^{t_2} \eta_\varepsilon \int_{\R^3}  [|u|^2 (u + P_k) + 2pu] \cdot (\nabla \psi)(x/R) \, dx \,dt' + $$ $$
	+ \frac{2}{R} \int_0^{t_2} \eta_\varepsilon \int_{\R^3} [P_k \otimes u + F_k] : u \otimes (\nabla \psi)(x/R) \, dx \,dt'.
\end{equation}
Since $u \in L^\infty_t L^2_x \cap L^2_t \dot H^1_x(Q_T)$, $p \in L^{\frac{3}{2}}(Q_T)+L_{t,\loc}^2 L^{2}_x(Q_{T})$ (see Remark~\ref{pressurermk}), $P_k \in L^\infty(\R^3 \times ]t_1-\varepsilon,T[)$, and $F_k \in L^2(Q_T)$, the last three lines of \eqref{globalenergywroteout} vanish as $R \upto \infty$. Hence, we obtain
	\begin{equation}
		\label{globalenergyrlim}
		\norm{u(\cdot,t_2)}_{L^2(\R^3)}^2 + 2 \int_{0}^{t_2} \eta_\varepsilon \int_{\R^3} |\nabla u|^2 \, dx \, dt' \leq $$ $$ \leq \frac{1}{2\varepsilon} \int_{t_1-\varepsilon}^{t_1+\varepsilon} \norm{u(\cdot,t')}_{L^2(\R^3)}^2 \, dt' + 2 \int_{0}^{t_2} \eta_\varepsilon \int_{\R^3} [P_k \otimes u + F_k] : \nabla u \, dx \,dt'.
	\end{equation}
	Using that $u(\cdot,t)$ is weakly $L^{2}$-continuous on $[0,T]$, we see that (\ref{globalenergyrlim}) in fact holds for all $0<t_{1}<t_{2}\leq T$ and  $0 < \varepsilon < \min(t_1,T-t_1, t_2-t_1)/2$. Recall from the Lebesgue differentiation theorem that for a.e. $t_1 \in ]0,T[$,
	\begin{equation}
		\frac{1}{2\varepsilon} \int_{t_1-\varepsilon}^{t_1+\varepsilon} \norm{u(\cdot,t')}_{L^2(\R^3)}^2 \, dt' \to \norm{u(\cdot,t_1)}_{L^2(\R^3)}^2 \text{ as } \varepsilon \dto 0.
		\label{}
	\end{equation}
Finally, the global energy inequality \eqref{uglobalenergyineq} follows from taking $\varepsilon \dto 0$ in \eqref{globalenergyrlim}. This completes the proof.
\end{proof}

The next proposition asserts that under mild hypotheses, weak Besov solutions are not highly sensitive to the order of the Picard iterate used in the splitting.

\begin{pro}[Raising and lowering]\label{pro:orderofpicarditerate}
Let $0 < T \leq \infty$, $u_0 \in \BMO^{-1}(\R^3)$ be a divergence-free vector field, $F \in \cF(Q_T)$, and $0 \leq k \in \Z$. Suppose that $v$ is a weak Besov solution on $Q_T$ based on the $k$th Picard iterate with initial data $u_0$ and forcing term $\div F$.
\begin{enumerate}[(i)]
	\item Then $v$ is a weak Besov solution based on the $(k+1)$th Picard iterate.
	\item If $k \geq 1$ and $F_{k-1}(u_0,F) \in L^2(Q_S)$ for all finite $0 < S \leq T$, then $v$ is a weak Besov solution based on the $(k-1)$th Picard iterate.
	\item If $u_0 \in \dot B^{s_p}_{p,\infty}(\R^3)$ and $F \in \cF_q(Q_T)$ for some $3 < q \leq p < \infty$, then $v$ is a weak Besov solution based on the $k(p)$th Picard iterate, where $k(p) := \lceil \frac{p}{2} \rceil - 2$.
\end{enumerate}
\end{pro}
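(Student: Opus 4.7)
My plan is to rewrite the splitting $v = u + P_k$ as $v = u^\sharp + P_{k \pm 1}$ with new correction term
\[
u^\sharp := u + (P_k - P_{k \pm 1}),
\]
and verify that $u^\sharp$ satisfies the requirements of Definition \ref{weaksol} at the new level. Since the pressure $q$ (Remark \ref{pressurermk}) and the local energy inequality \eqref{vlocalenergyineq} for $v$ are intrinsic to $v$ and do not reference which Picard iterate is used, they are automatically preserved. It therefore suffices to check the energy class, weak $L^2$-continuity, vanishing initial trace, and the $L^2$ forcing condition at the new level. The key algebraic identity driving everything is $P_{k+1} - P_k = -L(F_k)$, which follows immediately from $P_{k+1} - P_k = B(P_{k-1},P_{k-1}) - B(P_k,P_k) = -L(P_k \otimes P_k - P_{k-1} \otimes P_{k-1})$.

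\textbf{Part (i).} By the hypothesis that $v$ is a weak Besov solution based on the $k$th Picard iterate, $F_k \in L^2(Q_T)$. I would then invoke the standard maximal regularity for the Stokes system with $L^2$ data and zero initial condition (the same fact used in \eqref{Pkassertion} of Lemma \ref{forcinglem}) to deduce
\[
P_k - P_{k+1} = L(F_k) \in C([0,T]; L^2(\R^3)) \cap L^2_t \dot H^1_x(Q_T)
\]
with vanishing trace at $t = 0$. Consequently $u' := u + (P_k - P_{k+1})$ lies in the required energy class, is weakly $L^2$-continuous on $[0,T]$, and satisfies $\norm{u'(\cdot,t)}_{L^2} \to 0$ as $t \dto 0$. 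The forcing requirement at the new level, $F_\ell \in L^2(Q_T)$ for $\ell \geq k + 1$, is a sub-case of the hypothesis $\ell \geq k$, so (i) follows.

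\textbf{Part (ii).} The argument mirrors (i). The extra hypothesis supplies $F_{k-1} \in L^2(Q_S)$ for every finite $0 < S \leq T$, and the same Stokes regularity yields $P_k - P_{k-1} = -L(F_{k-1}) \in C([0,S]; L^2) \cap L^2_t \dot H^1_x(Q_S)$ with zero initial trace. Hence $u'' := u + (P_k - P_{k-1})$ is admissible on every finite $Q_S$. The new forcing condition $F_\ell \in L^2(Q_S)$ for $\ell \geq k - 1$ is covered: $\ell = k - 1$ by the extra hypothesis and $\ell \geq k$ by the original splitting.

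\textbf{Part (iii) and main obstacle.} Under the stated assumptions on $u_0$ and $F$, Lemma \ref{forcinglem} delivers $F_\ell \in L^2(Q_S)$ for every $\ell \geq k(p)$ and every finite $0 < S \leq T$. If $k < k(p)$, I would iterate (i) exactly $k(p) - k$ times; if $k > k(p)$, I would iterate (ii) exactly $k - k(p)$ times, noting that the hypothesis $F_{j-1} \in L^2$ needed at the lowering step $j \in \{k, k-1, \dots, k(p)+1\}$ is supplied by Lemma \ref{forcinglem} since $j - 1 \geq k(p)$; and if $k = k(p)$ there is nothing to do. There is no genuine obstacle beyond bookkeeping: the entire content of the proposition rests on the identity $P_{k+1} - P_k = -L(F_k)$ together with the $L^2$ maximal regularity for the Stokes system, both already in hand.
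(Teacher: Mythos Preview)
Your proposal is correct and essentially identical to the paper's proof: both reduce the problem to showing that $P_k - P_{k\pm 1} = \pm L(F_{k-1+\{0,1\}})$ lies in $C([0,T];L^2) \cap L^2_t \dot H^1_x$ with zero initial trace via Stokes maximal regularity, and then invoke Lemma~\ref{forcinglem} to handle (iii) by iteration. Your explicit remark that the pressure and local energy inequality are intrinsic to $v$ (hence unaffected by changing the splitting level) is a helpful clarification that the paper leaves implicit.
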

\begin{proof}
\textit{Proof of (i)}. We need only consider $T < \infty$. We must show that $\tilde{u} := u + (P_k - P_{k+1})$ belongs to the energy space, $\tilde{u}(\cdot,t)$ is weakly continuous as an $L^2(\R^3)$-valued function on $[0,T]$, and $\lim_{t \dto 0} \norm{\tilde{u}(\cdot,t)}_{L^2(\R^3)} = 0$. These conditions are already satisfied for the correction term $u$, so it remains to show them for $P_{k+1} - P_k$. Recall now that $F_k \in L^2(Q_T)$.\footnote{Definition~\ref{weaksol} requires that $F_{\ell}(u_0,F) \in L^2(Q_T)$ for all $\ell \geq k$.} Since $P_k - P_{k+1} = L(F_k)$,
we obtain $P_k - P_{k+1} \in C([0,T];L^2(\R^3)) \cap L^2_t \dot H^1_x(Q_T)$
and $\lim_{t \dto 0} \norm{P_k - P_{k+1}}_{L^2(\R^3)} = 0$. This completes the proof.

\textit{Proof of (ii)}. Once we further assume that $k \geq 1$ and $F_{k-1} \in L^2(Q_T)$, the proof is nearly identical to the proof of (i).

\textit{Proof of (iii)}. The proof follows from (i)--(ii) combined with the estimates on $F_k(u_0,F)$ proven in Lemma \ref{forcinglem}.
\end{proof}

\subsection{Uniform decay estimate}\label{uniformdecayestimates}

The goal of this section is to prove Proposition~\ref{improveddecaypropintro}, which we restate below:
\begin{pro}[Decay property]\label{improveddecayprop}
Let $T > 0$, $p \in ]3,\infty[$, $q \in ]3,p]$, and $k \geq k(p) := \lceil \frac{p}{2} \rceil - 2$. Assume that $v$ is a weak Besov solution on $Q_T$ based on the $k$th Picard iterate with initial data $u_0$ and forcing term $\div F$. Let $\norm{u_0}_{\dot B^{s_p}_{p,\infty}(\R^3)} + \norm{F}_{\cF_q(Q_T)} \leq M$.
	 Then
\begin{equation}\label{improveddecayeq}
\norm{u(\cdot,t)}_{L^2(\R^3)} \leq C(k,M,p,q) t^\frac{1}{4}.
	\end{equation}
\end{pro}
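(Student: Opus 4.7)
The plan is to start from the global energy inequality for the correction term $u$ established in Proposition~\ref{pro:vlocalglobal}, and pass $t_1\dto 0$; since $\lim_{t_1\dto 0}\norm{u(\cdot,t_1)}_{L^2}=0$ by Definition~\ref{weaksol}, this yields
\begin{equation*}
\norm{u(\cdot,t)}_{L^2}^2 + 2\int_0^t\!\int_{\R^3}|\nabla u|^2\,dx\,dt' \leq 2\int_0^t\!\int_{\R^3}\bigl(P_k\otimes u + F_k\bigr):\nabla u\,dx\,dt'.
\end{equation*}
The $F_k$ contribution is controlled by Cauchy--Schwarz together with the bound $\norm{F_k}_{L^2(Q_t)}\leq C(k,M,p)\,t^{1/4}$ from Lemma~\ref{forcinglem}, giving a term of the correct order $t^{1/2}$ on the right. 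The real obstruction is the quadratic term $\int(P_k\otimes u):\nabla u$: as illustrated by~\eqref{globalL3inftydemonstration}, any direct H\"older bound involves a critical norm of $P_k$ that is only bounded by $C(k,M,p,q)$, so the plain Gronwall argument available in the $L^3$ case fails.

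The main new ingredient is the splitting Lemma~\ref{splitlem}. For a parameter $N\geq 1$, write $u_0 = \widetilde u_0^N + \bar u_0^N$ with $\norm{\widetilde u_0^N}_{L^2}\leq CN^{-\gamma_2}M$ and $\norm{\bar u_0^N}_{\dot B^{s_{p_2}+\delta_2}_{p_2,p_2}}\leq CN^{\gamma_1}M$; crucially, the second piece is \emph{subcritical} since $\delta_2>0$. Define $\bar P_k:=P_k(\bar u_0^N,F)$ and $\widetilde P_k:=P_k(u_0,F)-\bar P_k$. Unwinding the Picard recursion $P_{\ell+1}=P_0-B(P_\ell,P_\ell)$ and repeatedly applying the bilinear energy estimate~\eqref{bilinearenergyestimate} together with the uniform critical bounds~\eqref{Picardestimatefrequent} on $P_\ell$ and $\bar P_\ell$, an induction on $\ell\leq k$ yields
\begin{equation*}
\norm{\widetilde P_k}_{L^\infty_t L^2_x(Q_T)} \leq C(k,M,p,q)\,N^{-\gamma_2}.
\end{equation*}
Separately, propagating the subcritical Besov regularity of $\bar u_0^N$ through Lemma~\ref{lem:katoest} and the Picard recursion yields a subcritical Kato-type bound of the form $\norm{\bar P_k(\cdot,t)}_{L^{p_2}}\leq C(k,M,p,q)\,N^{\gamma_1}\,t^{(s_{p_2}+\delta_2)/2}$, which possesses strictly better time integrability near $t=0$ than the critical Kato space $\cK_{p_2}$.

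With $P_k=\widetilde P_k+\bar P_k$ in hand, split the quadratic term accordingly. For the $\widetilde P_k$ piece, H\"older combined with interpolation between the small $L^\infty_t L^2_x$ bound and the (uniform) critical $\cK_p$ bound on $\widetilde P_k$ produces a prefactor that is $o_{N\to\infty}(1)$, which absorbs into $\int_0^t\norm{\nabla u}_{L^2}^2\,dt'$ once $N$ is taken large. For the $\bar P_k$ piece, H\"older, Sobolev, and Young's inequality give
\begin{equation*}
\left|\int_0^t\!\int_{\R^3}\bar P_k\otimes u:\nabla u\,dx\,dt'\right| \leq \tfrac14\int_0^t\norm{\nabla u}_{L^2}^2\,dt' + C_N\!\int_0^t g(t')\norm{u(\cdot,t')}_{L^2}^2\,dt',
\end{equation*}
where the subcritical decay of $\bar P_k$ ensures $g\in L^1_{\loc}([0,T])$. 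Gronwall's inequality, combined with the $F_k$ estimate, then yields $\norm{u(\cdot,t)}_{L^2}^2\leq C(k,M,p,q,N)\,t^{1/2}$, and~\eqref{improveddecayeq} follows after fixing $N$.

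The main difficulty will be the tension between the two halves of the splitting: the $L^2$-smallness $N^{-\gamma_2}$ of $\widetilde u_0^N$ is paid for by the $N^{\gamma_1}$ growth in the subcritical Besov norm of $\bar u_0^N$, which in turn inflates both the Gronwall weight $g$ and the constant $C_N$. One must verify that the subcritical gain $\delta_2>0$ delivers sufficient $t$-integrability to keep the Gronwall constant at worst polynomial in $N$, so that after choosing $N$ large to absorb the $\widetilde P_k$ piece the final constant depends only on $k,M,p,q$. This is precisely why Lemma~\ref{splitlem} strictly reduces the Besov summability index (from $\infty$ to $p_2$) in the $\bar u_0^N$ piece rather than performing a diagonal splitting by scaling index, and in this sense the splitting lemma is the essential new input.
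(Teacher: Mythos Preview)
Your strategy has a genuine gap at the absorption step for the $\widetilde P_k$ piece, and this gap is exactly what the paper's argument is built to avoid. The opening move---passing $t_1\dto0$ in~\eqref{uglobalenergyineq}---already presumes that $(P_k\otimes u):\nabla u\in L^1(Q_t)$; the paper records this as Corollary~\ref{cor:uglobalenergyineqinitialtime}, a \emph{consequence} of Proposition~\ref{improveddecayprop}, so assuming it is circular. One can work on $[t_1,t_2]$ and try to get $t_1$-uniform bounds, but then your absorption claim must be uniform in $t_1$, and it is not. The relevant H\"older estimate is $\bigl|\int\widetilde P_k\otimes u:\nabla u\,dx\bigr|\leq C\norm{\widetilde P_k(\cdot,t)}_{L^3}\norm{\nabla u(\cdot,t)}_{L^2}^2$, and interpolating the small $L^\infty_tL^2_x$ bound against the critical $\cK_p$ bound gives only $\norm{\widetilde P_k(\cdot,t)}_{L^3}\leq CN^{-\gamma_2\theta}t^{-\alpha}$ with $\alpha>0$. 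The time singularity is unavoidable because neither endpoint carries a uniform-in-$t$ $L^3$ bound, so the prefactor is not $o_N(1)$ uniformly on $]0,t[$. Other H\"older splittings (e.g.\ $L^4\times L^4\times L^2$ with Gagliardo--Nirenberg) lead to Gronwall weights $s^{-\beta}$ with $\beta\geq1$ once $p\geq4$, which again fail.

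The paper's resolution is structurally different from yours. Rather than splitting the coefficient inside the energy inequality for $u$, it sets $w_k:=v-\bar P_k=u+E_k$ with $E_k:=P_k-\bar P_k$ and proves a \emph{separate} energy inequality for $w_k$ (Lemma~\ref{lem:wNkenergyineq}) whose only drift coefficient is the subcritical $\bar P_k\in\cK^{-1+\delta}_\infty$. There is no $\widetilde P_k$ term to absorb: its contribution sits entirely in $E_k$, which obeys a linear energy equality and is estimated directly (Lemma~\ref{ENkest}). One then recovers $u=w_k-E_k$. In particular the paper never takes $N$ large---it fixes $N=1$---because smallness is not used for absorption. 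A secondary issue with your setup: you split only $u_0$, but $L(F)$ is itself merely critical, so $\bar P_0=S\bar u_0+L(F)$ would not be subcritical; the paper also splits $F=\bar F+\tilde F$ via Lemma~\ref{splitforcing} for this reason.
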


Heuristically, the global energy inequality starting from the initial time should give a decay rate for $\norm{u(\cdot,t)}_{L^2}$ that depends on the decay rate for $\int_{0}^t \int_{\R^3} |F_k(u_0,F)|^2 \, dx \,dt'$. However, it is not obvious whether the global energy inequality even makes sense starting from the initial time without a decay rate for $\norm{u(\cdot,t)}_{L^2}$.\footnote{The problem is with integrating the lower order term $\int_0^t \int_{\R^3} P_k \otimes u : \nabla u \, dx \, dt'$.} This issue is overcome by decomposing $u$ into two parts, each of which satisfies a global energy inequality with no integrability issues, and estimating $u$ by its parts. The method involves splitting the initial data $u_0$ into a subcritical part $\bar{u_0}$ and a perturbation $\tilde{u_0}$ with finite energy as in Lemma~\ref{splitlem}. The idea is that only subcritical coefficients will enter into the energy inequality for the time-evolution of the perturbation $\tilde{u_0}$. See~\cite{barkersereginsverakstability} for similar arguments in the context of global weak $L^{3,\infty}$ solutions.


The hypotheses of Proposition~\ref{improveddecayprop} will be taken as standing assumptions for the remainder of the section.

For $N = 1$, we decompose $u_0$ according to Lemma~\ref{splitlem} and $F$ according to Lemma~\ref{splitforcing},
\begin{equation}
	u_0 = \bar{u_0} + \tilde{u_0}, \qquad
	F = \bar{F} + \tilde{F},
	\label{u_0Fdecomp}
\end{equation}
with $\bar{u_0},\tilde{u_0},\bar{F},\tilde{F}$ satisfying the following properties:
\begin{equation}
	\norm{\bar{u_0}}_{\dot B^{s_{p_2}+\delta_2}_{p_2,p_2}(\R^3)} \leq C(p)M, \quad \norm{\tilde{u_0}}_{L^2(\R^3)} \leq C(p)M, 
	\label{u0Nsplitrevised}
\end{equation}
\begin{equation}
	\norm{\bar{u_0}}_{\dot B^{s_p}_{p,\infty}(\R^3)}, \norm{\tilde{u_0}}_{\dot B^{s_p}_{p,\infty}(\R^3)} \leq C(p)M,
	\label{u_0splitboundBp}
\end{equation}
\begin{equation}
	\norm{\bar{F}}_{\cF^{s_{p_3}^{'}+\delta_3}_{p_3}(Q_T)} \leq C(q,T)M, \quad \norm{\tilde{F}}_{L^3_t L^2_x(Q_T)} \leq C(q,T) M,
	\label{FNsplitrevised}
\end{equation}
\begin{equation}
	\norm{\bar{F}^N}_{\cF_q(Q_T)}, \norm{\tilde{F}^N}_{\cF_q(Q_T)} \leq M.
	\label{FspliboundKp}
\end{equation}
We will use the following notation. For each $k \geq 0$, we define
\begin{equation}\label{barPkN}
	\bar{P_k}(u_0,F) := P_k(\bar{u_0},\bar{F}^N), \quad 	\tilde{P}_k(u_0,F) := P_k(\tilde{u_0},\tilde{F}),
\end{equation}
%
\begin{equation}\label{barFkN}
	\bar{F_k}(u_0,F) := \bar{P_k} \otimes \bar{P_k} - \bar{P}_{k-1} \otimes \bar{P}_{k-1},
\end{equation}
\begin{equation}\label{EkNdef}
	E_k(u_0,F) := P_k(u_0,F) - \bar{P_k}(u_0,F),
\end{equation}
\begin{equation}\label{GkNdef}
	G_k(u_0,F) := P_k \otimes P_k - \bar{P}_k \otimes \bar{P}_k.
	\end{equation}
We will frequently suppress dependence on the data $(u_0,F)$ in our notation.

In this section, we will also use the following subcritical estimates for $B$ and $L$, in addition to the properties discussed in Section~\ref{sec:preliminaries}. Namely,
for $\delta>0$,
\begin{equation}\label{katoinfinitysubcritest}
	\norm{B(U,V)}_{\cK^{-1+\delta}_\infty(Q_T)} \leq cT^{\frac{\delta}{2}} \norm{U}_{\cK^{-1+\delta}_\infty(Q_T)} \norm{V}_{\cK^{-1+\delta}_{\infty}(Q_T)},
\end{equation}
\begin{equation}\label{katoinfinitypersistency2}
	\norm{B(U,V)}_{\cK^{-1+\delta}_{\infty}(Q_T)} \leq c(p,\delta) \min(\norm{U}_{\cK^{-1+\delta}_{\infty}(Q_T)} \norm{V}_{\cK_{p}(Q_{T})},
	\norm{V}_{\cK^{-1+\delta}_{\infty}(Q_T)} \norm{U}_{\cK_{p}(Q_{T})}),
\end{equation}
\begin{equation}\label{katoinfinitysubcritestL}
	\norm{L(G)}_{\cK^{-1+\delta_2}_\infty(Q_T)} \leq c(p_2) \norm{G}_{\cF_{p_2}^{s_{p_2}^{'}+\delta_2}(Q_T)},
\end{equation}
which follow from Lemma~\ref{lem:katoest}. Let $\delta:=\min(\delta_2,\delta_3) > 0$. Then
\begin{equation}\label{barPNKsubcritest}
	\norm{\bar{P_{k}}}_{\cK^{-1+\delta}_\infty(Q_T)}\leq C(T,\delta,k) \, Q_k (\norm{\bar{P_{0}}}_{\cK^{-1+\delta}_\infty(Q_T)} )<\infty, 
\end{equation}
where $Q_k$ denotes a polynomial with no constant term and degree depending only on $k$.
By the heat characterisation of homogeneous Besov spaces, (\ref{u_0splitboundBp}), and (\ref{FspliboundKp}), we have
\begin{equation}\label{Picardkatoests}
	\norm{\bar{P_{k}}}_{\cK_{p}(Q_T)}+\norm{\bar{P_{k}}}_{\cX_T} \leq C(k,M,p,q). 
\end{equation}
The same estimate holds for $P_k$ (see~\eqref{Picardestimatefrequent}). Finally, using~(\ref{Picardkatoests}), along with~(\ref{katoinfinitypersistency2}) and an induction argument, we see that
\begin{equation}\label{barPNKsubcritest2}
\norm{\bar{P_{k}}}_{\cK^{-1+\delta}_\infty(Q_T)} \leq c(\delta,k,M,p,q) \norm{\bar{P_{0}}}_{\cK^{-1+\delta}_\infty(Q_T)}. 
\end{equation}
\begin{lemma}\label{ENkest}
In the above notation, for all integers $k \geq 0$, $E_k(u_0,F)$ and $G_k(u_0,F)$ obey the following properties:
\begin{equation}\label{eq:ENkbelongrevised}
	E_k \in C([0,T];L^2(\R^3)) \cap L^2_t \dot{H}^{1}_x(Q_T),
\end{equation}
	\begin{equation}\label{eq:ENkestrevised}
		\norm{E_k}_{L^\infty_t L^2_x(Q_T)} \leq C(k,M,p,q) \norm{\tilde{P}_0}_{L^\infty_t L^2_x(Q_T)},
	\end{equation}
	\begin{equation}\label{GNkspacerevised}
	G_k \in L^2(Q_T).
	\end{equation}
\end{lemma}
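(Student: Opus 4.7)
The plan is to induct on $k \geq 0$, proving \eqref{eq:ENkbelongrevised}--\eqref{GNkspacerevised} simultaneously. The driving observation is the algebraic identity
\begin{equation*}
G_k = P_k \otimes P_k - \bar P_k \otimes \bar P_k = P_k \otimes E_k + E_k \otimes \bar P_k,
\end{equation*}
coupled with the Duhamel representation
\begin{equation*}
E_{k+1} = \tilde P_0 - L(G_k),
\end{equation*}
which is obtained by subtracting the time-dependent Stokes systems satisfied by $P_{k+1}$ and $\bar P_{k+1}$ (cf.\ \eqref{Pkheateqnpressure}): they share forcing difference $\div(\tilde F - G_k)$ and initial-data difference $\tilde u_0$.

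\emph{Base case $k=0$.} By linearity, $E_0 = P_0(\tilde u_0, \tilde F) = \tilde P_0$. Since $\tilde u_0 \in L^2(\R^3)$ and $\tilde F \in L^3_t L^2_x(Q_T) \hookrightarrow L^2(Q_T)$ (from \eqref{barV_2est} and \eqref{FNsplitrevised}), the classical energy estimate for the Stokes system delivers \eqref{eq:ENkbelongrevised} and \eqref{eq:ENkestrevised} at level zero, the latter even as equality. For \eqref{GNkspacerevised}, substitute $P_0 = \bar P_0 + E_0$ in the identity to split $G_0$ into two cross terms of type $\bar P_0 \otimes E_0$ (and its transpose) plus a diagonal $E_0 \otimes E_0$. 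The cross terms lie in $L^2(Q_T)$ by H\"older,
\begin{equation*}
\|\bar P_0 \otimes E_0\|_{L^2(Q_T)}^2 \leq \|E_0\|_{L^\infty_t L^2_x}^2 \int_0^T \|\bar P_0(\cdot,t)\|_{L^\infty}^2 \, dt,
\end{equation*}
the time integral being finite by the subcritical bound \eqref{barPNKsubcritest2} since $\delta > 0$. The diagonal term is handled via Kato interpolation \eqref{Kinterp}: since $\|E_0\|_{\cK_p(Q_T)} \leq \|P_0\|_{\cK_p} + \|\bar P_0\|_{\cK_p}$ is finite by \eqref{Picardestimatefrequent} and \eqref{Picardkatoests}, interpolating against $L^\infty_t L^2_x$ upgrades $E_0$ to $L^4(Q_T)$, so $E_0 \otimes E_0 \in L^2(Q_T)$.

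\emph{Inductive step.} Assume the conclusions at level $k$. Applying the energy estimate for the Stokes system to the Duhamel representation, with $\tilde u_0 \in L^2$ and $\tilde F - G_k \in L^2(Q_T)$ (the latter by \eqref{GNkspacerevised} at level $k$), one obtains $E_{k+1} \in C([0,T]; L^2) \cap L^2_t \dot H^1_x$ with
\begin{equation*}
\|E_{k+1}\|_{L^\infty_t L^2_x} \leq \|\tilde P_0\|_{L^\infty_t L^2_x} + C \|G_k\|_{L^2(Q_T)}.
\end{equation*}
The $L^2$ bound on $G_k$ is precisely that derived in the base case with $k$ in place of $0$: the cross contribution is $\lesssim T^{\delta/2}\|E_k\|_{L^\infty_t L^2_x}$ while the diagonal is $\lesssim T^{\alpha}\|E_k\|_{L^\infty_t L^2_x}^{2\theta}\|E_k\|_{\cK_p}^{2(1-\theta)}$, with $\alpha, \theta > 0$. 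Chaining these with the inductive bound \eqref{eq:ENkestrevised} on $\|E_k\|_{L^\infty_t L^2_x}$ and with the \emph{a priori} bound $\|\tilde P_0\|_{L^\infty_t L^2_x} \leq C(M, p, q, T)$ yields \eqref{eq:ENkestrevised} at level $k+1$. Property \eqref{GNkspacerevised} at level $k+1$ is then re-derived by the base-case computation applied to $G_{k+1} = P_{k+1}\otimes E_{k+1} + E_{k+1}\otimes \bar P_{k+1}$, using the finite-energy estimate just obtained.

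\emph{Principal difficulty.} The only delicate point is the diagonal contribution $E_k \otimes E_k$: pure finite-energy bounds place it merely in $L^{10/3}(Q_T)$, one power short of the $L^2$ required to close the maximal-regularity loop. The resolution is that $E_k$ inherits a uniform critical Kato bound from $P_k$ and $\bar P_k$ via \eqref{Picardestimatefrequent} and \eqref{Picardkatoests}, so the interpolation \eqref{Kinterp} upgrades it to $L^4(Q_T)$, with a positive power of $T$ absorbing the interpolation constant.
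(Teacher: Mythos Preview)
Your inductive skeleton, the Duhamel representation $E_{k+1}=\tilde P_0-L(G_k)$, and the treatment of $G_k\in L^2(Q_T)$ via the subcritical bound on $\bar P_k$ together with the $L^4$ interpolation for the diagonal term are all correct and essentially match the paper's proof of \eqref{eq:ENkbelongrevised} and \eqref{GNkspacerevised}. The gap is in your derivation of the \emph{linear} estimate \eqref{eq:ENkestrevised}. Routing the $L^\infty_t L^2_x$ bound through the Stokes energy estimate gives
\[
\norm{E_{k+1}}_{L^\infty_t L^2_x}\le \norm{\tilde P_0}_{L^\infty_t L^2_x}+C\norm{G_k}_{L^2(Q_T)},
\]
and the diagonal contribution is $\norm{E_k\otimes E_k}_{L^2}\le C T^\alpha\norm{E_k}_{L^\infty_t L^2_x}^{2\theta}\norm{E_k}_{\cK_p}^{2(1-\theta)}$ with $2\theta<1$. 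This is \emph{sublinear} in $\norm{E_k}_{L^\infty_t L^2_x}$: inserting the inductive hypothesis $\norm{E_k}\le C_k\norm{\tilde P_0}$ produces a term of order $\norm{\tilde P_0}^{2\theta}$, which for small $\norm{\tilde P_0}$ is much larger than $\norm{\tilde P_0}$ and therefore cannot be absorbed into $C_{k+1}\norm{\tilde P_0}$. Invoking the a~priori \emph{upper} bound on $\norm{\tilde P_0}$ does not help, since it is precisely the small-$\norm{\tilde P_0}$ regime that fails. Your constants also acquire $T$-dependence through the factors $T^{\delta/2}$ and $T^\alpha$, whereas \eqref{eq:ENkestrevised} asserts $C=C(k,M,p,q)$ independent of $T$.

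The paper avoids both issues by not passing through $\norm{G_{k-1}}_{L^2}$ at all for \eqref{eq:ENkestrevised}. It applies the Kato bilinear estimate \eqref{bilinearenergyestimate}, namely $\norm{B(U,V)}_{L^\infty_t L^2_x}\le c(p)\norm{U}_{L^\infty_t L^2_x}\norm{V}_{\cK_p}$, directly to the identity $E_k=\tilde P_0-B(E_{k-1},P_{k-1})-B(\bar P_{k-1},E_{k-1})$, obtaining the genuinely linear, $T$-independent recursion
\[
\norm{E_k}_{L^\infty_t L^2_x}\le \norm{\tilde P_0}_{L^\infty_t L^2_x}+c(p)\big(\norm{P_{k-1}}_{\cK_p}+\norm{\bar P_{k-1}}_{\cK_p}\big)\norm{E_{k-1}}_{L^\infty_t L^2_x},
\]
which closes immediately. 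Only after \eqref{eq:ENkestrevised} is secured does the paper prove $G_k\in L^2(Q_T)$ (by your same argument) and then use $E_k=\tilde P_0-L(G_{k-1})$ to upgrade to the full energy-space membership \eqref{eq:ENkbelongrevised}.
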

\begin{proof}
In the proof, we will make use of the following identities. In particular,
\begin{equation}\label{ENKidentity1}
E_{k}=\tilde{P_{0}} -B(E_{k-1},P_{k-1})-B(\bar{P}_{k-1}, E_{k-1}), 
\end{equation}
\begin{equation}\label{ENKidentity2}
	E_{k} = \tilde{P}_0-B(E_{k-1},E_{k-1}) -B(\bar{P}_{k-1}, E_{k-1})-B(E_{k-1},\bar{P}_{k-1}).
\end{equation}

\textit{1. Showing $E_{k}$ has finite kinetic energy}.
We proceed by induction. Clearly, $E_{0}=\tilde{P_{0}}$.
This, together with (\ref{u0Nsplitrevised}) and (\ref{FNsplitrevised}), implies that 
\begin{equation}\label{kineticenergyEN0}
E_{0} \in C([0,T]; L^2(\mathbb{R}^3)) \cap L^2_t \dot{H}^{1}_x(Q_T).
\end{equation}
For the inductive step assume $E_{k-1} \in L^\infty_t L^2_x(Q_T)$ ($k\geq 1$). Using \eqref{bilinearenergyestimate}, (\ref{Picardkatoests}), (\ref{ENKidentity1}), (\ref{kineticenergyEN0}), and the inductive assumption, we infer that $E_{k} \in L^\infty_t L^2_x(Q_{T})$ and
\begin{equation}\label{ENKinductiveestimate}
	\|E_{k}\|_{L^\infty_t L^2_x(Q_T)}\leq \|E_{0}\|_{L^\infty_t L^2_x(Q_T)}+ C(p)\|P_{k-1}\|_{\cK_p(Q_T)}\norm{E_{k-1}}_{L^\infty_t L^2_x(Q_T)}+ $$ $$+C(p)\|\bar{P}_{k-1}\|_{\cK_p(Q_T)}\norm{E_{k-1}}_{L^\infty_t L^2_x(Q_T)}\leq $$ $$\leq\|E_{0}\|_{L^\infty_t L^2_x(Q_T)}+ C(k,M,p,q)\norm{E_{k-1}}_{L^\infty_t L^2_x(Q_T)}.
\end{equation}
From (\ref{ENKinductiveestimate}), we can then immediately obtain (\ref{eq:ENkestrevised}).

\textit{2. Showing $G_{k}$ is in $L^{2}(Q_T)$}.
As  previously mentioned, we have
\begin{equation}\label{GNKidentity2}
G_{k} =E_{k}\otimes E_{k}+\bar{P}_{k}\otimes E_{k}+E_{k}\otimes\bar{P}_{k}.
\end{equation}
Using (\ref{barPNKsubcritest}) and Step 1, we see that
\begin{equation}\label{GNKperturbedtermsL2}
\bar{P}_{k}\otimes E_{k}+E_{k}\otimes\bar{P}_{k}\in L^{2}(Q_T).
\end{equation}
Next, we use the interpolation inequality \eqref{Kinterp} together with (\ref{Picardkatoests}) and Step 1 to obtain
\begin{equation}\label{L4nonlin}
E_{k}\otimes E_{k}\in L^{2}(Q_T).
\end{equation}
Combining this with (\ref{GNKperturbedtermsL2}) gives that $G_{k}^{N}\in L^{2}(Q_T)$.
Finally, we note that
\begin{equation}
	E_{k} =S(t)\tilde{u}_{0} + L(\tilde{F})(\cdot,t) - L(G_{k})(\cdot,t).
\end{equation}
This, together with (\ref{kineticenergyEN0}) and $G_{k}\in L^{2}(Q_T)$ implies that
\begin{equation}\label{energyEkN}
E_{k} \in C([0,T]; L^2(\R^3)) \cap L^2_t \dot{H}^{1}_x(Q_T),
\end{equation}
and furthermore, for all $t \in ]0,T]$,
\begin{equation}\label{energyequalityEKN}
\norm{E_{k}(\cdot,t)}_{L^{2}(\mathbb{R}^3)}^2 +2\int\limits_{0}^{t}\int\limits_{\mathbb{R}^3} |\nabla E_{k}|^2 \, dx \, dt' = 2\int\limits_{0}^{t}\int\limits_{\mathbb{R}^3} (G_{k}-\tilde{F}):\nabla E_{k} \, dx \, dt' + \norm{\tilde{u}_{0}}_{L^{2}(\mathbb{R}^3)}^2. 
\end{equation}
\end{proof}

\begin{remark}\label{ENkrmk}
	Standard energy estimates for Stokes equations imply that 
	\begin{equation}
	\norm{\tilde{P}_0}_{L^\infty_t L^2_x(Q_T)} \leq C\norm{\tilde{u_0}}_{L^2(\R^3)} + CT^{\frac{1}{6}} \norm{\tilde{F}}_{L^3_t L^2_x(Q_T)}.
	\end{equation} We combine the above estimate with \eqref{u0Nsplitrevised}, \eqref{FNsplitrevised}, and Lemma~\ref{ENkest} to obtain
	\begin{equation}
		\norm{E_k}_{L^\infty_t L^2_x(Q_T)}^2 \leq C(k,M,p,q,T), 
		\label{}
	\end{equation}
	with constant $C>0$ increasing in $T > 0$.
\end{remark}

From now on, we will assume that $v = u + P_k(u_0,F)$ is a weak Besov solution on $Q_T$ as in Definition~\ref{weaksol}. Moreover, we will assume that $k \geq k(p)$ in order that $F_k, \, \bar{F}_k \in \cF_2(Q_T)$ as guaranteed by Lemma~\ref{forcinglem}. We will denote
\begin{equation}
	w_k(u_0,F) := u + E_k(u_0,F) = v - \bar{P_k}(u_0,F).
\end{equation}
It is clear from Lemma~\ref{ENkest} that $w_k \in L^\infty_t L^2_x \cap L^2_t \dot H^1_x(Q_T)$ and $w(\cdot,t)$ is weakly continuous as an $L^2(\R^3)$-valued function on $[0,T]$.

\begin{lemma}[Energy inequality for $w_k$]\label{lem:wNkenergyineq}
	In the above notation, we have
	\begin{equation}\label{wenergyineq}
		\norm{w_k(\cdot,t)}_{L^2(\R^3)}^2 + 2 \int_0^t \int_{\R^3} |\nabla w_k|^2 \, dx \, dt' \leq  $$ $$ \leq \norm{\tilde{u}_0}_{L^2(\R^3)}^2 + 2 \int_0^t \int_{\R^3} (\bar{P_k} \otimes w_k + \bar{F_k} - \tilde{F}) : \nabla w_k \, dx \, dt',
	\end{equation}
for all $t \in ]0,T]$.
\end{lemma}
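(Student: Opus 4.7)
The plan is to derive the energy inequality for $w_{k} = u + E_{k}$ by mimicking the proof of Proposition~\ref{pro:vlocalglobal}. Expanding $|w_{k}|^{2} = |u|^{2} + 2\,u\cdot E_{k} + |E_{k}|^{2}$ and the analogous identity for $|\nabla w_{k}|^{2}$, I would combine three ingredients: first, the global energy inequality \eqref{uglobalenergyineq} for $u$, extended to the initial time by sending $t_{1}\dto 0$ and using $\lim_{t_{1}\dto 0}\|u(\cdot,t_{1})\|_{L^{2}} = 0$ from Definition~\ref{weaksol} together with the weak $L^{2}$-continuity of $u$; second, the energy equality \eqref{energyequalityEKN} for $E_{k}$ already at hand from Lemma~\ref{ENkest}; and third, a weak--strong identity for $\int u\cdot E_{k}(\cdot,t)\,dx + 2\int_{0}^{t}\int \nabla u:\nabla E_{k}$ obtained from \eqref{eq:weakstrongid} with $f = u$ and $g = E_{k}$ (after a standard spatial mollification) and $t_{1}\dto 0$ using $u(\cdot,0) = 0$ in $L^{2}$.

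For the third ingredient I substitute the two PDEs
\begin{equation*}
\p_{t} u - \Delta u + \div(u\otimes u + P_{k}\otimes u + u\otimes P_{k} + F_{k}) + \nabla \tilde p = 0,
\end{equation*}
\begin{equation*}
\p_{t} E_{k} - \Delta E_{k} + \div(G_{k} - \tilde F) + \nabla \pi_{E_{k}} = 0,
\end{equation*}
the latter obtained by subtracting the time-dependent Stokes systems for $P_{k}$ and $\bar P_{k}$ identified in Remark~\ref{pressurermk}. Since both $u$ and $E_{k}$ are divergence-free, the pressure gradients drop out after integration by parts, so the cross term reduces to an integral of a quadratic expression built from $u$, $P_{k}$, $F_{k}$, $\tilde F$, and $G_{k}$ paired with $\nabla u$ and $\nabla E_{k}$.

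Summing the first and second ingredients together with twice the cross-term identity produces $\|w_{k}(\cdot,t)\|_{L^{2}}^{2} + 2\int_{0}^{t}|\nabla w_{k}|^{2} \le \|\tilde u_{0}\|_{L^{2}}^{2} + \Sigma$ for an explicit but intricate integrand $\Sigma$. To reduce $\Sigma$ to the target $2\int(\bar P_{k}\otimes w_{k} + \bar F_{k} - \tilde F):\nabla w_{k}$ I would substitute $u = w_{k} - E_{k}$, $P_{k} = \bar P_{k} + E_{k}$, and the algebraic identity $F_{k} - \bar F_{k} = G_{k} - G_{k-1}$ (immediate from \eqref{Fkdefinition}, \eqref{barFkN}, \eqref{GkNdef}), and then exploit divergence-free integration by parts to cancel every term that does not contain a subcritical factor $\bar P_{k}$, $\bar F_{k}$, or $\tilde F$. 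The main obstacle I anticipate is precisely this algebraic bookkeeping, which should be driven by identities of the shape $\int h\cdot(h\cdot\nabla)h = 0$ for any divergence-free $h$. As a sanity check, the target right-hand side is what one obtains by formally testing against $w_{k}$ the perturbed Navier--Stokes system
\begin{equation*}
\p_{t} w_{k} - \Delta w_{k} + \div(w_{k}\otimes w_{k} + w_{k}\otimes \bar P_{k} + \bar P_{k}\otimes w_{k}) + \nabla \pi_{w_{k}} = \div(\tilde F - \bar F_{k}),
\end{equation*}
obtained by subtracting the Stokes system for $\bar P_{k}$ from the Navier--Stokes system for $v$; the three-ingredient combination above is the rigorous realization of this formal test at the available regularity.
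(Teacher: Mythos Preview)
Your overall strategy---combine the energy inequality for $u$, the energy equality \eqref{energyequalityEKN} for $E_{k}$, and a weak--strong cross identity, then cancel via $u=w_{k}-E_{k}$, $P_{k}=\bar P_{k}+E_{k}$, $F_{k}-\bar F_{k}=G_{k}-G_{k-1}$---is exactly the paper's approach. However, the order in which you pass to $t_{1}\dto 0$ is wrong, and this is not a cosmetic issue: it is the whole point of the lemma.

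You propose to extend \eqref{uglobalenergyineq} to $t_{1}=0$ \emph{before} combining, using only $\lim_{t_{1}\dto 0}\|u(\cdot,t_{1})\|_{L^{2}}=0$. But the right-hand side of \eqref{uglobalenergyineq} contains $\int_{t_{1}}^{t}\!\int (P_{k}\otimes u):\nabla u$, and since $\|P_{k}(\cdot,\tau)\|_{L^{\infty}}\sim\tau^{-1/2}$ while at this stage we have no decay for $\|u(\cdot,\tau)\|_{L^{2}}$, this integral need not converge as $t_{1}\dto 0$. Indeed, establishing that it does (Corollary~\ref{cor:uglobalenergyineqinitialtime}) requires Proposition~\ref{improveddecayprop}, whose proof uses the present lemma---so your argument would be circular. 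The same integrability problem arises in the cross-term identity, e.g.\ in $\int_{0}^{t}\!\int (P_{k}\otimes u):\nabla E_{k}$.

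The fix is to run everything on $]t_{1},t]$ for a.e.\ $t_{1}>0$ first: combine the three ingredients, carry out the algebraic cancellations so that every surviving term contains a factor from the \emph{subcritical} side ($\bar P_{k}\in\cK^{-1+\delta}_{\infty}$, $\bar F_{k}\in L^{2}$, $\tilde F\in L^{2}$), and only then let $t_{1}\dto 0$. After cancellation the right-hand side is $\|w_{k}(\cdot,t_{1})\|_{L^{2}}^{2}+2\int_{t_{1}}^{t}\!\int(\bar P_{k}\otimes w_{k}+\bar F_{k}-\tilde F):\nabla w_{k}$, which is integrable down to $0$; the boundary term tends to $\|\tilde u_{0}\|_{L^{2}}^{2}$ by the strong $L^{2}$-continuity of $w_{k}=u+E_{k}$ at the initial time. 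This is how the paper organizes the argument.
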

Note that the last integral in \eqref{wenergyineq} is convergent because $\bar{P}_k \in \cK^{-1+\delta}_{\infty}(Q_T)$ belongs to subcritical spaces and $\bar{F}_k, \tilde{F} \in L^2(Q_T)$. Here, $\delta := \min(\delta_2,\delta_3) > 0$.

We omit the proof of Lemma~\ref{lem:wNkenergyineq}, as it is similar to the proof of Proposition~\ref{pro:vlocalglobal}. The main idea is to ``transfer" the global energy inequality from $u$ and $E_k$ to $w_k$ by using the weak-strong identity \eqref{eq:weakstrongid} and the cancellation properties of the nonlinearity.

\begin{proof}[Proof of Proposition \ref{improveddecayprop}]
Below, we use the convention that the constants $C > 0$ depend only on $k,M,p,q$. By a scaling argument, it suffices to obtain an estimate of the form
\begin{equation}
\label{improveddecayesttime1}
	\norm{u(\cdot,1)}_{L^2(\R^3)} \leq C
\end{equation}
when $T \geq 1$. Since one may truncate the interval of existence, $T=1$, without loss of generality.

Split $u_0 \in \dot B^{s_p}_{p,\infty}(\R^3)$ and $F \in \cF_q(Q_{T})$ as in the beginning of Section~\ref{uniformdecayestimates}. 
Using the identity $u = w_k - E_k$, we obtain the inequality
	\begin{equation}\label{usplitest}
		\norm{u(\cdot,t)}_{L^2(\R^3)}^2 \leq
		2\norm{E_k(\cdot,t)}_{L^2(\R^3)}^2 +
		2\norm{w_k(\cdot,t)}_{L^2(\R^3)}^2
	\end{equation}
for all $t \in ]0,T]$.
	Recall from Remark \ref{ENkrmk} that $E^N_k$ obeys the estimate
	\begin{equation}\label{ENkfineest}
		\norm{E_k(\cdot,t)}_{L^2(\R^3)}^2 \leq C. 
	\end{equation}

It remains to estimate the energy of $w_k$. Denote $y(t) := \norm{w_k(\cdot,t)}_{L^2(\R^3)}^2$. By manipulating the energy inequality \eqref{wenergyineq} for $w_k$, one obtains
\begin{equation}\label{intermediateenergyest}
	y(t) + \int_0^t \int_{\R^3} |\nabla w_k|^2 \,dx \,d\tau \leq $$ $$ \leq C \norm{\tilde{u}_0}_{L^2(\R^3)}^2 + C \int_0^t \int_{\R^3} |\tilde{F}|^2 + |\bar{P_k} \otimes w_k|^2 + |\bar{F_k}|^2 \, dx \, d\tau
	\end{equation}
	for all $t \in ]0,T]$. Let us now analyze each of the terms.
		To begin, recall that $\norm{\tilde{u}_0}_{L^2(\R^3)}^2 \leq C$. 
	As a consequence of Lemma~\ref{forcinglem}, (\ref{u_0splitboundBp}), and (\ref{FspliboundKp}), we have that $\norm{\bar{F}_k}_{L^2(\R^3 \times ]0,t[)}^2 \leq Ct^{\frac{1}{2}}$.\footnote{This does not rely on $\bar{F}_k$ belonging to subcritical spaces.} Due to the splitting properties \eqref{u0Nsplitrevised} and \eqref{FNsplitrevised}, we have that $\norm{\tilde{F}}_{L^2(Q_{T})}^2 \leq C$. 
	Using (\ref{barPNKsubcritest2}), it is not difficult to show that
		\begin{equation}
			\norm{\bar{P}_k}_{\cK^{-1+\delta}_{\infty}(Q_{T_\sharp})} \leq C (\norm{\bar{u_0}}_{\dot B^{s_{p_2}+\delta_2}_{p_2,p_2}(\R^3)} + \norm{\bar{F}}_{\cF^{s_{p_3}^{'}+\delta_3}_{p_3}(Q_{T})}) \leq C. 
			\label{}
		\end{equation}
Substituting all the estimates into \eqref{intermediateenergyest}, we obtain that
	\begin{equation}
		y(t) \leq C (1+t^{\frac{1}{2}}) + C \int_0^t \frac{y(\tau)}{\tau^{1-\delta}} \, d\tau.
	\end{equation}
	Now we apply Gronwall's lemma:
	\begin{equation}\label{yNest2}
		y(t)\leq C (1+t^{\frac 1 2}) \times \exp \big( C t^{\delta} \big).
\end{equation}

We combine \eqref{yNest2}, \eqref{ENkfineest}, and \eqref{usplitest} to obtain the following estimate for each $t \in ]0,T]$:
\begin{equation}
	\norm{u(\cdot,t)}_{L^2}^2 \leq C (1+t^{\frac 1 2}) \times \big[ \exp \big( C t^{\delta} \big) + 1 \big].
	\label{finaldecayest}
\end{equation}
Let $t=1$ to verify~\eqref{improveddecayesttime1} and complete the proof.
\end{proof}

\begin{cor}[Global energy inequality, revised]\label{cor:uglobalenergyineqinitialtime}
	Under the hypotheses of Proposition~\ref{improveddecayprop}, we have that $P_k \otimes u \in L^2(Q_T)$ and, for all finite $t \in ]0,T]$,
	\begin{equation}\label{uglobalenergyineqinitialtime}
		\norm{u(\cdot,t)}_{L^2(\R^3)}^2 + 2 \int_{0}^{t} \int_{\R^3} |\nabla u|^2 \, dx \, dt' \leq 2 \int_{0}^{t} \int_{\R^3} [(P_k \otimes u) + F_k] : \nabla u \, dx \, dt'.
	\end{equation}
\end{cor}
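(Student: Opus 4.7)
The plan is to pass to the limit $t_1 \downarrow 0$ in the global energy inequality~\eqref{uglobalenergyineq} of Proposition~\ref{pro:vlocalglobal}. That inequality is available for a.e.\ $t_1 \in ]0,T[$ and every $t_2 \in ]t_1,T]$, so the argument reduces to (a) verifying integrability of the nonlinear term $P_k \otimes u : \nabla u$ on all of $Q_T$ so that the RHS converges, and (b) showing that $\|u(\cdot,t_1)\|_{L^2}^2 \to 0$ with a rate that makes the limiting inequality nontrivial. The decisive new input is the quantitative decay estimate $\|u(\cdot,t)\|_{L^2} \leq C t^{1/4}$ from Proposition~\ref{improveddecayprop}, which must be combined with the Kato bound $\|P_k(\cdot,t)\|_{L^p} \leq C t^{(3/p-1)/2}$ coming from~\eqref{Picardestimatefrequent}.

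The first step is to prove $P_k \otimes u \in L^2(Q_T)$. By Gagliardo--Nirenberg in $\R^3$,
\[
\|u(\cdot,t)\|_{L^{2p/(p-2)}} \leq C \|u(\cdot,t)\|_{L^2}^{1-3/p} \|\nabla u(\cdot,t)\|_{L^2}^{3/p},
\]
and invoking the decay estimate yields $\|u(\cdot,t)\|_{L^{2p/(p-2)}}^2 \leq C\, t^{(p-3)/(2p)} \|\nabla u(\cdot,t)\|_{L^2}^{6/p}$. Combined with the Kato bound for $P_k$ via H\"older's inequality in space, this gives
\[
\|P_k \otimes u\|_{L^2(Q_T)}^2 \leq C \int_0^T t^{-(p-3)/p} \cdot t^{(p-3)/(2p)} \|\nabla u(\cdot,t)\|_{L^2}^{6/p}\,dt = C \int_0^T t^{-(p-3)/(2p)} \|\nabla u(\cdot,t)\|_{L^2}^{6/p}\,dt.
\]
One more application of H\"older in time, with conjugate exponents $p/3$ and $p/(p-3)$, absorbs the singular weight since $(p-3)/(2p) \cdot p/(p-3) = 1/2 < 1$, and yields the bound $C\,T^{(p-3)/(2p)} \|\nabla u\|_{L^2(Q_T)}^{6/p}$, which is finite.

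With integrability in hand, one fixes $t \in ]0,T]$, selects a sequence $t_1^{(n)} \downarrow 0$ along which \eqref{uglobalenergyineq} holds, and passes to the limit. The term $\|u(\cdot,t_1^{(n)})\|_{L^2}^2$ tends to zero by Proposition~\ref{improveddecayprop}; the dissipation integral $\int_{t_1^{(n)}}^t \|\nabla u\|_{L^2}^2\,dt'$ converges to $\int_0^t \|\nabla u\|_{L^2}^2\,dt'$ by monotone convergence; and the RHS integral converges by dominated convergence, since its integrand lies in $L^1(Q_T)$ by Step~1 (for $P_k \otimes u:\nabla u$) and by Lemma~\ref{forcinglem} together with $\nabla u \in L^2(Q_T)$ (for $F_k : \nabla u$). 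This produces \eqref{uglobalenergyineqinitialtime}.

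The only genuine obstacle is Step~1: without the decay rate $\|u(\cdot,t)\|_{L^2} \leq C t^{1/4}$, the weight $\|P_k(\cdot,t)\|_{L^p}^2 \sim t^{-(p-3)/p}$ would not be integrable after pairing with $\|u(\cdot,t)\|_{L^{2p/(p-2)}}$, and one could not make sense of $P_k \otimes u : \nabla u$ as an $L^1$ function on $Q_T$. Once Proposition~\ref{improveddecayprop} is available, the remaining convergences are standard.
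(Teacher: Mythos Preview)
Your proposal is correct and follows the approach the paper intends. The paper does not write out an explicit proof of the corollary, but Remark~\ref{ontheconstant} immediately afterward (``plugging~\eqref{improveddecayeq} back into~\eqref{uglobalenergyineqinitialtime}'') makes clear that the argument is precisely what you outline: use the decay rate from Proposition~\ref{improveddecayprop} to control $P_k \otimes u$ in $L^2(Q_T)$, then let $t_1 \downarrow 0$ in~\eqref{uglobalenergyineq}.

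One minor remark: for the vanishing of $\|u(\cdot,t_1^{(n)})\|_{L^2}^2$ you do not actually need the quantitative rate---Definition~\ref{weaksol} already requires $\lim_{t\downarrow 0}\|u(\cdot,t)\|_{L^2}=0$. The decay estimate is needed only in Step~1, exactly as you identify, where the exponent bookkeeping (the weight $t^{-(p-3)/p}$ from $\|P_k\|_{L^p}^2$ being tamed to $t^{-1/2}$ after combining with $t^{(p-3)/(2p)}$ from the decay) is the whole point.
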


\begin{remark}[On the constant in the decay estimate]
\label{ontheconstant}
From the proof of Proposition~\ref{improveddecayprop}, see~\eqref{finaldecayest}, we may take $C = Q(M) \exp( Q(M) )$ in the decay estimate \eqref{improveddecayeq}, where $Q$ is a polynomial with coefficients depending on $k,p,q$, zero constant term, and degree depending only on $k$. Therefore, plugging~\eqref{improveddecayeq} back into \eqref{uglobalenergyineqinitialtime}, we obtain
\begin{equation}
	\norm{u(\cdot,t)}_{L^2(\R^3)}^2 + 2 \int_{0}^{t} \int_{\R^3} |\nabla u|^2 \, dx \, dt' \leq Q(M) \exp( Q(M) ) t^{\frac{1}{2}}.
\end{equation}
\end{remark}

\subsection{Weak Leray-Hopf solutions}\label{sec:relationshiplerayhopf}

In this subsection, we prove the equivalence of suitable weak Leray-Hopf solutions and global weak Besov solutions under certain assumptions. Let $J(\R^3)$ denote the space of divergence-free vector fields in $L^2(\R^3)$.
\begin{pro}[Equivalence of suitable weak Leray-Hopf solutions and weak Besov solutions]\label{pro:lerayhopfisweakbesov}
Let $0 < T \leq \infty$, $u_0 \in J(\R^3) \cap \dot B^{s_p}_{p,\infty}(\R^3)$, and $F \in L^2(Q_T) \cap \cK_q(Q_T)$ for some $p \in ]3,\infty[$ and $q \in ]3,p]$. A distributional vector field $v$ on $Q_T$ is a suitable weak Leray-Hopf solution on $Q_T$ with initial data $u_0$ and forcing term $\div F$ if and only if $v$ is a weak Besov solution on $Q_T$ with the same initial data and forcing term.
\end{pro}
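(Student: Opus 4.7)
The plan is to exploit a single observation: when the initial data and forcing lie additionally in $L^2$, the Picard iterates $P_k(u_0, F)$ themselves belong to the Leray-Hopf energy class $L^\infty_t L^2_x \cap L^2_t \dot H^1_x(Q_T)$ and take the correct strong $L^2$ initial trace $u_0$. Once this is in hand, the decomposition $v = u + P_k(u_0, F)$ converts one notion of solution into the other without modifying $v$, its associated pressure, or the local energy inequality~\eqref{vlocalenergyineq}, which are common to both definitions.

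The technical heart of the argument is a lemma asserting that, under the stated hypotheses on $u_0$ and $F$, each $P_k = P_k(u_0, F)$ belongs to $C([0, T]; L^2(\R^3)) \cap L^2_t \dot H^1_x(Q_T)$ with $P_k(\cdot, t) \to u_0$ strongly in $L^2$ as $t \dto 0$. I would prove this by induction on $k$. The base case $k = 0$ is standard Stokes theory applied to $P_0 = S(t) u_0 + L(F)$, using $u_0 \in L^2(\R^3)$ and $F \in L^2(Q_T)$. For the inductive step, the identity $P_{k+1} - P_k = -L(F_k)$ reduces the claim to $F_k \in L^2(Q_T)$; this follows from the interpolation inequality~\eqref{Kinterp}, combined with $P_k \in \cK_p(Q_T) \cap L^\infty_t L^2_x(Q_T)$ and, if necessary, the Besov embedding $\dot B^{s_p}_{p, \infty} \hookrightarrow \dot B^{s_{p'}}_{p', \infty}$ to arrange $p' > 4$, so that $P_k \in L^4(Q_T)$ for finite $T$ and hence $P_k \otimes P_k \in L^2(Q_T)$. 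Standard Stokes energy estimates for the equation satisfied by $P_{k+1} - P_k$ then close the induction.

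For the forward direction of the equivalence, given a suitable weak Leray-Hopf solution $v$, I would fix any $k \geq k(p) := \lceil p/2 \rceil - 2$ and set $u := v - P_k(u_0, F)$. By the key lemma and the Leray-Hopf hypothesis on $v$, $u$ lies in the energy class with $\lim_{t \dto 0} \|u(\cdot, t)\|_{L^2} = 0$; weak $L^2$-continuity of $u$ on $[0, T]$ follows from the Navier-Stokes-type equation~\eqref{vminuspicard1} it satisfies. The condition $F_\ell \in L^2(Q_T)$ for all $\ell \geq k$ is exactly Lemma~\ref{forcinglem}, and the local energy inequality and distributional equation for $v$ are part of the suitable weak Leray-Hopf hypothesis. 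Conversely, given a weak Besov solution $v = u + P_k(u_0, F)$, the key lemma and the assumptions on $u$ immediately yield the Leray-Hopf energy bound for $v$ together with the strong $L^2$ initial trace $v(\cdot, t) \to u_0$.

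The main obstacle is the inductive estimate in the key lemma, where one must marry the critical-space information controlling $P_k$ in $\cK_p(Q_T) \cap \cX_T$ with the subcritical $L^2$ hypothesis on $(u_0, F)$ to obtain quadratic control $P_k \otimes P_k \in L^2(Q_T)$ on finite time intervals. Once this interpolation step is carried out, both implications reduce to routine bookkeeping.
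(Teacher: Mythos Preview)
Your proposal is correct and follows essentially the same approach as the paper: establish that each $P_k$ lies in the energy class with strong $L^2$ initial trace $u_0$ (via the interpolation of $L^\infty_t L^2_x$ with $\cK_p$ to get $P_k \in L^4(Q_T)$, hence $F_k \in L^2(Q_T)$ for every $k \geq 0$), and then convert one notion of solution into the other through the decomposition $v = u + P_k$. The paper works with $k=0$ rather than $k \geq k(p)$ (invoking Proposition~\ref{pro:orderofpicarditerate} to lower the Picard iterate in the reverse direction), and it explicitly flags that the Leray--Hopf global energy inequality~\eqref{globalenergyv} must still be extracted from the local one~\eqref{vlocalenergyineq} in the reverse implication---a point you absorb into ``routine bookkeeping.''
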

Later, we will use Proposition~\ref{pro:lerayhopfisweakbesov} to prove the existence of global weak Besov solutions in Corollary~\ref{existencecor}. First, we remind the reader of the definition of suitable weak Leray-Hopf solution. Recall that $C^\infty_{0,0}(Q_T)$ denotes the space of smooth vector fields $\varphi \: Q_T \to \R^3$ with compact support and $\div \varphi = 0$.
\begin{definition}[Suitable weak Leray-Hopf solution]
	\label{weaklerayhopfdef}
Let $T > 0$, $u_0 \in J(\R^3)$, and $F \in L^2(Q_T)$.
	
	We say that a distributional vector field $v$ on $Q_T$ is a \emph{weak Leray-Hopf solution} to the Navier-Stokes equations on $Q_T$ with initial data $u_0$ and forcing term $\div F$ if $v$ satisfies the following properties:
	
	First, $v \in L^\infty_t L^2_x \cap L^2_t \dot H^1_x(Q_T)$, and $v$ satisfies the Navier-Stokes equations on $Q_T$ in the sense of divergence-free distributions:
	\begin{equation}
		\int_0^T \int_{\R^3} v \cdot \p_t \varphi + v \otimes v : \nabla \varphi - \nabla v : \nabla \varphi - F : \nabla \varphi \, dx \, dt = 0
		\label{}
	\end{equation}
	for all $\varphi \in C^\infty_{0,0}(Q_T)$. In addition, $v(\cdot,t)$ is weakly continuous on $[0,T]$ as an $L^2(\R^3)$-valued function, and $v$ attains its initial data strongly in $L^2(\R^3)$:
	\begin{equation}
		\lim_{t \dto 0} \norm{v(\cdot,t) - u_0}_{L^2(\R^3)} = 0.
		\label{}
	\end{equation}
	Finally, it is required that $v$ satisfies the energy inequality
	\begin{equation}
		\norm{v(\cdot,t)}_{L^2(\R^3)}^2 + 2 \int_0^t \int_{\R^3} |\nabla v(x,t')|^2 \, dx \, dt' \leq \norm{u_0}_{L^2(\R^3)}^2 - \int_0^t \int_{\R^3} F : \nabla v \, dx \, dt',
		\label{globalenergyv}
	\end{equation}
	for all $t \in [0,T]$.
		
	We say that a distributional vector field $v$ on $Q_\infty$ is a weak Leray-Hopf solution on $Q_\infty$ if it is a weak Leray-Hopf solution on $Q_T$ for all $T > 0$. These solutions are known as \emph{global} weak Leray-Hopf solutions.

	Now let $0 < T \leq \infty$. We say that a weak Leray-Hopf solution $v$ on $Q_T$ is \emph{suitable} if there exists a pressure $q \in L^{\frac{3}{2}}_\loc(Q_T)$ such that $(v,q)$ is a distributional solution of the Navier-Stokes equations on $Q_T$ with forcing term $\div F$ and moreover satisfies the local energy inequality \eqref{vlocalenergyineq} for all $0 \leq \varphi \in C^\infty_0(Q_\infty)$.
\end{definition}

The following proposition concerning the existence of suitable weak Leray-Hopf solutions is well known (see, for instance, \cite{lemarie2002}).
\begin{pro}[Existence of suitable weak Leray-Hopf solutions]\label{pro:lerayhopfexist}
Let $u_0 \in J(\R^3)$ and $F \in L^2(Q_T)$ for all $T > 0$. There exists a global suitable weak Leray-Hopf solution with initial data $u_0$ and forcing term $\div F$.
\end{pro}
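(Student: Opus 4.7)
The result is classical; the plan is to construct approximate solutions by regularizing the advecting velocity, establish uniform energy bounds, and pass to the limit while preserving the local energy inequality. I will work on a finite interval $]0,T[$ and recover a global solution by a diagonal extraction in $T \upto \infty$.

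\emph{Approximation and compactness.} Let $\psi_\varepsilon$ be a smooth spatial mollifier, and let $v^\varepsilon$ denote the unique smooth global solution of the Leray-regularized system
\begin{equation}
	\p_t v^\varepsilon - \Delta v^\varepsilon + (\psi_\varepsilon \ast v^\varepsilon) \cdot \nabla v^\varepsilon = -\nabla q^\varepsilon + \div F, \qquad \div v^\varepsilon = 0,
\end{equation}
with initial data $u_0$. Since $\psi_\varepsilon \ast v^\varepsilon$ is smooth and divergence-free, standard parabolic theory produces $v^\varepsilon$. Testing against $v^\varepsilon$ (the advection term vanishes by divergence-freeness of the advecting velocity) yields the energy identity
\begin{equation}
	\norm{v^\varepsilon(\cdot,t)}_{L^2(\R^3)}^2 + 2 \int_0^t \norm{\nabla v^\varepsilon(\cdot,s)}_{L^2(\R^3)}^2 \, ds = \norm{u_0}_{L^2(\R^3)}^2 - 2 \int_0^t \int_{\R^3} F : \nabla v^\varepsilon \, dx \, ds,
\end{equation}
and Young's inequality yields a uniform-in-$\varepsilon$ bound in $L^\infty_t L^2_x \cap L^2_t \dot H^1_x(Q_T)$, hence also in $L^{10/3}_\loc(Q_T)$ by interpolation. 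Bounding $\p_t v^\varepsilon$ in a negative Sobolev space via the equation, Aubin--Lions extracts a subsequence $v^\varepsilon \to v$ that converges weakly in the energy space and strongly in $L^3_\loc(Q_T)$.

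\emph{Passage to the limit and local energy inequality.} Strong $L^3_\loc$ convergence suffices to pass to the limit in the nonlinearity $(\psi_\varepsilon \ast v^\varepsilon) \otimes v^\varepsilon \to v \otimes v$ in $L^1_\loc(Q_T)$, so $v$ solves the Navier--Stokes equations with forcing $\div F$ in the divergence-free distributional sense. The associated pressure $q := (-\Delta)^{-1} \div \div (v \otimes v - F)$ belongs to $L^{3/2}_\loc(Q_T)$ by Calder\'on--Zygmund. Weak $L^2$-continuity of $v(\cdot,t)$ follows from the equation, and strong $L^2$-attainment of the initial data follows from the limiting global energy inequality \eqref{globalenergyv}---obtained by weak lower-semicontinuity in the $\varepsilon$-level identity---combined with weak continuity. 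For the local energy inequality, multiply the regularized equation by $\varphi v^\varepsilon$ with $0 \leq \varphi \in C^\infty_0(Q_\infty)$ to get a pointwise local energy identity modulo a commutator of the form $\int \varphi (\psi_\varepsilon \ast v^\varepsilon - v^\varepsilon) \cdot \nabla v^\varepsilon \cdot v^\varepsilon$ that vanishes as $\varepsilon \dto 0$ by the uniform $L^2_t \dot H^1_x$ bound and standard properties of mollifiers; passing to the limit via strong $L^3_\loc$ convergence in the cubic and pressure--velocity terms and lower-semicontinuity of the dissipation yields \eqref{vlocalenergyineq}. A diagonal extraction in $T \upto \infty$ then produces the desired global solution.

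The main obstacle is controlling the cubic term $|v^\varepsilon|^2 v^\varepsilon \cdot \nabla \varphi$ and the pressure--velocity product $q^\varepsilon v^\varepsilon \cdot \nabla \varphi$ in the local energy inequality; both reduce to the strong $L^3_\loc$ convergence produced by interpolating the energy bounds. A secondary technical point is identifying the limit of $q^\varepsilon$ with $q$, but this reduces to the convergence $v^\varepsilon \otimes v^\varepsilon \to v \otimes v$ in $L^{5/3}_\loc$ via the Calder\'on--Zygmund formula.
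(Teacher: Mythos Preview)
The paper does not prove this proposition at all: it simply states that the result is well known and cites Lemari\'e-Rieusset~\cite{lemarie2002}. Your sketch via Leray's mollified advection is the classical route and is essentially what one finds in the cited reference, so your approach is aligned with what the authors have in mind.

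One small imprecision worth flagging: the ``commutator'' you introduce, $\int \varphi (\psi_\varepsilon \ast v^\varepsilon - v^\varepsilon) \cdot \nabla v^\varepsilon \cdot v^\varepsilon$, is neither needed nor obviously controlled (it is a triple product with exponents $\tfrac{3}{10}+\tfrac{1}{2}+\tfrac{3}{10}>1$). The cleaner derivation keeps the regularized advecting velocity throughout: testing with $\varphi v^\varepsilon$ and using $\div(\psi_\varepsilon \ast v^\varepsilon)=0$ gives directly the flux term $|v^\varepsilon|^2(\psi_\varepsilon \ast v^\varepsilon)\cdot\nabla\varphi$, with no commutator. Since $v^\varepsilon\to v$ and $\psi_\varepsilon\ast v^\varepsilon\to v$ strongly in $L^3_\loc$, this flux converges to $|v|^2 v\cdot\nabla\varphi$ in $L^1_\loc$, and the rest of your argument goes through unchanged.
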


We now prove Proposition~\ref{pro:lerayhopfisweakbesov}.
\begin{proof}[Proof of Proposition~\ref{pro:lerayhopfisweakbesov}]
	 Assume the hypotheses of the proposition. It suffices to consider the case $T < \infty$. We now record a few properties. Namely,
\begin{equation}\label{P0energy}
	P_{0}(u_0,F) \in C([0,T]; L^2(\R^3)) \cap L^2_t \dot H^1_x(Q_T) \cap \cK_p(Q_T),
 \end{equation}
 \begin{equation}\label{P0initialdata}
	 \lim_{t \dto 0} \|P_{0}(u_0,F)(\cdot,t)-u_0\|_{L^{2}(\mathbb{R}^3)}=0.
 \end{equation}
Combining these observations with \eqref{Picardestimatefrequent} and \eqref{bilinearenergyestimate} gives that $P_k(u_0,F) \in L^\infty_t L^2_x(Q_T) \cap \cK_p(Q_T)$ for all $k \geq 0$. Next, the interpolation inequality~\eqref{Kinterp} implies $P_k \in L^4(Q_T)$. Finally, since $F_k(u_0,F) = P_k \otimes P_k - P_{k-1} \otimes P_{k-1}$, we obtain that $F_k \in L^2(Q_T)$ for all $k \geq 0$.

\emph{1. Forward direction}. Suppose that $v$ is a suitable weak Leray-Hopf solution on $Q_{T}$
with initial data $u_0$ and forcing term $\div F$. 
From~\eqref{P0energy} and Definition~\ref{weaklerayhopfdef}, it is clear that $u = v - P_0(u_0,F) \in L^\infty_t L^2_x \cap L^2_t \dot H^1_x(Q_T)$
  and $u(\cdot,t)$ is weakly continuous on $[0,T]$ with values in $L^2(\R^3)$.
 In addition, \eqref{P0initialdata} implies that
	$\lim_{t \dto 0}\|u(\cdot,t)\|_{L^2(\mathbb{R}^3)}=0$.
Since $F_k(u_0,F) \in L^2(Q_T)$ for all $k\geq 0$, we conclude that $v$ is a weak Besov solution on $Q_T$ based on the zeroth Picard iterate.

 \emph{2. Reverse direction}. Suppose that $v$ is a weak Besov solution on $Q_{T}$
with initial data $u_0$ and forcing term $\div F$. As observed above, $F_k(u_0,F) \in L^2(Q_T)$ for all $k \geq 0$. Hence, Proposition~\ref{pro:orderofpicarditerate} implies that $v$ is a weak Besov solution on $Q_T$ based on the zeroth Picard iterate.
By~\eqref{P0energy}, \eqref{P0initialdata}, and the properties of $u = v - P_0(u_0,F)$ in Definition~\ref{weaksol} (with $k=0$), we have that
	$v \in L^\infty_t L^2_x \cap L^2_t \dot H^1_x(Q_T)$,
$v(\cdot,t)$ is weakly continuous on $[0,T]$ with values in $L^2(\R^3)$, and
	$\lim_{t \dto 0} \norm{v(\cdot,t) - u_0}_{L^2(\R^3)} = 0$.
It remains to verify the global energy inequality \eqref{globalenergyv} for weak Leray-Hopf solutions. This is obtained from the local energy inequality \eqref{vlocalenergyineq} by similar arguments as in Step~2 of Proposition~\ref{pro:vlocalglobal}. The proof is complete.
\end{proof}

\subsection{Weak--$\ast$ stability}\label{sec:stabilityexistence}

Here is our main result concerning weak-$\ast$ stability.

\begin{pro}[Weak--$\ast$ stability]\label{pro:stability}
	Let $0 < T \leq \infty$ and $(v^{(n)})_{n \in \N}$ be a sequence of weak Besov solutions on $Q_T$. For each $n \in \N$, denote by $u_0^{(n)}$ and $\div F^{(n)}$ the initial data and forcing term of $v^{(n)}$, respectively. Suppose that
	\begin{equation}
		u_0^{(n)} \wstar u_0 \text{ in } \dot B^{s_p}_{p,\infty}(\R^3), \quad F^{(n)} \wstar F \text{ in } \cF_q(Q_T)
		\label{}
	\end{equation}
for some $p \in ]3,\infty[$ and $q \in ]3,p]$. There exists a subsequence (still denoted by~$n$) converging in the following senses to a weak Besov solution $v$ on $Q_T$ with initial data $u_0$ and forcing term $\div F$. Namely,
	\begin{equation}
	v^{(n)} \wstar v \text{ in } L^\infty_t L^2_x(B(R) \times ]\delta,S[), \;
	\nabla v^{(n)} \wto \nabla v \text{ in } L^2(B(R) \times ]\delta,S[),
		\label{localconvofv}
	\end{equation}
	\begin{equation}\label{vqconvergence}
	v^{(n)} \to v \text{ in } L^3(B(R) \times ]\delta,S[), \quad q^{(n)} \wto q \text{ in } L^{\frac{3}{2}}(B(R) \times ]\delta,S[),
	\end{equation}
	\begin{equation}\label{vntimeconvergence}
		v^{(n)} \to v \text{ in } C([0,S];\cS'(\R^3)),
	\end{equation}
for each $R > 0$, $S \in ]0,T]$ finite, and $\delta \in ]0,S[$. Here, $q^{(n)}$ and $q$ denote the pressures associated to $v^{(n)}$ and $v$, respectively.
\end{pro}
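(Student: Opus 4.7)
The strategy is to decompose each solution as $v^{(n)} = u^{(n)} + P_k(u_0^{(n)}, F^{(n)})$, pass to the limit in the Picard iterates using linear theory, obtain compactness of the energy-class correction terms $u^{(n)}$, and then pass to the limit in the nonlinear equation, the pressure, and the local energy inequality. By Proposition~\ref{pro:orderofpicarditerate}(iii), we may assume that all $v^{(n)}$ are based on the $k(p)$th Picard iterate, with $k := k(p)$ fixed throughout.

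\textbf{Step 1: Convergence of Picard iterates.} The Banach-Steinhaus theorem gives a uniform bound $M$ on $\norm{u_0^{(n)}}_{\dot B^{s_p}_{p,\infty}} + \norm{F^{(n)}}_{\cF_q(Q_T)}$. Then $P_0(u_0^{(n)}, F^{(n)}) = S(t)u_0^{(n)} + L(F^{(n)})$. The first term converges to $S(t)u_0$ in the sense of distributions on $Q_T$ (since heat convolution commutes with weak-$\ast$ convergence of the initial data in $\cS'$), and the second converges to $L(F)$ by testing against elements of the predual of $\cF_q$. By the estimate~\eqref{Picardestimatefrequent} one has uniform bounds in $\cK_p(Q_T) \cap \cX_T$, so after passing to a subsequence, $P_0^{(n)} \wstar P_0(u_0, F)$ in these spaces. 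By induction on the bilinear structure of~\eqref{Picarditeratedef2} and the continuity of $B$, the full iterate $P_k^{(n)}$ converges to $P_k$ at least weakly-$\ast$ in $\cK_p(Q_T) \cap \cX_T$, and strongly in $L^r_\loc(Q_T)$ for any $r < \infty$ by standard parabolic regularity applied to~\eqref{Pkheateqnpressure}.

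\textbf{Step 2: Compactness of the correction terms.} By Proposition~\ref{improveddecayprop} and Corollary~\ref{cor:uglobalenergyineqinitialtime}, we obtain uniform bounds
\begin{equation}
\norm{u^{(n)}(\cdot,t)}_{L^2(\R^3)}^2 + 2\int_0^t \int_{\R^3} |\nabla u^{(n)}|^2 \, dx\, dt' \leq C(k, M, p, q) \min(t^{1/2}, 1+t),
\end{equation}
uniformly on bounded time intervals. Using the equation \eqref{vminuspicard1} for $u^{(n)}$, we control $\p_t u^{(n)}$ in, say, $L^{4/3}_t H^{-1}_x$ on bounded subcylinders, since the nonlinear and drift terms are integrable by H\"older combined with the $\cK_p$ bound on $P_k^{(n)}$ and the $L^2(Q_T)$ bound on $F_k^{(n)}$ from Lemma~\ref{forcinglem}. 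Aubin--Lions then yields a subsequence with $u^{(n)} \to u$ strongly in $L^3_\loc(Q_T)$, weakly-$\ast$ in $L^\infty_t L^2_x$, and with $\nabla u^{(n)} \wto \nabla u$ in $L^2_\loc$. Combined with Step~1 this gives the convergences~\eqref{localconvofv}--\eqref{vqconvergence}, where pressure convergence uses the Calder\'on--Zygmund representation from Remark~\ref{pressurermk}. Weak continuity of $v(\cdot,t)$ in $\cS'$ follows from uniform equicontinuity coming from the equation, yielding~\eqref{vntimeconvergence}.

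\textbf{Step 3: Verification of the limit solution.} The strong $L^3_\loc$ convergence is sufficient to pass to the limit in the nonlinearity $v^{(n)} \otimes v^{(n)}$ in the sense of distributions, and in the cubic and quadratic terms of the local energy inequality \eqref{vlocalenergyineq}; the lower-semicontinuity of the dissipation term under weak convergence of $\nabla v^{(n)}$ handles the left-hand side. This gives the local energy inequality for $(v, q)$. To obtain the representation $v = u + P_k(u_0, F)$ with $u$ in the energy class, one uses Steps~1--2 and the uniform decay estimate, which passes to the limit to give $\lim_{t \dto 0} \norm{u(\cdot,t)}_{L^2} = 0$. Weak $L^2$-continuity of $u(\cdot,t)$ on $[0,T]$ follows from the equation for $u$. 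Finally, $F_\ell(u_0, F) \in L^2(Q_S)$ for all $\ell \geq k$ and all finite $S \leq T$ follows from Lemma~\ref{forcinglem}.

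\textbf{Main obstacle.} The delicate point is behavior at the initial time: without the decay estimate from Proposition~\ref{improveddecayprop}, one cannot guarantee that the limit decomposition $v = u + P_k$ attains zero initial data for $u$ in the right sense, nor that the global energy inequality starting from $t = 0$ survives the limit. The uniform estimate $\norm{u^{(n)}(\cdot,t)}_{L^2} \lesssim t^{1/4}$ is precisely what bridges this gap. A secondary technical issue is that strong $L^3_\loc$ convergence of $v^{(n)}$ must be upgraded near $t = 0$ to ensure the local energy inequality holds for test functions supported up to $t = 0$; this again is a consequence of the uniform decay rate, which dominates the quadratic terms integrably near the initial time.
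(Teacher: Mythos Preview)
Your overall strategy matches the paper's: decompose into Picard iterate plus energy-class correction, use the uniform decay estimate to control the correction near $t=0$, extract compactness via Aubin--Lions, and pass to the limit. The identification of Proposition~\ref{improveddecayprop} as the key ingredient for the initial-time behavior is exactly right.

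However, there is a genuine gap in Step~3. You propose to pass to the limit directly in the local energy inequality~\eqref{vlocalenergyineq} for $v^{(n)}$, but that inequality contains the term
\[
-2\int_0^t\int_{\R^3} F^{(n)} : \nabla(\varphi v^{(n)})\,dx\,dt'
= -2\int_0^t\int_{\R^3} F^{(n)} : \big((\nabla\varphi)\otimes v^{(n)} + \varphi\,\nabla v^{(n)}\big)\,dx\,dt'.
\]
The first piece is harmless since $v^{(n)}\to v$ strongly in $L^3_\loc$. But in the second piece, $F^{(n)}$ converges only weakly-$\ast$ in $\cF_q(Q_T)$ and $\nabla v^{(n)}$ converges only weakly in $L^2_\loc$; a product of two weakly convergent sequences need not converge. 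Strong $L^3_\loc$ convergence of $v^{(n)}$ does not help here, and there is no hypothesis giving strong convergence of $F^{(n)}$.

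The paper circumvents this by passing to the limit in the local energy inequality~\eqref{ulocalenergyineq} for the \emph{correction} $u^{(n)}$ rather than for $v^{(n)}$. In that inequality the external forcing $F^{(n)}$ has been absorbed into the Picard iterate, and the remaining forcing-type terms are $P_k^{(n)}\otimes u^{(n)}$ and $F_k^{(n)} = P_k^{(n)}\otimes P_k^{(n)} - P_{k-1}^{(n)}\otimes P_{k-1}^{(n)}$. Both of these converge \emph{strongly} on compact subsets of $Q_T$ (since the Picard iterates do, by Lemma~\ref{Picardconvlem}), so the pairing against the weakly convergent $\nabla u^{(n)}$ is stable. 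Once the local energy inequality for $u$ is established, Remark~\ref{otherdirectionlocalenergy} transfers it back to the inequality~\eqref{vlocalenergyineq} for $v$. This detour through $u^{(n)}$ is not merely cosmetic; it is what makes the limit passage work under the weak-$\ast$ hypothesis on $F^{(n)}$.
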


First, we require an analogous result for the Picard iterates.

\begin{lemma}[Weak--$\ast$ stability of Picard iterates]\label{Picardconvlem}
	Under the hypotheses of Proposition~\ref{pro:stability}, there exists a subsequence (still denoted by~$n$) such that for each $0 \leq k \in \Z$, the Picard iterates $P_k^{(n)} := P_k(u_0^{(n)},F^{(n)})$ converge in the following senses to $P_k(u_0,F)$. Namely,
	\begin{equation}\label{Pkconvergence}
		P_k^{(n)} \wstar P_k \text{ in } \cK_p(Q_T), \quad
	P_k^{(n)} \to P_k \text{ in } L^q(B(R) \times ]\delta,S[),
	\end{equation}
	\begin{equation}
	\nabla P_k^{(n)} \wto \nabla P_k^{(n)} \text{ in } L^q(B(R) \times ]\delta,S[),
		\label{}
	\end{equation}
	\begin{equation}\label{Pktimeconvergence}
		P_k^{(n)} \to P_k \text{ in } C([0,S];\cS'(\R^3)),
	\end{equation}
for each $R > 0$, $S \in ]0,T]$ finite, and $\delta \in ]0,S[$.
\end{lemma}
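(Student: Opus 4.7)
The plan is to proceed by induction on $k$, extracting nested subsequences at each stage and diagonalising over $k \in \N$ at the end. The hypothesis gives $\norm{u_0^{(n)}}_{\dot B^{s_p}_{p,\infty}(\R^3)} + \norm{F^{(n)}}_{\cF_q(Q_T)} \leq M$ uniformly, and hence by~\eqref{Picardestimatefrequent} a uniform bound $\norm{P_k^{(n)}}_{\cK_p(Q_T)} + \norm{P_k^{(n)}}_{\cX_T} \leq C(k,M,p,q)$ for every $k$. Since $\cK_p(Q_T)$ is the dual of a weighted $L^1_t L^{p'}_x$ space, Banach--Alaoglu supplies weak-$\ast$ precompactness.

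For the base case $k = 0$, decompose $P_0^{(n)} = S(\cdot) u_0^{(n)} + L(F^{(n)})$. The weak-$\ast$ hypotheses on the data imply $u_0^{(n)} \to u_0$ in $\cS'(\R^3)$ and $F^{(n)} \to F$ in $\cD'(Q_T)$; convolution with the heat kernel and with the Oseen kernel~\eqref{StPdivdef} (both Schwartz in $x$ for each frozen $t>0$) then yields pointwise convergence $P_0^{(n)}(x,t) \to P_0(x,t)$ on $Q_T$, where $P_0 := S(\cdot) u_0 + L(F)$. Combined with the uniform $\cK_p$ bound, this identifies the weak-$\ast$ limit as $P_0$. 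Strong $L^q_\loc$ convergence on $B(R) \times [\delta,S]$ is obtained from the derivative version of Lemma~\ref{lem:katoest}, which supplies uniform bounds on $t^{1/2} \nabla_x P_0^{(n)}$ and $t\,\p_t P_0^{(n)}$ in the appropriate Kato spaces; on the compact region $B(R) \times [\delta,S]$ these translate to uniform $W^{1,p}_{t,x}$ estimates, and an Aubin--Lions / Rellich--Kondrachov argument delivers, along a further subsequence, strong $L^q$ convergence together with the weak $L^q$ convergence of $\nabla P_0^{(n)}$. Equicontinuity of $t \mapsto \langle P_0^{(n)}(\cdot, t), \varphi\rangle$ for each $\varphi \in \cS(\R^3)$, coming from the Stokes equation $\p_t P_0^{(n)} = \Delta P_0^{(n)} + \bP \div F^{(n)}$ tested against $\varphi$, then upgrades this to convergence in $C([0,S]; \cS'(\R^3))$.

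For the inductive step, assume all claimed convergences hold for indices $0, 1, \ldots, k-1$. Using $P_k^{(n)} = P_0^{(n)} - B(P_{k-1}^{(n)}, P_{k-1}^{(n)})$ from~\eqref{Picarditeratedef2}, the inductive strong $L^q_\loc$ convergence of $P_{k-1}^{(n)}$, combined with its uniform $\cK_p$ bound, produces strong $L^{q/2}_\loc$ convergence of $P_{k-1}^{(n)} \otimes P_{k-1}^{(n)}$ to $P_{k-1} \otimes P_{k-1}$; continuity of the Oseen-kernel convolution on local pieces, together with the pointwise decay in~\eqref{StPdivdef} for the far-field contribution, yield $B(P_{k-1}^{(n)}, P_{k-1}^{(n)}) \to B(P_{k-1}, P_{k-1})$ in $\cD'(Q_T)$. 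This forces the weak-$\ast$ limit of $P_k^{(n)}$ to equal $P_k(u_0, F)$. The remaining convergences in~\eqref{Pkconvergence}--\eqref{Pktimeconvergence} now propagate verbatim from the base-case argument, applied to the Stokes equation satisfied by $P_k^{(n)}$ with forcing $\div(F^{(n)} - P_{k-1}^{(n)} \otimes P_{k-1}^{(n)})$, whose right-hand side is uniformly controlled in Kato spaces by Lemma~\ref{lem:katoest} and~\eqref{Picardestimatefrequent}. The main obstacle is the transfer of strong $L^q_\loc$ convergence through the nonlocal bilinear term $B$, which is handled by a standard cut-off argument separating the spacetime convolution into a local part (where the product converges strongly) and a distant part (where the Oseen-kernel decay absorbs the uniform bounds); the concluding diagonal extraction in $k$ is then routine.
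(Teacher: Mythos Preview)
Your overall architecture --- induction on $k$, uniform Kato bounds, Aubin--Lions for strong local compactness, Arzel\`a--Ascoli for time continuity, diagonalisation over $k$ --- matches the paper. There are two points worth flagging.

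First, your gradient estimate for $P_0^{(n)}$ is misattributed. The derivative version of Lemma~\ref{lem:katoest} requires $t^{|\beta|/2}\nabla^\beta F^{(n)} \in \cF_q$, which is not part of the hypotheses: you only know $F^{(n)} \in \cF_q(Q_T)$. The heat-semigroup part $S(t)u_0^{(n)}$ is smooth for $t>0$, but the $L(F^{(n)})$ contribution needs a different argument. The paper instead invokes interior (local) parabolic estimates for the heat equation $\p_t P_k^{(n)} - \Delta P_k^{(n)} = \bP\div[F^{(n)} - P_{k-1}^{(n)}\otimes P_{k-1}^{(n)}]$, which yield $\norm{P_k^{(n)}}_{L^q_t W^{1,q}_x(Q)} + \norm{\p_t P_k^{(n)}}_{L^q_t W^{-1,q}_x(Q)} \leq C(Q)$ on $Q = B(R)\times]\delta,T[$ from the right-hand side being merely in $L^q_\loc$ in divergence form, without derivatives of $F^{(n)}$. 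Your conclusion is correct; the justification needs replacing.

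Second, your identification of the limit is more hands-on than the paper's. You argue directly that $B(P_{k-1}^{(n)},P_{k-1}^{(n)}) \to B(P_{k-1},P_{k-1})$ via a spacetime cut-off on the Oseen convolution. This can be made to work, but the time integral in $B$ runs down to $s=0$ where your strong $L^q_\loc$ convergence is unavailable; the near-$s=0$ piece must be controlled uniformly via the Kato bounds, which your sketch does not make explicit. The paper sidesteps this entirely: it extracts a weak-$\ast$ limit $\tilde P_k$, passes to the limit in the \emph{heat equation} \eqref{heateqnPk} (needing only $P_{k-1}^{(n)}\otimes P_{k-1}^{(n)} \to P_{k-1}\otimes P_{k-1}$ in $\cD'$, immediate from local strong convergence), and then invokes uniqueness for the heat equation in $C([0,T];\cS'(\R^3))$ to conclude $\tilde P_k = P_k(u_0,F)$. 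This is cleaner and avoids analysing the nonlocal operator $B$ directly.
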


In the proofs below, we allow the implicit constants $C > 0$ to depend on $k$, $M$, $p$, $q$. We will also not vary our notation when passing to subsequences.

\begin{proof}[Proof of Lemma \ref{Picardconvlem}]
	It suffices to consider the case when $T < \infty$. Due to weak-$\ast$ convergence, there exists a constant $M > 0$ such that
	\begin{equation}
		\sup_{n \in \N} [\norm{u_0^{(n)}}_{\dot B^{s_p}_{p,\infty}(\R^3)} + \norm{F^{(n)}}_{\cF_q(Q_T)}] \leq M.
		\label{}
	\end{equation}
	Let $k \geq 0$ be a fixed integer. From~\eqref{Picardestimatefrequent}, we obtain
	\begin{equation}
		\sup_{n \in \N} [\norm{P_k^{(n)}}_{\cK_p(Q_T)} + \norm{P_k^{(n)}}_{\cX_T}] \leq C(k,M,p,q).
		\label{Pnkbounds}
	\end{equation}
	 Therefore, there exists a subsequence such that $P_k^{(n)} \wstar \tilde{P}_k$ in $\cK_p(Q_T)$. Eventually, we will show that $\tilde{P}_k = P_k(u_0,F)$.
	
	\emph{1. Strong convergence in $L^q$}. Consider the heat equation satisfied by the Picard iterates:
	\begin{equation}\label{heateqnPk}
		\begin{aligned}
			\p_t P^{(n)}_k - \Delta P^{(n)}_k = \bP \div [F^{(n)} - P^{(n)}_{k-1} \otimes P^{(n)}_{k-1}] 
	\end{aligned} \text{ in } Q_T
	\end{equation}
in the sense of distributions.
Interior estimates for~\eqref{heateqnPk} give us the following gradient estimate for $P^{(n)}_k$ on domains $Q := B(R) \times ]\delta,T[$. For all $n \in \N$,
	\begin{equation}
		\norm{P^{(n)}_k}_{L^q_t W^{1,q}_x(Q)} + \norm{\p_t P^{(n)}_k}_{L^q_t W^{-1,q}_x(Q)} \leq C(Q),
		\label{}
	\end{equation}
for all $R > 0$ and $\delta \in ]0,T[$. Hence, we may assume that $\nabla P_k^{(n)} \wstar \nabla \tilde{P}_k$ in $L^q(B(R) \times ]\delta,T[)$.
	By the Aubin-Lions lemma (see, for example, Seregin's book~\cite[Proposition 1.1]{sereginnotes} or the paper~\cite{aubin}) in the function spaces
	\begin{equation}
		W^{1,q}(\Omega) \overset{{\rm cpt}}{\into} L^q(\Omega) \into W^{-1,q}(\Omega)
		\label{}
	\end{equation}
and a diagonal argument, there exists a subsequence such that $P^{(n)}_k \to \tilde{P}_k$ in $L^q(B(R) \times ]\delta,T[)$ for all $R > 0$ and $\delta \in ]0,T[$.

	\emph{2. Weak continuity in time}.
Let $\varphi$ be a vector field belonging to the Schwartz class on $\R^3$. Since $P^{(n)}_k \in C([0,T];\cS'(\R^3))$ for all $n \in \N$, we consider the family $\cF_{k,\varphi} \subset C([0,T])$ consisting of the functionals
\begin{equation}
	[0,T] \to \R : t \mapsto \la P^{(n)}_k(\cdot,t), \varphi \ra, \quad \, n \in \N.
	\label{}
\end{equation}
Our goal is to apply the Arzel{\`a}-Ascoli theorem to the family $\cF_{k,\varphi}$.
Recall from \eqref{timespacebesovest} in Section \ref{sec:preliminaries} that $\sup_{n \in \N} \norm{P^{(n)}_k}_{L^\infty_t \dot B^{s_p}_{p,\infty}(Q_T)} \leq C$, so using the characterisation of dual spaces for homogeneous Besov spaces (see \cite[Chapter 2]{bahourichemindanchin}, for example) we obtain
	\begin{equation}
		\sup_{n \in \N} |\la P^{(n)}_k(\cdot,t), \varphi \ra| \leq C \norm{\varphi}_{\dot B^{-s_p}_{p',1}(\R^3)}, \quad t \in [0,T].
		\label{Pnkfunnybesovbound}
	\end{equation}
	Therefore, $\cF_{k,\varphi}$ is a bounded subset of $C([0,T])$. To prove equicontinuity, we estimate $\p_t P^{(n)}_k(\cdot,t)$ with values in the space $\cZ := W^{-2,p}(\R^3) + W^{-1,\frac{p}{2}}(\R^3)+W^{-1,q}(\R^3)$. The space $\cZ$ is motivated by the estimate
	\begin{equation}
	\sup_{t \in ]0,T[} [ t^{-\frac{s_p}{2}} \norm{\Delta P_k^{(n)}}_{W^{-2,p}(\R^3)} + t^{-s_p} \norm{\bP \div P_{k-1}^{(n)} \otimes P_{k-1}^{(n)}}_{W^{-1,\frac{p}{2}}(\R^3)} +$$$$+t^{-\frac{s_{q}^{'}}{2}}\norm{\bP\div F^{(n)}}_{W^{-1,q}(\R^3)}] \leq C.
		\label{timederivestpk}
	\end{equation}
	Let $r > 1$ such that $r\min{\big(s_p,\frac{s_{q}^{'}}{2}\big)} > -1$. The time derivative $\p_t P^{(n)}_k$ is estimated from the other terms in time-dependent Stokes equations \eqref{heateqnPk} and \eqref{timederivestpk} to obtain $\norm{\p_t P^{(n)}_k}_{L^r_t {\cZ}_x(Q_T)} \leq C(r)$. This gives us equicontinuity:
\begin{equation}
	\sup_{n \in \N} \lvert \la P^{(n)}_k(\cdot,t_2), \varphi \ra - \la P^{(n)}_k(\cdot,t_1), \varphi \ra \rvert \leq |t_2-t_1|^{1-\frac{1}{r}} C(r,\varphi),
	\label{}
\end{equation}
for all $t_1,t_2 \in [0,T]$. Hence, there exists a subsequence such that
\begin{equation}
	\la P_k^{(n)}(\cdot,t), \varphi \ra \to \la \tilde{P}_k(\cdot,t), \varphi \ra \text{ in } C([0,T]).
	\label{Pknweakconv}
\end{equation}
The above argument was for a single vector field $\varphi$. Let $(\varphi_m)_{m \in \N} \subset \cS(\R^3)$ be a dense sequence of vector fields in $\dot B^{-s_p}_{p',1}(\R^3)$. By the previous reasoning and a diagonal argument, there exists a subsequence such that
\begin{equation}
		\la P_k^{(n)}(\cdot,t), \varphi_m \ra \to \la \tilde{P}_k(\cdot,t), \varphi_m \ra \text{ in } C([0,T])
	\end{equation}
for all $m \in \N$. From the estimate \eqref{Pnkfunnybesovbound} and the density of $(\varphi_m)_{m \in \N}$ in $\dot B^{-s_p}_{p',1}$, one may show that \eqref{Pknweakconv} is valid for all Schwartz vector fields $\varphi$. Moreover, $\tilde{P}_k(\cdot,0) = u_0$.

\emph{3. Showing $\tilde{P}_k = P_k(u_0,F)$}.
First, note that while the convergence arguments up to now were for a fixed $k \geq 0$, we may assume they hold for all $k \geq 0$ simultaneously by a diagonalization argument. Let us proceed inductively. For the base case, we may write $\tilde{P}_{-1} = P_{-1}(u_0,F) = 0$. Next, suppose that $\tilde{P}_{k-1} = P_{k-1}(u_0,F)$ for a given $k \geq 0$. 
Let $n \to \infty$ in~\eqref{heateqnPk} to obtain the following heat equation:
\begin{equation}
	\begin{aligned}
		\p_t \tilde{P}_k - \Delta \tilde{P}_k = \bP \div [F - P_{k-1} \otimes P_{k-1}]
\end{aligned}
\text{ in } Q_T
	\label{tildePkeqn}
\end{equation}
in the sense of distributions.\footnote{Here, we require the following fact concerning the Leray projector. For a sequence of vector fields $f_n \wto f$ in $L^l_t L^r_x(Q_T)$, $1 < l,r < \infty$, we also have $\bP f_n \wto \bP f$ in $L^l_t L^r_x(Q_T)$ due to the observation that $\int_{Q_T} \bP f_n \cdot \varphi \, dx \,dt = \int_{Q_T} f_n \cdot \bP \varphi \, dx \, dt$ for all vector fields $\varphi \in L^{l'}_t L^{r'}_x(Q_T)$.} Also, $\tilde{P}_k(\cdot,0) = u_0$. Therefore, $\tilde{P}_k \equiv P_k(u_0,F)$ on $Q_T$ due to the well-posedness of the heat equation in $C([0,T];\cS'(\R^3))$. This completes the induction and the proof.
\end{proof}
\begin{remark}
Using \eqref{Pkconvergence}, \eqref{Pnkbounds}, and interpolation, we have
\begin{equation}\label{convergPnkanyleb}
P^{(n)}_{k} \rightarrow P_{k} \text{ in } L^{l}(B(R)\times ]\delta,S[)
\end{equation}
for any $l \geq 1$, $R > 0$, $S\in ]0,T]$ finite, and $\delta\in ]0,S[$.
\end{remark}

We are now ready to prove Proposition~\ref{pro:stability}.

\begin{proof}[Proof of Proposition~\ref{pro:stability}]
	It suffices to consider the case when $T < \infty$. Let $k := k(p)$. As in Lemma~\ref{Picardconvlem}, exists a constant $M > 0$ such that
	\begin{equation}
		\sup_{n \in \N} [\norm{u_0^{(n)}}_{\dot B^{s_p}_{p,\infty}(\R^3)} + \norm{F^{(n)}}_{\cF_q(Q_T)}] \leq M,
		\label{}
	\end{equation}
		\begin{equation}
			\sup_{n \in \N} [\norm{P_k^{(n)}}_{\cK_p(Q_T)} + \norm{P_k^{(n)}}_{\cK_\infty(Q_T)}] \leq C(k,M,p,q).
		\label{Pnkuniformest}
	\end{equation}
	According to Proposition~\ref{pro:orderofpicarditerate}, $v^{(n)} = u^{(n)} + P_k(u_0,F)$ is a weak Besov solution on $Q_T$ based on the $k$th Picard iterate for each $n \in \N$.
	
	\emph{1. Energy estimates}. Recall the uniform decay estimate from Proposition~\ref{improveddecayprop}. Namely, there exists $\alpha > 0$ such that
	\begin{equation}
	\sup_{n \in \N} \norm{u^{(n)}(\cdot,t)}_{L^2(\R^3)} \leq Ct^{\alpha}, \quad t \in ]0,\min(T,1)[.
		\label{improveddecayeststab}
	\end{equation}
	By combining \eqref{Pnkuniformest}, \eqref{improveddecayeststab}, and the global energy inequality \eqref{uglobalenergyineqinitialtime}, we obtain the following Gronwall-type estimate:
	\begin{equation}
\sup_{n \in \N} \sup_{t \in ]0,T[} \int_{\R^3} |u^{(n)}(x,t)|^2 \, dx + 2 \int_0^t \int_{\R^3} |\nabla u^{(n)}|^2 \, dx \, dt' \leq C.
		\label{}
	\end{equation}
For each $n \in \N$, we estimate the time derivative $\p_t u^{(n)}$ in a negative Sobolev space according to the Navier-Stokes equations:
\begin{equation}\label{untimederivest}
	\sup_{n \in \N} \norm{\p_t u^{(n)}}_{L^2_t H^{-\frac{3}{2}}_x(Q_T)} \leq \sup_{n \in \N} C [\lVert \Delta u^{(n)} - \div F^{(n)}_k - \div P^{(n)}_k \otimes u^{(n)} $$ $$ - u^{(n)} \otimes P^{(n)}_k \rVert_{L^2_t H^{-1}_x(Q_T)}
	+ \norm{\div u^{(n)} \otimes u^{(n)}}_{L^2_t H^{-\frac{3}{2}}_x(Q_T)}] \leq C.
\end{equation}
By the Banach-Alaoglu theorem, we obtain a subsequence
\begin{equation}
	u^{(n)} \wstar u \text{ in } L^\infty_t L^2_x(Q_T), \quad \nabla u^{(n)} \wto \nabla u \text{ in } L^2(Q_T),
	\label{}
\end{equation}
\begin{equation}
\esssup_{t \in ]0,\min(1,T)[} \frac{\norm{u(\cdot,t)}_{L^2(\R^3)}}{t^\alpha} \leq C.
	\label{uniformdecaylimitfunction}
\end{equation}
A standard application of the Aubin-Lions lemma 
implies that $u^{(n)} \to u$ in $L^2(B(R) \times ]0,T[)$ for all $R > 0$. Moreover, since $\sup_{n \in \N} \norm{u^{(n)}}_{L^{\frac{10}{3}}(Q_T)} \leq C$, we obtain
\begin{equation}
u^{(n)} \to u \text{ in } L^{l}(B(R) \times ]0,T[)
	\label{}
\end{equation}
for all $R > 0$ and $l \in [1,\frac{10}{3}[$, by interpolation.
In addition, by the estimates \eqref{untimederivest} and arguments similar to those in Lemma \ref{Picardconvlem}, we have a subsequence such that
\begin{equation}
	\int_{\R^3} u^{(n)}(x,t) \varphi(x) \, dx \to \int_{\R^3} u(x,t) \varphi(x) \, dx \text{ in } C([0,T]).
	\label{}
\end{equation}
Hence, $u(\cdot,t)$ is weakly continuous as an $L^2(\R^3)$-valued function, and by \eqref{uniformdecaylimitfunction}, we have
\begin{equation}
	\lim_{t \dto 0} \norm{u(\cdot,t)}_{L^2(\R^3)} = 0.
	\label{}
\end{equation}

\emph{2. Pressure estimates}. As described in Remark~\ref{pressurermk}, we may take $q^{(n)}$ to be the associated pressure:
	\begin{equation}
		q^{(n)} = q_1^{(n)} + q_2^{(n)} + q_3^{(n)} + q^{(4)}, \quad n \in \N,
		\label{}
	\end{equation}
	where
	\begin{align}
		q_1^{(n)} &:= (-\Delta)^{-1} \div \div u^{(n)} \otimes u^{(n)}, \\
		q_2^{(n)} &:= (-\Delta)^{-1} \div \div P_k^{(n)} \otimes u^{(n)} + u^{(n)} \otimes P_k^{(n)}, \\
		q_3^{(n)} &:= (-\Delta)^{-1} \div \div P_k^{(n)} \otimes P_k^{(n)}, \\
		q_4^{(n)} &= (-\Delta)^{-1} \div \div F^{(n)}.
		\label{}
	\end{align}
By the Calder{\'o}n-Zygmund estimates, for all $\delta \in ]0,T[$,
	\begin{equation}
		\sup_{n \in \N} \norm{q_1^{(n)}}_{L^{\frac{3}{2}}(Q_T)} \leq C \norm{u}_{L^3(Q_T)}^2 \leq C,
\end{equation}
\begin{equation}
\sup_{n \in \N} \norm{q_2^{(n)}}_{L^3(\R^3 \times ]\delta,T[)} \leq C \norm{u}_{L^3(Q_T)} \norm{ P^{(n)}_k}_{L^\infty(\R^3 \times ]\delta,T[)} \leq C(\delta),
		\label{}
	\end{equation}
	\begin{equation}
		\sup_{n \in \N} \norm{q_3^{(n)}}_{\cF_{p}(Q_T)} \leq C \norm{P^{(n)}_k}_{\cK_p(Q_T)} \norm{P^{(n)}_k}_{\cK_\infty(Q_T)} \leq C,
		\label{}
	\end{equation}
	\begin{equation}
		\sup_{n \in \N} \norm{q_4^{(n)}}_{\cF_q(Q_T)} \leq C \norm{F^{(n)}}_{\cF_q(Q_T)} \leq C.
		\label{}
	\end{equation}
There exists a subsequence such that for all $R > 0$ and $\delta \in ]0,T[$,
	\begin{equation}
	q^{(n)} \rightharpoonup q \text{ in } L^{\frac{3}{2}}(B(R) \times ]\delta,T[),
		\label{}
	\end{equation}
and $q(\cdot,t) \in L^{\frac{3}{2}}(\R^3) + L^3(\R^3) + L^p(\R^3) + L^q(\R^3)$ for a.e. $t \in ]0,T[$. Hence, $c \equiv 0$ in Remark~\ref{pressurermk}. That is, $q$ is the pressure associated to $v$.

	\emph{3. Local energy inequality}. It remains to verify that $v$ satisfies the local energy inequality \eqref{vlocalenergyineq}. It will be more convenient\footnote{This way, we avoid the problematic term $\int_0^t \int_{\R^3} F^{(n)} : \nabla (\varphi v^{(n)}) \,dx \,dt'$ in the local energy inequality for $v$, since $F^{(n)}$ is only assumed to converge weakly--$\ast$ in $\cF_q(Q_T)$.} to examine the energy inequality satisfied by $u^{(n)}$:
	\begin{equation}\label{unlocalenergyineq}
		\int_{\R^3} \varphi(x,t) |u^{(n)}(x,t)|^2 \, dx + 2 \int_0^t \int_{\R^3} \varphi |\nabla u^{(n)}|^2 \, dx \, dt' \leq $$ $$ \leq \int_0^t \int_{\R^3} |u^{(n)}|^2 (\p_t \varphi + \Delta \varphi) + [|u^{(n)}|^2(u^{(n)}+ P_k^{(n)}) + 2p^{(n)}u^{(n)}] \cdot \nabla \varphi \,dx \,dt' + $$ $$ + 2 \int_0^t \int_{\R^3} [P_k^{(n)} \otimes u^{(n)} + F_k^{(n)}]: \nabla (\varphi u^{(n)}) \, dx \, dt',
	\end{equation}
for all $t \in ]0,T]$ and $0 \leq \varphi \in C^\infty_0(Q_\infty)$.
Each term in \eqref{unlocalenergyineq} converges to its corresponding term in \eqref{ulocalenergyineq} with $u,p,P_k,F_k$ replacing $u^{(n)},p^{(n)},P_k^{(n)},F_k^{(n)}$ \emph{except} for the term
	\begin{equation}
		\int_0^t \int_{\R^3} \varphi |\nabla u^{(n)}|^2 \, dx \, dt'.
		\label{}
	\end{equation}
Since $\int_0^T \int_{\R^3} |\nabla u^{(n)}|^2 \, dx \,dt' \leq C$, there exists a subsequence such that
\begin{equation}
	|\nabla u^{(n)}|^2 - |\nabla u|^2 \wstar \mu \text{ in } \mathcal{M}(Q_S),
	\label{}
\end{equation}
where $\mathcal{M}(Q_T)$ is the Banach space of all finite Radon measures on $Q_T$. Moreover, since $\nabla u^{(n)} \wto \nabla u$ in $L^2(Q_T)$, the lower semicontinuity of the $L^2$-norm implies that $\mu \geq 0$. Therefore,
\begin{equation}
		\int_0^t \int_{\R^3} \varphi |\nabla u^{(n)}|^2 \ dx \,dt' \to \int_0^t \int_{\R^3} \varphi |\nabla u|^2 \, dx \,dt' + \mu
	\label{}
\end{equation}
for all $0\leq \varphi \in C^\infty_0(Q_\infty)$ and $t \in ]0,T]$, and $u$ satisfies the local energy inequality \eqref{ulocalenergyineq}. By Remark~\ref{otherdirectionlocalenergy}, $v$ satisfies its corresponding local energy inequality \eqref{vlocalenergyineq}. This completes the proof.
\end{proof}

As a consequence of weak-$\ast$ stability, we obtain an existence result for global weak Besov solutions.

\begin{cor}[Existence]\label{existencecor}
Let $0 < T \leq \infty$, $u_0 \in \dot B^{s_p}_{p,\infty}(\R^3)$ be a divergence-free vector field, and $F \in \cF_q(Q_T)$ for some $p \in ]3,\infty[$ and $q \in ]3,p]$. There exists a weak Besov solution $v$ on $Q_T$ with initial data~$u_0$ and forcing term~$\div F$.
\end{cor}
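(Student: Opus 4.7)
The plan is to reduce the corollary to the classical existence theory for suitable weak Leray--Hopf solutions (Proposition~\ref{pro:lerayhopfexist}) by approximating $(u_0, F)$ by data to which that theory applies, and then to pass to the limit via the weak--$\ast$ stability of Proposition~\ref{pro:stability}. The bridge is Proposition~\ref{pro:lerayhopfisweakbesov}, which identifies suitable weak Leray--Hopf solutions with weak Besov solutions precisely when the data satisfy the additional $L^2$-type hypotheses required by classical theory.

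First, I would construct a sequence $(u_0^{(n)}, F^{(n)})_{n \in \N}$ with $u_0^{(n)} \in J(\R^3) \cap \dot B^{s_p}_{p,\infty}(\R^3)$ and $F^{(n)} \in L^2(Q_{T'}) \cap \cF_q(Q_T)$ for every finite $T' \leq T$, satisfying the uniform bound
\begin{equation*}
\sup_{n \in \N} \bigl( \norm{u_0^{(n)}}_{\dot B^{s_p}_{p,\infty}(\R^3)} + \norm{F^{(n)}}_{\cF_q(Q_T)} \bigr) < \infty
\end{equation*}
together with weak--$\ast$ convergences $u_0^{(n)} \wstar u_0$ in $\dot B^{s_p}_{p,\infty}(\R^3)$ and $F^{(n)} \wstar F$ in $\cF_q(Q_T)$. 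A concrete choice is
\begin{equation*}
u_0^{(n)} := \bP \bigl( \chi(\cdot/n)\, S(1/n) u_0 \bigr), \qquad F^{(n)}(x,t) := \chi(x/n)\, F(x, t)\, \mathbf{1}_{\{1/n < t < n\}},
\end{equation*}
where $\chi \in C^\infty_0(\R^3)$ equals $1$ on the unit ball. Indeed, $S(1/n) u_0 \in L^p \cap C^\infty$ by the heat-kernel characterization of $\dot B^{s_p}_{p,\infty}$, so the spatial cutoff places it in $L^2 \cap L^p$ with compact support, and the Leray projector---bounded on each---restores the divergence-free condition. The time truncation removes the critical singularity of $F$ at $t=0$ and its tail at infinity, yielding $L^2$-integrability after the spatial cutoff. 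The uniform Besov bounds follow from the boundedness of $\bP$, $S(t)$, and of pointwise multiplication by the dilated cutoffs on $\dot B^{s_p}_{p,\infty}$; the weak--$\ast$ convergence of both sequences is a standard dominated-convergence argument on the predual.

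Having the approximations in hand, Proposition~\ref{pro:lerayhopfexist} provides, for each $n$, a global suitable weak Leray--Hopf solution $v^{(n)}$ with initial data $u_0^{(n)}$ and forcing term $\div F^{(n)}$. Since $(u_0^{(n)}, F^{(n)})$ fits the hypotheses of Proposition~\ref{pro:lerayhopfisweakbesov}, the restriction of $v^{(n)}$ to $Q_T$ is a weak Besov solution based on the zeroth Picard iterate. Applying Proposition~\ref{pro:stability} to $(v^{(n)})$ then extracts a subsequence converging in the senses described there to a weak Besov solution $v$ on $Q_T$ with initial data $u_0$ and forcing term $\div F$, which is the conclusion.

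The principal obstacle is the approximation step: the space $\dot B^{s_p}_{p,\infty}(\R^3)$ is not norm-separable and smooth compactly supported divergence-free fields are not norm-dense, so the approximation is necessarily only in the weak--$\ast$ sense. In particular, one must verify that the compact-support truncation needed for $L^2$-integrability does not inflate the critical Besov norm---a point which, while standard, requires invoking the heat-kernel characterization of the Besov norm together with the fact that the dilated cutoffs $\chi(\cdot/n)$ have uniformly bounded pointwise multiplier norms on $\dot B^{s_p}_{p,\infty}(\R^3)$. Once the construction is settled, the remainder is a direct stitching of Propositions~\ref{pro:lerayhopfexist}, \ref{pro:lerayhopfisweakbesov}, and~\ref{pro:stability}.
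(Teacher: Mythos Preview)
Your proposal is correct and follows essentially the same route as the paper: approximate the data by $L^2$-compatible data, invoke Proposition~\ref{pro:lerayhopfexist} for suitable weak Leray--Hopf solutions, upgrade these to weak Besov solutions via Proposition~\ref{pro:lerayhopfisweakbesov}, and pass to the limit using Proposition~\ref{pro:stability}. The paper packages the approximation step as a separate density lemma (Lemma~\ref{lemma:density}, stated without proof) producing Schwartz-class data, whereas you give an explicit construction landing only in $J(\R^3)$; your version suffices since Proposition~\ref{pro:lerayhopfisweakbesov} requires nothing more, and your observation that the dilated cutoffs have scale-invariant multiplier norm on $\dot B^{s_p}_{p,\infty}$ (immediate from the homogeneity of the norm) handles the one nontrivial point.
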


First, we require the following lemma which we state without proof.
\begin{lemma}[Density]\label{lemma:density}
	Under the hypotheses of Corollary~\ref{existencecor}, there exist sequences $(u_0^{(n)})_{n \in \N} \subset \cS(\R^3;\R^3)$ and $(F^{(n)})_{n \in \N} \subset C^\infty_0(Q_T;\R^{3\times3})$ such that $u_0^{(n)} \wstar u_0$ in $\dot B^{s_p}_{p,\infty}(\R^3)$, $F^{(n)} \wstar F$ in $\cF_q(Q_T)$, and for each $n \in \N$, $\div u_0^{(n)} = 0$
\end{lemma}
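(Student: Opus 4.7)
The plan is to construct explicit approximating sequences via Littlewood--Paley frequency truncation combined with spatial cutoffs (for $u_0$), and via spacetime mollification plus compactly supported cutoffs (for $F$). Weak-$\ast$ convergence will then be established by combining a uniform bound in the target Banach space with convergence in the sense of tempered distributions, since both $\Bp$ and $\cF_q(Q_T)$ are (essentially) duals of separable Banach spaces.

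For $u_0$, I would first localize in frequency. Fixing a radial $\chi \in C^\infty_0(\R^3)$ with $\chi \equiv 1$ on $B(0,1)$ and $\operatorname{supp} \chi \subset B(0,2)$, and setting $\chi_N(\xi) := \chi(\xi/N) - \chi(2N\xi)$, the multiplier $u_0^{[N]} := \chi_N(D) u_0$ is smooth, divergence-free (since radial Fourier multipliers commute with $\bP$), belongs to $L^p(\R^3)$, and has Fourier support in an annulus away from the origin. Consequently, the vector potential $A^{[N]} := (-\Delta)^{-1}(\nabla \times u_0^{[N]})$ is well-defined, smooth, lies in $L^p$, and satisfies $u_0^{[N]} = \nabla \times A^{[N]}$. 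Next, fixing a spatial cutoff $\eta \in C^\infty_0(\R^3)$ with $\eta \equiv 1$ on $B(0,1)$, I would set
\begin{equation*}
u_0^{[N,R]} := \nabla \times \bigl( \eta(\cdot / R)\, A^{[N]} \bigr) = \eta(\cdot / R)\, u_0^{[N]} + R^{-1} (\nabla \eta)(\cdot / R) \times A^{[N]}.
\end{equation*}
Since $\eta(\cdot/R) A^{[N]} \in C^\infty_0(\R^3;\R^3)$, each $u_0^{[N,R]}$ lies in $C^\infty_0(\R^3;\R^3) \subset \cS(\R^3;\R^3)$ and is automatically divergence-free.

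Convergence in $\cS'$ is then immediate: for fixed $N$, the second summand has $L^p$-norm of order $R^{-1}\|A^{[N]}\|_{L^p}$, while $\eta(\cdot/R) u_0^{[N]} \to u_0^{[N]}$ in $L^p$ by dominated convergence, so $u_0^{[N,R]} \to u_0^{[N]}$ in $L^p$ as $R \to \infty$; and standard Littlewood--Paley theory gives $u_0^{[N]} \to u_0$ in $\cS'$ together with the uniform bound $\|u_0^{[N]}\|_{\Bp} \leq C \|u_0\|_{\Bp}$. For the uniform $\Bp$-bound on $u_0^{[N,R]}$, I would estimate the first summand by duality with $\dot B^{-s_p}_{p',1}$, using that multiplication by $\eta(\cdot / R)$ acts boundedly on $\dot B^{-s_p}_{p',1}$ uniformly in $R \geq 1$ (because $\eta \in C^\infty_0$ and $-s_p \in (0,1)$). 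A diagonal extraction $R = R_N$, chosen large enough that $R_N^{-1}\|A^{[N]}\|_{L^p}$ is summable, makes the second summand negligible in $\Bp$; the resulting sequence $u_0^{(n)} := u_0^{[N_n, R_{N_n}]}$ is uniformly bounded in $\Bp$ and converges to $u_0$ in $\cS'$, hence weak-$\ast$ in $\Bp$.

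For $F$, I would mollify and cut off: with $\rho_{1/n}$ a standard spacetime mollifier, $\zeta_n \in C^\infty_0(]0,T[)$ rising to $1$ on $[2/n, T - 2/n]$, and $\eta$ as above, set
\begin{equation*}
F^{(n)} := \rho_{1/n} * \bigl( \eta(\cdot/n)\, \zeta_n(t)\, F \bigr) \in C^\infty_0(Q_T; \R^{3\times 3}).
\end{equation*}
Uniform boundedness in $\cF_q(Q_T)$ follows from Young's inequality for the spatial mollification (with constant independent of $n$) together with the fact that the time weight $t^{1-3/(2q)}$ is only distorted by a bounded factor once the mollifier radius is small relative to $t$; distributional convergence $F^{(n)} \to F$ in $\cD'(Q_T)$ is standard. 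The combination yields $F^{(n)} \wstar F$ in $\cF_q(Q_T)$.

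The main obstacle is the uniform $\Bp$-bound for $u_0^{[N,R]}$: the remainder $R^{-1}(\nabla\eta)(\cdot/R) \times A^{[N]}$ is small in $L^p$ only at rate $R^{-1}$, while its Besov norm depends on $N$ through $\|A^{[N]}\|_{L^p}$; this is resolved by choosing $R_N$ sufficiently large depending on $N$. Everything else reduces to classical Fourier-analytic estimates and the mapping properties recalled in Section~\ref{sec:preliminaries}.
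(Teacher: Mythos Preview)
The paper states this lemma \emph{without proof}, so there is nothing to compare against directly. Your construction is standard and correct in outline: frequency truncation followed by a curl-of-cutoff-potential step for $u_0$ (to keep the divergence-free constraint while landing in $C^\infty_0\subset\cS$), and spacetime mollification with cutoffs for $F$. The treatment of $F$ is clean; the combination of the time cutoff $\zeta_n$ supported away from $t=0$ with the weight $t^{1-3/(2q)}$ gives the uniform $\cF_q$ bound exactly as you describe.

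One step warrants a second look. Your duality argument for the uniform $\Bp$-bound on $\eta(\cdot/R)\,u_0^{[N]}$ claims that multiplication by $\eta(\cdot/R)$ is uniformly bounded on the predual $\dot B^{-s_p}_{p',1}$. Since $-s_p\in(0,1)$ and $p'<3/2$, this is a \emph{homogeneous} Besov space of positive regularity, and the paraproduct term $T_g\eta_R=\sum_j S_{j-1}g\cdot\dot\Delta_j\eta_R$ involves $\lVert S_{j-1}g\rVert_{L^{p'}}$, which is \emph{not} controlled for $g\in\dot B^{-s_p}_{p',1}$ (the low-frequency sum diverges). The cleaner route is to estimate directly in $\Bp$: writing $\eta_R u_0^{[N]}=T_{\eta_R}u_0^{[N]}+T_{u_0^{[N]}}\eta_R+R(\eta_R,u_0^{[N]})$, the first term is controlled by $\lVert\eta\rVert_{L^\infty}\lVert u_0^{[N]}\rVert_{\Bp}$; for the second, one uses $\lVert S_{j-1}u_0^{[N]}\rVert_{L^p}\lesssim 2^{-js_p}\lVert u_0\rVert_{\Bp}$ (valid since $s_p<0$) together with $\lVert\dot\Delta_j\eta_R\rVert_{L^\infty}\lesssim_M(2^jR)^{-M}$; and for the remainder at low frequencies one applies Bernstein $L^1\to L^p$, which produces a factor $2^{2j}$ absorbing the weight $2^{js_p}$ as $j\to-\infty$. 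All three pieces are then uniformly bounded for $R\gtrsim N$, and your diagonal extraction closes the argument.
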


\begin{proof}[Proof of Corollary \ref{existencecor}]
	Let $( u_0^{(n)} )_{n \in \N}$ and $(F^{(n)})_{n \in \N}$ be the approximating sequences from Lemma~\ref{lemma:density}.
By Proposition \ref{pro:lerayhopfexist}, there exists a sequence $(v^{(n)})_{n \in \N}$ of suitable weak Leray-Hopf solutions on $Q_T$ with respective initial data $u_0^{(n)}$ and forcing terms $\div F^{(n)}$ for each $n \in \N$. In Proposition \ref{pro:lerayhopfisweakbesov}, we proved that for each $n \in \N$, the suitable weak Leray-Hopf solution $v^{(n)}$ is also a weak Besov solution on $Q_T$ with initial data $u_0^{(n)}$ and forcing term $\div F^{(n)}$. Finally, recall Proposition~\ref{pro:stability} regarding weak-$\ast$ stability of weak Besov solutions. There exists a subsequence (still denoted by $n$) such that $v^{(n)} \to v$ in $L^3_\loc(Q_T)$, where $v$ is a weak Besov solution on $Q_T$ with initial data $u_0$ and forcing term $\div F$.
\end{proof}

\subsection{Weak-strong uniqueness}\label{sec:strongsols}

In this subsection, we are concerned with mild solutions of the Navier-Stokes equations and their relationship to weak Besov solutions.
\begin{definition}[Mild/strong solutions]
\label{strongsoldef}
	Let $T > 0$ and $F \in \cF(Q_T)$. Assume that $u_0 \in \cS'(\R^3)$ is divergence-free and $P_0(u_0,F) \in \cX_T$. (For instance, this is satisfied when $u_0 \in \BMO^{-1}(\R^3) \cup L^\infty(\R^3)$ is divergence free.)
	
A vector field $v \in \cX_T$ is a \emph{mild solution} of the Navier-Stokes equations on $Q_T$ with initial data $u_0$ and forcing term $\div F$ if for a.e. $t \in ]0,T]$, $v$ satisfies the integral equation
	\begin{equation}
		v(\cdot,t) = P_0(u_0,F)(\cdot,t) - B(v,v)(\cdot,t).
		\label{}
	\end{equation}
	A mild solution $v$ on $Q_T$ is a \emph{strong solution} if $v$ is also a weak Besov solution on $Q_T$ with initial data $u_0$ and forcing term $\div F$.

	We say that $v$ is a mild (resp. strong) solution on $Q_\infty$ with initial data $u_0$ and forcing term $\div F$ if for all $T > 0$, $v$ is a mild (resp. strong) solution on $Q_T$ with initial data $u_0$ and forcing term $\div F$.
\end{definition}

Our main goal is the following theorem:

\begin{theorem}[Weak-strong uniqueness]\label{weakstronguniqueness}
Let $0 < T \leq \infty$, $u_0 \in \dot B^{s_p}_{p,\infty}(\R^3)$ be a divergence-free vector field, and $F \in \cF_q(Q_T)$ for some $p \in ]3,\infty[$ and $q \in ]3,p]$.

There exists an absolute constant $\varepsilon_0 > 0$ with the following property. 

Suppose that $v \in \cK_\infty(Q_T)$ is a weak Besov solution on $Q_T$ with initial data $u_0$ and forcing term $\div F$.
Moreover, assume that $v$ satisfies
\begin{equation}
	\esssup_{0<t<S} t^{\frac{1}{2}} \norm{v(\cdot,t)}_{L^\infty(\R^3)} < \varepsilon_0
	\label{smallness}
\end{equation}
for some $S \in ]0,T]$.
If $\tilde{v}$ is a weak Besov solution on $Q_T$ with the same initial data and forcing term, then $v \equiv \tilde{v}$ on $Q_T$.
\end{theorem}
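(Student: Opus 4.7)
The plan is to derive a Gronwall inequality for the $L^2$-norm of the difference $w := \tilde v - v$ on the small-time interval $]0,S[$ and then bootstrap to the full interval. First, using Proposition~\ref{pro:orderofpicarditerate}, I reduce to the case that both $v$ and $\tilde v$ are based on the same Picard iterate $P_k$ with $k \geq k(p)$, so that $v = u + P_k$ and $\tilde v = \tilde u + P_k$ with $u, \tilde u \in L^\infty_t L^2_x \cap L^2_t \dot H^1_x(Q_T)$ both vanishing strongly in $L^2$ as $t \dto 0$. Then $w = \tilde u - u$ lies in the energy class, is weakly $L^2$-continuous on $[0,T]$, and, by Proposition~\ref{improveddecayprop} applied to both $u$ and $\tilde u$, obeys
\[
\|w(\cdot,t)\|_{L^2(\R^3)} \leq C t^{1/4}, \qquad 0 < t \leq \min(S,1).
\]

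Second, I would establish the energy estimate
\[
\|w(\cdot,t)\|_{L^2}^2 + 2\int_0^t \|\nabla w(\cdot,s)\|_{L^2}^2 \, ds \leq 2 \int_0^t \int_{\R^3} |v(x,s)| \, |w(x,s)| \, |\nabla w(x,s)| \, dx \, ds
\]
for all $t \in ]0,S[$. Formally this comes from subtracting the Navier-Stokes equations for $v$ and $\tilde v$ to obtain
\[
\partial_t w - \Delta w + \tilde v \cdot \nabla w + w \cdot \nabla v = -\nabla \pi, \qquad \div w = 0,
\]
testing against $w$, discarding the transport term by $\tilde v$ thanks to $\div \tilde v = 0$, and integrating by parts in $w \cdot \nabla v$ using $\div w = 0$. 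Rigorously, I would follow the strategy of Lemma~\ref{lem:wNkenergyineq}: combine the global energy inequalities~\eqref{uglobalenergyineqinitialtime} for $u$ and $\tilde u$ (valid from the initial time by Corollary~\ref{cor:uglobalenergyineqinitialtime}) with the weak-strong identity~\eqref{eq:weakstrongid} applied to the pair $(u,\tilde u)$. Because both solutions share the Picard iterate $P_k$ and the forcing $F_k$, the Picard- and forcing-type contributions cancel in pairs, leaving only the estimate above.

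Third, the smallness assumption gives $\|v(\cdot,t)\|_{L^\infty(\R^3)} \leq \varepsilon_0 t^{-1/2}$ on $]0,S[$, whence Young's inequality yields
\[
2 \int_{\R^3} |v| \, |w| \, |\nabla w| \, dx \leq \frac{\varepsilon_0^2}{t} \|w\|_{L^2}^2 + \|\nabla w\|_{L^2}^2.
\]
After absorbing $\|\nabla w\|_{L^2}^2$ on the left, the quantity $y(t) := \|w(\cdot,t)\|_{L^2}^2$ satisfies $y(t) \leq \int_0^t (\varepsilon_0^2/s) y(s) \, ds$, so that $z(t) := t^{-\varepsilon_0^2} y(t)$ is non-increasing on $]0,S[$. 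The initial decay bound gives $z(t) \leq C t^{1/2 - \varepsilon_0^2}$, which tends to $0$ as $t \dto 0$ provided $\varepsilon_0^2 < 1/2$. Choosing $\varepsilon_0$ this way forces $w \equiv 0$ on $]0,S[$, and hence $v \equiv \tilde v$ on $Q_S$.

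To extend equality to $]S,T[$, note that $v \in \cK_\infty(Q_T)$ gives $\|v(\cdot,t)\|_{L^\infty} \leq M S^{-1/2}$ uniformly on $]S,T'[$ for each finite $T' \leq T$. Rerunning the same energy/Gronwall argument on $]S,T[$ --- now starting from $y(S) = 0$ with a bounded, non-singular coefficient --- yields $w \equiv 0$ on all of $Q_T$. I expect the main obstacle to be the rigorous justification of the energy estimate near $t = 0$: since $u$ and $\tilde u$ individually satisfy only energy \emph{inequalities}, naive subtraction does not directly produce a useful identity for $w$. Navigating this via the weak-strong identity~\eqref{eq:weakstrongid} together with the improved decay bound of Proposition~\ref{improveddecayprop}, in the spirit of Lemma~\ref{lem:wNkenergyineq}, is the key technical point.
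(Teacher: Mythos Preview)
Your proposal is correct and follows essentially the same route as the paper: derive the energy inequality $\|w(t)\|_{L^2}^2 + 2\int_0^t\|\nabla w\|_{L^2}^2 \leq 2\int_0^t\int v\otimes w:\nabla w$ via the weak--strong identity, combine the decay $\|w(\cdot,t)\|_{L^2}^2 \lesssim t^{1/2}$ from Proposition~\ref{improveddecayprop} with the smallness of $t^{1/2}\|v(\cdot,t)\|_{L^\infty}$ to close a singular Gronwall on $]0,S[$ (the paper writes this as $t^{-1/2}\|w(t)\|_{L^2}^2 \leq 2C_0\varepsilon_0^2 \sup_{s<t}s^{-1/2}\|w(s)\|_{L^2}^2$, equivalent to your $z(t)=t^{-\varepsilon_0^2}y(t)$ monotonicity), and extend by standard Gronwall on $]S,T[$. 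The one point to sharpen is the obstacle you flag at the end: the paper resolves it by observing that $u = v - P_k$ in fact satisfies the global energy \emph{equality} (since $v\in\cK_\infty(Q_T)$ makes $u$ sufficiently regular, cf.\ Step~1B of Proposition~\ref{pro:strongexist}), which is exactly what legitimizes applying~\eqref{eq:weakstrongid} to the pair $(u,\tilde u)$ and makes the cross-term computation rigorous down to $t=0$.
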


Note that Theorem~\ref{weakstronguniqueness} proves weak-strong uniqueness until the maximal existence time of the solution in $\cK_\infty(Q_T)$, not merely on the initial interval $]0,S[$ where the strong solution is small.

We investigate the existence of strong solutions in Proposition~\ref{pro:strongexist}. In particular, strong solutions satisfying~\eqref{smallness} always exist when the initial data and forcing term are sufficiently small. This observation proves Theorem~\ref{weakstrongintro} in the introduction. 

\begin{remark}[Alternative proof of small-data-uniqueness]
Let $\norm{u_0}_{\dot B^{s_p}_{p,\infty}(\R^3)} + \norm{F}_{\cF_q(Q_T)} \leq M$. When $M \ll 1$, one may prove the uniqueness for weak Besov solutions $v$ in the following way, which does not rely on the perturbation theory in Proposition~\ref{pro:strongexist}.

Without loss of generality, $T=1$. We will use Proposition~\ref{pro:epsilonregularity} ($\varepsilon$-regularity) with $f=0$, $q_2 = \infty$, and $p_2 = q$. Choose $0 < R \ll 1$ such that $c_0/R < \varepsilon_0$, where $\varepsilon_0$ is the constant in~\eqref{smallness} and $c_0$ from Proposition~\ref{pro:epsilonregularity}.

By using the energy inequality in Remark~\ref{ontheconstant}, estimates on the Picard iterates in Section~\ref{sec:preliminaries}, and Calder{\'o}n-Zygmund estimates for the pressure, one may show that
\begin{equation}
	\label{tildevqckn}
	\sup_{x_0 \in \R^3} \frac{1}{R^2} \int_{1-R^2}^1 \int_{B(x_0,R)} |v|^3 + |q|^{\frac{3}{2}} \, dx \, dt < \frac{\varepsilon_{\rm CKN}}{2}
\end{equation}
when $M \ll 1$. See the proof of Lemma~\ref{bddatinfinity} for similar arguments. Upon further reducing $M \ll 1$,
\begin{equation}
	\label{Fckn}
	\sup_{x_0 \in \R^3} \frac{1}{R^\delta} \norm{F}_{L^\infty_t L^q_x(B(x_0,R) \times ]1-R^2,1[)} < \frac{\varepsilon_{\rm CKN}}{2},
\end{equation}
with $\delta = 2-3/q$. Combining \eqref{tildevqckn}-\eqref{Fckn} and $\varepsilon$-regularity, we obtain
\begin{equation}
	\norm{v(\cdot,1)}_{L^\infty(\R^3)} < \epsilon_0.
\end{equation}
Using a scaling argument, one obtains \eqref{smallness}. Finally, Theorem~\ref{weakstronguniqueness} implies the uniqueness.
\end{remark}

\begin{proof}[Proof of Theorem~\ref{weakstronguniqueness}]
	Let $v$, $\tilde{v}$ be as in the statement of the theorem with the constant $\varepsilon_0 > 0$ in \eqref{smallness} to be determined. Let $k:=k(p)$ and denote $u := v - P_k(u_0,F)$, $\tilde{u} := \tilde{v} - P_k(u_0,F)$, $w := \tilde{u} - u$.

\textit{0. Properties of $w$}.
Observe that $w \in L^\infty_t L^2_x \cap L^2_t \dot H^1_x(Q_S)$
for all finite $S \in ]0,T]$ solves the following Navier-Stokes-type system in the sense of distributions:
	\begin{equation}
		\left.
		\begin{aligned}
		\p_t w - \Delta w + \div w \otimes w + \div v \otimes w + \div w \otimes v &= - \nabla r \\ \div w &= 0
	\end{aligned}
	\right\rbrace \text{ on } Q_T.
		\label{}
	\end{equation}
	Also, $w(\cdot,t)$ is weakly continuous as an $L^2(\R^3)$-valued function. Due to the uniform decay estimate \eqref{improveddecayeq} satisfied by $u,\tilde{u}$, we have
	\begin{equation}
	\sup_{t \in ]0,T[} \frac{\norm{w(\cdot,t)}_{L^2(\R^3)}^2}{t^{\frac{1}{2}}} < \infty.
		\label{wdecay}
	\end{equation}

	\textit{1. Energy estimate for $w$}. Our goal is to demonstrate that $w \equiv 0$ on $Q_T$. Recall that $u,\tilde{u}$ satisfy the global energy inequality \eqref{uglobalenergyineqinitialtime} starting from the initial time, see Corollary \ref{cor:uglobalenergyineqinitialtime}. (In fact, $u$ satisfies the global energy \emph{equality}, compare with Step 1B in Proposition \ref{pro:strongexist}.) As is typical in weak-strong uniqueness arguments, we combine the two energy estimates using the weak-strong identity \eqref{eq:weakstrongid} to obtain the following energy inequality for $w$:
	\begin{equation}
	\norm{w(\cdot,t)}_{L^2(\R^3)}^2 + 2 \int_0^t \int_{\R^3} |\nabla w|^2 \, dx \,dt' \leq 2 \int_0^t \int_{\R^3} v \otimes w : \nabla w \, dx \,dt'
		\label{wweakstrongenergyineq}
	\end{equation}
for all $t \in ]0,T[$. The requirement $v \in \cK_\infty(Q_T)$ together with Proposition~\ref{improveddecayprop} are used to make certain calculations rigorous, in particular, to ensure that the RHS of~\eqref{wweakstrongenergyineq} is finite. See the proof of 
Proposition~\ref{pro:vlocalglobal} for a similar argument.

	\textit{2. Showing $w \equiv 0$}. We will conclude with a Gronwall-type argument that crucially makes use of~\eqref{wdecay}. The connection between similar decay properties and weak-strong uniqueness was observed by Dong and Zhang in~\cite{dongzhangweakstrong} and was subsequently used by the second author in~\cite{barkerweakstrong}.

 Manipulating \eqref{wweakstrongenergyineq}, one obtains
\begin{equation}\label{complicatedwweakstrong}
\norm{w(\cdot,t)}^2_{L^2(\R^3)} \leq c \int_0^t \int_{\R^3} |v|^2 |w|^2 \, dx \, ds \leq $$ $$ \leq C_0 \sup_{s \in ]0,t[} [s^{\frac{1}{2}} \norm{v(\cdot,s)}_{L^\infty(\R^3)}]^2 \times t^{\frac{1}{2}} \sup_{s \in ]0,t[} [s^{-\frac{1}{2}} \norm{w(\cdot,s)}_{L^2(\R^3)}^2]
\end{equation}
for all finite $t \in ]0,T]$. We may choose $\varepsilon_0 := (2C_0)^{-\frac{1}{2}}$ in the statement of the theorem. Recall the assumption \eqref{smallness}, i.e., there exists $S \in ]0,T]$ such that 
\begin{equation}
\sup_{s \in ]0,S[} [s^{\frac{1}{2}} \norm{v(\cdot,s)}_{L^\infty(\R^3)}]^2 < \frac{1}{2C_0}.
\label{}
\end{equation}
Then \eqref{complicatedwweakstrong} gives us
	\begin{equation}
		\frac{\norm{w(\cdot,t)}_{L^2(\R^3)}^2}{t^{\frac{1}{2}}} \leq \frac{1}{2} \sup_{s \in ]0,t[} [s^{-\frac{1}{2}} \norm{w(\cdot,s)}_{L^2(\R^3)}^2]
		\label{}
	\end{equation}
for all $t \in ]0,S[$. Hence, $w \equiv 0$ on $Q_S$. Now, the original energy inequality \eqref{wweakstrongenergyineq} gives us
	\begin{equation}
	\norm{w(\cdot,t)}^2_{L^2(\R^3)} \leq C \norm{v}_{L^\infty(\R^3 \times ]S,t[)}^2 \int_S^t \norm{w}_{L^\infty_t L^2_x(Q_{t'})}^2 \, dx \, dt',
		\label{}
	\end{equation}
for all finite $t \in ]S,T]$. Finally, the standard Gronwall lemma implies that $w \equiv 0$ on $Q_T$. This completes the proof of weak-strong uniqueness.
\end{proof}

Finally, we consider the existence of strong solutions. First, we require some notation. Let $p \in ]3,\infty[$ and $0 < T \leq \infty$. Define $\mathring{\cK}_p(Q_T)$ to be the closed subspace of $\cK_p(Q_T)$ consisting of vector fields $v$ such that
	$\lim_{S \dto 0} \norm{v}_{\cK_p(Q_S)} = 0$
and satisfying the following additional requirement when $T = \infty$:
\begin{equation}
	\lim_{S \upto \infty} \esssup_{t > S} t^{-\frac{s_p}{2}}\norm{v(\cdot,t)}_{L^p(\R^3)} = 0.
	\label{}
\end{equation}
Similarly, define $\mathring{\cX}_T$ to be the closed subspace of $\cX_T$ consisting of vector fields $v$ such that $\lim_{S \dto 0} \norm{v}_{\cX_{S}}=0$ and such that the following additional requirements are satisfied when $T = \infty$. Namely, $v(\cdot,t_{1}+\cdot)\in \cX_{\infty}$ for all $t_{1} > 0$, and
\begin{equation}
	\lim_{t_1 \upto \infty} \|v(\cdot+t_{1})\|_{\cX_{\infty}}=0.
	\label{}
\end{equation}
The space $\mathring{\cY}_T$ is defined analogously for forcing terms.
Recall from Section~\ref{linearestimatessec} that when $q\in ]3,p]$ and $F \in\cF_{q}(Q_{T})$, $L(F)$ belongs to $\cK_{p}(Q_{T}) \cap \cX_{T}$.

Here is our main result concerning the existence of strong solutions:

\begin{pro}[Existence of strong solutions in perturbative regime]\label{pro:strongexist}
Let $0 < T \leq \infty$, $u_0 \in \dot B^{s_p}_{p,\infty}(\R^3)$ be a divergence-free vector field, and $F \in \cF_q(Q_T)$ for some $p \in ]3,\infty[$ and $q \in ]3,p]$. Suppose that $v \in \cK_p(Q_T) \cap \mathring{\cX}_T$ is a mild solution of the Navier-Stokes equations on $Q_T$ with initial data $u_0$ and forcing term $\div F$. There exists a constant $\varepsilon_0 := \varepsilon_0(v,p) > 0$ such that for all divergence-free $\tilde{u_0} \in \dot B^{s_p}_{p,\infty}(\R^3)$ and $\tilde{F} \in \cF_q(Q_T)$ satisfying
	\begin{equation}\label{eq:smallnesscond}
		\norm{P_0(\tilde{u_0},\tilde{F}) - P_0(u_0,F)}_{\cX_T} < \varepsilon_0,
	\end{equation}
	there exists a mild solution $\tilde{v} \in \cX_T$ with initial data $\tilde{u_0}$ and forcing term $\div \tilde{F}$ and such that
	\begin{equation}
		\norm{\tilde{v} - v}_{\cX_T} < 2 \varepsilon_0.
		\label{mildsmallnessest}
	\end{equation}
	In addition, 
	$\tilde{v}$ is unique amongst all mild solutions (with initial data $\tilde{u_0}$ and forcing term $\div \tilde{F}$) that satisfy (\ref{mildsmallnessest}). Moreover, $\tilde{v}$ is a weak Besov solution on $Q_T$ with initial data $\tilde{u_0}$ and forcing term $\div \tilde{F}$ (in particular, it is a strong solution). Finally, $\tilde{v}$ satisfies
	\begin{equation}
		\norm{\tilde{v} - v}_{\cX_T} \leq 2 \norm{P_0(\tilde{u_0},\tilde{F}) - P_0(u_0,F)}_{\cX_T},
		\label{}
	\end{equation}
	\begin{equation}
		\norm{\tilde{v} - v}_{\cK_p(Q_T)} \leq 2 \norm{P_0(\tilde{u_0},\tilde{F}) - P_0(u_0,F)}_{\cK_p(Q_T)}.
		\label{}
	\end{equation}
\end{pro}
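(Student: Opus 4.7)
The plan is to construct $\tilde v$ as a fixed point of the integral equation satisfied by the perturbation $w := \tilde v - v$, namely
\begin{equation*}
w = W_0 - B(v,w) - B(w,v) - B(w,w) =: \Phi(w), \qquad W_0 := P_0(\tilde u_0, \tilde F) - P_0(u_0, F),
\end{equation*}
and then to identify $\tilde v = v + w$ with a weak Besov solution using weak-strong uniqueness (Theorem~\ref{weakstronguniqueness}).

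For the fixed point I would work in a closed ball of radius $R := 2\varepsilon_0$ in $\cX_T$ centered at $0$. The Koch-Tataru boundedness of $B$ on $\cX_T \times \cX_T \to \cX_T$, with operator norm independent of $T$, yields
\begin{equation*}
\|\Phi(w)\|_{\cX_T} \leq \|W_0\|_{\cX_T} + C_0 (2 \|v\|_{\cX_T} + \|w\|_{\cX_T}) \|w\|_{\cX_T},
\end{equation*}
\begin{equation*}
\|\Phi(w_1) - \Phi(w_2)\|_{\cX_T} \leq C_0 (2\|v\|_{\cX_T} + \|w_1\|_{\cX_T} + \|w_2\|_{\cX_T}) \|w_1 - w_2\|_{\cX_T}.
\end{equation*}
Since $\|v\|_{\cX_T}$ need not be small, the contraction is not immediate; this is the main obstacle. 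The key is to exploit $v \in \mathring{\cX}_T$: $\|v\|_{\cX_S} \to 0$ as $S \dto 0$, and when $T = \infty$ also $\|v(\cdot + t_1)\|_{\cX_\infty} \to 0$ as $t_1 \upto \infty$, so $[0,T]$ can be partitioned into finitely many slices on each of which $v$ has arbitrarily small local $\cX$-norm. On each slice, treating the endpoint value of $w$ as new ``initial data'' for the next slice, the contraction argument produces a local solution, and these pieces glue into a global $w \in \cX_T$ with $\|w\|_{\cX_T} \leq 2\|W_0\|_{\cX_T}$, provided $\varepsilon_0 = \varepsilon_0(v,p)$ is small enough. Uniqueness among mild solutions in the class~\eqref{mildsmallnessest} is immediate from the same contraction. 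A parallel fixed point argument in $\cK_p(Q_T)$, using the bilinear bound $B \colon \cK_p \times \cK_p \to \cK_p$ together with $v \in \cK_p(Q_T)$ and an analogous slicing (via the $\mathring{\cK}_p$-type vanishing obtained by interpolating the $\cX_T$-estimate with $\cK_p$-boundedness), delivers the $\cK_p$-estimate for $w$.

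Finally, to see that $\tilde v$ is a weak Besov solution, I would apply Corollary~\ref{existencecor} to produce some weak Besov solution $v^\star$ on $Q_T$ with data $(\tilde u_0, \tilde F)$, and identify $v^\star \equiv \tilde v$ via weak-strong uniqueness. The hypotheses of Theorem~\ref{weakstronguniqueness} are satisfied as follows: $\tilde v \in \cX_T$ implies $\tilde v \in \cK_\infty(Q_S)$ locally; moreover, since $\tilde v = v + w$ with $v \in \mathring{\cX}_T$ (hence $\|v\|_{\cX_S} \to 0$ as $S \dto 0$) and $\|w\|_{\cX_T} \leq 2\varepsilon_0$, the quantity $\esssup_{0<t<S} t^{1/2}\|\tilde v(\cdot,t)\|_{L^\infty}$ can be made smaller than the weak-strong threshold of Theorem~\ref{weakstronguniqueness} by choosing $S > 0$ and $\varepsilon_0$ sufficiently small. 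Applying Theorem~\ref{weakstronguniqueness} on $Q_T$ then gives $\tilde v \equiv v^\star$, so $\tilde v$ is itself a weak Besov solution. The principal technical difficulty is the time-slicing contraction, particularly when $T = \infty$, where both vanishing conditions defining $\mathring{\cX}_T$ are needed and one must verify that the reassembled local pieces form a genuine mild solution with the claimed global $\cX_T$ and $\cK_p$ bounds.
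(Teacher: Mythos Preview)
Your fixed-point construction for $w = \tilde v - v$ is essentially equivalent to what the paper does: rather than slicing in time for the nonlinear problem, the paper rewrites the equation as $z = (I+L_v)^{-1}W_0 - (I+L_v)^{-1}B(z,z)$ and invokes a separate lemma (Lemma~\ref{invertibility}) asserting that $I+L_v$ is invertible on $\cX_T$ and on $\cK_p(Q_T)$ whenever $v \in \mathring{\cX}_T$. The proof of that invertibility lemma is precisely time-slicing for the \emph{linear} problem, so you are collapsing two steps into one. Your $\cK_p$ estimate is also fine once you note (as the paper does in~\eqref{l2boundeq}) that $\|B(U,V)\|_{\cK_p} \lesssim \|U\|_{\cX_T}\|V\|_{\cK_p}$, so smallness of $v$ in $\cX_T$ on each slice suffices and no ``$\mathring{\cK}_p$-type vanishing'' is needed.

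The genuine gap is in your argument that $\tilde v$ is a weak Besov solution. Theorem~\ref{weakstronguniqueness} requires \emph{both} solutions to be weak Besov solutions: the ``strong'' one must be a weak Besov solution in $\cK_\infty$ satisfying the smallness condition, and the other one must be a weak Besov solution as well. You produce $v^\star$ via Corollary~\ref{existencecor}, which is a weak Besov solution, and you have $\tilde v \in \cX_T$ with the small-time smallness; but to invoke Theorem~\ref{weakstronguniqueness} with $\tilde v$ in the role of the strong solution, you must already know that $\tilde v$ is a weak Besov solution --- exactly what you are trying to prove. Swapping roles does not help, since nothing tells you $v^\star \in \cK_\infty(Q_T)$ or that it satisfies~\eqref{smallness}. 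The paper avoids this circularity by verifying the weak Besov structure of $\tilde v$ directly: on an initial interval where $\|\tilde v\|_{\cX_{\tilde T}}$ is small, $\tilde v$ coincides with the small-data solution from the $v\equiv 0$ case (whose energy-class and local-energy properties are checked by hand via the Picard iterates and Lemma~\ref{forcinglem}); the finite-energy property of $\tilde u = \tilde v - P_{k(p)}$ is then propagated to all of $Q_T$ by an iterative continuation argument (Lemma~\ref{strongcontinuation}) using that $\tilde v, P_k, F_k \in L^\infty$ away from $t=0$.
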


The method of proof is well known and goes back to the work~\cite{fujitakato} of Fujita and Kato for initial data in $H^s$, $s \geq 1/2$, as well as Kato's seminal paper~\cite{kato} concerning small-data-global-existence for initial data in $L^3$. Solutions evolving from initial data in critical Besov spaces $\dot B^{-1+\frac{3}{p}}_{p,\infty}$, $p > 3$, were investigated by Cannone~\cite{cannone} and many other authors, see, e.g., the appendix of~\cite{gallagherasymptotics} and the references in~\cite{lemarie2002}. Finally, solutions evolving from $\BMO^{-1}$ initial data were pioneered in~\cite{kochtataru} by H. Koch and D. Tataru.

\begin{proof}
	\textbf{1. Perturbations of the zero solution}.
	Let us consider the case when $u_0$ and $F$ are zero. As mentioned in Section~\ref{linearestimatessec}, there exists a constant $\kappa> 0$ such that for all $U$ and $V$ in $\cX_{T}$,
	\begin{equation}
		\norm{B(U,V)}_{\cX_T} \leq \kappa \norm{U}_{\cX_T} \norm{V}_{\cX_T}.
		\label{kochtatarubd}
		\end{equation}
		Furthermore, it is not difficult to show that there exists $\kappa_{p}>0$ such that
		\begin{equation}
			\norm{B(U,V)}_{\cK_p(Q_T)} \leq \kappa_p \min(\norm{U}_{\cX_T} \norm{V}_{\cK_p(Q_T)}, \norm{U}_{\cK_p(Q_T)} \norm{V}_{\cX_T}),
		\label{l2boundeq}
	\end{equation}
	 for all $U, V \in \cX_T\cap\cK_{p}(Q_{T})$.
	 The constants are independent of $0 <T \leq \infty$. We also use~\eqref{l2boundeq} for $p=2$. Let us write $M \geq \norm{P_0(\tilde{u_0},\tilde{F})}_{\cX_T}$ and $M_p \geq \norm{P_0(\tilde{u_0},\tilde{F})}_{\cK_p(Q_T)}$.

 \emph{1A. Existence in $\cX_T$ and $\cK_p(Q_T)$}.
 Suppose that $M < (4 \kappa)^{-1}$. One may verify using~\eqref{kochtatarubd} that the Picard iterates $\tilde{P}_k := P_k(\tilde{u_0},\tilde{F})$ satisfy 
 \begin{equation}
	 \norm{\tilde{P}_k}_{\cX_T} \leq 2M, \quad \norm{\tilde{P}_{k+1} - \tilde{P}_{k}}_{\cX_T} \leq 4 \kappa M \norm{\tilde{P}_k - \tilde{P}_{k-1}}_{\cX_T}
	 \label{}
 \end{equation}
 for all integers $k \geq 0$.
 Hence, the sequence of Picard iterates $(\tilde{P}_k)_{k \geq 0}$ converges to a solution $\tilde{v} \in \cX_T$ of the integral equation
 \begin{equation}
	 \tilde{v}(\cdot,t) = P_0(\tilde{u}_0,\tilde{F}) - B(\tilde{v},\tilde{v}).
	 \label{}
 \end{equation}
 Observe that $\tilde{v}$ is the unique solution satisfying $\norm{\tilde{v}}_{\cX_T} < (2\kappa)^{-1}$. Now suppose that $M < (4\kappa_p)^{-1}$ is additionally satisfied. One verifies using~\eqref{l2boundeq} that for all integers $k \geq 0$, we have
\begin{equation}
	\norm{\tilde{P}_k}_{\cK_p(Q_T)} \leq 2 M_p,
	\label{}
\end{equation}
\begin{equation}
	\norm{\tilde{P}_{k+1} - \tilde{P}_{k}}_{\cK_p(Q_T)} \leq 4 \kappa_p M \norm{\tilde{P}_k - \tilde{P}_{k-1}}_{\cK_p(Q_T)}.
	\label{}
\end{equation}
The sequence $(\tilde{P}_k)_{k \geq 0}$ converges also in the space $\cK_p(Q_T)$, so $\tilde{v}$ additionally belongs to $\cK_p(Q_T)$ and satisfies $\norm{\tilde{v}}_{\cK_p(Q_T)} \leq 2M_p$.
	
\emph{1B. $\tilde{v}$ is a weak Besov solution}.
Recall from Lemma \ref{forcinglem} that $\tilde{P}_{k(p)+1} - \tilde{P}_{k(p)} \in \cK_2(Q_T)$. Let us further assume that $M < (4\kappa_2)^{-1}$. One may demonstrate using~\eqref{l2boundeq} that for all $k > k(p)$,
	\begin{equation}
		\norm{\tilde{P}_{k+1} - \tilde{P}_k}_{\cK_2(Q_T)} \leq 4M \kappa_2 \norm{\tilde{P}_k - \tilde{P}_{k-1}}_{\cK_2(Q_T)}.
		\label{}
	\end{equation}
	Therefore, $\tilde{u} := \tilde{v} - \tilde{P}_{k(p)}$ belongs to $\cK_2(Q_T)$, since
	\begin{equation}
		\norm{\tilde{u}}_{\cK_2(Q_T)} \leq \norm{\tilde{v} - \tilde{P}_{k(p)}}_{\cK_2(Q_T)} \leq \sum_{k = k(p)}^\infty \norm{\tilde{P}_{k+1} - \tilde{P}_k}_{\cK_2(Q_T)} < \infty.
		\label{}
	\end{equation}
	Let us now demonstrate that $\tilde{u} \in C([0,S];L^2(\R^3)) \cap L^2_t \dot H^1_x(Q_S)$ for all finite $S \in ]0,T]$. In order to show this, we use the identity
	\begin{equation}
		\tilde{u}(\cdot,t) = - B(\tilde{u},\tilde{u})(\cdot,t) - L(\tilde{P}_k \otimes \tilde{u} + \tilde{u} \otimes \tilde{P}_k + \tilde{F}_k)(\cdot,t).
		\label{}
	\end{equation}
	We then conclude using the following facts. Namely,
	\begin{equation}
	\norm{U \otimes V}_{L^{2}(Q_{S})}\leq S^{\frac{1}{4}}\norm{U}_{\cK_{2}(Q_{S})}\norm{V}_{\cX_{S}}
	\end{equation}
	($U\in\cK_{2}(Q_{S})$ and $V\in \cX_{S}$) and the fact that $\tilde{F}_k \in L^2(Q_S)$ for all $k \geq k(p)$, as observed in Lemma \ref{forcinglem}. Note also that since $\tilde{u}\in\cK_{2}(Q_{T})$, we have that 
	\begin{equation}
	\lim_{t\downarrow 0}\norm{\tilde{u}(\cdot,t)}_{L^{2}}=0.
	\end{equation}

It remains to prove the local energy inequality \eqref{vlocalenergyineq} for $\tilde{v}$ with its associated pressure $\tilde{q} := (-\Delta)^{-1} \div \div \tilde{v} \otimes \tilde{v}$. Recall that $\tilde{v} \in L^\infty(\R^3 \times ]\delta,S[)$ for all finite $S \in ]0,T]$ and $\delta \in ]0,S[$. By Calder{\'o}n-Zygmund estimates, $\tilde{q} \in L^\infty_t \BMO_x(\R^3 \times ]\delta,S[)$.
	Using these facts, the local energy inequality for $(\tilde{v},\tilde{q})$ follows by using a mollification argument in the same spirit as in \cite[p. 160-161)]{sereginnotes}. Hence, the proposition is proven with $\varepsilon_0(p) := (8\max(\kappa,\kappa_p,\kappa_2))^{-1}$ in the special case that $u_0$ and $F$ are zero.

	\textbf{2. Perturbations of general solutions.} Now we consider the proposition in full generality.
	
	\textit{2A. Solving the integral equation.} Our goal is to solve the following integral equation:
	\begin{equation}
		z(\cdot,t) = P_0(\tilde{u_0},\tilde{F})(\cdot,t) - P_0(u_0,F)(\cdot,t) - B(z,z)(\cdot,t) - L_v(z)(\cdot,t),
		\label{zintegraleqn}
	\end{equation}
	where $L_v(z) := B(z,v) + B(v,z)$.
	Then $\tilde{v} := z + v$ will be a mild solution of the Navier-Stokes equations. The integral equation \eqref{zintegraleqn} is equivalent to
	\begin{equation}
		z = (I+L_v)^{-1} [ P_0(\tilde{u_0},\tilde{F}) - P_0(u_0,F) ] - (I+L_v)^{-1} B(z,z),
		\label{zneweq}
	\end{equation} since $I+L_v$ is invertible on $\cX_T$ and $\cK_p(Q_T)$, see Lemma \ref{invertibility}. The existence and uniqueness theory for mild solutions of \eqref{zneweq} in $\cX_T$ and $\cK_p(Q_T)$ is similar to that of Step 1A except that one uses Picard iterates $\bar{P}_k(u_0,F,\tilde{u}_0,\tilde{F})$ defined recursively by
	\begin{equation}
		\bar{P}_0(u_0,F,\tilde{u}_0,\tilde{F}) := (I+L_v)^{-1} [P_0(\tilde{u_0},\tilde{F}) - P_0(u_0,F)],
		\label{}
	\end{equation}
	\begin{equation}
		\bar{P}_k(u_0,F,\tilde{u}_0,\tilde{F}) := \bar{P}_0 - (I+L_v)^{-1} B(\bar{P}_{k-1},\bar{P}_{k-1}), \quad k \in \N.
		\label{}
	\end{equation}
	In addition, we define
	\begin{equation}
		\varepsilon_0(v,p) := (8\max(\norm{(I+L_v)^{-1}}^2_{\cX_T} \kappa ,\norm{(I+L_v)^{-1}}^2_{\cK_p(Q_T)} \kappa_p,\kappa,\kappa_p,\kappa_2))^{-1} / 3,
		\label{}
	\end{equation}
	which is less than $\varepsilon_{0}(p)/3$ (where $\varepsilon_{0}(p)$ is as in Step 1) when $v = 0$.
	The proof of existence and uniqueness is not difficult and follows Step 1A, so we will omit it. Let $\tilde{v}$ denote the resulting mild solution of the Navier-Stokes equations.

\textit{2B. $\tilde{u}$ has finite kinetic energy}. Since $v \in \mathring{\cX}_T$ and~\eqref{mildsmallnessest}, there exists $\tilde{T} \in ]0,T[$ such that
		$\norm{v}_{\cX_{\tilde{T}}} < 2\varepsilon_0(v,p) - \norm{\tilde{v} - v}_{\cX_{{\tilde{T}}}}$. 
		By the triangle inequality $\norm{\tilde{v}}_{\cX_{\tilde{T}}} \leq \norm{v}_{\cX_{\tilde{T}}} + \norm{\tilde{v} - v}_{\cX_{\tilde{T}}}$ and $\varepsilon_0(v,p) < \varepsilon_0(p)/3$, we obtain
 \begin{equation}
 \norm{\tilde{v}}_{\cX_{\tilde{T}}} < 2 \varepsilon_0(v,p)<2\varepsilon_0(p)/3.
  \label{smallnessmildvtilde}
 \end{equation}
Since 
 $P_{0}(\tilde{u_0},\tilde{F})=\tilde{v}(\cdot,t)+B(\tilde{v}, \tilde{v})(\cdot,t)$,
 we infer that
 \begin{equation}
 \norm{P_{0}(\tilde{u_0},\tilde{F})}_{X_{\tilde{T}}}\leq \norm{\tilde{v}}_{X_{\tilde{T}}}+\kappa\norm{\tilde{v}}_{X_{\tilde{T}}}^2.
 \end{equation}
 Using (\ref{smallnessmildvtilde}) and the fact that $4\kappa\varepsilon_{0}(p)<1$, we obtain that 
 \begin{equation}
	 \norm{P_{0}(\tilde{u_0},\tilde{F})}_{X_{\tilde{T}}}<\varepsilon_{0}(p).
	 \label{}
 \end{equation} So we can construct a strong solution (with initial data $\tilde{u_0}$ and forcing term $\textrm{div}\,\tilde{F}$) on $Q_{\tilde{T}}$ according to Step 1. Finally, using (\ref{smallnessmildvtilde}), $\tilde{v}$ agrees on $Q_{\tilde{T}}$ with the mild solution constructed in Step 1, and in particular, $\tilde{u} \in C([0,\tilde{T}];L^2(\R^3)) \cap \dot L^{2}_{t}H^1_x(Q_{\tilde{T}})$.
 
 To show that $\tilde{u}$ has finite energy on $Q_S$ for all finite $S \in ]0,T]$, we appeal to Lemma \ref{strongcontinuation}. Specifically, after translating in time, Lemma~\ref{strongcontinuation} says there exists $S>0$ and a solution $\bar{u} \in L^\infty(\R^3 \times ]\tilde{T},\tilde{T}+S[)$ of the integral equation
 \begin{equation}
	 \bar{u}(\cdot,t) = S(t-\tilde{T}) \tilde{u}(\cdot,\tilde{T}) -\int_{\tilde{T}}^t S(t-s-{\tilde{T}}) \bP \div F_{k}(\cdot,s)ds $$ $$ - \int_{\tilde{T}}^t S(t-s-\tilde{T}) \bP \div [(\bar{u} + P_k) \otimes \bar{u} + P_k \otimes \bar{u}](\cdot,s) \,ds.
 \end{equation}
 on $\R^3 \times ]\tilde{T},\tilde{T}+S[$. Moreover, $\bar{u}$ belongs to the energy space. Since $\bar{v} := P_k + \bar{u}$ is an $L^\infty$ mild solution of the Navier-Stokes equations on $\R^3 \times ]\tilde{T},\tilde{T}+S[$ with initial data $\tilde{v}(\cdot,\tilde{T})$ and forcing term $\div \tilde{F}$, the uniqueness of such solutions implies that $\tilde{v} \equiv \bar{v}$ on $\R^3 \times ]\tilde{T},\tilde{T}+S[$. Hence, $\tilde{u} \equiv \bar{u}$ on the same domain, so we obtain that $\tilde{u} \in C([0,\tilde{T}+S];L^2(\R^3)) \cap L^2_t \dot H^1_x(Q_{\tilde{T}+S})$. We may continue in this fashion as long as the existence time is not shrinking to zero in the iteration. In light of the lower bound \eqref{lowerboundexistencetime} on the existence time in Lemma \ref{strongcontinuation}, we conclude that $\tilde{u} \in C([0,S];L^2(\R^3)) \cap L^2_t \dot H^1_x(Q_S)$ for all finite $S \in ]0,T]$.

 \textit{2C. $\tilde{v}$ is a weak Besov solution}. The local energy inequality for $\tilde{v}$ follows from exactly the same argument as in Step~1B.
\end{proof}

\begin{lemma}[Spectrum of $L_v$]\label{invertibility}
Let $0 < T \leq \infty$ and $p \in ]3,\infty[$. Suppose that $v \in \mathring{\cX}_T$ is divergence free. Then $L_v \: \cX_T \to \cX_T$ and $L_v \: \cK_p(Q_T) \to \cK_p(Q_T)$ defined by $L_v(z) := B(z,v) + B(v,z)$ have spectrum $\{ 0 \}$.
\end{lemma}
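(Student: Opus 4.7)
To show $\sigma(L_v) = \{0\}$ on both $\cX_T$ and $\cK_p(Q_T)$, the plan is to demonstrate that $L_v$ is quasinilpotent, i.e., $\|L_v^n\|^{1/n} \to 0$ as $n \to \infty$. This leverages two structural features: the causal (Volterra) nature of $B$ in the time variable, and the smallness built into the definition of $\mathring{\cX}_T$.

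First I would use $v \in \mathring{\cX}_T$ to decompose, for each $\varepsilon > 0$, $v = v_\varepsilon + w_\varepsilon$ with $\|w_\varepsilon\|_{\cX_T} < \varepsilon$ and $v_\varepsilon$ supported (in time) in a bounded sub-interval $[\delta_\varepsilon, R_\varepsilon] \subset ]0, T[$. The existence of $\delta_\varepsilon$ uses $\lim_{S \dto 0}\|v\|_{\cX_S} = 0$; when $T = \infty$, the existence of $R_\varepsilon$ uses $\lim_{t_1 \upto \infty}\|v(\cdot + t_1)\|_{\cX_\infty} = 0$. Concretely, one may take $v_\varepsilon = \chi(t) v$ for a smooth time cutoff $\chi$ equal to $1$ on $[\delta_\varepsilon, R_\varepsilon]$ and vanishing outside a slightly larger interval; then $v_\varepsilon$ is uniformly bounded on $\R^3 \times [\delta_\varepsilon, R_\varepsilon]$ thanks to the $L^\infty$ part of the $\cX_T$ norm, while the cut-off parts absorb into $w_\varepsilon$ with Carleson-type norm bounded by the tail quantities above.

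By the bilinear estimates recalled in Section \ref{linearestimatessec}, $\|L_{w_\varepsilon}\|_{\cX_T \to \cX_T} \leq 2\kappa\varepsilon$, and analogously on $\cK_p(Q_T)$. For $L_{v_\varepsilon}$, the bounded time support of $v_\varepsilon$ and its uniform $L^\infty$ bound permit a direct iteration: writing $L_{v_\varepsilon}^n(z)(\cdot, t)$ as an $n$-fold Duhamel integral whose time variables run over a simplex contained in $[\delta_\varepsilon, R_\varepsilon]^n$, together with the pointwise gradient-of-Oseen-kernel estimates of Section \ref{linearestimatessec}, I would derive a bound of the form $\|L_{v_\varepsilon}^n\| \leq C_\varepsilon^n (R_\varepsilon - \delta_\varepsilon)^n / n!$, yielding quasinilpotence of $L_{v_\varepsilon}$ and hence invertibility of $\lambda I - L_{v_\varepsilon}$ for every $\lambda \neq 0$.

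Given any $\lambda \neq 0$, I would then write $\lambda I - L_v = (\lambda I - L_{v_\varepsilon}) - L_{w_\varepsilon}$ and invoke the Neumann series: choosing $\varepsilon$ small enough (depending on $\lambda$ and on the norm of $(\lambda I - L_{v_\varepsilon})^{-1}$) makes $L_{w_\varepsilon}$ a sufficiently small perturbation to ensure invertibility of $\lambda I - L_v$. Since $\lambda \neq 0$ was arbitrary and the spectrum of a bounded operator on a Banach space is always nonempty, this gives $\sigma(L_v) = \{0\}$. The parallel argument on $\cK_p(Q_T)$ relies on the mixed bilinear estimate $\|B(U,V)\|_{\cK_p(Q_T)} \leq \kappa_p \min(\|U\|_{\cX_T}\|V\|_{\cK_p(Q_T)}, \|V\|_{\cX_T}\|U\|_{\cK_p(Q_T)})$ in both the small-norm step and the Volterra-iteration step. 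The main technical obstacle is the factorial-decay estimate for $L_{v_\varepsilon}^n$ in the intrinsic, global-in-time norms, which do not a priori see the simplex volume; the resolution is to pass to a pointwise-in-time control such as $\phi_n(t) := t^{1/2}\|L_{v_\varepsilon}^n(z)(\cdot, t)\|_{L^\infty}$ and iterate the Duhamel formula, exploiting $\|v_\varepsilon(\cdot, s)\|_{L^\infty} \leq C_\varepsilon \mathbf{1}_{[\delta_\varepsilon, R_\varepsilon]}(s)$ at each step, so that the $n$-fold simplex integration produces the $1/n!$ factor.
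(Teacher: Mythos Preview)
Your proposal has a genuine gap in the Neumann-series perturbation step. You want to choose $\varepsilon>0$ so that $2\kappa\varepsilon\cdot\|(\lambda I - L_{v_\varepsilon})^{-1}\|<1$, but the resolvent bound you obtain from the Volterra iteration is of the form $\|(\lambda I - L_{v_\varepsilon})^{-1}\|\le |\lambda|^{-1}\,G\!\big(C_\varepsilon (R_\varepsilon-\delta_\varepsilon)^{1/2}/|\lambda|\big)$ for some entire function $G$, and both $C_\varepsilon\sim\delta_\varepsilon^{-1/2}\|v\|_{\cX_T}$ and $R_\varepsilon-\delta_\varepsilon$ grow as $\varepsilon\to 0$. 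For a generic $v\in\mathring{\cX}_T$ (say with $\|v(\cdot+t_1)\|_{\cX_\infty}$ decaying only logarithmically in $t_1$), the resolvent norm blows up much faster than $1/\varepsilon$, so the required inequality is never satisfied. The scaling identity $\lambda L_v=L_{\lambda v}$ reduces matters to $\lambda=1$, but the same circularity persists: as $\eta\to 0$ in a decomposition $v=v_0+w_0$ with $\|w_0\|_{\cX_T}<\eta$, one has $v_0\to v$ and $\|(I-L_{v_0})^{-1}\|$ cannot be bounded without already knowing $I-L_v$ is invertible. There is also a secondary issue: your pointwise-in-time control $\phi_n(t)=t^{1/2}\|L_{v_\varepsilon}^n(z)(\cdot,t)\|_{L^\infty}$ handles $\cK_\infty$ but not the Carleson-measure half of the $\cX_T$ norm; when $T=\infty$ the naive estimate picks up a factor $\big(\log(R^2/\delta_\varepsilon)\big)^{1/2}$ that diverges as $R\to\infty$.

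The paper's proof avoids all of this by never iterating globally. After reducing to $\lambda=1$ via $\lambda L_v=L_{\lambda v}$, it solves $f-L_vf=g$ directly: using $v\in\mathring{\cX}_T$, one partitions $[0,T]$ into finitely many overlapping intervals $I_j$ on which the time-shifted coefficient $v_j(\cdot,t):=v(\cdot,t+t_j^{(1)})$ satisfies $\|v_j\|_{\cX_{|I_j|}}<\tfrac{1}{4\max(\kappa,\kappa_p)}$. On each $I_j$ the map $f\mapsto L_{v_j}f+g_j$ (plus the heat flow of the terminal value from the previous piece) is a contraction in $\cX_{|I_j|}$ and in $\cK_p$, yielding a unique local solution; the pieces agree on overlaps and glue to a global solution in $\cX_T\cap\cK_p(Q_T)$. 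This is exactly the argument of \cite[Lemma 6]{auscher}. Finally, $0\in\sigma(L_v)$ is shown by a range obstruction (one always has $\nabla L_vf\in L^3_{\loc}(Q_T)$, which fails for generic targets), rather than by appealing to nonemptiness of the spectrum.
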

\begin{proof}
\textit{1. $L_v$ is not invertible}. Notice that $\nabla L_v f \in L^3_\loc(Q_T)$ for all $f \in \cX_T \cup \cK_{p}(Q_T)$ due to local regularity properties of the Stokes equations.
	Of course, there exists elements $g_{1}\in\cK_{p}(Q_{T})$ and $g_{2}\in \cX_{T}$ with $\nabla g_{i}\notin L^3_\loc(Q_T)$ for $i=1,2$. Clearly, $L_v f_{1} \not= g_{1}$ for all $f_{1} \in \cK_{p}(Q_T)$ and $L_v f_{2} \not= g_{2}$ for all $f_{2} \in \cX_{T}$.
	Hence, zero belongs to the spectrum of $L_v$ on $\cX_T$ and $\cK_p(Q_T)$.

	\textit{2. $\lambda I - L_v$ is invertible ($\lambda \in \mathbb{C} \setminus \{ 0 \}$)}. We omit the proof of invertibility, since it is nearly identical to the proof of \cite[Lemma 6]{auscher}, in particular, p. 684-685. The main idea is to solve the linear problem $f - L_v f = g$ on a finite number of small subintervals by a perturbation argument.
\end{proof}

\begin{lemma}[Local continuation with finite energy]\label{strongcontinuation}
Let $0 < T \leq \infty$. Assume that $a \in L^\infty(\R^3) \cap J(\R^3)$, $V \in L^\infty(Q_{T})$ is a divergence-free vector field, and $G \in L^\infty(Q_T) \cap L^2(Q_T)$ with values in $\R^{3\times3}$. There exists a finite time $S \in ]0,T]$, an absolute constant $c_0 > 0$ satisfying
\begin{equation}
	S \geq \frac{c_0}{(1+\norm{P_0(a,G)}_{L^\infty(Q_T)}+\norm{V}_{L^\infty(Q_T)})^2},
	\label{lowerboundexistencetime}
\end{equation} and a solution $u \in L^\infty(Q_S) \cap C([0,S];L^2(\R^3)) \cap L^2_t \dot H^1_x(Q_S)$ of the following integral equation:
	\begin{equation}
		u(\cdot,t) = P_0(a,G)(\cdot,t) - B(u,u)(\cdot,t) - L_V(u)(\cdot,t),
		\label{utheintegraleqn}
	\end{equation}
	 for a.e. $t \in ]0,S[$.
\end{lemma}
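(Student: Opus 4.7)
The plan is a two-stage contraction mapping argument for the integral equation~\eqref{utheintegraleqn}. Writing $M_0 := \norm{P_0(a,G)}_{L^\infty(Q_T)}$, $M_1 := \norm{V}_{L^\infty(Q_T)}$, and $\Phi(u) := P_0(a,G) - B(u,u) - L_V(u)$, I would first solve $u = \Phi(u)$ in $L^\infty(Q_S)$ for $S$ as in~\eqref{lowerboundexistencetime}; then I would upgrade the mild $L^\infty$ solution to the energy class $C([0,S];L^2(\R^3))\cap L^2_t\dot H^1_x(Q_S)$ using that $a\in L^2(\R^3)$ and $G\in L^2(Q_T)$.

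The $L^\infty$ step is classical. It relies on the bilinear bound $\norm{B(u,v)}_{L^\infty(Q_S)} \leq C_* S^{1/2} \norm{u}_{L^\infty(Q_S)} \norm{v}_{L^\infty(Q_S)}$, a consequence of the pointwise estimate $|S(t)\bP\div|\les t^{-2}$ from~\eqref{StPdivdef} together with Young's inequality and integration of $(t-s)^{-1/2}$ in time. Applied to $L_V(u)=B(u,V)+B(V,u)$, this produces
\begin{equation*}
\norm{\Phi(u)-\Phi(v)}_{L^\infty(Q_S)} \leq C_* S^{1/2}\bigl(\norm{u}_{L^\infty(Q_S)}+\norm{v}_{L^\infty(Q_S)}+2M_1\bigr)\norm{u-v}_{L^\infty(Q_S)},
\end{equation*}
so a standard contraction in the closed ball of radius $2M_0$ in $L^\infty(Q_S)$ yields a unique fixed point $u$ with $\norm{u}_{L^\infty(Q_S)}\leq 2M_0$ as soon as $S\leq c_0/(1+M_0+M_1)^2$ for an absolute constant $c_0>0$.

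To promote $u$ to the energy class, I would track the $L^\infty_t L^2_x(Q_S)$-norm along the same Picard iteration. Two ingredients are needed: the initial bound $\norm{P_0(a,G)}_{L^\infty_t L^2_x(Q_S)}\leq \norm{a}_{L^2(\R^3)}+C\norm{G}_{L^2(Q_T)}$ from the classical Stokes energy estimate, and the bilinear bound $\norm{B(u,v)}_{L^\infty_t L^2_x(Q_S)}\leq C S^{1/2}\norm{u}_{L^\infty(Q_S)}\norm{v}_{L^\infty_t L^2_x(Q_S)}$, obtained by maximal regularity for the Stokes system with forcing $-\bP\div(u\otimes v)$ and the pointwise bound $\norm{u\otimes v}_{L^\infty_t L^2_x}\leq\norm{u}_{L^\infty}\norm{v}_{L^\infty_t L^2_x}$. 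Crucially, the analogous bound for $B(V,u)$ holds with $\norm{u}_{L^\infty}$ replaced by $M_1$: although $V\notin L^2_x$, the product $V\otimes u$ still lies in $L^\infty_t L^2_x$ with norm $\leq M_1\norm{u}_{L^\infty_t L^2_x}$ because $V\in L^\infty$. Combining these estimates with the already-established uniform $L^\infty$-bound $\norm{u^{(k)}}_{L^\infty(Q_S)}\leq 2M_0$ on the Picard iterates yields a uniform $L^\infty_t L^2_x$-bound on the same time scale $S$, without further shrinking; passing to the limit gives $u\in L^\infty(Q_S)\cap L^\infty_t L^2_x(Q_S)$.

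Once $u\in L^\infty(Q_S)\cap L^\infty_t L^2_x(Q_S)$ is known, the forcing tensor $F:=u\otimes u+u\otimes V+V\otimes u$ lies in $L^2(Q_S)$, and $w:=u-P_0(a,G)$ is a Duhamel solution of $\p_t w-\Delta w = -\bP\div F$ with $w(\cdot,0)=0$. A standard energy estimate then gives $w\in C([0,S];L^2(\R^3))\cap L^2_t\dot H^1_x(Q_S)$, and adding $P_0(a,G)\in C([0,S];L^2(\R^3))\cap L^2_t\dot H^1_x(Q_S)$ finishes the proof. The main---indeed only---subtlety is that $V$ is merely in $L^\infty$, not in any Lebesgue space based on $L^2_x$, so the bilinear energy estimate~\eqref{bilinearenergyestimate} does not apply directly to $B(V,u)$. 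This is overcome by always placing $V$ in $L^\infty_{t,x}$ and exploiting the smoothing of the heat kernel to gain a factor $S^{1/2}$, thereby transferring the entire $L^2_x$-norm to the other factor in the bilinear term.
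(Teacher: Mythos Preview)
Your proposal is correct and follows essentially the same route as the paper's (omitted) argument, which relies on the same two bilinear bounds
\[
\norm{B(U,W)}_{L^\infty(Q_S)} \leq \kappa S^{1/2}\norm{U}_{L^\infty}\norm{W}_{L^\infty}, \qquad
\norm{B(U,W)}_{L^\infty_t L^2_x(Q_S)} \leq \kappa_2 S^{1/2}\norm{U}_{L^\infty}\norm{W}_{L^\infty_t L^2_x},
\]
followed by the Stokes energy estimate to upgrade to $C([0,S];L^2)\cap L^2_t\dot H^1_x$. The only organizational difference is that the paper's intended argument (as in Proposition~\ref{pro:strongexist}) first inverts $I+L_V$ on $L^\infty(Q_S)$ and $L^\infty_t L^2_x(Q_S)$ by Neumann series and then runs Picard iteration on the equivalent equation $u=(I+L_V)^{-1}P_0(a,G)-(I+L_V)^{-1}B(u,u)$, whereas you handle the linear term $L_V(u)$ directly inside the contraction map $\Phi$; either arrangement gives the same lower bound on $S$ up to the value of~$c_0$.
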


We omit the proof of Lemma~\ref{strongcontinuation}, since it follows known perturbation arguments similar to those in Proposition~\ref{pro:strongexist}.

\section{Applications}\label{sec:applications}
\subsection{Blow-up criteria}\label{sec:blowupBesov}

As mentioned before, the second half of this paper focuses on applications of the weak Besov solutions developed in Section~\ref{sec:weakbesovsols}. Let $\mathbb{B}$ denote
the set of all divergence-free vector fields $f \in \dot B^{-1}_{\infty,\infty}(\R^3)$ satisfying
\begin{equation}
	\lim_{\lambda \dto 0} \lambda f(\lambda (\cdot)) = 0 \text{ in } \cD'(\R^3).
	\label{}
\end{equation}
Note that $\mathbb{B}$ does not contain any non-trivial scale-invariant vector fields. We wish to prove the following theorem:

\begin{theorem}[Blow-up criteria]\label{blowupBesov}
Let $T^* > 0$, $u_0 \in L^\infty(\R^3)$ be a divergence-free vector field, and $F \in L^\infty_t L^q_x(\R^3 \times ]0,T^*[)$ for some $q \in ]3,\infty[$. Suppose that $v \in L^\infty(\R^3 \times ]0,T[)$ is a mild
solution of the Navier-Stokes equations on $\R^3 \times ]0,T[$ with initial data $u_0$ and forcing term $\div F$ for
all $T \in ]0,T^*[$. Let $p \in ]3,\infty[$ and $M > 0$. There exists a constant $\varepsilon := \varepsilon(p,q,M) > 0$ with the following properties:
	\begin{enumerate}[(i)]
	\item Suppose that
	$\norm{v(\cdot,t_1)}_{\dot B^{-1+\frac{3}{p}}_{p,\infty}(\R^3)} \leq M$ for some $t_1 \in ]0,T^*[$.\footnote{In this statement, $v(\cdot,t)$ is  well-defined for each $t \in [0,T^*]$ since $v$ belongs to $C([0,T^*];\cD'(\R^3))$. One way to argue this is as follows. First, it is known that as a mild solution, $v$ belongs to $C([0,T^*[;\cD'(\R^3))$. Second, according to Proposition~\ref{pro:strongsolbounded}, $v$ agrees on $\R^3 \times ]t_1,T^*[$ with a weak Besov solution, and such a solution belongs to $C([t_1,T^*];\cD'(\R^3))$.}
			If also
		\begin{equation}
			\norm{v(\cdot,T^*)}_{\dot B^{-1}_{\infty,\infty}(\R^3)} + \norm{F}_{\cF_q(\R^3 \times ]0,T^*[)} \leq \varepsilon,
	\label{smallatblowuptime}	
\end{equation}
then $v \in L^\infty(\R^3 \times ]0,T^*[)$.
		\item Suppose that there exists a
	sequence of times $t_n \upto T^*$ such that
	\begin{equation}\label{controlonsequence}
		\sup_{n \in \N} \norm{v(\cdot,t_n)}_{\dot B^{-1+\frac{3}{p}}_{p,\infty}(\R^3)} \leq M.
	\end{equation}
	If there exists $x^* \in \R^3$ such that $v(\cdot,T^*)$ satisfies
	\begin{equation}
		{\rm dist}(v(\cdot+x^*,T^*),\mathbb{B}) \leq \varepsilon,
		\label{blowupassumption}
	\end{equation}
	where the distance is measured in the $\dot B^{-1}_{\infty,\infty}(\R^3)$ norm,
then $v$ is regular at $(x^*,T^*)$. 
If \eqref{blowupassumption} is satisfied for all $x^* \in \R^3$, then $v \in L^\infty(\R^3 \times ]0,T^*[)$.
	\end{enumerate}
\end{theorem}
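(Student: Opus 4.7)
The proof is by contradiction, via a rescaling (blowup) procedure combined with weak-$\ast$ stability (Proposition~\ref{pro:stability}) and weak-strong uniqueness (Theorem~\ref{weakstronguniqueness}). I treat part (ii); part (i) reduces to (ii) by propagating the single-time $\dot B^{-1+3/p}_{p,\infty}$-bound from $t_1$ to a sequence $t_n \uparrow T^*$ via persistency of the Kato norms on $[t_1, T^*)$ (Lemma~\ref{lem:katoest} plus~\eqref{Bpequivalentnorm}), together with the observation that $0 \in \mathbb{B}$, so $\norm{v(\cdot,T^*)}_{\dot B^{-1}_{\infty,\infty}} \leq \varepsilon$ automatically gives $\mathrm{dist}(v(\cdot+x^*,T^*),\mathbb{B}) \leq \varepsilon$ for every $x^* \in \R^3$.

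Assume $v$ is singular at $(x^*,T^*)$; WLOG $x^* = 0$. Set $\lambda_n := (T^*-t_n)^{1/2}$ and
\[
v_n(y,s) := \lambda_n v(\lambda_n y,\, t_n + \lambda_n^2 s), \qquad F_n(y,s) := \lambda_n^2 F(\lambda_n y,\, t_n + \lambda_n^2 s),
\]
on $\R^3 \times (0,1)$. Scale invariance and~\eqref{controlonsequence} yield $\norm{v_n(\cdot,0)}_{\dot B^{-1+3/p}_{p,\infty}} \leq M$; since $q > 3$, $\norm{F_n}_{\cF_q(\R^3\times(0,1))} \leq \lambda_n^{2-3/q}\norm{F}_{L^\infty_t L^q_x} \to 0$. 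Each smooth $v_n$ is a global weak Besov solution. Banach--Alaoglu plus Proposition~\ref{pro:stability} furnish a subsequence along which $v_n(\cdot,0) \wstar \bar u_0$ in $\dot B^{-1+3/p}_{p,\infty}$, $F_n \wstar 0$ in $\cF_q$, $v_n \to \bar v$ strongly in $L^3_{\loc}$, and $v_n \to \bar v$ in $C([0,1];\cS'(\R^3))$, where $\bar v$ is a global weak Besov solution with data $\bar u_0$ and zero forcing.

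For the behavior at $s = 1$, choose $g \in \mathbb{B}$ with $\norm{v(\cdot,T^*)-g}_{\dot B^{-1}_{\infty,\infty}} \leq 2\varepsilon$ and split
\[
v_n(\cdot,1) \;=\; \lambda_n g(\lambda_n\cdot) \;+\; \lambda_n\bigl(v(\cdot,T^*)-g\bigr)(\lambda_n\cdot).
\]
The first term tends to $0$ in $\mathcal{D}'$ by the defining property of $\mathbb{B}$; the second is uniformly bounded by $2\varepsilon$ in $\dot B^{-1}_{\infty,\infty}$ by scale invariance. Combining with $\cS'$-convergence and~\eqref{Picardestimatefrequent} one obtains $\norm{\bar v(\cdot,1)}_{\dot B^{-1+3/p}_{p,\infty}} \leq CM$ and $\norm{\bar v(\cdot,1)}_{\dot B^{-1}_{\infty,\infty}} \leq C\varepsilon$. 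Interpolation gives
\[
\norm{\bar v(\cdot,1)}_{\dot B^{-1+3/\tilde p}_{\tilde p,\infty}} \;\leq\; C\, M^{p/\tilde p}\, \varepsilon^{1-p/\tilde p}, \qquad \tilde p > p,
\]
which, for $\tilde p$ large and then $\varepsilon = \varepsilon(p,q,M)$ sufficiently small, falls below the Theorem~\ref{weakstronguniqueness} threshold. Weak-strong uniqueness then identifies $\bar v|_{[1,1+\tau]}$ with the unique small-data weak Besov solution and places it in $L^\infty_{\loc}$, so $\bar v$ is regular at $(0,1^+)$.

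Finally, strong convergence $v_n \to \bar v$ in $L^3_{\loc}$, convergence of the associated pressures in $L^{3/2}_{\loc}$ (Calder\'on--Zygmund bounds, cf.\ Remark~\ref{pressurermk}), and the uniform decay estimate (Proposition~\ref{improveddecayprop}) applied to $u^{(n)} := v_n - P_k(u_0^{(n)},F_n)$ together produce a small scaled local energy for $v_n$ on a parabolic cylinder centered at $(0,1)$. A Caffarelli--Kohn--Nirenberg-type $\varepsilon$-regularity criterion (Proposition~\ref{pro:epsilonregularity}) then yields $v_n \in L^\infty(B(r) \times (1-r^2, 1))$ for some $r > 0$ and all large $n$. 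Undoing the rescaling gives $v \in L^\infty$ on a parabolic neighborhood of $(x^*,T^*)$, contradicting singularity. The main obstacle is exactly this last step: weak-strong uniqueness provides regularity of $\bar v$ only forward from $s = 1$, so extracting backward $L^\infty$-control near $s = 1$---where the rescaled singularity of the $v_n$ accumulates---requires combining the $L^3_{\loc}$-convergence with pressure control and the Besov decay so that the local-energy smallness needed for $\varepsilon$-regularity holds on both halves of the cylinder at $(0,1)$.
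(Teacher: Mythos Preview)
Your rescaling and weak-$\ast$ stability setup matches the paper's, but the argument has a genuine gap precisely where you flag ``the main obstacle.'' The paper does \emph{not} close it by combining forward weak-strong uniqueness with $\varepsilon$-regularity; it uses a completely different mechanism that you have omitted: \emph{backward uniqueness} for the vorticity equation (Proposition~\ref{backwarduniqueness}, based on the Escauriaza--Seregin--\v{S}ver\'ak theory). The logic runs as follows. By persistence of singularities (Lemma~\ref{persistenceofsingularity}), the limit solution $\bar v$ is itself singular at $(0,1)$. In part~(i), the hypothesis forces $\bar v(\cdot,1)=0$, and backward uniqueness then gives $\bar v\equiv 0$ on $Q_1$, contradicting the singularity. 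In part~(ii), the paper reduces to the version of part~(i) for weak Besov solutions (Remark~\ref{blowupcriterionweakbesov}), which again rests on backward uniqueness.

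Your proposed patch cannot work as stated. Weak-strong uniqueness only tells you that the solution launched from $\bar v(\cdot,1)$ is regular \emph{for $s>1$}; it says nothing about $\bar v$ on the backward cylinder $B(r)\times(1-r^2,1)$, which is exactly where the rescaled singularities of the $v_n$ live. In particular you have no reason for the scaled local energy of $\bar v$ (and hence of $v_n$ for large $n$) to be small on that backward cylinder---indeed, since $\bar v$ is singular at $(0,1)$ by persistence, it will \emph{not} be small there. Neither the $L^3_{\loc}$ convergence nor Proposition~\ref{improveddecayprop} supplies this: the decay estimate controls $u^{(n)}$ near $s=0$, not near $s=1$. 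A secondary issue is your claim $\norm{\bar v(\cdot,1)}_{\dot B^{-1+3/p}_{p,\infty}}\leq CM$: the correction $\bar u(\cdot,1)$ only lies in $L^2$, and $L^2\not\hookrightarrow\dot B^{-1+3/p}_{p,\infty}$ in general, so the interpolation step that produces smallness in $\dot B^{-1+3/\tilde p}_{\tilde p,\infty}$ is not justified. The essential missing idea is backward uniqueness; without it the contradiction cannot be closed.
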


\noindent Here are a few remarks concerning Theorem~\ref{blowupBesov}:
\begin{enumerate}
\item Let us mention that Escauriaza, Seregin and \v{S}verak's result\footnote{ Specifically, they prove that if a solution belongs to $L^{\infty}_{t}L^{3}_{x}$ then it is regular. See Theorems 1.3-1.4 in \cite{escauriazasereginsverak}.} was shown to hold true with the addition of certain forcing terms by Lemari{\'e}-Rieusset in \cite{lemarie2016} (specifically, Theorem 15.15, p. 527 of~\cite{lemarie2016}).
\item Previously, in \cite{choewolfyangweakl3}, Choe, Wolf, and Yang showed that a weak Leray-Hopf solution satisfying 
	\begin{equation}
		\esssup_{0<t<T*}\norm{v(\cdot,t)}_{L^{3,\infty}}\leq M
		\label{}
	\end{equation} is regular at $(x^{*},T^*)$, under certain additional assumptions on $v(\cdot,T^{*})$, which are similar in spirit to~\eqref{blowupassumption}.
\item The blow-up profiles that do not satisfy our assumption \eqref{blowupassumption} are reminiscent of the initial data conjectured by Guillod and {\u S}ver{\'a}k in~\cite{guillodsverak} to give rise to non-uniqueness. It is plausible to us that there exists a global weak Besov solution~$v$ which is singular at $T^* > 0$, $\sup_{0 < t < T^*} \norm{v(\cdot,t)}_{\dot B^{s_p}_{p,\infty}(\R^3)} < \infty$, and such that uniqueness is lost at the singular time; that is, there exists a different global weak Besov solution $\tilde{v}$ such that $v \equiv \tilde{v}$ on $Q_{T^*}$.
	\end{enumerate}

From the proof of Theorem~\ref{blowupBesov}.(i), we obtain an analogous criterion for weak Besov solutions which we will use to prove Theorem~\ref{blowupBesov}.(ii).
\begin{remark}[Blow-up criterion for weak Besov solutions]\label{blowupcriterionweakbesov}
Let $T^* > 0$, $p \in ]3,\infty[$ and $q \in ]3,p]$. Suppose that $v$ is a weak Besov solution on $Q_{T^*}$ with initial data $u_0 \in \dot B^{s_p}_{p,\infty}(\R^3)$ and forcing term $\div F$ ($F \in \cF_q(Q_{T^*})$). Finally, suppose that $\norm{u_0}_{\dot B^{s_p}_{p,\infty}(\R^3)} \leq M$. There exists a constant $\varepsilon := \varepsilon(p,q,M) > 0$ with the following property. Namely, if~\eqref{smallatblowuptime} is satisfied, then there exists an $\tilde{\varepsilon}\in ]0,T^*[$ such that $v\in L_{\infty}(\mathbb{R}^3 \times ]T^*-\tilde{\varepsilon}, T^*[).$
\end{remark}

Before we prove Theorem~\ref{blowupBesov}, we state three preliminary tools. 
The proofs of Lemma~\ref{bddatinfinity} and Proposition~\ref{backwarduniqueness} will be postponed to the end of the section. We omit the proof of Proposition~\ref{pro:strongsolbounded}, since it follows perturbation arguments similar to those in Proposition~\ref{pro:strongexist}.
\begin{lemma}[Boundedness for $|x| \gg 1$]\label{bddatinfinity}
Let $T>0$ and $q \in ]3,\infty[$. Let $v$ be a weak Besov solution (based on the $k$th Picard iterate, $0 \leq k \in \Z$) on $Q_T$ with initial data $u_0\in\BMO^{-1}(\R^3)$ and forcing term $\div F$ ($F\in \cF_{q}(Q_{T})$). There exists $R := R(v,k,T,q) > 0$ such that 
\begin{equation}
v \in L^{\infty}((\R^3\setminus B(R))\times ]T/2,T[).
\end{equation}
Moreover, if $F = 0$, we have that for all $0 \leq \alpha,\beta \in \Z$,
\begin{equation}
	\p_t^\alpha \nabla^\beta_x v \in L^{\infty}((\R^3\setminus B(R))\times ]T/2,T[).
	\label{}
\end{equation}
\end{lemma}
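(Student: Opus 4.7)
The plan is to combine the decomposition $v = u + P_k$ with the Caffarelli--Kohn--Nirenberg $\varepsilon$-regularity criterion (Proposition~\ref{pro:epsilonregularity}) to show that $v$ is regular at every point $(x_0,t)$ with $|x_0|$ sufficiently large and $t \in ]T/2,T[$. The key observation is that the two parts of the decomposition have complementary smallness: the correction term $u$ has fixed global integrability ($L^{10/3}(Q_T)$ by interpolation, since $u\in L^\infty_t L^2_x \cap L^2_t \dot H^1_x$), so its contribution to a small parabolic cylinder $Q(x_0,R)$ of fixed size vanishes as $|x_0|\to\infty$, while the Picard iterate $P_k\in \cX_T$ is pointwise controlled on $]T/2,T[$ (by $\esssup_{t\in ]T/2,T[} t^{\frac12} \|P_k(\cdot,t)\|_{L^\infty} \leq C(k,M,T)$, cf.\ \eqref{Picardestimatefrequent}), so its contribution to the CKN integral is small when $R$ is small.

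First, I would fix $R > 0$ small enough that $R^3 \|P_k\|_{L^\infty(\R^3\times ]T/2,T[)}^3 < \varepsilon_{\rm CKN}/4$ and $R^\delta \|F\|_{\cF_q(Q_T)} T^{(s_q')/2} < \varepsilon_{\rm CKN}/4$ for the appropriate scaling exponent $\delta = 2 - 3/q$, and that $T/2 + R^2 < 3T/4$. Next, for the pressure I would use Remark~\ref{pressurermk} to write $q = \pi_k + p$ where $\pi_k$ is the Picard pressure (bounded in $L^\infty_{t,\loc}L^p + L^\infty_{t,\loc}L^{q'}$ by Calder\'on--Zygmund applied to $P_{k-1}\otimes P_{k-1} - F$, hence with $L^{3/2}$ mass vanishing at infinity in space for fixed small cylinders) and $p$ is the correction pressure in $L^{3/2}(Q_T) + L^2_{t,\loc}L^2_x(Q_T)$. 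Since $u \in L^{10/3}(Q_T)$ and $P_k\otimes u, u\otimes P_k, F_k \in L^s(Q_T)$ for some $s > 1$, a standard absolute-continuity argument yields
\begin{equation*}
\lim_{|x_0|\to\infty} \left[ \frac{1}{R^2}\int_{Q(x_0,R)} |u|^3 + |p|^{3/2} + |\pi_k|^{3/2}\,dx\,dt + \frac{1}{R^\delta}\|F\|_{L^\infty_t L^q_x(Q(x_0,R))} \right] = 0,
\end{equation*}
uniformly for time intervals $Q(x_0,R) \subset \R^3 \times ]T/2,T[$.

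Combining the two contributions and invoking $\varepsilon$-regularity at the end-time slice, I conclude that there exists $R_0 := R_0(v,k,T,q) > 0$ such that $v$ is regular at every $(x_0,t)$ with $|x_0| \geq R_0$ and $t \in ]3T/4,T[$; a covering argument (sliding the cylinder in time and using the same smallness on the full interval $]T/2,T[$) upgrades this to $v \in L^\infty((\R^3\setminus B(R))\times ]T/2,T[)$ for some $R=R(v,k,T,q)>0$.

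For the second assertion with $F = 0$, I would bootstrap from the $L^\infty$ bound obtained above. Once $v$ is bounded on $(\R^3 \setminus B(R))\times ]T/2,T[$, standard parabolic interior regularity for the Navier-Stokes equations (applied iteratively with the pressure controlled by Calder\'on--Zygmund on suitable annular cutoffs) yields $\p_t^\alpha \nabla_x^\beta v \in L^\infty$ on a slightly smaller annular region and shifted time interval, and a further enlargement of $R$ absorbs the loss. The main obstacle in executing this plan is ensuring that the pressure decomposition behaves well at spatial infinity: the Picard-iterate pressure $\pi_k$ is a Riesz-transform image of $P_{k-1}\otimes P_{k-1}$, which only has critical (not subcritical) spatial decay, so one must verify that after localising with a fixed-size cutoff the resulting $L^{3/2}$ norm over $Q(x_0,R)$ vanishes as $|x_0|\to\infty$; this follows from $P_{k-1}\otimes P_{k-1} \in L^\infty_{t,\loc}L^{p/2}_x(Q_T)$ and standard singular-integral decay estimates, but it is the step that requires the most care.
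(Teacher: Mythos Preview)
Your overall strategy---combining the decomposition $v = u + P_k$ with the $\varepsilon$-regularity criterion, making the $P_k$ and $F$ contributions small by choosing the cylinder radius small, and making the $u$ and $p$ contributions small by sending $|x_0|\to\infty$---is exactly the approach the paper takes, and the velocity, forcing, and correction-pressure terms are handled essentially as in the paper.

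There is, however, a genuine gap in your treatment of the Picard pressure $\pi_k$. You assert that $\pi_k \in L^\infty_{t,\loc} L^p + L^\infty_{t,\loc} L^{q'}$ and, at the end, that $P_{k-1}\otimes P_{k-1} \in L^\infty_{t,\loc} L^{p/2}_x(Q_T)$, so that its localised $L^{3/2}$ mass vanishes as $|x_0|\to\infty$. But the lemma is stated for general $u_0 \in \BMO^{-1}(\R^3)$, and in that setting the Picard iterates $P_{k-1}$ lie only in $\cX_T$ (cf.\ the discussion around~\eqref{Picardestimatefrequent} and the last paragraph of Remark~\ref{pressurermk}), which gives $L^\infty$ pointwise bounds on $]T/4,T[$ but \emph{no} $L^r$ spatial integrability for any finite $r$. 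Consequently $P_{k-1}\otimes P_{k-1}$ is only $L^\infty$ in space, Calder\'on--Zygmund yields $\pi_k \in L^\infty_{t,\loc}\BMO_x$, and $\int_{Q(x_0,r_0)} |\pi_k|^{3/2}$ need not tend to zero as $|x_0|\to\infty$.

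The paper resolves this by exploiting the freedom to subtract a function of time from the pressure in the $\varepsilon$-regularity criterion: one replaces $q$ by $q - [q]_{x_0,r_0}(t)$, where $[\,\cdot\,]_{x_0,r_0}$ denotes the spatial average over $B(x_0,r_0)$. Then the John--Nirenberg inequality gives
\[
\frac{1}{r_0^{2}}\int_{Q(z,r_0)} \bigl|\pi_k - [\pi_k]_{x_0,r_0}(t)\bigr|^{3/2}\,dx\,dt \;\lesssim\; r_0^{3}\,\norm{\pi_k}_{L^\infty_t \BMO_x(Q_{1/4,1})}^{3/2},
\]
which is made small by choosing $r_0$ small, uniformly in $x_0$---exactly as for the $P_k$ velocity contribution. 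This is precisely the point you flagged as ``the main obstacle,'' and the correct resolution is via the $\BMO$ oscillation bound rather than any $L^{p/2}$ integrability of $P_{k-1}\otimes P_{k-1}$.
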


\begin{pro}[Backward uniqueness]
	\label{backwarduniqueness}
Let $T > 0$ and $v$ be a weak Besov solution on $Q_T$ with initial data $u_0\in \dot B^{s_p}_{p,\infty}(\R^3)$, where $p \in ]3,\infty[$, and zero forcing term. Furthermore, assume that $v(\cdot,T) = 0$. Then $v \equiv 0$ on $Q_T$.
\end{pro}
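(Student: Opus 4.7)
The plan follows the Escauriaza-Seregin-\v{S}ver\'ak backward uniqueness strategy, adapted to weak Besov solutions in the spirit of~\cite{barkerB4}. Consider the vorticity $\omega := \curl v$, which satisfies
\begin{equation*}
\partial_t \omega - \Delta \omega = (\omega \cdot \nabla) v - (v \cdot \nabla) \omega
\end{equation*}
in the sense of distributions on $Q_T$, with final condition $\omega(\cdot, T) = 0$. By Lemma~\ref{bddatinfinity} applied with $F = 0$, there exists $R > 0$ such that $v$ and all its spatial derivatives are bounded on $(\R^3 \setminus B(R)) \times ]T/2, T[$, so $\omega$ satisfies a linear differential inequality $|\partial_t \omega - \Delta \omega| \leq C(|\omega| + |\nabla \omega|)$ with bounded coefficients in this exterior region. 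The Escauriaza-Seregin-\v{S}ver\'ak backward uniqueness theorem in the exterior of a ball then yields $\omega \equiv 0$ on $(\R^3 \setminus B(R)) \times ]T/2, T[$.

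The next step is to propagate $\omega = 0$ inward via the parabolic Carleman-type unique continuation of~\cite{escauriazasereginsverak}. This is the main technical obstacle, since it requires local boundedness of the coefficients $v, \nabla v$, whereas a weak Besov solution is a priori only suitable (via the local energy inequality in Definition~\ref{weaksol}), not locally smooth. I would upgrade the interior regularity by exploiting the distributional vanishing $v(\cdot, T) = 0$: together with Calder\'on-Zygmund bounds on the associated pressure, this forces local smallness of $\int |v|^3 + |q|^{3/2}$ on backward parabolic neighborhoods of each $(x_0, T)$ with $|x_0| \leq R$, so the Caffarelli-Kohn-Nirenberg $\varepsilon$-regularity criterion yields a strip $\R^3 \times ]T-\delta, T[$ on which $v$ is smooth. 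Unique continuation on that strip then gives $\omega \equiv 0$ on $\R^3 \times ]T-\delta, T[$. Together with $\div v = 0$, each slice $v(\cdot, t)$ is thereby harmonic on $\R^3$. Since $v = u + P_k$ with $u(\cdot, t) \in L^2(\R^3)$ and $P_k(\cdot, t) \in L^\infty(\R^3)$ for $t > T/4$ (as $P_k \in \cX_T$), the slice $v(\cdot, t)$ is a tempered harmonic distribution, hence a polynomial; the requirement that $v - P_k = u \in L^2(\R^3)$ while $P_k$ is bounded forces this polynomial to have degree zero, so $v(x, t) = c(t)$ is constant in $x$. The associated pressure (Remark~\ref{pressurermk}) then reduces to $q = (-\Delta)^{-1} \div \div (v \otimes v) = 0$, and the Navier-Stokes equations collapse to $\dot c(t) = -\nabla q = 0$; combined with $v(\cdot, T) = 0$, this gives $v \equiv 0$ on $\R^3 \times ]T-\delta, T[$.

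Finally, I would iterate backwards. Define $t^* := \inf\{ t \in [0, T] : v \equiv 0 \text{ on } \R^3 \times ]t, T[ \}$; the preceding step shows $t^* < T$. If $t^* > 0$, then by weak $L^2$-continuity of the correction term $u = v - P_k$ one has $v(\cdot, t^*) = 0$, and the entire argument reapplies to the restriction $v|_{Q_{t^*}}$ (itself a weak Besov solution on $Q_{t^*}$ with vanishing final data), producing some $\delta' > 0$ with $v \equiv 0$ on $]t^* - \delta', t^*]$ and contradicting the definition of $t^*$. Hence $t^* = 0$ and $v \equiv 0$ on $Q_T$.
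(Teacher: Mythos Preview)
Your overall architecture matches the paper: apply Lemma~\ref{bddatinfinity} to get exterior smoothness, use the Escauriaza--Seregin--\v{S}ver\'ak backward uniqueness theorem to kill $\omega$ outside $B(R)$, then propagate inward by unique continuation, and conclude by Liouville for harmonic functions. The iteration down to $t=0$ is also fine.

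The genuine gap is in how you obtain the interior regularity needed for unique continuation. You claim that ``the distributional vanishing $v(\cdot,T)=0$ \dots\ forces local smallness of $\int |v|^3 + |q|^{3/2}$ on backward parabolic neighborhoods of each $(x_0,T)$'', and then invoke $\varepsilon$-regularity to produce a smooth strip $\R^3 \times ]T-\delta,T[$. This implication is not justified: the CKN quantity is a \emph{time-integrated} object on $]T-r^2,T[$, and vanishing of $v$ at the single time $T$ (in the sense of $C([0,T];\cS')$, or weak-$L^2$ continuity of $u$) does not control it. Concretely, write $v=u+P_k$; the $P_k$ contribution is harmless since $P_k$ is smooth near $T$, but for the correction one only has $u\in L^{10/3}(Q_T)$, which gives $\tfrac{1}{r^2}\int_{Q((x_0,T),r)}|u|^3 \lesssim r^{-3/2}\|u\|_{L^{10/3}(Q((x_0,T),r))}^3$ with no reason for the right-hand side to vanish as $r\to 0$. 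Indeed, if $(x_0,T)$ were a singular point of the suitable weak solution, the contrapositive of Proposition~\ref{pro:epsilonregularity} says the CKN quantity stays bounded below for all small $r$; ruling this out is precisely the content of the proposition you are trying to prove, so the step is circular.

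The paper avoids this by obtaining regularity at \emph{generic} times rather than at $T$ itself. It shows (Step~4 of the proof) that for a.e.\ $t_0\in]0,T[$ one has $u(\cdot,t_0)\in H^1(\R^3)$ and the energy inequality holds from $t_0$; then, treating $u$ as a weak Leray--Hopf solution of a perturbed system with $L^2$ forcing and $H^1$ data, local-in-time strong solvability gives smoothness of $v$ on a short forward interval $]t_0,t_1(t_0)[$. This yields an open set $G$ with $\overline{G}=[0,T]$ on which $v$ is smooth, and the spatial unique continuation theorem is applied at times $t\in G\cap]T/2,T[$ (where one already knows $\omega=0$ outside $B(R)$). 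Density of $G$ and weak-$\ast$ continuity of $\omega$ then give $\omega\equiv 0$ on $\R^3\times]T/2,T[$. Your Liouville step is more roundabout than the paper's (which uses $v(\cdot,t)\in L^p+L^3$ directly to force the harmonic function to zero, not merely to a constant), but that part of your argument does go through.
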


\begin{pro}[Strong solutions with $u_0 \in L^\infty$]
	\label{pro:strongsolbounded}
Let $0 < T \leq \infty$, $u_0 \in L^\infty(\R^3) \cap \dot B^{s_p}_{p,\infty}(\R^3)$ be a divergence-free vector field, and $F \in L^\infty_t L^q_x(Q_T)$ for some $p \in ]3,\infty[$ and $q \in ]3,p]$. Suppose that $v \in L^\infty(Q_T)$ is a mild solution of the Navier-Stokes equations on $Q_T$ with initial data $u_0$ and forcing term $\div F$. Then $v$ is a weak Besov solution on $Q_T$ with the same initial data and forcing term.
\end{pro}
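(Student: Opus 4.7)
The plan is to fix $k := k(p)$ and study $u := v - P_k(u_0,F)$. Since $u_0 \in L^\infty \cap \dot B^{s_p}_{p,\infty}$ and $F \in L^\infty_t L^q_x \hookrightarrow \cF_q(Q_{T'})$ for each finite $T' \in ]0,T]$, the Kato-type convolution estimates of Section~\ref{linearestimatessec} yield $P_k \in L^\infty(Q_{T'}) \cap \cK_p(Q_{T'}) \cap \cX_{T'}$ (iteratively: $S(t)u_0$ is bounded, $\|L(F)(\cdot,t)\|_{L^\infty} \les t^{\frac{1}{2}-\frac{3}{2q}}\|F\|_{L^\infty_t L^q_x}$ since $q>3$, and $B(U,V)$ preserves $L^\infty$ with a factor $S^{1/2}$), while Lemma~\ref{forcinglem} gives $F_k(u_0,F) \in L^2(Q_{T'})$. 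Substituting $v = P_0 - B(v,v)$ and $P_k = P_0 - B(P_{k-1},P_{k-1})$ into $u = v - P_k$ and expanding yields the correction integral equation
\begin{equation*}
u(\cdot,t) = -L(F_k)(\cdot,t) - L(P_k \otimes u + u \otimes P_k)(\cdot,t) - B(u,u)(\cdot,t) \quad \text{on } Q_{T'},
\end{equation*}
with $u \in L^\infty(Q_{T'})$ automatically.

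\textbf{Energy class for $u$ via iteration of Lemma~\ref{strongcontinuation}.} Next, I would apply Lemma~\ref{strongcontinuation} with $a = 0$, $V = P_k$, $G = -F_k$ to produce a solution $\bar u \in L^\infty(Q_{S_1}) \cap C([0,S_1];L^2(\R^3)) \cap L^2_t \dot H^1_x(Q_{S_1})$ of the \emph{same} integral equation on a short interval $]0,S_1[$, with $S_1$ bounded below via \eqref{lowerboundexistencetime} in terms of $\|P_k\|_{L^\infty(Q_{T'})}$ and $\|L(F_k)\|_{L^\infty(Q_{T'})}$. A direct contraction estimate using the bounds $\|B(f,g)\|_{L^\infty(Q_S)} \les S^{\frac{1}{2}} \|f\|_{L^\infty}\|g\|_{L^\infty}$ and $\|L_{P_k}(f)\|_{L^\infty(Q_S)} \les S^{\frac{1}{2}}\|P_k\|_{L^\infty}\|f\|_{L^\infty}$ gives uniqueness of $L^\infty$ solutions to the correction integral equation on small intervals, forcing $u \equiv \bar u$ on $Q_{S_1}$. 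Thus $u \in C([0,S_1];L^2) \cap L^2_t \dot H^1_x$ with $\|u(\cdot,t)\|_{L^2} \to 0$ as $t \dto 0$. I would then iterate, starting from $v(\cdot,S_1) \in L^\infty \cap L^2$ and applying time-shifted versions of Lemma~\ref{strongcontinuation}; the uniform $L^\infty$ bounds on $v$ and $P_k$ over $Q_{T'}$ guarantee a uniform positive lower bound on every step length, so finitely many iterations exhaust $Q_{T'}$. Patching across the join times $S_j$ (which is legitimate because the endpoint values of $u$ match the initial data of the next piece by construction) produces $u \in C([0,T'];L^2) \cap L^2_t \dot H^1_x(Q_{T'})$.

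\textbf{Local energy inequality and the case $T = \infty$.} Finally, with $v \in L^\infty(Q_{T'})$, the associated pressure $q = (-\Delta)^{-1}\div\div(v\otimes v - F)$ lies in $L^\infty_t \BMO_x + L^\infty_t L^q_x$ on $\R^3 \times ]\delta,T'[$ for every $\delta \in ]0,T'[$ by the Calder{\'o}n--Zygmund estimates. The mollification argument of \cite[p.~160--161]{sereginnotes}, exactly as invoked in Step~1B of Proposition~\ref{pro:strongexist}, then delivers the local energy inequality \eqref{vlocalenergyineq} for $(v,q)$. Combined with the $F_\ell \in L^2(Q_{T'})$ bound for all $\ell \geq k$ from Lemma~\ref{forcinglem}, this shows $v$ is a weak Besov solution on $Q_{T'}$ based on the $k$th Picard iterate for every finite $T' \in ]0,T]$; since the integer $k = k(p)$ is used uniformly, Definition~\ref{weaksol} then gives the global statement when $T = \infty$. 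The main obstacle will not be the perturbation theory itself (which essentially mirrors Proposition~\ref{pro:strongexist}) but rather the iterative patching on $Q_{T'}$: one must carefully track the dependence of the step sizes $S_j$ on the $L^\infty$ norms of $v$ and $P_k$ (finite for each finite $T'$ but potentially growing with $T'$ in the global case) and verify that the weak $L^2$-continuity of $u$ is preserved across the joins, which reduces to the identification $u \equiv \bar u$ on each subinterval together with the $L^2$-continuity guaranteed by Lemma~\ref{strongcontinuation}.
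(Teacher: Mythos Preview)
Your proposal is correct and follows essentially the same approach as the paper, which omits the detailed proof but indicates it ``follows perturbation arguments similar to those in Proposition~\ref{pro:strongexist}''; your iteration of Lemma~\ref{strongcontinuation} with the uniform lower bound \eqref{lowerboundexistencetime} on the step size, followed by the mollification argument for the local energy inequality, is precisely that template. One minor slip: at each restart you should take $u(\cdot,S_j) \in L^\infty \cap L^2$ (not $v(\cdot,S_j)$, which need not lie in $L^2$) as the new datum $a$ in Lemma~\ref{strongcontinuation}; this is consistent with the rest of your argument since the $L^\infty$ bound on $u$ follows from those on $v$ and $P_k$.
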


We now prove Theorem~\ref{blowupBesov} by following the rescaling procedure and backward uniqueness arguments of Seregin in~\cite{sereginh1/2,sereginl3}, see also the subsequent paper \cite{barkersereginhalfspace}. In turn, those arguments are adapted from the seminal work of Escauriaza, Seregin, and {\v S}ver{\'a}k in~\cite{escauriazasereginsverak}.

\begin{proof}[Proof of Theorem~\ref{blowupBesov}]

	\textit{0. Singular points}. 
	 Let us show that to prove Theorem~\ref{blowupBesov}, it is sufficient to investigate potential singularities of $v$.
Let $T^*$, $p$, $q$, $M$, $v$, $u_0$, $F$ be as in the statement of Theorem \ref{blowupBesov}, and suppose that there exists $t_1 \in ]0,T^*[$ such that $\norm{v(\cdot,t_1)}_{\dot B^{s_p}_{p,\infty}(\R^3)} < \infty$.
	We claim that $v \in L^\infty(Q_{T^*})$ provided that $v$ has no singular points at $T^*$. 
	By Proposition \ref{pro:strongsolbounded}, the mild solution $v$ is also a global weak Besov solution on $\R^3\times ]t_1,T^{*}[$ with initial data $v(\cdot,t_1)$ and forcing term $\div F$. By Lemma~\ref{bddatinfinity}, there exists an $R>0$ such that 
	\begin{equation}
	v\in L^{\infty}(B(R)^c \times ]t_1+(T^*-t_1)/2, T^{*}[),
\label{}
\end{equation}
which proves the claim.

\textit{1. Proof of (i)}.
We first discuss a few simplifications. 
By Sobolev embedding for homogeneous Besov spaces, we may assume that $p \geq q$. Next, by the scaling symmetry, we may assume that $T^* = 1$. Finally, we make the following observation that allows us to assume that $t_1 = 0$ in our arguments below. For the moment, suppose that $v$ is a mild solution on $Q_1$ with forcing term $F$, as in the statement of Theorem~\ref{blowupBesov}.(i). Then \eqref{smallatblowuptime} is satisfied, and $\norm{v(\cdot,t_1)}_{\dot B^{s_p}_{p,\infty}(\R^3)} \leq M$ for some $t_1 \in ]0,1[$. Define $\lambda := \sqrt{1-t_1}$ and
	\begin{equation}
		\bar{v}(x,t) := \lambda v(\lambda x, t_1 + \lambda^2 t), \quad \bar{F}(x,t) := \lambda^2 F(\lambda x, t_1 + \lambda^2t).
		\label{}
	\end{equation}
	Then $\bar{v}$ is a mild solution on $Q_1$ with forcing term $\div \bar{F}$ also satisfying the hypotheses of Theorem~\ref{blowupBesov}.(i) with $t_1=0$ and $T^*=1$. Indeed, one may verify that $\norm{\bar{F}}_{\cF_q(Q_1)} \leq \norm{F}_{\cF_q(Q_1)}$ and
	\begin{equation}
		\norm{\bar{v}(\cdot,0)}_{\dot B^{s_p}_{p,\infty}(\R^3)} \leq M, \quad \norm{\bar{v}(\cdot,1)}_{\dot B^{-1}_{\infty,\infty}(\R^3)} + \norm{\bar{F}}_{\cF_q(Q_1)} \leq \varepsilon.
		\label{}
	\end{equation}
	If $v$ is singular at $(0,1)$, then so is $\bar{v}$.

For contradiction, suppose that Theorem~\ref{blowupBesov}.(i) is false. Then there exists
a sequence $(v^{(n)})_{n \in \N}$ of vector fields on $Q_1$ with the following properties. First, for each $n \in \N$, $v^{(n)} \in L^\infty(Q_T)$ is a mild solution on $Q_T$ with initial data $u_0^{(n)} \in L^\infty(\R^3)$ and forcing term $F^{(n)} \in \cF_q(Q_1)$ for all $T \in ]0,1[$. 
Second,
\begin{equation}
	\sup_{n \in \N} \norm{u_0^{(n)}}_{\dot B^{s_p}_{p,\infty}(\R^3)} \leq M,
	\label{}
\end{equation}
so Proposition~\ref{pro:strongsolbounded} ensures that $v^{(n)}$ is a weak Besov solution on $Q_1$.
Third,
\begin{equation}
	\lim_{n \upto \infty} [\norm{v^{(n)}(\cdot,1)}_{\dot B^{-1}_{\infty,\infty}(\R^3)} + \norm{F^{(n)}}_{\cF_q(Q_{1})}] = 0.
	\label{besovnormgoingtozero}
\end{equation}
Finally, $v^{(n)}$ is singular at $(x^{(n)},1)$ for some $x^{(n)} \in \R^3$ which by the translation symmetry we may assume to be the origin.

By Proposition~\ref{pro:stability} concerning weak-$\ast$ stability, there exists a subsequence of $(v^{(n)})_{n \in \N}$ that converges to a weak Besov solution $\tilde{v}$ on $Q_1$ with initial data $\tilde{u}_0 \in \dot B^{s_p}_{p,\infty}(\R^3)$ and zero forcing term. Specifically,
	\begin{equation}
		u_0^{(n)} \wstar \tilde{u}_0 \text{ in } \dot B^{s_p}_{p,\infty}(\R^3),
		\label{convergencepart1}
	\end{equation}
	\begin{equation}
	v^{(n)} \wstar \tilde{v} \text{ in } (L^{\infty}_{t} L^{2}_x)_\loc (\overline{Q}_{\frac{1}{2},1}), \quad \nabla v^{(n)} \wto \nabla\tilde{v} \text{ in } L^{2}_\loc(\overline{Q}_{\frac{1}{2},1}).
	\end{equation}
	\begin{equation}
		v^{(n)} \to \tilde{v} \text{ in } L^3_\loc(\overline{Q}_{\frac{1}{2},1}), \quad q^{(n)} \wto \tilde{q} \text{ in } L^{\frac{3}{2}}_\loc(\overline{Q}_{\frac{1}{2},1}),
		\label{}
	\end{equation}
	\begin{equation}
		v^{(n)}(\cdot,1) \wstar \tilde{v}(\cdot,1) \text{ in } \cD'(\R^3),
		\label{vconvergetimeslice}
	\end{equation}
	where $q^{(n)},\tilde{q}$ denotes the pressure associated to $v^{(n)},\tilde{v}$, respectively. According to Lemma~\ref{persistenceofsingularity} in the appendix, $\tilde{v}$ also has a singular point at $(0,1)$. Furthermore,~\eqref{besovnormgoingtozero} and~\eqref{vconvergetimeslice} imply that $\tilde{v}(\cdot,1) = 0$. By Proposition~\ref{backwarduniqueness}, $\tilde{v} \equiv 0$ on $Q_1$, which contradicts that $\tilde{v}$ is singular. This completes the proof.

	\textit{2. Proof of (ii)}
	For contradiction, suppose that Theorem \ref{blowupBesov}.(ii) is false. In particular, there exist $T^*$, $p$, $q$, $M$, $v$, $u_0$, $F$, as in the statement of Theorem~\ref{blowupBesov}, satisfying \eqref{controlonsequence}--\eqref{blowupassumption}, where $\varepsilon := \varepsilon(p,q,M) > 0$ is the constant in Remark~\ref{blowupcriterionweakbesov}, and such that $v$ is singular at $(x^*,T^*)$ for some $x^* \in \R^3$. As in Step 1, we may assume that $p \geq q$, $x^* = 0$, and $T^* = 1$.
	
We now zoom in the singularity to obtain a contradiction. For each $n \in \N$, we define $\lambda_n := (1-t_n)^{\frac{1}{2}}$, and for a.e. $(x,t) \in Q_1$,
	\begin{equation}
		v^{(n)}(x,t) := \lambda_n v(\lambda_n x, t_n + \lambda_n^2 t),
		\label{}
	\end{equation}
	\begin{equation}
		F^{(n)}(x,t) := \lambda_n^2 F(\lambda_n x, t_n + \lambda_n^2 t).
		\label{}
	\end{equation}
	Proposition~\ref{pro:strongsolbounded} and \eqref{controlonsequence} imply that $v^{(n)}$ is a weak Besov solution on $Q_1$ with initial data $u_0^{(n)}:= \lambda_n v(\lambda_n \cdot, t_n)$ and forcing term $\div F^{(n)}$. Furthermore,
	\begin{equation}
		\sup_{n \in \N} \norm{u_0^{(n)}}_{\dot B^{s_p}_{p,\infty}(\R^3)} \leq M, \quad
		\lim_{n \upto \infty} \norm{F^{(n)}}_{\cF_q(Q_1)} = 0.
		\label{}
	\end{equation}
	Each velocity field $v^{(n)}$ is singular at $(0,1)$. By Proposition~\ref{pro:stability} regarding weak-$\ast$ stability, 
	there exists a divergence-free vector field $\tilde{u_0} \in \dot B^{s_p}_{p,\infty}(\R^3)$ and a subsequence of $(v^{(n)})_{n \in \N}$ converging to a weak Besov solution $\tilde{v}$ on $Q_1$ with initial data $\tilde{u}_0$, see~\eqref{convergencepart1}--\eqref{vconvergetimeslice} in Step~1. Due to Lemma~\ref{persistenceofsingularity} in the appendix, $\tilde{v}$ is singular at $(0,1)$.
	On the other hand, by~\eqref{vconvergetimeslice} and the assumption~\eqref{blowupassumption}, there exists $\Psi \in \dot B^{-1}_{\infty,\infty}(\R^3)$ with $\norm{\Psi}_{\dot B^{-1}_{\infty,\infty}(\R^3)} \leq \varepsilon$ and
	\begin{equation}
		v^{(n)}(\cdot,1) = \lambda_n v(\lambda_n \cdot, 1) \wstar \Psi = \tilde{v}(\cdot,1) \text{ in } \cD'(\R^3).
		\label{zoomintopsi}
	\end{equation}
	Since also $\norm{\tilde{u}_0}_{\dot B^{s_p}_{p,\infty}(\R^3)} \leq M$, Remark~\ref{blowupcriterionweakbesov} implies that $\tilde{v}$ is regular at $(0,1)$. This is the desired contradiction. The proof is complete.
\end{proof}

We now prove the auxiliary results Lemma~\ref{bddatinfinity} and Proposition~\ref{backwarduniqueness}. Let $Q_{S,T} := \R^3 \times ]S,T[$ when $0 < S < T \leq \infty$.

\begin{proof}[Proof of Lemma \ref{bddatinfinity}]
	Using the scale-invariance of the Navier-Stokes equations, we may assume without loss of generality that $T=1$. We will use the $\varepsilon$-regularity criterion for suitable weak solutions to control the equation near spatial infinity, see Proposition~\ref{pro:epsilonregularity}.
	
For $z=(x,t) \in Q_{1/2,1}$, $r \in ]0,1/2[$, $R_0 > 1/2$, and $|x|\geq R_{0}$, we have that
	 \begin{equation}
		 \frac{1}{r^2} \int_{Q(z,r)} |v|^3 \, dx' \, dt \leq \frac{c}{r^2} \int_{Q(z,r)} |u|^3 \, dx' \, dt + \frac{c}{r^2} \int_{Q(z,r)} |P_k|^3 \, dx' \, dt \leq $$ $$
		 \leq \frac{c}{r^2} \int_{\frac{1}{4}}^{1} \int_{|x|\geq R_{0}-\frac{1}{2}} |u|^3 \, dx' \, dt + cr^3 \norm{P_k}_{L^\infty(Q_{1/4,1})}^3 .
	 \end{equation}
	 Here, $Q(z,r) := B(x,r) \times ]t-r^2,t[$ denotes a parabolic ball.
 Fix $r_0 := r_{0}(\norm{P_k}_{L^\infty(Q_{1/4,1})}, \varepsilon_{\rm CKN}) > 0$ satisfying
 \begin{equation}
 cr_0^3 \norm{P_k}_{L^\infty(Q_{1/4,1})}^3\leq \frac{\varepsilon_{\rm CKN}}{8}.
 \end{equation}
 Since $v$ is a weak Besov solution on $Q_{1}$, we have that $u\in L^{\infty}_{t}L^{2}_{x} \cap L_{t}^{2}\dot{H}^{1}_{x}(Q_1)$. This implies $u\in L^{3}(Q_{1})$. Hence, there exists $R_0 := R_{0}(u, r_0, \epsilon_0) > 1/2$ such that
 \begin{equation}
 \frac{c}{r_0^2} \int_{\frac{1}{4}}^{1} \int_{|x|\geq R_{0}-\frac{1}{2}} |u|^3 \, dx' \, dt\leq \frac{\varepsilon_{\rm CKN}}{8}.
 \end{equation}
 Hence, for $z=(x,t) \in Q_{1/2,1}$ and $|x|\geq R_{0}$, we have that
 \begin{equation}\label{v3small}
		 \frac{1}{r_0^2} \int_{Q(z,r_0)} |v|^3 \, dx' \, dt\leq \frac{\varepsilon_{\rm CKN}}{4}.
		 \end{equation}
		 Similarly, after possibly adjusting $r_0$ and $R_0$, one may obtain that for $z=(x,t) \in Q_{1/2,1}$ and $|x|\geq R_{0}$,
	 \begin{equation}\label{presterm}
		 \frac{1}{r_0^2} \int_{Q(z,r_0)} |q - [q]_{x,r_0}(t)|^{\frac{3}{2}} \, dx' dt' \leq $$ $$ \leq \frac{c}{r_0^2} \int_{Q(z,r_0)} |p|^{\frac{3}{2}} \, dx' \, dt' + \frac{c}{r_0^2} \int_{Q(z,r_0)} |\pi_k - [\pi_k]_{x,r_0}(t)|^{\frac{3}{2}} \, dx' \, dt' \leq $$ $$
		 \leq \frac{c}{r_0^2} \int_{\frac{1}{4}}^{1}\int_{|x'|\geq R_{0}-\frac{1}{2}} |p_{1}|^{\frac{3}{2}} \, dx' \, dt' +\frac{c}{r_0^{\frac{3}{4}}}\Big( \int_{\frac{1}{4}}^{1}\int_{|x'|\geq R_{0}-\frac{1}{2}} |p_{2}|^{{2}} \, dx' \, dt' \Big)^{\frac{3}{4}}+ cr_0^3 \norm{\pi_k}_{L^\infty_t \BMO_x(Q_{1/4,1})}^{\frac{3}{2}} \leq \frac{\varepsilon_{\rm CKN}}{4},
	 \end{equation}
	 where $[q]_{x,r}(t') := |B(x,r)|^{-1} \int_{B(x,r)} q(x',t') \, dx'$. In~(\ref{presterm}), we have used the fact that $p\in L^{\frac{3}{2}}(Q_{1})+L^{2}_{t,\loc}L^{2}_{x}(Q_{1})$ (see the proof of Remark~\ref{pressurermk}).
	 
	  Clearly, there exists $\tilde{q} > 1$ such that
	 \begin{equation}\label{Fspacesloc}
	 F\in L_{t,\loc}^{\tilde{q}} L^{q}_{x}(Q_{1})\,\,\,\textrm{with}\,\,\,\frac{2}{\tilde{q}}+\frac{3}{q}=2-\delta\,\,\,\textrm{and}\,\,\,\delta>0.
	 \end{equation}
	 Since $\tilde{q}$ and $q$ are finite, we may adjust $R_0$ to obtain the following for $z=(x,t) \in Q_{1/2,1}$ and $|x|\geq R_{0}$. Namely,
	 \begin{equation}\label{Ftails}
	 r_0^\delta \|F\|_{L^{\tilde{q}}_{t}L^{q}_{x}(Q(z,r_0))} \leq r_0^\delta \|F\|_{L^{\tilde{q}}_{t}L^{q}_{x}(\mathbb{R}^3\setminus B(R_0-{1}/{2})\times ]1/4,1[)}\leq \frac{\varepsilon_{\rm CKN}}{2}.
	 \end{equation}
	 Using Proposition~\ref{pro:epsilonregularity}, (\ref{v3small}), (\ref{presterm}), and (\ref{Ftails}) gives the desired conclusion.
\end{proof}

\begin{proof}[Proof of Proposition \ref{backwarduniqueness}]
	\textit{0. Properties of $v$}.
	It is sufficient to show that $v\equiv0$ in $ \mathbb{R}^3 \times ]T/2,T[$. A repeated application then gives $v \equiv 0$ on $Q_{T}$.
	
  By rescaling the problem, we may assume that $T = 1$. From Definition~\ref{weaksol}, there exists an integer $k \geq 0$ and $u \in L^\infty_t L^2_x \cap L^2_t \dot H^1_x(Q_1)$ such that
	\begin{equation}
		v = P_k(u_0) + u
		\label{}
	\end{equation}
and satisfies certain additional properties, including the local energy inequality \eqref{vlocalenergyineq}. Observe that $P_k(u_0) \in L^{\infty}_{t}L^p_{x}(\R^3 \times ]\delta,1[)$
and the associated pressure $\pi_k := (-\Delta)^{-1} \div \div P_{k-1} \otimes P_{k-1} \in L^\infty_t L^{\frac{p}{2}}_x(\R^3 \times ]\delta,1[)$
for all $\delta \in ]0,1[$. Also, $u \in L^3(Q_1)$ and $p \in L^{\frac{3}{2}}(Q_1)$. Hence, the velocity field satisfies 
	\begin{equation}
	v \in L^\infty_{t}L^{p}_{x}(\R^3 \times ]\delta,1[) + L^3(\R^3 \times ]\delta,1[), \quad \delta \in ]0,1[.
		\label{}
	\end{equation}
Let $\omega := \curl v$ denote the vorticity.

\textit{1. Suffices to prove $\omega \equiv 0$}. To complete the proof, it is sufficient to prove  that $\omega \equiv 0$ on $Q_{\frac{1}{2},1} := \R^3 \times ]1/2,1[$. In such case, the velocity field $v$ is harmonic, due to the well-known identity $\Delta = \nabla \div - \curl \curl$ for the vector Laplacian. Then $\Delta v(\cdot,t) = 0$ while $v(\cdot,t) \in L^p(\R^3) + L^3(\R^3)$ for almost every $t \in ]1/2,1[$. Finally, the Liouville theorem for entire harmonic functions implies that $v \equiv 0$ on $Q_{\frac{1}{2},1}$ and finishes the proof.

\textit{2. Backward uniqueness: $\omega \equiv 0$ near spatial infinity}. From Lemma \ref{bddatinfinity}, there exists $R := R(v,P_{k}(u_0))>0$ such that for $K=B(R)$, we have $\nabla^{\ell-1}v \in L^\infty(K^c \times ]1/2,1[)$ and $\norm{\nabla^{\ell-1} v}_{L^\infty( (K^c \times ]1/2,1[)} \leq C(r_0,\ell)$ for all $\ell \in \N$. Now recall that the vorticity satisfies the equation
	 \begin{equation}
		 \p_t \omega - \Delta \omega = -\curl (u \cdot \nabla u) = \omega \cdot \nabla u - u \cdot \nabla \omega,
		 \label{}
	 \end{equation}
 from which we obtain that $\p_t \omega \in L^\infty(K^c \times ]1/2,1[)$, and
 \begin{equation}
 |\p_t \omega - \Delta \omega| \leq c(|\omega| + |\nabla \omega|) \text{ on } K^c \times ]1/2,1[.
	 \label{differentialinequality}
 \end{equation}
 Moreover, $\omega(\cdot,1) = 0$. From Theorem 5.1 in \cite{escauriazasereginsverak} concerning backward uniqueness for the differential inequality \eqref{differentialinequality}, we obtain that $\omega \equiv 0$ on $K^c \times ]1/2,1[$.

 \textit{3. Unique continuation: $\omega \equiv 0$ near the spatial origin}. The proof will be complete once we demonstrate that $w \equiv 0$ in $K \times ]1/2,1[$. For the moment, let us take for granted the following claim that we prove in Step 4:
 \begin{itemize}
	 \item[] \emph{Claim}: There exists an open set $G \subset ]0,1[$ such that $\overline{G} = [0,1]$ and $v$ is smooth on $\R^3 \times G$.
	 \end{itemize}
With the claim in hand, let $t_1 \in G \cap ]1/2,1[$ and $x_0 \in K^c$. Let $t_0 \in \R$ be such that $[t_0,t_1] \subset G$. From the smoothness of $v$, we have that $\omega, \p_t \omega, \nabla^2 \omega \in L^2(B(x_0,2R) \times [t_0,t_1])$ for any $R > 0$, and
\begin{equation}
	|\p_t \omega - \Delta \omega| \leq c(|\omega| + |\nabla \omega|) \text{ on } B(x_0,2R) \times ]t_0,t_1[.
	\label{}
\end{equation}
In addition, recall that $\omega \equiv 0$ in a neighborhood of $(x_0,t_1)$.
Hence, by Theorem 4.1 in \cite{escauriazasereginsverak} concerning unique continuation across spatial boundaries, $\omega \equiv 0$ in $B(x_0,R) \times \{t_{1}\}$. Since $t_1 \in G \cap ]1/2,1[$ and $R>0$ are arbitrary, we have that $\omega \equiv 0$ in $\R^3 \times (G \cap ]1/2,1[)$.
	Moreover, by the density of $G$ and weak-$\ast$ continuity of $\omega(\cdot,t)$ on $[0,1]$ in the sense of distributions on $\R^3$, we obtain that $\omega \equiv 0$ on $Q_{\frac{1}{2},1}$, as desired. (Another way to complete Step 3 is to use the spatial analyticity of smooth solutions of the Navier-Stokes equations.)

 \textit{4. Showing $v$ is smooth at generic times}.
 We will now prove the claim from Step 3. Let $\Pi$ denote the set of all $t_0 \in ]0,1$[ such that $u(\cdot,t) \in H^1(\R^3)$ and the global energy inequality \eqref{uglobalenergyineq} is satisfied with initial time $t_0$. The second condition ensures that
	 \begin{equation}
		 \lim_{t \dto t_0} \norm{u(\cdot,t) - u(\cdot,t_0)}_{L^2(\R^3)} = 0.
		 \label{}
	 \end{equation}
	 Notice that $|\Pi| = 1$, and in particular, $\overline{\Pi} = [0,1]$. We will prove that for each $t_0 \in \Pi$, there exists $t_1 := t_1(t_0) \in ]t_0,1[$ such that $v$ is smooth on $]t_0,t_1[$. Then $G := \cup_{t_0 \in \Pi} ]t_0,t_1(t_0)[$ will satisfy the desired properties.
From the above, we see that $u$ is a weak Leray-Hopf solution on $\R^3 \times ]t_0,t_1[$, with initial data $u(\cdot,t_0)\in H^{1}(\R^3)$ and forcing term
	\begin{equation}
		f:= -P_{k}\cdot\nabla u-u\cdot\nabla P_{k}-F_{k}.
		\label{}
	\end{equation}
One can show that $f$ belongs to $L^{2}(\mathbb{R}^3 \times ]t_0,1[)$ for all $k\geq k(p)$, where $k(p) = \lceil \frac{p}{2} \rceil - 2$. By unique solvability 
	results for weak Leray-Hopf solutions,\footnote{See Heywood's paper~\cite[Theorem 2']{heywood1980} and  Sohr's book~\cite[Theorem 1.5.1, p. 276]{sohrelementaryfunctionalanalytic}, for example.} we can conclude the following. Namely, we can find $t_1 := t_{1}(t_0,u,f)>0$ such that
\begin{equation}
u, \nabla u \in L^\infty_t L^2_x(\R^2 \times ]t_0,t_1[) \text{ and } u\in L^{\infty}_t L^6_x(\R^3 \times ]t_0,t_1[).
\end{equation}
Hence,
\begin{equation}
v\in L^{\infty}(\R^3\times ]t_0,t_1[)+L^\infty_t L^{6}_{x}(\R^3 \times ]t_0,t_1[),
\end{equation}
\begin{equation}
\nabla v \in L^{\infty}(\R^3\times ]t_0,t_1[)+L^{2}(\R^3\times ]t_0,t_1[).
\end{equation}
Using known arguments due to Serrin \cite{serrinsmoothness}, we deduce that
\begin{equation}
\nabla^{\ell}v \in L^{\infty}(\R^3\times ]t_0+\varepsilon,t_1[)
\end{equation}
for all $0<\varepsilon<t_{1}-t_0$ and all $0 \leq \ell \in \Z$.
Using known arguments (see Proposition 3.9, p. 160-162 of Seregin's book \cite{sereginnotes}, for example), we can now show that
\begin{equation}
\partial_{t}^{k}\nabla^{\ell}v \in L^{\infty}(\R^3\times ]t_0+\varepsilon,t_1[)
\end{equation}
for all $0<\varepsilon<t_{1}-t_0$ and all $0 \leq k, \ell \in \Z$.
\end{proof}

\subsection{Minimal blow-up initial data}

As discussed in Section~\ref{minimalblowupintro}, global weak Besov solutions provide a convenient framework for investigating minimal blow-up problems, even when local-in-time mild solutions are no longer guaranteed to exist.

Let $\cX$ be a critical space which continuously embeds into $\dot B^{-1+\frac{3}{p}}_{p,\infty}(\R^3)$ for some $p \in ]3,\infty[$. Here, we are using the notion of critical space in Definition~\ref{criticalspacedef}. For each $u_0 \in \cX$, we define
	\begin{itemize}
		\item[]
			$\rho_{\cX}^{u_0} := \sup( \{ 0 \} \cup \lbrace \rho > 0 : $ for all $a \in {\cX}$ satisfying $\norm{a-u_0}_{\cX} \leq \rho$, any global weak Besov solution with initial condition $a$ has no singular points$\rbrace )$.
	\end{itemize}
	We also define $\rho_{\cX} := \rho_{\cX}^0$ as in Section~\ref{minimalblowupintro}.\footnote{It is also possible to prove minimal blow-up results with non-zero forcing terms, but the setup is not as convenient owing to the fact that many natural spaces of forcing terms do not embed into each other.}

	
	\begin{remark}
		If $\rho_{\cX} < \infty$, the quantity $\rho_X^{u_0}$ may be zero (for example, when $u_0$ is initial data for a singular global weak Besov solution). It is guaranteed to be non-zero when additionally $u_0 \in \VMO^{-1}(\R^3)$ and there exists a global mild solution $u \in \mathring{\cX}_\infty$ with initial data $u_0$. In this scenario, small perturbations of $u_0$ also give rise to global mild solutions, see Proposition~\ref{pro:strongexist}. For example, Theorem~\ref{weakstrongintro} implies that $\rho_{\cX} > 0$.
		
	\end{remark}

	Here is our main theorem concerning minimal blow-up perturbations of global solutions, which extends Rusin's treatment in \cite{rusinminimalperturbations} for $\dot H^{\frac{1}{2}}$ initial data.\footnote{Rusin's paper~\cite{rusinminimalperturbations} is based on profile decomposition. For minimal blow-up problems, the profile decomposition approach appears to be effective in all dimensions, whereas ours is restricted to dimension $\leq 3$. The reason is that the existence theory and stability properties of suitable weak solutions are currently unknown in dimension $\geq 4$.}

	\begin{theorem}[Minimal blow-up perturbations]
	\label{thm:minimalblowupperturbations}
Let $\cX$ be a critical space which embeds into $\dot B^{-1+\frac{3}{p}}_{p,\infty}(\R^3)$ for some $p \in ]3,\infty[$. Suppose that $u_0 \in \cX$ satisfies the following property:
	\begin{itemize}
		\item[]
			If $(x_n,t_n)_{n \in \N} \subset Q_\infty$ is a sequence such that $t_n \to \infty$, $t_n \to 0$, or $|x_n| \to \infty$, then 
		\begin{equation}
			\sqrt{t_n} u_0 (\sqrt{t_n} (\cdot+x_n)) \wstar 0 \text{ in } \cD'(\R^3).
			\label{}
		\end{equation}
\end{itemize}
Suppose that $\rho_{\cX} < \infty$. Then (at least) one of the following holds:
	\begin{enumerate}[(i)]
		\item There exists a singular global weak Besov solution $v$ with initial data $a \in \cX$ such that $\norm{u_0-a}_{\cX} = \rho_{\cX}^{u_0}$.
		\item There exists a singular global weak Besov solution $v$ with initial data $a \in \cX$ such that $\norm{a}_{\cX} \leq \rho_{\cX}^{u_0}$. Hence, $\rho_{\cX} \leq \rho_{\cX}^{u_0}$.
	\end{enumerate}
	Moreover,
	\begin{enumerate}[(i')]
	\item If (ii) does not hold, then there exists a compact set $K \subset Q_\infty$ and $\varepsilon_0 > 0$ such that for all $\varepsilon \in ]0, \varepsilon_0[$, every singular global weak Besov solution with initial data $a \in \cX$ and $\norm{a-u_0}_{\cX} < \rho_{\cX}^{u_0}+\varepsilon$ has all its singularities in $K$. In this case, the set  $\{ a - u_0 : a \in {\cX} \text{ satisfies (i)} \}$ is sequentially compact in $\cX$ in the topology of distributions on $\R^3$.
	\item If $(i)$ does not hold, then for every compact set $K \subset Q_\infty$, there exists $\varepsilon_0 > 0$ such that for all $\varepsilon \in ]0,\varepsilon_0[$, every singular global weak Besov solution with initial data $a \in \cX$ and satisfying $\norm{a-u_0}_{\cX} < \rho_{\cX}^{u_0} + \varepsilon$ has all its singularities outside $K$. 
	\end{enumerate}
\end{theorem}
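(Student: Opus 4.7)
The plan is to construct and analyze a minimizing sequence of singular initial data. Select $a^{(n)} = u_0 + b^{(n)} \in \cX$ such that each $a^{(n)}$ is initial data for a singular global weak Besov solution $v^{(n)}$ with a singular point $(x_n, t_n) \in Q_\infty$, and $\|b^{(n)}\|_{\cX} \to \rho_{\cX}^{u_0}$. Note $\|b^{(n)}\|_{\cX} \geq \rho_{\cX}^{u_0}$ for all $n$ by the definition of $\rho_{\cX}^{u_0}$. I then split into two cases based on the behavior of $(x_n, t_n)$: either (a) a subsequence remains in a compact subset of $Q_\infty$, or (b) every subsequence satisfies $t_n \to 0$, $t_n \to \infty$, or $|x_n| \to \infty$.

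In case (a), pass to a further subsequence with $(x_n, t_n) \to (x_*, t_*)$ with $0 < t_* < \infty$. By property (iii) of critical spaces, the $\cX$-bounded sequence $b^{(n)}$ admits a subsequence with $b^{(n)} \wstar b$ in $\cD'(\R^3)$ with $\|b\|_{\cX} \leq \rho_{\cX}^{u_0}$, and hence $a^{(n)} \wstar u_0 + b$ in $\dot B^{s_p}_{p,\infty}(\R^3)$. Weak--$\ast$ stability (Proposition~\ref{pro:stability}) extracts a further subsequence with $v^{(n)} \to v$ in $L^3_\loc(Q_\infty)$, where $v$ is a global weak Besov solution with initial data $u_0 + b$; Lemma~\ref{persistenceofsingularity} then forces $v$ to be singular at $(x_*, t_*)$. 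Consequently $u_0 + b$ is initial data for a singular solution, so $\|b\|_{\cX} \geq \rho_{\cX}^{u_0}$, giving equality and establishing (i).

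In case (b), set $\lambda_n := \sqrt{t_n}$ and use the scaling/translation symmetries of the Navier--Stokes equations to define
\begin{equation*}
\tilde{v}^{(n)}(x,t) := \lambda_n v^{(n)}(x_n + \lambda_n x,\ \lambda_n^2 t),
\end{equation*}
which is a global weak Besov solution with a singular point at $(0,1)$ and initial data
\begin{equation*}
\tilde{a}^{(n)}(x) = \lambda_n u_0(x_n + \lambda_n x) + \lambda_n b^{(n)}(x_n + \lambda_n x).
\end{equation*}
By the standing hypothesis on $u_0$ the first summand tends to $0$ in $\cD'(\R^3)$; by the scaling/translation invariance of $\cX$ the second summand has $\cX$-norm equal to $\|b^{(n)}\|_{\cX}$, hence by property (iii) a subsequence converges in $\cD'(\R^3)$ to some $b \in \cX$ with $\|b\|_{\cX} \leq \rho_{\cX}^{u_0}$. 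Thus $\tilde{a}^{(n)} \wstar b$ in $\dot B^{s_p}_{p,\infty}(\R^3)$, and weak--$\ast$ stability together with persistence of singularities produces a singular global weak Besov solution with initial data $b$, establishing (ii).

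For (i$'$) and (ii$'$), I would argue by contradiction using the same dichotomy applied to sequences $a^{(n)}$ with $\|a^{(n)} - u_0\|_{\cX} < \rho_{\cX}^{u_0} + 1/n$. Assuming (ii) fails, any sequence of singular points escaping every compact $K_n \subset Q_\infty$ would land in the non-compact case, contradicting the failure of (ii); the claimed sequential compactness in (i$'$) then follows by running the same compact-case argument on an arbitrary sequence of minimal data. Similarly, assuming (i) fails and singular points remain in a fixed compact $K$ puts us in the compact case and contradicts the failure of (i). The main obstacle throughout is verifying that, in the rescaled setup, the linear part $\lambda_n u_0(x_n + \lambda_n\,\cdot\,)$ is killed exactly when the singular point degenerates, which is precisely why the hypothesis on $u_0$ is formulated in terms of the three regimes $t_n \to 0$, $t_n \to \infty$, and $|x_n| \to \infty$; all other steps reduce to careful bookkeeping with subsequences and the black-box tools of Proposition~\ref{pro:stability} and Lemma~\ref{persistenceofsingularity}.
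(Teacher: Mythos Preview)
Your proposal is correct and follows essentially the same approach as the paper. The paper likewise takes a minimizing sequence of singular data, splits into the two cases according to whether the singular points $(x_n,t_n)$ accumulate in $Q_\infty$ or escape (so that a subsequence has $t_n\to 0$, $t_n\to\infty$, or $|x_n|\to\infty$), applies the same rescaling $\tilde v^{(n)}(x,t)=\sqrt{t_n}\,v^{(n)}(\sqrt{t_n}(x+x_n),t_n t)$ in the escaping case, and then invokes Proposition~\ref{pro:stability} and Lemma~\ref{persistenceofsingularity} exactly as you do; the proofs of (i$'$) and (ii$'$) are likewise obtained by observing that the failure of (i) (resp.\ (ii)) rules out the compact (resp.\ escaping) case for every minimizing sequence.
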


\begin{proof}
In order to prove the above theorem, we utilise the weak-$\ast$ stability properties of global Besov solutions, along with arguments related to those contained in~\cite{rusinsverakminimal} and~\cite{rusinminimalperturbations}.

	Assume the hypotheses of the theorem. Suppose $(u_0^{(n)})_{n \in \N} \subset \cX$ and $(v^{(n)})_{n \in \N}$ is an associated sequence of global weak Besov solutions such that
	\begin{equation}
		\norm{u_0^{(n)}-u_0}_{\cX} \dto \rho_{\cX}^{u_0}
		\label{}
	\end{equation}
	and for each $n \in \N$, $v^{(n)}$ has a singular point $(x_n,t_n) \in Q_\infty$.

	Let us consider the following two mutually exclusive cases (which also exhaust all possible cases).

	\emph{Case I}: Suppose the sequence of singular points $(x_n,t_n)_{n \in \N}$ has an accumulation point $(x',t') \in Q_\infty$. By passing to a subsequence\footnote{In this proof, we will not alter our notation when passing to a subsequence.}, 
we may assume that $u_0^{(n)} \wstar a$ in $\dot B^{s_p}_{p,\infty}(\R^3)$ for some $p \in ]3,\infty[$, the limit $a$ belongs to $\cX$ with norm
		$\norm{a-u_0}_{\cX} \leq \rho_{\cX}^{u_0}$,
		and $(v^{(n)})_{n \in \N}$ converge in the sense described in Proposition~\ref{pro:stability} to a global weak Besov solution $v$ with initial data $a$. By Lemma~\ref{persistenceofsingularity}, $v$ has a singularity at $(x',t')$. According to the definition of $\rho_X^{u_0}$, we must have $\norm{a-u_0}_{\cX} = \rho_{\cX}^{u_0}$, which verifies (i).

	\emph{Case II}: Suppose the sequence of singular points $(x_n,t_n)_{n \in \N}$ has no accumulation point in $Q_\infty$. Then there exists a subsequence
	such that $t_n \to 0$, $t_n \to \infty$, or $|x_n| \to \infty$. We define a sequence of singular global weak Besov solutions $(\tilde{v}^{(n)})_{n \in \N}$ associated to a sequence of initial data $(\tilde{u_0}^{(n)})_{n \in \N}$ by the following translation and rescaling:
	\begin{equation}
		\tilde{v}^{(n)}(x,t) := \sqrt{t_n} v(\sqrt{t_n} (x+x_n),t_n t),
		\label{}
	\end{equation}
	\begin{equation}
		\tilde{u}_0^{(n)}(x) := \sqrt{t_n} u_0^{(n)}(\sqrt{t_n} (x+x_n)).
		\label{}
	\end{equation}
The solutions $\tilde{v}^{(n)}$ have singularities at the spatial origin and time $T=1$. By passing to a further subsequence, we may assume that $\tilde{u}_0^{(n)} \wstar a$ in $\dot B^{s_p}_{p,\infty}(\R^3)$ for some $p \in ]3,\infty[$ and $a \in \cX$ and that $(\tilde{v}^{(n)})_{n \in \N}$ converges to a singular global weak Besov solution with singularity at $(x',t') = (0,1)$. Furthermore, by the assumption on $u_0$ in the statement of Theorem~\ref{thm:minimalblowupperturbations}, we must have
	\begin{equation}
		\tilde{u_0}^{(n)} - \sqrt{t_n} u_0(\sqrt{t_n} (\cdot+x_n)) \wstar a \text{ in } \cD'(\R^3),
		\label{}
	\end{equation}
	so that $a$ satisfies $\norm{a}_{\cX} \leq \rho_{\cX}^{u_0}$. This verifies (ii).

	The proof is completed by noting that if (i) does not hold, then Case I cannot occur for any minimizing sequence of initial data, and similarly, if (ii) does not hold, then Case II cannot occur.
\end{proof}

Corollary~\ref{cor:minimalblowupdataintro} corresponds to the case $u_0 = 0$.

\begin{remark}[Interpretation]
	Suppose that Case II occurs and consider the behavior of the sequence $(x_n,t_n)_{n \in \N}$. One might interpret the situation 
	\begin{equation}
		|x_n| \to \infty \text{ and } \inf_{n \in \N} t_n > \varepsilon
		\label{}
	\end{equation}
	as meaning that $u_0$ has certain nice properties which cause the singularities to disappear at spatial infinity as the initial data approaches the sphere of radius $\rho_{\cX}^{u_0}$ centered on $u_0$, and similarly for $t_n \to \infty$. Since $\rho_{\cX} \leq \rho^{u_0}_{\cX}$, one is tempted to say that, in terms of its ability to ``prevent'' the blow-up of nearby solutions, $u_0$ is at least as good as zero initial data. The case $t_n \to 0$ is perhaps not as clear. It is tempting to interpret the occurrence of singularities very close to the initial time as ill-posedness, but this conflicts with the idea that $u_0$ is at least as good as zero. If $u_0$ is ``singular,'' as is the case for $-1$--homogeneous initial data giving rise to a self-similar solution, then the case $t_n \to 0$ may not be surprising.\footnote{It is interesting to note that scale-invariant solutions in quite general spaces are smooth regardless of the size of their initial data, see~\cite{jiasverakselfsim}.}
\end{remark}





\subsection{Forward self-similar solutions}\label{sssection}

Finally, we will prove Theorem~\ref{ssexistintro}. As mentioned in Section~\ref{selfsimintro}, we will obtain self-similar solutions evolving from rough initial data as limits of self-similar solutions evolving from $L^{3,\infty}$ initial data. The existence of such solutions was established in \cite{bradshawtsaiII} by Galerkin approximation:
\begin{pro}\textnormal{(\cite[Theorems 1.2--1.3]{bradshawtsaiII})}
	\label{bradshawthm}
	The conclusions of Theorem~\ref{ssexistintro} are valid under the additional assumption that $u_0 \in L^{3,\infty}(\R^3)$.
\end{pro}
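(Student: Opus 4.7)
I would follow the Galerkin construction that appears in \cite{bradshawtsaiII}. Write $V := S(\cdot) u_0$; since the heat semigroup commutes with parabolic rescaling, $V$ inherits the (discrete) self-similarity of $u_0$. Young's convolution inequality for Lorentz spaces gives $V \in L^\infty(\R_+; L^{3,\infty}(\R^3))$ and, as in \eqref{VotimesVest}, the forcing $V\otimes V$ satisfies $\|V\otimes V\|_{L^2(Q_T)} \leq c\,T^{1/4}\|u_0\|_{L^{3,\infty}}^2$, so the zeroth Picard iterate already yields a finite-energy forcing and it suffices to find $v = V+u$ with $u$ in the energy class satisfying the perturbed system \eqref{equationforu} with zero initial condition.

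I would construct $u$ by a Galerkin approximation that preserves the (discrete) self-similar symmetry. First approximate $u_0$ by $\lambda$--DSS, divergence-free, smooth vector fields $u_0^{(n)}$, obtained for example by retaining only the dyadic frequency annuli $[\lambda^{-n},\lambda^n]$ and extending by scaling, or equivalently by cutting off in the self-similar variables. For each $n$, solve \eqref{equationforu} with $V^{(n)} := S(\cdot)u_0^{(n)}$ via a Galerkin scheme on a $\lambda$--DSS--invariant basis of divergence-free vector fields; equivalently, pass to self-similar coordinates $(y,\tau) := (x/\sqrt t,\log t)$, in which $\lambda$--DSS corresponds to time-periodicity with period $2\log\lambda$, and solve a finite-dimensional time-periodic problem on one fundamental cell. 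The scale-invariant case is handled either by imposing $\tau$--independence in the same scheme (the stationary Leray profile equation) or by taking the limit $\lambda \dto 1$ along a sequence of $\lambda_n$--DSS approximants.

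The technical heart is obtaining the $n$--uniform bound $\|u^{(n)}(\cdot,t)\|_{L^2}^2 + \|\nabla u^{(n)}\|_{L^2(Q_t)}^2 \leq C(\|u_0\|_{L^{3,\infty}})\,t^{1/2}$ asserted in \eqref{globalL3aprioriintro}. As discussed around \eqref{globalL3inftydemonstration}, the naive Gronwall argument fails because $\|V^{(n)}\|_{L^\infty_t L^{3,\infty}_x}$ is not locally small. To close it, I would adapt the Calder\'on--type splitting used in \cite{barkersereginsverakstability}: split $u_0 = \bar u_0 + \tilde u_0$ with $\bar u_0$ subcritical (belonging to some $L^q$ with $q>3$) and $\tilde u_0 \in L^2(\R^3)$, decompose $u^{(n)}$ accordingly as a piece $E^{(n)}$ carrying the $L^2$ data and a residual $w^{(n)}$, and derive an energy balance for $w^{(n)}$ in which only the subcritical coefficient $\bar V^{(n)} \in \cK^{-1+\delta}_\infty(Q_T)$ enters. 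Gronwall then closes on $w^{(n)}$ and, together with a direct bound on $E^{(n)}$, produces the $t^{1/2}$ decay. This step is essentially the $L^{3,\infty}$ analogue of the decay argument in Section~\ref{uniformdecayestimates} and is the main obstacle.

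Finally, pass to the limit. Since $L^{3,\infty}(\R^3) \into \dot B^{s_p}_{p,\infty}(\R^3)$ and the approximation is chosen so that $u_0^{(n)} \wstar u_0$ in $\dot B^{s_p}_{p,\infty}(\R^3)$, Proposition~\ref{pro:stability} extracts a subsequence of the $v^{(n)}$ converging strongly in $L^3_\loc(Q_\infty)$ to a global weak Besov solution $v$ with initial data $u_0$. The pointwise relation $v^{(n)}(x,t) = \lambda v^{(n)}(\lambda x,\lambda^2 t)$, respectively scale-invariance, is manifestly stable under convergence in $\cD'(Q_\infty)$, so $v$ inherits the same symmetry and delivers the desired $\lambda$--DSS (or scale-invariant) global weak Besov solution.
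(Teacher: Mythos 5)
Your proposal sets out to re-prove the cited result of Bradshaw and Tsai from scratch, whereas the paper treats Proposition~\ref{bradshawthm} purely as a citation: it invokes \cite[Theorems 1.2--1.3]{bradshawtsaiII} for the existence of $\lambda$-DSS (resp.\ scale-invariant) local Leray solutions with $\norm{v(\cdot,t)-S(t)u_0}_{L^2}\leq Ct^{1/4}$, and then only checks that such solutions satisfy Definition~\ref{weaksol} with $k=0$ (the correction $u=v-Su_0$ lies in the energy class, vanishes in $L^2$ as $t\dto 0$, and the local energy inequality is part of the local Leray framework). Re-deriving the construction is a legitimate alternative route, but as written it has a genuine gap.

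The gap is the local energy inequality \eqref{vlocalenergyineq}. It is a required ingredient of Definition~\ref{weaksol}, and it is also a hypothesis you need twice in your own scheme: each approximate solution $v^{(n)}$ must be a weak Besov solution before you may feed the sequence into Proposition~\ref{pro:stability}, and the limit must satisfy \eqref{vlocalenergyineq} to be the object claimed in Theorem~\ref{ssexistintro}. A Galerkin scheme --- even one built on a DSS-invariant divergence-free basis, or run in the self-similar variables $(y,\tau)$ --- is not known to produce suitable solutions: the projected nonlinearity destroys the pointwise cancellation $\int (v\cdot\nabla v)\cdot \varphi v\,dx = \tfrac12\int |v|^2 v\cdot\nabla\varphi\,dx$ at the level of the approximates, so the local energy inequality does not pass to the limit. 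This is exactly why constructions of suitable weak solutions use retarded mollification or related regularizations rather than Galerkin, and why \cite{bradshawtsaiII} has to work to obtain suitability. Your sentence ``it suffices to find $v=V+u$ with $u$ in the energy class satisfying \eqref{equationforu}'' therefore understates what must be proved. A secondary, smaller issue: obtaining the scale-invariant case by letting $\lambda\dto 1$ along $\lambda_n$-DSS approximants does not obviously work, since a limit of solutions each of which is $\lambda_n$-DSS for a \emph{different} $\lambda_n$ need not be DSS for any fixed factor; the paper avoids this by citing the scale-invariant existence theorem directly and approximating the data within the class of scale-invariant fields (Lemma~\ref{approxlem}).
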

While the results in \cite{bradshawtsaiII} are stated for local Leray solutions $v$ satisfying the additional property that $\norm{v(\cdot,t) - S(t)u_0}_{L^2(\R^3)} \leq C t^{\frac{1}{4}}$ for all $t > 0$, it is clear from their construction that $u := v - Su_0$ belongs to the energy class. This fact, combined with the local energy inequality~\eqref{vlocalenergyineq} satisfied by local Leray solutions, implies that the (discretely) self-similar solutions constructed in~\cite{bradshawtsaiII} are global weak Besov solutions.
	

Next, we require the following approximation lemma proven in~\cite{bradshawtsaibesov}.\footnote{One may also approximate by (discretely) self-similar vector fields that are smooth away from the origin.}
	\begin{lemma}\textnormal{(\cite[Lemmas 2.2 and 5.2]{bradshawtsaibesov})}
		\label{approxlem}
	Assume the hypotheses of Theorem~\ref{ssexistintro}. If $u_0$ is $\lambda$-DSS, then there exists a sequence $(u_0^{(n)})_{n \in \N} \subset L^{3,\infty}(\R^3)$ of divergence-free $\lambda$-DSS vector fields such that $u^{(n)}_0 \to u_0$ in $\dot B^{s_p}_{p,\infty}(\R^3)$. If $u_0$ is scale-invariant, then there exists a sequence $(u_0^{(n)})_{n \in \N} \subset L^{3,\infty}(\R^3)$ of scale-invariant divergence-free vector fields such that $u^{(n)}_0 \to u_0$ in $\dot B^{s_p}_{p,\infty}(\R^3)$.
\end{lemma}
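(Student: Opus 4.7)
The plan is to construct scale-preserving smoothed approximants to $u_0$ that land in $L^{3,\infty}$, and to verify convergence in $\dot B^{s_p}_{p,\infty}$ via the caloric characterization reduced, through the DSS (resp.~scale invariance), to a compact time interval.

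For the $\lambda$-DSS case, fix a smooth non-negative radial $\eta \in C_0^\infty(\R^3\setminus\{0\})$ with $\mathrm{supp}\,\eta \subset \{\lambda^{-1} \leq |x| \leq \lambda\}$ and $\sum_{k\in\Z}\eta(\lambda^k x) = 1$ for $x \neq 0$; such $\eta$ is obtained by lifting a smooth periodic partition of unity from $\R/(\log\lambda)\Z$. Define the DSS-periodizing operator
\[
\mathcal{T}_\lambda f(x) := \mathbb{P}\sum_{k\in\Z}\lambda^k(\eta f)(\lambda^k x).
\]
Dilation-commutativity of $\mathbb{P}$ implies that $\mathcal{T}_\lambda f$ is divergence-free and $\lambda$-DSS for any input $f$, and the partition-of-unity collapse using $\lambda^k u_0(\lambda^k\cdot) = u_0$ gives $\mathcal{T}_\lambda u_0 = u_0$ for the given divergence-free $\lambda$-DSS $u_0$. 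With $\rho_{1/n} \in C_0^\infty(\R^3)$ a standard mollifier, I would set $u_0^{(n)} := \mathcal{T}_\lambda(\rho_{1/n} * u_0)$. Because $\rho_{1/n} * u_0 \in C^\infty(\R^3)$, the product $\eta(\rho_{1/n} * u_0)$ is bounded with compact support, so $|u_0^{(n)}(x)| \leq C_n|x|^{-1}$ by the DSS scaling, placing $u_0^{(n)}$ in $L^{3,\infty}(\R^3)$ as noted before Lemma~\ref{approxlem}.

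To prove convergence, the caloric characterization \eqref{Bpequivalentnorm} and $\mathcal{T}_\lambda u_0 = u_0$ give
\[
\|u_0 - u_0^{(n)}\|_{\dot B^{s_p}_{p,\infty}} \approx \sup_{t > 0} t^{-s_p/2}\|S(t)\mathcal{T}_\lambda(u_0 - \rho_{1/n} * u_0)\|_{L^p}.
\]
Since the right-hand expression is the caloric extension of a $\lambda$-DSS datum, the quantity $t^{-s_p/2}\|\cdot\|_{L^p}$ is $\log\lambda$-periodic in $\log\sqrt t$ and attains its supremum on $t\in[1,\lambda^2]$. The Lipschitz bound $\|\p_t S(t)(u_0-u_0^{(n)})\|_{L^p} \lesssim t^{-1+s_p/2}\|u_0-u_0^{(n)}\|_{\dot B^{s_p}_{p,\infty}}$, uniform in $n$ because of the a priori bound $\sup_n\|u_0^{(n)}\|_{\dot B^{s_p}_{p,\infty}} \leq C$, yields equicontinuity on $[1,\lambda^2]$, so uniform convergence reduces to pointwise-in-$t_0$ convergence. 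Expanding and rescaling,
\[
\|S(t_0)\mathcal{T}_\lambda g\|_{L^p} \lesssim \sum_{k\in\Z}\lambda^{-ks_p}\|S(t_0\lambda^{2k})(\eta g)\|_{L^p},
\]
with $g := u_0 - \rho_{1/n}*u_0$. Using $\|S(s)\|_{L^1\to L^p} \lesssim s^{-3/(2p')}$ for $k \geq 0$, and the identity $s_p + 3/p' = 2$, produces a geometric factor $\lambda^{-2k}$; for $k < 0$ one uses $\|S(s)\|_{L^p\to L^p} \leq 1$ combined with $-s_p > 0$ (valid since $p > 3$) to obtain geometric decay in $\lambda^{ks_p}$. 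Each summand is treated via the commutator decomposition $S(s)(\eta g) = M_\eta S(s)g + [S(s),M_\eta]g$: the main piece $M_\eta(S(s)u_0 - \rho_{1/n} * S(s)u_0)$ tends to $0$ in $L^p$ by standard approximate-identity theory on $S(s)u_0\in L^p$, while the commutator has Schwartz kernel $\Gamma(x-y,s)(\eta(y)-\eta(x))$ that, by duality against $\|g\|_{\dot B^{s_p}_{p,\infty}}$, admits a rapidly decaying $L^p$ dominator in $x$ allowing dominated convergence.

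The scale-invariant case follows the identical scheme with $\mathcal{T}_\lambda$ replaced by the continuous analog $\mathcal{T}_{\mathrm{SI}} f(x) := \mathbb{P}\int_0^\infty s(\tilde\eta f)(sx)\,\tfrac{ds}{s}$ for a continuous multiplicative partition of unity $\tilde\eta$, with sums over $k\in\Z$ becoming integrals with respect to $ds/s$. The main obstacle is the uniform-in-$k$ commutator convergence for rough data: ensuring that $[S(s),M_\eta](u_0 - \rho_{1/n} * u_0) \to 0$ in $L^p$ with a control strong enough to be summed (resp.~integrated) over the periodization index. This is handled by the smoothness and compact support of $\eta$ combined with the Gaussian decay of the heat kernel, which jointly produce the required Schwartz pointwise dominator uniformly in $k$.
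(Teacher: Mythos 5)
The paper offers no proof of this lemma: it is imported directly from Bradshaw--Tsai \cite[Lemmas 2.2 and 5.2]{bradshawtsaibesov}, so your argument has to stand on its own. Your overall architecture --- periodize a mollified copy of the fundamental-annulus profile, restore solenoidality with $\bP$, and use discrete self-similarity to collapse the caloric characterization of $\norm{\cdot}_{\dot B^{s_p}_{p,\infty}}$ to $t\in[1,\lambda^2]$ --- is the natural one and is in the spirit of the cited proof. The construction itself (membership of $u_0^{(n)}$ in $L^{3,\infty}$, divergence-freeness, the DSS property, $\mathcal{T}_\lambda u_0=u_0$) is fine.

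The gap is in the summation over the periodization index $k$. Your bounds use $\norm{\eta g_n}_{L^1}$ for $k\ge 0$ and $\norm{\eta g_n}_{L^p}$ for $k<0$, with $g_n=u_0-\rho_{1/n}\ast u_0$; neither is finite for general $u_0\in\dot B^{s_p}_{p,\infty}$ with $s_p=-1+3/p<0$, since such $u_0$ need not be locally integrable, and even a locally integrable example such as the DSS periodization of $|x-x_0|^{-\alpha}$ with $3/p<\alpha\le 1$ fails to be in $L^p_{\loc}$. If you fall back on the only norm you control uniformly in $n$, namely $\norm{\eta g_n}_{B^{s_p}_{p,\infty}}$, then for $k<0$ the prefactor $\lambda^{-ks_p}$ is exactly cancelled by the caloric bound $\norm{S(s)(\eta g_n)}_{L^p}\lesssim s^{s_p/2}\norm{\eta g_n}_{B^{s_p}_{p,\infty}}$, every term is $O(1)$, the series diverges, and the individual terms do not tend to $0$ in $n$ because mollification does not converge in $B^{s}_{p,\infty}$-type norms (the high-frequency blocks $2^{js_p}\norm{\dot\Delta_j(\eta g_n)}_{L^p}$ need not vanish). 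The same issue undermines your unproved a priori bound $\sup_n\norm{u_0^{(n)}}_{\dot B^{s_p}_{p,\infty}}\le C$, on which the equicontinuity step rests. The missing ingredient is the structural fact that makes the lemma true: the rescaled copies $\lambda^k(\eta u_0)(\lambda^k\cdot)$ are spatially disjoint yet contribute to the \emph{same} Littlewood--Paley block, so the global condition $u_0\in\dot B^{s_p}_{p,\infty}(\R^3)$ forces an $\ell^p$-summability over scales of the annulus profile --- roughly, $\eta u_0$ lies in an inhomogeneous $B^{s_p}_{p,p}$-type (hence separable, ``little'' Besov) space rather than merely $B^{s_p}_{p,\infty}$. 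That extra summability is precisely what makes the sum over $k$ converge, makes mollification converge in the relevant norm, and gives the uniform bound; identifying and proving it is the actual content of the cited lemmas. Short of that, your pointwise arguments would at best yield weak-$\ast$ convergence with a uniform bound still to be justified, which --- as the paper notes in a footnote to the proof of Theorem~\ref{ssexistintro} --- would suffice for the application but is not the statement being proved.
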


With these useful facts in hand, we now prove Theorem~\ref{ssexistintro}.

\begin{proof}[Proof of Theorem~\ref{ssexistintro}]
	Let $u_0$ be the $\lambda$-DSS (resp. scale-invariant) initial data from the statement of Theorem~\ref{ssexistintro}.
	According to Lemma \ref{approxlem}, there exists a sequence $(u_0^{(n)})_{n \in \N} \subset L^{3,\infty}(\R^3)$ of divergence-free $\lambda$-DSS (resp. scale-invariant) vector fields such that $u_0^{(n)} \to u_0$ in $\dot B^{s_p}_{p,\infty}(\R^3)$. By Proposition~\ref{bradshawthm}, given such a sequence, there exist $\lambda$-DSS (resp. scale-invariant) global weak Besov solutions $v^{(n)}$, $n \in \N$, with initial data $u_0^{(n)}$. Proposition~\ref{pro:stability} allows us to extract a subsequence of $(v^{(n)})_{n \in \N}$ converging in $L^3_\loc(\R^3 \times \R_+)$ to a global weak Besov solution $v$ with initial data $u_0$.\footnote{Note that weak-$\ast$ convergence $u^{(n)}_0 \wstar u_0$ in $\dot B^{s_p}_{p,\infty}(\R^3)$ would be sufficient to apply Proposition~\ref{pro:stability}.} Since the approximating solutions $v^{(n)}$ are $\lambda$-DSS (resp. scale-invariant), the limit solution $v$ is $\lambda$-DSS (resp. scale-invariant) as well. This completes the proof.
\end{proof}

\section{Appendix: Splitting lemmas}
\label{sec:appendixsplitting}

In this appendix, we prove several splitting lemmas, including Lemma~\ref{splitlem} from the introduction. 

To illustrate the key points, we consider the following simple situation. Let $1 \leq p_0 \leq p \leq p_1 \leq \infty$. For each measurable function $f \: \R \to \R$ with $\norm{f}_{L^p} < \infty$ and $N > 0$, we may write $f = f^N_+ + f^N_-$, where
\begin{equation}
	 f^N_+ := f\chi_{ \{ |f| > N \norm{f}_{L^p}\} }, \quad f^N_- := f\chi_{ \{ |f| \leq N \norm{f}_{L^p} \} }.
\end{equation}
Then, by elementary arguments,
\begin{equation}
\label{fNsplittingest}
	\norm{f^N_+}_{L^{p_0}} \leq N^{1-\frac{p}{p_0}} \norm{f}_{L^p}, \quad \norm{f^N_-}_{L^{p_1}}^{p_1} \leq N^{1-\frac{p}{p_1}} \norm{f}_{L^p}.
\end{equation}
This splitting has the desirable property that it is ``uniform," in the sense that $\norm{f^N_+}_{L^{p_0}}$ can be made small without making $\norm{f^N_-}_{L^{p_1}}$ too large, and vice versa.\footnote{This is also a ``non-dimensionalized" splitting. If one uses
\begin{equation}
	 f^N_+ := f\chi_{ \{ |f| > N \} }, \quad f^L_- := f\chi_{ \{ |f| \leq N \} },
\end{equation}
instead, then $N$ has the same dimensions as $f$, and when $p_1 < \infty$,
\begin{equation}
	\norm{f^L_+}_{L^{p_0}}^{p_0} \leq N^{p_0-p} \norm{f}_{L^p}^p, \quad \norm{f^L_-}_{L^{p_1}}^{p_1} \leq N^{p_1-p} \norm{f}_{L^p}^p.
\end{equation}
} Moreover, it satisfies the obvious estimate
\begin{equation}
	\label{fNpersistencyest}
	\norm{f^N_+}_{L^p}, \norm{f^N_-}_{L^p} \leq \norm{f}_{L^p},
\end{equation}
as can be seen from taking $p_0 = p_1 = p$ in~\eqref{fNsplittingest}. This is known as the ``persistency property."

It is well known that uniform splittings such as~\eqref{fNsplittingest} can be readily obtained from abstract interpolation theory. Our main reference is \cite[Chapters 3-4]{berghlofstrom}. Let $A_0, A_1$ be Banach spaces embedded in a Hausdorff topological vector space $U$. The \emph{$K$-functional} is defined by
\begin{equation}
	K(t,a) = \inf_{a = a_0+a_1} \norm{a_0}_{A_0} + t \norm{a_1}_{A_1}, \quad (t,a) \in \R_+ \times (A_0 + A_1),
\end{equation}
where $a_0, a_1$ are required to belong to $A_0,A_1$, respectively. The function $t \mapsto K(t,a)$ is continuous. For $0 < \theta < 1$ and $1 \leq q \leq \infty$, $K_{\theta,q}$ is defined as the Banach space consisting of all $a \in \Sigma$ satisfying
\begin{equation}
	\norm{a}_{K_{\theta,q}} := \left\lVert t^{-\theta} K(t,a) \right\rVert_{L^q(\R_+,\frac{dt}{t})}  < \infty.
\end{equation}
By definition, for each $\varepsilon > 0$, $a \in K_{\theta,\infty}$, and $N > 0$, there exist $a_0 \in A_0$ and $a_1 \in A_1$ such that
\begin{equation}
	\norm{a_0}_{A_0} \leq N^{-\theta} \big( \norm{a}_{K_{\theta,\infty}} + \varepsilon \big), \quad \norm{a_1}_{A_1} \leq N^{1-\theta} \big( \norm{a}_{K_{\theta,\infty}} + \varepsilon \big).
\end{equation}
This uniform splitting property is analogous to~\eqref{fNsplittingest}. Furthermore, every Banach space $\tilde{A} \subset A_0 + A_1$ satisfying the uniform splitting property~\eqref{fNsplittingest} with $\norm{a}_{\tilde{A}}$ in place of $\norm{a}_{K_{\theta,\infty}}$ must embed continuously into $K_{\theta,\infty}$.\footnote{In contrast, $J_{\theta,1}$ must continuously embed into every Banach space $\tilde{A} \supset A_0 \cap A_1$ satisfying
\begin{equation}
	\norm{a}_{\tilde{A}} \leq C \norm{a_0}_{A_{0}}^{1-\theta} \norm{a_1}_{A_{1}}^{\theta}
\end{equation}
for a constant $C>0$ independent of $a \in \tilde{A}$. See \cite[Theorem 3.5.2 ]{berghlofstrom}, for example.} For example, the spaces $[A_0,A_1]_\theta$ obtained from the complex interpolation method embed continuously into $K_{\theta,\infty}$.

In the sequel, \emph{we are interested in splittings of homogeneous Besov spaces that are uniform and satisfy the persistency property}. Since the persistency property does not appear to obviously follow from the abstract real interpolation theory, our approach will be to construct such splittings explicitly.

To begin, we present the homogeneous Besov spaces as spaces of distributions modulo polynomials.

Let $d,m \in \N$. Let $\cS'$ denote the space of tempered distributions on $\R^d$ with values in $\R^m$. Let $\mathcal{P} \subset \cS'$ denote the closed subspace consisting of polynomials on $\R^d$ with values in $\R^m$. Then $\cS'/\mathcal{P}$ denotes the space of tempered distributions \emph{modulo polynomials} on $\R^d$ with values in $\R^m$.

Recall the operators $\dot \Delta_j$, $j \in \Z$, defined in Section~\ref{sec:functionspaces}.

For $s \in \R$ and $0 < p,q \leq \infty$, we define the homogeneous Besov space $\dot B^{s}_{p,q}$ as the space of tempered distributions (modulo polynomials) $u \in \cS'/\mathcal{P}$ satisfying
\begin{equation}
  \norm{u}_{\dot B^s_{p,q}} := \left\lVert \big( 2^{js} \norm{\dot \Delta_j u}_{L^p} \big)_{j\in\Z} \right\rVert_{\ell^q} < \infty.
\end{equation}
Note that $\sum_{j \geq J} \dot \Delta_j u \to u$ in $\cS'/\mathcal{P}$ as $J \to -\infty$. Moreover, when $s < d/p$ (or $s=d/p$, $q \leq 1$), the sum converges in $\cS'$ and determines a unique tempered distribution $u$. Hence, this definition of $\dot B^s_{p,q}$ is equivalent to the one given in Section~\ref{sec:functionspaces}.

Let $R$ be the \emph{retraction} from $\cS'/\mathcal{P}$ to the space of $\cS'$-valued sequences over $\Z$:
\begin{equation}
  Ru = \big(\dot \Delta_j u \big)_{j \in \Z}.
\end{equation}
Let $S$ be the \emph{co-retraction} from the space of $\cS'$-valued sequences over $\Z$ to $\cS'/\mathcal{P}$:
\begin{equation}
  S (u_j)_{j \in \Z} = \sum_{j \in \Z} \tilde{\Delta}_j u_j,
\end{equation}
where $\tilde{\Delta}_j = \dot \Delta_{j-1} + \dot \Delta_{j} + \dot \Delta_{j+1}$. Then $SR = I$ is the identity map on $\cS'/\mathcal{P}$. Let $\ell^s_q L^p$ denote the space of $L^p$-valued sequences $(u_j)_{j \in \Z}$ over $Z$ satisfying
\begin{equation}
  \left\lVert (u_j)_{j \in \Z} \right\rVert_{\ell^s_q L^p} := \left\lVert \big( 2^{js} \norm{u_j}_{L^p} \big)_{j \in \Z} \right\rVert_{\ell^q} < \infty.
\end{equation} Then $R \: \dot B^s_{p,q} \to \ell^s_q L^p$ and $S \: \ell^s_q L^p \to \dot B^s_{p,q}$ are continuous maps.


The retraction/co-retraction technology allows one to ``transfer" splittings in sequence spaces to splittings in Besov spaces. Therefore, we begin with two splitting lemmas in sequence spaces:

\begin{lemma}[Horizontal splitting]
\label{horizontalsplittinglem}
Let $s,s_0,s_1 \in \R$ be distinct real numbers and $p \in ]0,\infty]$. For all $u \in \ell^{s}_{\infty} L^p$ and $K > 0$, there exist $f^K \in \ell^{s_0}_{1} L^p$ and $g^K \in \ell^{s_1}_{1} L^p$
 such that
\begin{equation}
  u=f^K + g^K,
\end{equation}
\begin{equation}
  \norm{f^K}_{\ell^{s_0}_1 L^p}\leq \frac{K^{s_0-s}}{1-2^{|s_0-s|}} \norm{u}_{\ell^s_\infty L^p},
\end{equation}
\begin{equation}
  \norm{g^K}_{\ell^{s_1}_1 L^p} \leq \frac{K^{s_1-s}}{1-2^{|s_1-s|}} \norm{u}_{\ell^s_\infty L^p}.
\end{equation}
Moreover,
\begin{equation}
\label{horizontalpersistency}
  \norm{f^{K}}_{\ell^s_\infty L^p}, \norm{g^{K}}_{l^s_\infty L^p} \leq \norm{u}_{\ell^s_\infty L^p}.
\end{equation}
\end{lemma}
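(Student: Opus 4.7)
The plan is to perform the splitting by projecting the sequence $u = (u_j)_{j\in\Z}$ onto complementary half-lines of frequencies, cut at a threshold index $J$ chosen so that $2^J$ is comparable to $K$. By the symmetry of the statement (swapping $f^K$ and $g^K$) we may relabel to assume $s_0 > s$, and the natural setting for the lemma is $s_1 < s$, i.e.\ $s_0$ and $s_1$ lie on opposite sides of $s$; this is the case I will describe, since it is exactly the configuration in which both geometric series below converge.

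First I would pick $J \in \Z$ with $2^J \leq K < 2^{J+1}$ and set
\[
f^K_j := u_j \cdot \mathbf{1}_{\{j \leq J\}}, \qquad g^K_j := u_j \cdot \mathbf{1}_{\{j > J\}}.
\]
Then $u = f^K + g^K$ termwise, and since each piece is obtained from $u$ merely by zeroing out a subset of indices, the persistency property~\eqref{horizontalpersistency} is immediate from taking the supremum over a smaller index set.

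Next I would estimate each piece using the defining bound $\|u_j\|_{L^p} \leq 2^{-js}\|u\|_{\ell^s_\infty L^p}$. For $f^K$ this gives
\[
\|f^K\|_{\ell^{s_0}_1 L^p} = \sum_{j \leq J} 2^{j s_0}\|u_j\|_{L^p} \leq \|u\|_{\ell^s_\infty L^p}\sum_{j \leq J} 2^{j(s_0-s)} = \|u\|_{\ell^s_\infty L^p}\cdot \frac{2^{J(s_0-s)}}{1 - 2^{-(s_0-s)}},
\]
where the series converges precisely because $s_0 - s > 0$, and $2^J \leq K$ combined with $s_0 - s > 0$ yields $2^{J(s_0-s)} \leq K^{s_0-s}$. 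Symmetrically, for $g^K$ the tail series $\sum_{j > J} 2^{j(s_1-s)}$ converges because $s_1 - s < 0$; its largest term occurs at $j = J+1$, and $2^{J+1} > K$ together with $s_1 - s < 0$ produces the factor $K^{s_1-s}$ after summing the geometric series.

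There is essentially no obstacle here: the entire lemma is a careful bookkeeping of two geometric series governed by the signs of $s_0 - s$ and $s_1 - s$. The one subtlety is ensuring the signs align properly — the piece carrying the index $s_0 > s$ must retain the \emph{low}-frequency tail of $u$ (small $j$), since that is where $\sum 2^{j(s_0-s)}$ is summable, while $g^K$ with $s_1 < s$ must carry the high frequencies for the dual reason. The constants in the stated bound should be read as the reciprocals of the standard geometric-series denominators $\lvert 1 - 2^{-\lvert s_i - s\rvert}\rvert$ that the above computation produces.
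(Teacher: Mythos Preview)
Your proposal is correct and essentially identical to the paper's proof: both truncate the sequence at $J=\lfloor\log_2 K\rfloor$, assign the two half-lines of indices to $f^K$ and $g^K$, and sum the resulting geometric series, with persistency immediate from the truncation. The only cosmetic difference is that the paper normalizes to $s_0<s<s_1$ (so $f^K$ carries the high-frequency block $j>\kappa$) while you normalize to $s_0>s>s_1$ (so $f^K$ carries the low-frequency block $j\le J$); your closing remark about reading the constants as $1/(1-2^{-|s_i-s|})$ correctly flags a sign convention in the stated bound that the paper's own computation also tacitly corrects.
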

\begin{proof}
Without loss of generality, we assume that $s_0 < s < s_1$. For $\kappa = \lfloor \log_2 K \rfloor$, we define
\begin{equation}
\label{horizontalsplitdef}
  f^K_j = \begin{cases} u_j & j > \kappa \\ 0 & \text{otherwise} \end{cases}, \quad g^K = u - f^K.
\end{equation}
Hence,
\begin{equation}
  \sum_{j \in \Z} 2^{j s_0} \norm{f^K_j}_{L^p} \leq \sum_{j > \kappa} 2^{j(s_0-s)} \times \norm{u}_{\ell^s_\infty L^p} \leq \frac{K^{s_0 - s}}{1-2^{s-s_0}} \norm{u}_{\ell^s_\infty L^p},
\end{equation}
\begin{equation}
   \sum_{j \in \Z} 2^{j s_1} \norm{g^K_j}_{L^p} \leq  \sum_{j \leq \kappa} 2^{j(s_1-s)} \times \norm{u}_{\ell^s_\infty L^p} \leq \frac{K^{s_1-s}}{1-2^{s_1-s}} \norm{u}_{\ell^s_\infty L^p},
\end{equation}
and the persistency property~\eqref{horizontalpersistency} is valid due to~\eqref{horizontalsplitdef}.
\end{proof}

\begin{lemma}[Diagonal splitting]
\label{diagonalsplittinglem}
Let $\sigma,\tilde{s},\bar{s} \in \R$, $0 < \tilde{p} < p < \bar{p} \leq \infty$, and $q,\tilde{q},\bar{q} \in (0,\infty]$ such that $(\sigma,p,q)$ belongs to the open segment connecting $(\tilde{s},\tilde{p},\tilde{q})$ and $(\bar{s},\bar{p},\bar{q})$.
Then for all $g \in \ell^{\sigma}_{q} L^p$ and $N > 0$, there exist $\tilde{g}^{N} \in \ell^{\tilde{s}}_{\tilde{q}} L^{\tilde{p}}$ and $\bar{g}^{N} \in \ell^{\bar{s}}_{\bar{q}} L^{\bar{p}}$ such that
\begin{equation}
  g = \tilde{g}^N + \bar{g}^N,
\end{equation}
\begin{equation}
\label{diagest1}
  \norm{\tilde{g}^N}_{\ell^{\tilde{s}}_{\tilde{q}} L^{\tilde{p}}} \leq N^{1-\frac{p}{\tilde{p}}} \norm{g}_{\ell^{\sigma}_{q} L^p}
\end{equation}
\begin{equation}
\label{diagest2}
  \norm{\bar{g}^N}_{\ell^{\bar{s}}_{\bar{q}} L^{\bar{p}}} \leq N^{1-\frac{p}{\bar{p}}} \norm{g}_{\ell^{\sigma}_{q} L^p}
\end{equation}
Moreover, for all $j \in \Z$,
\begin{equation}
\label{diagpersistency}
  \norm{\tilde{g}_j^N}_{L^p}, \norm{\bar{g}_j^N}_{L^p} \leq \norm{g_j}_{L^p}.
\end{equation}
\end{lemma}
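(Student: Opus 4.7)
The plan is to decompose each $g_j$ pointwise via a level-set cutoff whose height is chosen to balance the two target norms. Let $\theta\in(0,1)$ denote the interpolation parameter for which $(\sigma,1/p,1/q)=\theta(\tilde s,1/\tilde p,1/\tilde q)+(1-\theta)(\bar s,1/\bar p,1/\bar q)$, and write $a_j:=\|g_j\|_{L^p}$ and $A:=\|g\|_{\ell^\sigma_q L^p}$. For $N>0$ and real parameters $\beta,\mu,\lambda$ to be determined, I would set
\[
M_j := N\,2^{\beta j}\,a_j^{\mu}\,A^{\lambda}, \qquad
\tilde g_j^N := g_j\chi_{\{|g_j|>M_j\}}, \qquad
\bar g_j^N := g_j\chi_{\{|g_j|\leq M_j\}}.
\]
The pointwise bounds $|\tilde g_j^N|,|\bar g_j^N|\leq|g_j|$ yield the persistency property \eqref{diagpersistency} at once.

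For each fixed $j$, the scalar level-set estimates recorded at the start of the appendix give
\[
\|\tilde g_j^N\|_{L^{\tilde p}}\leq M_j^{1-p/\tilde p}a_j^{p/\tilde p}, \qquad
\|\bar g_j^N\|_{L^{\bar p}}\leq M_j^{1-p/\bar p}a_j^{p/\bar p}
\]
(Chebyshev plus H\"older, using $\tilde p<p$, for the first; $|\bar g_j^N|\leq M_j$, using $\bar p>p$, for the second). Substituting the ansatz for $M_j$, multiplying by $2^{j\tilde s}$, raising to the $\tilde q$-th power, and summing in $j$ yields
\[
\|\tilde g^N\|_{\ell^{\tilde s}_{\tilde q}L^{\tilde p}}^{\tilde q}
\leq N^{(1-p/\tilde p)\tilde q}\,A^{\lambda(1-p/\tilde p)\tilde q}\sum_j 2^{j[\tilde s+\beta(1-p/\tilde p)]\tilde q}\,a_j^{[\mu(1-p/\tilde p)+p/\tilde p]\tilde q}.
\]
To collapse the $j$-sum to $A^q=\sum_j(2^{j\sigma}a_j)^q$ I would impose the three equations
\[
\mu(1-p/\tilde p)+p/\tilde p=q/\tilde q,\quad
\tilde s+\beta(1-p/\tilde p)=\sigma q/\tilde q,\quad
\lambda(1-p/\tilde p)+q/\tilde q=1,
\]
which, once solved for $(\mu,\beta,\lambda)$ and followed by a $\tilde q$-th root, give \eqref{diagest1}. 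The symmetric argument with $(\tilde p,\tilde q,\tilde s)$ replaced by $(\bar p,\bar q,\bar s)$ produces the analogous triple of equations and, once compatibility is known, the estimate \eqref{diagest2}.

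The main obstacle is verifying that these two triples of equations admit a \emph{common} solution $(\mu,\beta,\lambda)$. Each equation has the form $X\cdot(1-p/p_\ast)=R$ for some right-hand side $R$, so compatibility amounts to an identity $R_1/(1-p/\tilde p)=R_2/(1-p/\bar p)$, where $R_1,R_2$ are the values of $R$ coming from the estimates on $\tilde g^N$ and $\bar g^N$ respectively. After cross-multiplying and using the key identity
\[
\theta(1-p/\tilde p)+(1-\theta)(1-p/\bar p)=0,
\]
which is itself a direct consequence of $1/p=\theta/\tilde p+(1-\theta)/\bar p$, compatibility reduces to $\theta R_1+(1-\theta)R_2=0$. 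In each of the three cases this is a restatement of the corresponding collinearity hypothesis ($\sigma=\theta\tilde s+(1-\theta)\bar s$ for $\beta$, $1/q=\theta/\tilde q+(1-\theta)/\bar q$ for $\lambda$, and both together for $\mu$), and is therefore automatic. These are brief algebraic verifications, parallel in spirit to the consistency check carried out in the proof of Lemma~\ref{horizontalsplittinglem}; once in place, taking the $\tilde q$-th and $\bar q$-th roots of the collapsed sums yields \eqref{diagest1} and \eqref{diagest2}.
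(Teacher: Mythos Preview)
Your proof is correct and takes essentially the same approach as the paper: both use a pointwise level-set cutoff $\tilde g_j^N = g_j\chi_{\{|g_j|>M_j\}}$ with threshold $M_j$ chosen to collapse the weighted sums to $\|g\|_{\ell^\sigma_q L^p}$, and both verify that the two systems of equations (one from the $\tilde g$ side, one from the $\bar g$ side) are compatible via the collinearity hypotheses. The only cosmetic difference is that you parametrize the threshold directly as $M_j = N\,2^{\beta j}a_j^\mu A^\lambda$ with three scalar unknowns, whereas the paper writes $M_j = cN\lambda_j a_j$ with a sequence $(\lambda_j)$ and constant $c$ that end up having exactly this form; your compatibility argument via $\theta(1-p/\tilde p)+(1-\theta)(1-p/\bar p)=0$ is a clean repackaging of the paper's verification of \eqref{diagsplitequiv}. (Minor note: there is no consistency check in the proof of Lemma~\ref{horizontalsplittinglem}; you likely meant the present lemma itself.)
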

\begin{proof}
There exists $\theta \in ]0,1[$ such that
\begin{equation}
  \sigma = \theta \tilde{s} + (1-\theta) \bar{s}, \quad \frac{1}{p} = \frac{\theta}{\tilde{p}} + \frac{1-\theta}{\bar{p}}, \quad \frac{1}{q} = \frac{\theta}{\tilde{q}} + \frac{1-\theta}{\bar{q}}.
\end{equation}
For $c, \lambda_j > 0$ to be specified below, we define
\begin{equation}
\label{diaggdef}
  \tilde{g}^N_j = g_j \chi_{\{|g_j| > cN \lambda_j \norm{g_j}_{L^p}\}}, \quad \bar{g}^N_j = g_j - \tilde{g}^N_j,
\end{equation}
which gives
\begin{equation}
  \norm{\tilde{g}^N_j}_{L^{\tilde{p}}} \leq (cN)^{1-\frac{p}{\tilde{p}}} \lambda_j^{1-\frac{p}{\tilde{p}}} \norm{g_j}_{L^p}, \quad
  \norm{\bar{g}^N_j}_{L^{\bar{p}}} \leq (cN)^{1-\frac{p}{\bar{p}}} \lambda_j^{1-\frac{p}{\bar{p}}} \norm{g_j}_{L^p}.
\end{equation}
By elementary manipulations,
\begin{equation}
\label{splittingrel1}
  2^{{j}\tilde{s}\tilde{q}} \norm{\tilde{g}^N_j}_{L^{\tilde{p}}}^{\tilde{q}} \leq (cN)^{(1-\frac{p}{\tilde{p}})\tilde{q}} \lambda_j^{(1-\frac{p}{\tilde{p}})\tilde{q}} 2^{{j}\tilde{s}\tilde{q}} \norm{g_j}_{L^p}^{\tilde{q}}
\end{equation}
\begin{equation}
\label{splittingrel2}
  2^{{j}\bar{s}\bar{q}} \norm{\bar{g}^N_j}_{L^{\bar{p}}}^{\bar{q}} \leq (cN)^{(1-\frac{p}{\bar{p}})\bar{q}} \lambda_j^{(1-\frac{p}{\bar{p}})\bar{q}} 2^{{j}\bar{s}\bar{q}}  \norm{g_j}_{L^p}^{\bar{q}}
\end{equation}
Let us only deal with values $j \in \Z$ such that $\norm{g_j}_{L^p} > 0$. We define $\lambda_j > 0$ by the following (equivalent) equations:
\begin{equation}
\label{splittingrel3}
  \lambda_j^{(1-\frac{p}{\tilde{p}})\tilde{q}} = 2^{j(\sigma q-\tilde{s}\tilde{q})} \norm{g_j}^{q-\tilde{q}}, \quad
  \lambda_j^{(1-\frac{p}{\bar{p}})\bar{q}} = 2^{j(\sigma q-\bar{s}\bar{q})} \norm{g_j}^{q-\bar{q}},
\end{equation}
whose equivalence will be justified below. Substituting~\eqref{splittingrel3} into \eqref{splittingrel1}-\eqref{splittingrel2} and summing over $j \in \Z$ gives
\begin{equation}
  \norm{\tilde{g}^N}_{\ell^{\tilde{s}}_{\tilde{q}} L^{\tilde{p}}} \leq (cN)^{1-\frac{p}{\tilde{p}}} \norm{g}_{\ell^{\sigma}_q L^p}^{\frac{q}{\tilde{q}}}, \quad
  \norm{\bar{g}^N}_{\ell^{\bar{s}}_{\bar{q}} L^{\bar{p}}} \leq (cN)^{1-\frac{p}{\bar{p}}} \norm{g}_{\ell^{\sigma}_q L^p}^{\frac{q}{\bar{q}}}
\end{equation}
Then choose $c>0$ satisfying the following (equivalent) equations
\begin{equation}
  c^{1-\frac{p}{\bar{p}}} = \norm{g}_{\ell^{\sigma}_q L^p}^{1-\frac{q}{\bar{q}}}, \quad c^{1-\frac{p}{\tilde{p}}} = \norm{g}_{\ell^{\sigma}_q L^p}^{1-\frac{q}{\tilde{q}}},
\end{equation}
and the estimates~\eqref{diagest1}-\eqref{diagest2} are proven. The persistency property~\eqref{diagpersistency} follows from the definition~\eqref{diaggdef} of $\tilde{g}^N$ and $\bar{g}^N$.

Finally, we argue that the two equations in~\eqref{splittingrel3} are equivalent. By comparing exponents, they will be equivalent as long as
\begin{equation}
\label{diagsplitequiv}
	\frac{\sigma\frac{q}{\tilde{q}} - \tilde{s}}{1-\frac{p}{\tilde{p}}} = \frac{\sigma\frac{q}{\bar{q}} - \bar{s}}{1-\frac{p}{\bar{p}}} \quad \text{ and } \quad
	\frac{\frac{q}{\tilde{q}}-1}{\frac{q}{\bar{q}}-1} = \frac{\frac{p}{\tilde{p}}-1}{\frac{p}{\bar{p}}-1}.
\end{equation}
Let us assume that $\tilde{q} \neq \bar{q}$ and $\tilde{s} \neq \bar{s}$ (otherwise, the proof simplifies). Thus,
\begin{equation}
	\theta = \frac{\sigma-\bar{s}}{\tilde{s}-\bar{s}} = \frac{ \frac{1}{p} - \frac{1}{\bar{p}}}{ \frac{1}{\tilde{p}} - \frac{1}{\bar{p}}} = \frac{ \frac{1}{q} - \frac{1}{\bar{q}}}{ \frac{1}{\tilde{q}} - \frac{1}{\bar{q}}}.
\end{equation}
The second equation in \eqref{diagsplitequiv} readily reduces to $(\theta-1)/\theta$ on each side, so let us only deal with the first equation, which is equivalent to
\begin{equation}
\label{diagsplitthingtobeverified}
	\frac{\sigma\frac{q}{\tilde{q}} - \tilde{s}}{\sigma\frac{q}{\bar{q}} - \bar{s}} = \frac{\theta-1}{\theta}.
\end{equation}
Substituting $\sigma = \theta \tilde{s} + (1-\theta) \bar{s}$ and employing the relationship $\frac{1}{q} = \frac{\theta}{\tilde{q}} + \frac{1-\theta}{\bar{q}}$ in the numerator and denominator verifies
\eqref{diagsplitthingtobeverified}. The proof is complete.
\end{proof}

Lemma~\ref{horizontalsplittinglem} is related to the characterization $\ell^s_\infty L^p = (\ell^{s_1}_1 L^p,\ell^{s_0}_1 L^p)_{\theta,\infty}$ (real interpolation, with $s = \theta s_0 + (1-\theta) s_1$), while Lemma~\ref{diagonalsplittinglem} is related to the characterization of $\ell^{\sigma}_q L^p = [ \ell^{\bar{s}}_{\bar{q}} L^{\bar{p}}, \ell^{\tilde{s}}_{\tilde{q}} L^{\tilde{p}}  ]_{\theta}$ (complex interpolation, with $\theta$ as in the proof). See \cite[Chapter 5]{berghlofstrom}.

\begin{remark}[Generalizations]\label{rmkgeneraldiagonal}
\begin{enumerate}
\item In Lemma~\ref{diagonalsplittinglem}, one may replace $L^p(\R^d;\R^m)$ by $L^p(\Omega;X)$, where $\Omega$ is a measure space and $X$ is a Banach space, at no additional cost.
\item In the proof of Lemma~\ref{splitforcing}, we will require an analogous diagonal splitting lemma in spaces of sequences (over $\Z_{j \leq J}$, for fixed $J \in \Z$) with values in $L^\infty_t L^p_x$. One may verify that same proof works with almost no alteration.
\item One could replace the $L^p$ spaces with Banach spaces $X_{\alpha}$ satisfying analogous splitting properties. In this way, one could iterate the proof of Lemma~\ref{diagonalsplittinglem} to handle a variety of mixed spaces combining $L^p$ and $\ell^s_q$. One could also allow the function spaces to depend on $j \in \Z$.
\end{enumerate}
\end{remark}

Combining the previous two lemmas, we obtain
\begin{pro}[Non-diagonal splitting]
\label{nondiagonalsplittinglem}
Let $s,\tilde{s},\bar{s} \in \R$ and $0 < \tilde{p} < p < \bar{p} \leq \infty$ such that $(s,1/p)$, $(\tilde{s},1/\tilde{p})$, and $(\bar{s},1/\bar{p})$ are not colinear. There exists a unique $s_1 \in \R$ such that $(s_1,1/p)$ belongs to the closed segment connecting $(\tilde{s},1/\tilde{p})$ and $(\bar{s},1/\bar{p})$. Let $s_0 \in \R$ such that $s$ belongs to the open segment connecting $s_0$ and $s_1$.

For all $u \in \ell^s_\infty L^p$ and $K,N > 0$, there exist $f^K \in \ell^{s_0}_{1} L^p$, $\tilde{g}^{K,N} \in \ell^{\tilde{s}}_1 L^{\tilde{p}}$, and $\bar{g}^{K,N} \in \ell^{\bar{s}}_1 L^{\bar{p}}$ such that
\begin{equation}
  u = f^K + \tilde{g}^{K,N} + \bar{g}^{K,N},
\end{equation}
\begin{equation}
  \norm{f^K}_{\ell^{s_0}_1 L^p} \leq \frac{K^{s_0-s}}{1-2^{|s_0-s|}} \norm{u}_{\ell^s_\infty L^p}
\end{equation}
\begin{equation}
  \norm{\tilde{g}^{K,N}}_{\ell^{\tilde{s}}_1 L^{\tilde{p}}} \leq \frac{K^{s_1-s}}{1-2^{|s_1-s|}} N^{1-\frac{p}{\tilde{p}}} \norm{u}_{\ell^s_\infty L^p}
\end{equation}
\begin{equation}
  \norm{\bar{g}^{K,N}}_{\ell^{\bar{s}}_1 L^{\bar{p}}} \leq \frac{K^{s_1-s}}{1-2^{|s_1-s|}} N^{1-\frac{p}{\bar{p}}} \norm{u}_{\ell^s_\infty L^p}
\end{equation}
Moreover, for all $j \in \Z$,
\begin{equation}
\label{nondiagonalpersistency}
  \norm{f^K_j}_{L^p}, \norm{\tilde{g}^{K,N}_j}_{L^p}, \norm{\bar{g}^{K,N}_j}_{L^p} \leq \norm{u_j}_{L^p}.
\end{equation}
\end{pro}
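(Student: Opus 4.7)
The idea is to produce the three-term splitting by composing the two preceding lemmas: first apply the horizontal splitting (Lemma~\ref{horizontalsplittinglem}) to peel off the $\ell^{s_0}_1 L^p$ piece $f^K$, and then apply the diagonal splitting (Lemma~\ref{diagonalsplittinglem}) to the remainder to separate it into the $\ell^{\tilde{s}}_1 L^{\tilde{p}}$ and $\ell^{\bar{s}}_1 L^{\bar{p}}$ pieces. The parameter $s_1$ is precisely the intermediate regularity index that allows the second step to be a ``purely diagonal'' complex-interpolation-style decomposition of a fixed $L^p$-summable sequence.

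\textbf{Step 1 (horizontal splitting).} Noting that $s_0, s, s_1$ are pairwise distinct (because $s$ lies in the open segment between $s_0$ and $s_1$), apply Lemma~\ref{horizontalsplittinglem} with regularity indices $s_0, s, s_1$ and integrability $p$ to obtain a decomposition
\[
u = f^K + g^K, \qquad f^K \in \ell^{s_0}_1 L^p,\ \ g^K \in \ell^{s_1}_1 L^p,
\]
with the bounds
\[
\norm{f^K}_{\ell^{s_0}_1 L^p} \leq \frac{K^{s_0-s}}{1-2^{|s_0-s|}}\,\norm{u}_{\ell^s_\infty L^p},\qquad
\norm{g^K}_{\ell^{s_1}_1 L^p} \leq \frac{K^{s_1-s}}{1-2^{|s_1-s|}}\,\norm{u}_{\ell^s_\infty L^p},
\]
and with the persistency property $\norm{f^K_j}_{L^p}, \norm{g^K_j}_{L^p} \leq \norm{u_j}_{L^p}$ for every $j \in \Z$ (which follows immediately from the explicit definition~\eqref{horizontalsplitdef} of the splitting as $u_j$ times an indicator in $j$).

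\textbf{Step 2 (diagonal splitting of the remainder).} Since $\tilde{p} < p < \bar{p}$, there is a unique $\theta \in (0,1)$ with $1/p = \theta/\tilde{p} + (1-\theta)/\bar{p}$, and by the definition of $s_1$ the triple $(s_1,p,1)$ lies on the open segment connecting $(\tilde{s},\tilde{p},1)$ and $(\bar{s},\bar{p},1)$ (the summability slot is trivial, since $1 = \theta \cdot 1 + (1-\theta)\cdot 1$). Therefore we may apply Lemma~\ref{diagonalsplittinglem} with $\sigma = s_1$ and $q = \tilde{q} = \bar{q} = 1$ to the sequence $g^K \in \ell^{s_1}_1 L^p$, producing a decomposition
\[
g^K = \tilde{g}^{K,N} + \bar{g}^{K,N},
\]
with
\[
\norm{\tilde{g}^{K,N}}_{\ell^{\tilde{s}}_1 L^{\tilde{p}}} \leq N^{1-\frac{p}{\tilde{p}}}\,\norm{g^K}_{\ell^{s_1}_1 L^p},\qquad
\norm{\bar{g}^{K,N}}_{\ell^{\bar{s}}_1 L^{\bar{p}}} \leq N^{1-\frac{p}{\bar{p}}}\,\norm{g^K}_{\ell^{s_1}_1 L^p},
\]
and with the persistency bound $\norm{\tilde{g}^{K,N}_j}_{L^p},\norm{\bar{g}^{K,N}_j}_{L^p} \leq \norm{g^K_j}_{L^p}$ for each $j \in \Z$.

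\textbf{Step 3 (assembly).} Substituting the Step~1 bound for $\norm{g^K}_{\ell^{s_1}_1 L^p}$ into the Step~2 estimates yields the stated inequalities for $\tilde{g}^{K,N}$ and $\bar{g}^{K,N}$. Chaining the two persistency bounds gives $\norm{\tilde{g}^{K,N}_j}_{L^p},\norm{\bar{g}^{K,N}_j}_{L^p} \leq \norm{g^K_j}_{L^p} \leq \norm{u_j}_{L^p}$, which together with the Step~1 persistency bound on $f^K_j$ verifies~\eqref{nondiagonalpersistency}. The decomposition $u = f^K + \tilde{g}^{K,N} + \bar{g}^{K,N}$ is immediate. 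There is no real obstacle: the only point requiring minor care is checking that the auxiliary $s_1$ provided by the non-colinearity hypothesis is genuinely interior to the diagonal segment so that Lemma~\ref{diagonalsplittinglem} applies, which is handled by the strict inequalities $\tilde{p} < p < \bar{p}$.
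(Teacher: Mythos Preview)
Your proof is correct and follows exactly the same approach as the paper: first apply Lemma~\ref{horizontalsplittinglem} to get $u=f^K+g^K$, then apply Lemma~\ref{diagonalsplittinglem} with $\sigma=s_1$ and $q=\tilde q=\bar q=1$ to split $g^K$. Your added remarks (that the pointwise-in-$j$ persistency for $f^K,g^K$ comes from the explicit truncation \eqref{horizontalsplitdef}, and that the strict inequalities $\tilde p<p<\bar p$ place $(s_1,1/p)$ in the open segment so Lemma~\ref{diagonalsplittinglem} applies) are correct elaborations of points the paper leaves implicit.
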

\begin{proof}
First, apply Lemma~\ref{horizontalsplittinglem} to obtain $u = f^K + g^K$. Next, apply Lemma~\ref{diagonalsplittinglem} with $\sigma=s_1$ and $q=\tilde{q}=\bar{q}=1$ to obtain $g^K = \tilde{g}^{K,N} + \bar{g}^{K,N}$.
\end{proof}
Let all indices be as in Proposition~\ref{nondiagonalsplittinglem}.
Note that there exist $\theta, \phi\in ]0,1[$ such that
\begin{equation}
	\ell^{{s}}_\infty L^{{p}}=([ \ell^{\bar{s}}_{1} L^{\bar{p}},\ell^{\tilde{s}}_{1} L^{\tilde{p}}]_{\theta}, \ell^{s_{0}}_{1} L^{{p}})_{\phi,\infty}.
\end{equation} Furthermore,
\begin{equation}
	[ \ell^{\bar{s}}_{1} L^{\bar{p}},\ell^{\tilde{s}}_{1} L^{\tilde{p}}]_{\theta}\hookrightarrow (\ell^{\bar{s}}_{1} L^{\bar{p}},\ell^{\tilde{s}}_{1} L^{\tilde{p}})_{\theta,\infty}.
\end{equation}
Thus, Proposition~\ref{nondiagonalsplittinglem} (without the persistency property) can be obtained via the abstract interpolation theory.


\begin{remark}[Non-diagonal splitting, Besov version]
\label{nondiagonalsplittingrmk}
The non-diagonal splitting in Lemma~\ref{nondiagonalsplittinglem} is applicable to Besov functions in the following way. Let $s,\tilde{s},\bar{s},p,\tilde{p},\bar{p}$ be as in Lemma~\ref{nondiagonalsplittinglem}. Given $u \in \dot B^{s}_{p,\infty}$ and $K,N > 0$, we apply Lemma~\ref{nondiagonalsplittinglem} to the retraction $Ru$, which belongs to $\ell^s_\infty L^p$, and obtain $Ru = f^K + \tilde{g}^{K,N} + \bar{g}^{K,N}$, satisfying the estimates in Lemma~\ref{nondiagonalsplittinglem} with $Ru$ replacing $u$. The Besov splitting is obtained by applying the co-retraction: $u = SRu = S(f^K) + S(\tilde{g}^{K,N}) + S(\bar{g}^{K,N})$. For the moment, we abuse notation by writing $f^K$ instead of $S(f^K)$, etc. In summary, we have
\begin{equation}
  u = f^K + \tilde{g}^{K,N} + \bar{g}^{K,N},
\end{equation}
\begin{equation}
  \norm{f^K}_{\dot B^{s_0}_{p,1}} \leq C \frac{K^{s_0-s}}{1-2^{|s_0-s|}} \norm{u}_{\dot B^s_{p,\infty}}
\end{equation}
\begin{equation}
  \norm{\tilde{g}^{K,N}}_{\dot B^{\tilde{s}}_{\tilde{p},1}} \leq C\frac{K^{s_1-s}}{1-2^{|s_1-s|}} N^{1-\frac{p}{\tilde{p}}} \norm{u}_{\dot B^s_{p,\infty}}
\end{equation}
\begin{equation}
  \norm{\bar{g}^{K,N}}_{\dot B^{\bar{s}}_{\bar{p},1}} \leq C\frac{K^{s_1-s}}{1-2^{|s_1-s|}} N^{1-\frac{p}{\bar{p}}} \norm{u}_{\dot B^s_{p,\infty}},
\end{equation}
and finally, the persistency property,
\begin{equation}
  \norm{f^K}_{\dot B^s_{p,\infty}}, \norm{\tilde{g}^{K,N}}_{\dot B^s_{p,\infty}}, \norm{\bar{g}^{K,N}}_{\dot B^s_{p,\infty}} \leq C\norm{u}_{\dot B^s_{p,\infty}}.
\end{equation}
The constant $C > 0$ only appears when estimating the co-retraction operator and depends continuously on the parameters $s,\tilde{s},\bar{s}$.
\end{remark}

The following splitting is obtained by combining Remark~\ref{nondiagonalsplittingrmk} and Sobolev embedding.
\begin{pro}[Splittings in Besov spaces]
\label{besovsplittingpro}
Let $1 \leq \tilde{p} < p < \bar{p} \leq \infty$, and $s, \tilde{s} \in \R$.

Let $\alpha$ denote the line through $(s,1/p)$ and $(\tilde{s},1/\tilde{p})$, $\beta$ the line through $(s,1/p)$ of slope $1/d$, and $\gamma$ the horizontal line through the origin. Assume $\alpha \neq \beta$. Let $D \subset \R^2$ denote the interior of the compact region enclosed by $\alpha$, $\beta$, and $\gamma$.

Let $\bar{s} \in \R$ such that $(\bar{s},1/\bar{p}) \in D$. 
Let $s_1 \in \R$ be the unique value such that $(s_1,1/p)$ belongs to the open segment connecting $(\tilde{s},1/\tilde{p})$ and $(\bar{s},1/\bar{p})$. Let $s_0 = \bar{s} + \frac{d}{p} - \frac{d}{\bar{p}}$.

There exists a constant $C>0$ depending continuously on the above parameters and satisfying the following properties:

For all $u \in \dot B^{s}_{p,\infty}$ and $N>0$, there exist $\tilde{u}^N \in \dot B^{\tilde{s}}_{\tilde{p},1}$ and $\bar{u}^N \in \dot B^{\bar{s}}_{\bar{p},1}$ such that
\begin{equation}
  u = \tilde{u}^N + \bar{u}^N,
\end{equation}
\begin{equation}
  \norm{\tilde{u}^N}_{\dot B^{\tilde{s}}_{\tilde{p},1}} \leq C N^{\frac{s_1-s}{s_0-s_1}  (1-\frac{p}{\bar{p}}) + (1-\frac{p}{\tilde{p}})} \norm{u}_{\dot B^{s}_{p,\infty}},
\end{equation}
\begin{equation}
  \norm{\bar{u}^N}_{\dot B^{\bar{s}}_{\bar{p},1}} \leq C N^{\frac{s_0-s}{s_0-s_1} (1-\frac{p}{\bar{p}})} \norm{u}_{\dot B^{s}_{p,\infty}}.
\end{equation}
Moreover,
\begin{equation}
\label{besovsplittingpersistencyproperty}
  \norm{\tilde{u}^N}_{\dot B^{s}_{p,\infty}}, \norm{\bar{u}^N}_{\dot B^{s}_{p,\infty}} \leq C \norm{u}_{\dot B^{s}_{p,\infty}}.
\end{equation}
\end{pro}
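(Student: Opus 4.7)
My strategy is to derive the splitting by combining the three-piece non-diagonal splitting from Remark~\ref{nondiagonalsplittingrmk} with Sobolev embedding for Besov spaces. The key observation is that the exponent $s_0 := \bar s + d/p - d/\bar p$ appearing in the statement is exactly the Sobolev embedding exponent: since $s_0 - d/p = \bar s - d/\bar p$ and $p \leq \bar p$, one has the continuous inclusion $\dot B^{s_0}_{p,1} \hookrightarrow \dot B^{\bar s}_{\bar p,1}$. Thus I would apply Remark~\ref{nondiagonalsplittingrmk} with this choice of $s_0$, producing a three-piece decomposition $u = f^K + \tilde g^{K,M} + \bar g^{K,M}$ with $f^K \in \dot B^{s_0}_{p,1}$, $\tilde g^{K,M} \in \dot B^{\tilde s}_{\tilde p, 1}$, and $\bar g^{K,M} \in \dot B^{\bar s}_{\bar p, 1}$, and then absorb $f^K$ into the $(\bar s, \bar p)$-piece via Sobolev embedding. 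Setting $\tilde u^N := \tilde g^{K,M}$ and $\bar u^N := f^K + \bar g^{K,M}$ would yield the desired two-piece decomposition.

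Before applying Remark~\ref{nondiagonalsplittingrmk}, I would verify its geometric hypothesis, namely that $s$ lies strictly between $s_0$ and $s_1$. The Sobolev line through $(\bar s, 1/\bar p)$ is parallel to $\beta$ (both have slope $1/d$) and intersects the horizontal height $1/p$ at $(s_0, 1/p)$, while the segment from $(\tilde s, 1/\tilde p)$ to $(\bar s, 1/\bar p)$ crosses the same horizontal at $(s_1, 1/p)$. Since $(\bar s, 1/\bar p)$ is in the interior of $D$, it lies strictly on the appropriate side of each of $\alpha$ and $\beta$; translating these sidedness conditions into the position of $(s_0, 1/p)$ and $(s_1, 1/p)$ on the horizontal line through $B = (s, 1/p)$ produces the desired strict interleaving. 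Non-colinearity of $A, B, C$ is automatic, since $(\bar s, 1/\bar p)$ on $\alpha$ would force it to the boundary of $D$.

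Given the three-piece splitting with bounds of the form $K^{s_0-s}$, $K^{s_1-s} M^{1-p/\tilde p}$, and $K^{s_1-s} M^{1-p/\bar p}$ on the respective Besov norms, I would choose $K := N^{(1-p/\bar p)/(s_0-s_1)}$ and $M := N$. A short computation using the identity $(s_0-s) + (s-s_1) = s_0 - s_1$ shows that both $K^{s_0-s}$ and $K^{s_1-s} M^{1-p/\bar p}$ equal $N^{\frac{s_0-s}{s_0-s_1}(1-p/\bar p)}$, producing the claimed bound on $\|\bar u^N\|_{\dot B^{\bar s}_{\bar p,1}}$ (after invoking Sobolev embedding to estimate $\|f^K\|_{\dot B^{\bar s}_{\bar p,1}} \leq C \|f^K\|_{\dot B^{s_0}_{p,1}}$), and $K^{s_1-s} M^{1-p/\tilde p} = N^{\frac{s_1-s}{s_0-s_1}(1-p/\bar p) + (1-p/\tilde p)}$, which is the desired bound on $\|\tilde u^N\|_{\dot B^{\tilde s}_{\tilde p,1}}$. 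The persistency property~\eqref{besovsplittingpersistencyproperty} in $\dot B^s_{p,\infty}$ is then inherited from the corresponding persistency bounds on the three pieces in Remark~\ref{nondiagonalsplittingrmk} and the triangle inequality applied to $\bar u^N = f^K + \bar g^{K,M}$. The main obstacle I anticipate is the geometric verification of strict interleaving of $s$ between $s_0$ and $s_1$: the shape of $D$ depends on whether the slope of $\alpha$ is greater than, less than, or opposite in sign to $1/d$, so a short case analysis is needed, although in each case the argument reduces to comparing three intersections of lines with a fixed horizontal height.
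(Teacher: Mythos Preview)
Your proposal is correct and matches the paper's proof essentially line for line: apply Remark~\ref{nondiagonalsplittingrmk} with $s_0=\bar s+d/p-d/\bar p$, absorb $f^K$ into the $\bar p$-piece via the Sobolev embedding $\dot B^{s_0}_{p,1}\hookrightarrow\dot B^{\bar s}_{\bar p,1}$, set $\tilde u^N=\tilde g^{K,N}$ and $\bar u^N=f^K+\bar g^{K,N}$, and choose $K=N^{(1-p/\bar p)/(s_0-s_1)}$. The paper's geometric verification is a bit terser (it observes that the slope of the line through $(s,1/p)$ and $(\bar s,1/\bar p)$ lies strictly between those of $\alpha$ and $\beta$), but your sidedness argument is equivalent and the anticipated case analysis is not actually needed.
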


\setcounter{figure}{\value{theorem}}
	\begin{figure}[h!]
		\centering
		\includegraphics[width=4in]{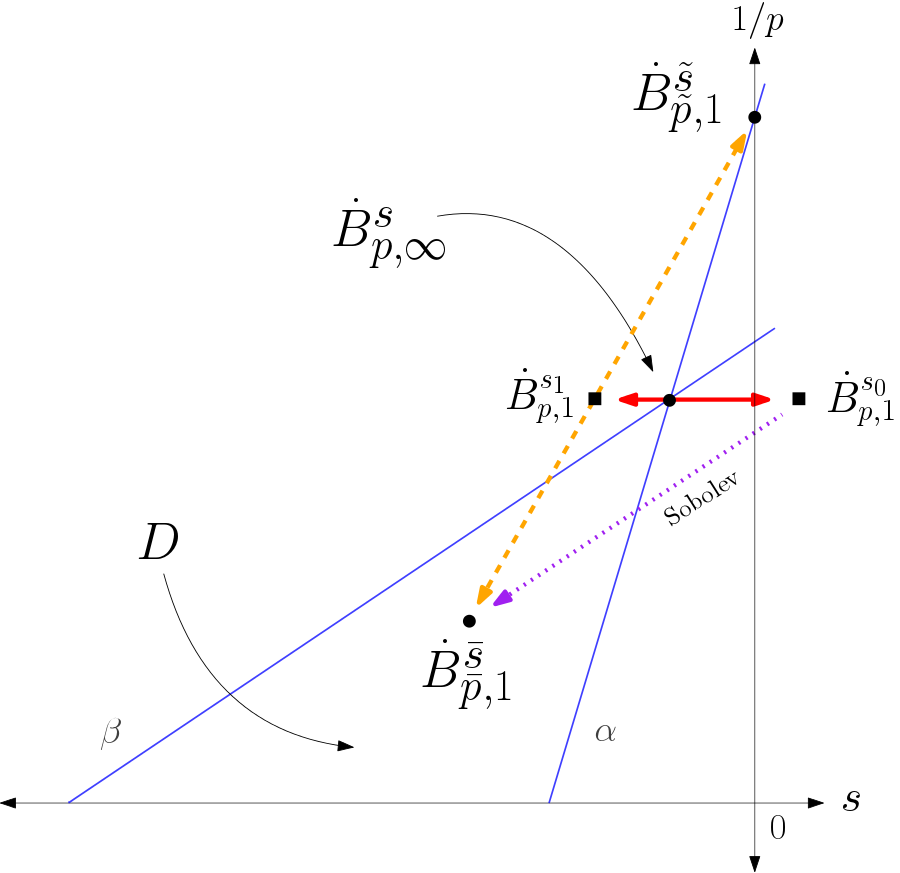}
	\caption{Illustration of Proposition~\ref{besovsplittingpro}. The original function $u \in \dot B^{s}_{p,\infty}$ is split horizontally into $f^K \in \dot B^{s_0}_{p,1}$ and $g^K \in \dot B^{s_1}_{p,1}$ along the solid red line. Next, $g^K$ is split diagonally along the orange dashed line into $\tilde{u}^N = \tilde{g}^{K,N} \in \dot B^{\tilde{s}}_{\tilde{p},1}$ and $\bar{g}^{K,N} \in \dot B^{\bar{s}}_{\bar{p},1}$. Finally, using Sobolev embedding along the dotted purple line, $f^K$ and $\bar{g}^{K,N}$ are combined to form $\bar{u}^N$.}
\label{fig:splittingfigureappendix}
\end{figure}
\addtocounter{theorem}{1}

\begin{proof}
We claim that $s,s_0,s_1$ are distinct and that $s$ lies on the open segment connecting $s_0$ and $s_1$. If $s_0$ were equal to $s_1$, then $\alpha$ would equal $\beta$. Moreover, $s$ is strictly between $s_0$ and $s_1$ because the slope of the line between $(s,1/p)$ and $(\bar{s},1/\bar{p})$ is strictly between the slope of $\alpha$ and the slope of $\beta$. See Figure~\ref{fig:splittingfigureappendix}. Hence, the hypotheses of Remark~\ref{nondiagonalsplittingrmk} are satisfied.

Let $K>0$, to be specified below. By Remark~\ref{nondiagonalsplittingrmk}, we obtain $u=f^{K} + \tilde{g}^{K,N} + \bar{g}^{K,N}$ satisfying the properties described in the remark. Notice that $f^K$ also belongs to $\dot B^{\bar{s}}_{\bar{p},1}$ due to Sobolev embedding and our particular choice of $s_0$. Moreover, \begin{equation}\label{sobolevembeddingsplitting}
\norm{f^{K}}_{\dot{B}^{\bar{s}}_{\bar{p},1}}\leq C(\bar{s}, s_{0},\bar{p}, p_{0})\norm{f^{K}}_{\dot{B}^{{s_0}}_{{p_0},1}}.
\end{equation}
 We define 
\begin{equation}
\quad \tilde{u}^N = \tilde{g}^{K,N}, \quad \bar{u}^{N} = f^K + \bar{g}^{K,N}.
 \end{equation}
 By the triangle inequality, the estimates in Remark~\ref{nondiagonalsplittingrmk} and (\ref{sobolevembeddingsplitting}),
 \begin{equation}
  \norm{\tilde{u}^N}_{\dot B^{\tilde{s}}_{\tilde{p},1}} \leq \frac{C}{1-2^{|s_1-s|}}  K^{s_1-s} N^{1-\frac{p}{\tilde{p}}} \norm{u}_{\dot B^s_{p,\infty}},
 \end{equation}
 \begin{equation}
  \norm{\bar{u}^N}_{\dot B^{\bar{s}}_{\bar{p},1}} \leq C \left(\frac{1}{1-2^{|s_0-s|}} + \frac{1}{1-2^{|s_1-s|}} \right) \left( K^{s_0-s} + K^{s_1-s} N^{1-\frac{p}{\bar{p}}} \right) \norm{u}_{\dot B^s_{p,\infty}}.
 \end{equation}
 Substituting $K = N^{(1-\frac{p}{\bar{p}})/(s_0-s_1)}$ gives the desired estimates. The persistency property~\eqref{besovsplittingpersistencyproperty} also follows from Remark~\ref{nondiagonalsplittingrmk} and the triangle inequality.
 \end{proof}

\begin{proof}[Proof of Lemma~\ref{splitlem}]
Let $d = m \geq 3$, $p > d$, $s = s_p := -1+\frac{d}{p}$, $\tilde{p}=2$, $\tilde{s} = 0$, $\bar{p}=2p$, and $\bar{s} = (s_{2p} + \dot{s})/2$ in
Proposition~\ref{besovsplittingpro}. Here, $\dot{s}$ is defined such that $(\dot{s},1/\bar{p}) \in \alpha$:
\begin{equation}
	\dot{s} = \frac{\frac{1}{2p} - \frac{1}{2}}{\frac{1}{p}-\frac{1}{2}} s.
\end{equation}
Hence, $(\bar{s},1/\bar{p}) \in D$, and we may apply Proposition~\ref{besovsplittingpro} to obtain the desired splitting. The proof is completed by applying the Leray projector $\bP$ onto divergence-free vector fields. Recall that $\bP$ is continuous on homogeneous Besov spaces, see~\cite[Proposition 2.30]{bahourichemindanchin}.
\end{proof}

We now state and prove an analogous splitting lemma for the forcing term. 

\begin{lemma}[Splitting of forcing term]\label{splitforcing}
Let $T > 0$ and $p \in ]3,\infty[$. There exist $p_3 \in ]3,\infty[$,
$\delta_3>0$, and $C > 0$, each depending only on $p$, such that for each $F \in \cF_{p}(Q_T)$ and $N > 0$, there exist $\bar{F}^N \in \cF_{p_3}^{s_{p_3}'+\delta_3}(Q_T) \cap \cF_{p}(Q_T)$ and $\tilde{F}^N \in L^3_t L^2_x(Q_T) \cap \cF_{p}(Q_T)$ with the following properties:
	\begin{equation}
		F =  \tilde{F}^N + \bar{F}^N,
		\label{}
	\end{equation}
		\begin{equation}
		\norm{\tilde{F}^N}_{L^3_t L^2_x(Q_T)} \leq C T^{-\frac{1}{12}} N^{1-\frac{p}{2}} \norm{F}_{\cF_{p}(Q_T)},
		\label{}
	\end{equation}
	\begin{equation}\label{eq:Fbarnorm}
		\norm{\bar{F}^N}_{\cF_{p_3}^{s_{p_3}'+\delta_3}(Q_T)} \leq C T^{\frac{\delta_3}{2}} N^{\frac{1}{2}} \norm{F}_{\cF_{p}(Q_T)}.
	\end{equation}
	Furthermore,
	\begin{equation}
		\norm{\tilde{F}^{N}}_{\cF_{p}(Q_T)}, \norm{\bar{F}^{N}}_{\cF_{p}(Q_T)} \leq \norm{F}_{\cF_{p}(Q_T)}.
		\label{eq:barFleqM}
	\end{equation}
\end{lemma}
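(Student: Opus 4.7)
The idea is to perform a pointwise-in-time amplitude split of $F$, analogous to the diagonal splitting of Lemma~\ref{diagonalsplittinglem} but in which only the spatial amplitude is thresholded at each fixed time. Set $M := \norm{F}_{\cF_p(Q_T)}$, take $p_3 := 2p$, pick a small parameter $\delta > 0$ (to be determined), and define the threshold $\lambda(t) := NM\, t^{-(1-\delta)}$. Set
$$\tilde{F}^N(x,t) := F(x,t)\,\chi_{\{|F(\cdot,t)|>\lambda(t)\}}, \qquad \bar{F}^N(x,t) := F(x,t) - \tilde{F}^N(x,t).$$
The persistency bounds \eqref{eq:barFleqM} follow immediately from the pointwise inequalities $|\tilde F^N|, |\bar F^N| \leq |F|$.

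\textbf{Core estimate.} The key computation is a layer-cake argument: on the set where $|F|>\lambda(t)$ one has $|F|^q \leq \lambda(t)^{q-p}|F|^p$ for $q<p$, and on its complement the analogous bound holds for $q>p$. This yields, at each fixed $t$,
$$\norm{\tilde{F}^N(\cdot,t)}_{L^2}^2 \leq \lambda(t)^{2-p} \norm{F(\cdot,t)}_{L^p}^p, \qquad \norm{\bar{F}^N(\cdot,t)}_{L^{p_3}}^{p_3} \leq \lambda(t)^{p_3-p}\norm{F(\cdot,t)}_{L^p}^p.$$
Substituting the critical bound $\norm{F(\cdot,t)}_{L^p} \leq M t^{-\beta}$ with $\beta := 1-3/(2p)$ and the definition of $\lambda$, and using the simplification $p_3 = 2p$ (so that $1-p/p_3 = 1/2$ and $2-p = -(p-2)$), a direct algebraic calculation produces
$$\norm{\tilde{F}^N(\cdot,t)}_{L^2} \leq N^{1-p/2} M\, t^{-1/4 - (p-2)\delta/2}, \qquad \norm{\bar{F}^N(\cdot,t)}_{L^{p_3}} \leq N^{1/2} M\, t^{-\beta_3 + \delta/4},$$
where $\beta_3 := 1-3/(2p_3)$.

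\textbf{Conclusion.} Setting $\delta_3 := \delta/2$, the second estimate is precisely $\norm{\bar F^N}_{\cF^{s'_{p_3}+\delta_3}_{p_3}(Q_T)} \leq C N^{1/2} M$ (uniformly in $T$), which implies \eqref{eq:Fbarnorm}. For the first estimate, taking the $L^3$ norm in time requires the integrability condition $3/4 + 3(p-2)\delta/2 < 1$ at the origin, i.e., $\delta < 1/(6(p-2))$, and yields a bound of the form $\norm{\tilde F^N}_{L^3_t L^2_x(Q_T)} \leq C N^{1-p/2} M\,T^{1/12 - (p-2)\delta/2}$, which is of the desired type once $\delta = \delta(p) > 0$ is chosen small enough.

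\textbf{Main obstacle.} The delicate point is the joint balance in the choice of $\delta$. A purely critical threshold ($\delta = 0$, i.e., $\lambda(t) = NM/t$) would leave $\bar F^N$ exactly in the critical Kato space $\cF_{p_3}$ rather than in the strictly subcritical $\cF^{s'_{p_3}+\delta_3}_{p_3}$ demanded by \eqref{eq:Fbarnorm}, so $\delta > 0$ is needed to shift $\bar F^N$ into a subcritical space. Conversely, making $\delta$ too large pushes the time integrand $t^{-3/4 - 3(p-2)\delta/2}$ controlling $\tilde F^N$ past the borderline of integrability at $t=0$. The existence of the admissible window $0 < \delta < 1/(6(p-2))$ is possible precisely because the critical scalings of the amplitude threshold and of $\norm{F(\cdot,t)}_{L^p}$ leave the time exponent of $\norm{\tilde F^N(\cdot,t)}_{L^2}$ at $-1/4$, which is (barely) compatible with $L^3$ in time.
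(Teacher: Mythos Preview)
Your approach is correct and somewhat more direct than the paper's. The paper introduces a dyadic-in-time retraction, sending $F$ to the sequence $(F_j)_{j\leq 0}$ with $F_j(\cdot,\tau)=F(\cdot,2^j \tau)\chi_{]2^j,2^{j+1}[}(2^j\tau)$ on $\R^3\times]1,2[$, applies the diagonal splitting Lemma~\ref{diagonalsplittinglem} (in the form of Remark~\ref{rmkgeneraldiagonal}.2) in the sequence space $\ell^{s}_\infty(L^\infty_t L^p_x)$ with parameters $\sigma=s_p'$, $\tilde p=2$, $\bar p=2p$, $\tilde s=-7/12$, and then pulls back via the co-retraction; the general $T$ is recovered by scaling from $T=2$. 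Your continuous-time threshold $\lambda(t)=NMt^{-(1-\delta)}$ is the direct continuous analogue of the block-wise threshold hidden in Lemma~\ref{diagonalsplittinglem}, and both arguments ultimately rest on the same layer-cake bound. The paper's route has the advantage of fitting into the unified retraction/co-retraction framework already developed for the Besov splittings; yours is more elementary and self-contained. Two minor points: the time exponent in your $\bar F^N$ estimate works out to $-\beta_3+\delta/2$ rather than $-\beta_3+\delta/4$ (so the natural choice is $\delta_3=\delta$), and your resulting $T$-exponent in the $\tilde F^N$ bound is positive rather than the stated $-1/12$; since the lemma is only invoked with $T$-dependent constants (see~\eqref{FNsplitrevised}), neither affects the application.
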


The proof of the splitting lemma for the forcing term is easier because 
the function spaces in question are not homogeneous.  

\begin{proof}[Proof of Lemma~\ref{splitforcing}]
By a scaling argument, we need only to consider the case $T=2$.

We define a retraction $R$ from the space of measurable tensor fields on $Q_T = \R^3 \times ]0,T[$ to the space of sequences (over $\Z_{\leq 0}$) of measurable tensor fields on $\R^3 \times ]1,2[$:
\begin{equation}
	RG = (G_j)_{j \leq 0}, \quad G_j(\cdot,t) := G(\cdot,2^j t) \chi_{]2^{j},2^{j+1}[}(2^jt), \quad t \in ]1,2[, \; j \leq 0.
\end{equation}
The retraction $R$ is invertible, and the co-retraction $S$ is its inverse. Namely, 
\begin{equation}
S(G_{j})(\cdot,t) = \sum_{j\in\mathbb{Z}_{\leq 0}} G_{j}(\cdot, 2^{-j}t)\chi_{]2^{j},2^{j+1}[}(t).
\end{equation}
For $p \in [1,\infty]$ and $s \in \R$, we consider the space $\ell^s_\infty (L^\infty_t L^p_x)$ consisting of sequences $(G_j)_{j \leq 0}$ of locally integrable tensor fields $G_j$ on $\R^3 \times ]1,2[$ such that
\begin{equation}
\norm{(G_j)_{j \leq 0}}_{\ell^s_\infty (L^\infty_t L^p_x)} := \sup_{j \leq 0} 2^{-\frac{js}{2}} \norm{G_j}_{L^\infty_t L^p_x(\R^3 \times ]1,2[)} < \infty.
 \end{equation}
Note the $-1/2$ in the exponent.
Then $R \: \cF^s_p(Q_T) \to \ell^s_\infty (L^\infty_t L^p_x)$ and $S \: \ell^s_\infty (L^\infty_t L^p_x) \to \cF^s_p(Q_T)$ are continuous maps with norms depending only on $s$.

Let $p > 3$, $\sigma=s_p' := -2+3/p$, $\tilde{p}=2$, $\bar{p} = 2p$, and $q=\tilde{q}=\bar{q}=\infty$. Let $\tilde{s} = -7/12$ and $\bar{s} \in \R$ such that $(s_p',1/p)$, $(\tilde{s},1/2)$ and $(\bar{s},1/\bar{p})$ are colinear. Note that $\bar{s} > s_{\bar{p}}'$, since the slope of the segment from $(s_p',1/p)$ to $(-7/12,1/2)$ is less than $1/3$.\footnote{For comparison, one may verify that the slope of the segment from $(s_p',1/p)$ to $(-1/2,1/2)$ is $1/3$.}

By Remark~\ref{rmkgeneraldiagonal}.2, we may apply the diagonal splitting in $\ell^s_\infty (L^\infty_t L^p_x)$ to $RF = (F_j)_{j \leq 0}$ and obtain 
\begin{equation}
	F_j = \tilde{F}_j^N + \bar{F}_j^N, \quad j \leq 0.
\end{equation} Denote $\tilde{F}^N = S(\tilde{F}^N_j)_{j \leq 0}$ and $\bar{F}^N = S(\bar{F}^N_j)_{j \leq 0}$. Then $F = \tilde{F}^N + \bar{F}^N$, and
\begin{equation}
	\int_0^T \norm{\tilde{F}^N(\cdot,t)}_{L^2(\R^3)}^3 \, dt \leq \int_0^T t^{\frac{3 \tilde{s}}{2}} \, dt \times \norm{\tilde{F}^N}_{\cF^{\tilde{s}}_2(Q_T)}^3 \overset{\tilde{s}=-\frac{7}{12}}{\leq} C N^{1-\frac{p}{2}} \norm{F}_{\cF_p(Q_T)}^3,
	\end{equation}
	\begin{equation}
	\norm{\bar{F}^{N}}_{\cF^{\bar{s}}_{\bar{p}}(Q_T)} \leq C N^{\frac{1}{2}} \norm{F}_{\cF_p(Q_T)}.
\end{equation}
Finally, we have the persistency property:
\begin{equation}
	\norm{\tilde{F}^N}_{\cF_p(Q_T)}, \norm{\bar{F}^N}_{\cF_p(Q_T)} \leq \norm{F}_{\cF_p(Q_T)}.
\end{equation}
We define $p_3 := \bar{p}$ and $\delta_3 := \bar{s} - s_{p_3}' > 0$ to complete the proof.
\end{proof}

\section{Appendix: $\varepsilon$-regularity}

	In this section, we recall an $\varepsilon$-regularity criterion for the three-dimensional Navier-Stokes equations and some of its important consequences, following \cite{ckn,lin,ladyzhenskayaseregin,escauriazasereginsverak,rusinsverakminimal}. In particular, we will state without proof certain results with forcing terms which we could not find in the literature and indicate what modifications are necessary to prove them.

	Our main definition is adapted from the one in F. H. Lin's paper~\cite{lin}.
\begin{definition}[Suitable weak solution]
Let $Q$ denote a parabolic ball
\begin{equation}
Q(z_0,r) := B(x_0,r) \times ]t_0-r^2,t_0[
\label{}
\end{equation}
for some $z_0 = (x_0,t_0) \in \R^{3+1}$ and $r > 0$. Suppose that $v \in L^\infty_t L^2_x \cap L^2_t H^1_x(Q)$, $q, f \in L^{\frac{3}{2}}_\loc(Q)$, and $F \in L^2_\loc(Q)$.

	We say that $(v,q)$ is a \emph{suitable weak solution} of the Navier-Stokes equations on $Q$ with forcing term $f + \div F$ if
	\begin{equation}
		\left.
		\begin{aligned}
		\p_t v - \Delta v + v \cdot \nabla v &= - \nabla q + f + \div F \\
		\div v &= 0
	\end{aligned}
	\right\rbrace \text{ in } Q
		\label{}
	\end{equation}
	in the sense of distributions, and the local energy inequality
	\begin{equation}
		\label{localenergyineqappendix}
		\int_{B(x_0,r)} |v(x,t)|^2 \varphi \, dx + 2 \int_Q |\nabla v|^2 \varphi \, dx \,dt \leq $$ $$ \leq \int_Q (\p_t \varphi + \Delta \varphi) |v|^2 +  (|v|^2+2p) v \cdot \nabla \varphi + 2f\cdot (\varphi v) - 2F : \nabla (\varphi v) \, dx \,dt
	\end{equation}
is satisfied for a.e. $t \in ]t_0-r^2,t_0[$ and all $0 \leq \varphi \in C^\infty_0(Q)$.
\end{definition}

The following $\varepsilon$-regularity criterion for suitable weak solutions may be proven by copying the scheme of Ladyzhenskaya and Seregin in~\cite{ladyzhenskayaseregin}. Higher regularity with zero forcing term was demonstrated in~\cite{necas} according to the arguments in Serrin's paper~\cite{serrinsmoothness}.

\begin{pro}[$\varepsilon$-regularity]\label{pro:epsilonregularity}
Let $\delta > 0$ and $p_1,p_2,q_1,q_2 \in ]1,\infty]$ satisfying
\begin{equation}
	\frac{2}{q_1} + \frac{3}{p_1} = 3-\delta, \quad \frac{2}{q_2} + \frac{3}{p_2} = 2-\delta.
\label{}
\end{equation} There exist constants $\varepsilon_{\rm CKN}, c_0 > 0$ depending on $p_1,p_2,q_1,q_2$ such that for all $z \in \R^{3+1}$, $R > 0$, and suitable weak solutions $(v,q)$ on $Q(z,R)$ with forcing term $f + \div F$, $f \in L^{q_1}_t L^{p_1}_x(Q(z,R))$, $F \in L^{q_2}_t L^{p_2}_x(Q(z,R))$, the condition
	\begin{equation}
		\frac{1}{R^2} \int_{Q(z,R)} |v|^3 + |q|^{\frac{3}{2}} \, dx' \, dt' + R^\delta \norm{f}_{L^{q_1}_t L^{p_1}_x(Q(z,R))} + R^\delta \norm{F}_{L^{q_2}_t L^{p_2}_x(Q(z,R))} < \varepsilon_{\rm CKN}
		\label{}
	\end{equation}
	implies that
	$v \in C^\alpha_{\rm par}(Q(z,R/2))$, and
	\begin{equation}
		\norm{v}_{L^\infty(Q(z,R/2))} + R^{\alpha} [v]_{C^\alpha_{\rm par}(Q(z,R/2)} \leq \frac{c_0}{R}.
		\label{}
	\end{equation}
	If the condition is satisfied and $f$, $F$ are zero, then $\nabla^{\ell} v \in C^{\alpha}_{\rm par}(Q(z,R/2))$ for all $\ell \in \N$, and there exist absolute constants $c_{0,\ell} > 0$, $\ell \in \N$, such that
	\begin{equation}
		\norm{\nabla^{\ell} v}_{L^\infty(Q(z,R/2))} + R^{\alpha} [\nabla^{\ell} v]_{C^\alpha_{\rm par}(Q(z,R/2)} \leq \frac{c_{0,\ell}}{R^{\ell+1}}.
		\label{}
	\end{equation}
\end{pro}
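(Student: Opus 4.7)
The plan is to follow the Ladyzhenskaya--Seregin scheme~\cite{ladyzhenskayaseregin} (itself a refinement of Caffarelli--Kohn--Nirenberg~\cite{ckn} and Lin~\cite{lin}) and track how the extra forcing terms $f$ and $\div F$ propagate through the iteration. The scale-invariance $\tfrac{2}{q_i}+\tfrac{3}{p_i} = (3-\delta)$ or $(2-\delta)$, $i=1,2$, is chosen precisely so that the forcing contributions carry a positive power $R^\delta$ at every step, i.e.\ they are \emph{subcritical}. By translation and parabolic scaling, it suffices to treat $z = 0$ and $R = 1$.

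First I would introduce the standard scale-invariant energy quantities on $Q(z,r)$,
\begin{equation}
	A(z,r) := \esssup_{t_0-r^2 < t < t_0} \tfrac{1}{r} \int_{B(x_0,r)} |v|^2\,dx, \;\;
	E(z,r) := \tfrac{1}{r}\int_{Q(z,r)} |\nabla v|^2\,dx\,dt,
\end{equation}
\begin{equation}
	C(z,r) := \tfrac{1}{r^2}\int_{Q(z,r)} |v|^3\,dx\,dt, \quad
	D(z,r) := \tfrac{1}{r^2}\int_{Q(z,r)} |q|^{\frac{3}{2}}\,dx\,dt,
\end{equation}
together with the forcing functionals
\begin{equation}
	\cI_1(z,r) := r^\delta\norm{f}_{L^{q_1}_t L^{p_1}_x(Q(z,r))}, \quad \cI_2(z,r) := r^\delta\norm{F}_{L^{q_2}_t L^{p_2}_x(Q(z,r))},
\end{equation}
which are scale-invariant by our choice of $(p_i,q_i)$. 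The first key step is to bound $A + E$ on $Q(z,\theta r)$ by $C + D + \cI_1 + \cI_2$ on $Q(z,r)$ (for $\theta \in (0,1/2)$) via a cut-off in the local energy inequality~\eqref{localenergyineqappendix}: H\"older and Young applied to $\int f\cdot(\varphi v)$ and $\int F:\nabla(\varphi v)$ produce terms $\cI_1^{a} C^{b}$ and $\cI_2^{a'}(A{+}E)^{b'}$ which can be absorbed. The second step is a pressure decomposition $q = q_{\mathrm{NS}} + q_f + q_F$, where $q_{\mathrm{NS}} = (-\Delta)^{-1}\div\div(v\otimes v)$ is handled as in~\cite{ladyzhenskayaseregin} and the forcing contributions $q_f, q_F$ are estimated by Calder\'on--Zygmund against $\cI_1,\cI_2$, at the cost of decomposing $q$ inside and outside $B(x_0,r)$ to keep the decay in $\theta$.

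The main iteration is a Campanato-type scheme: one shows that if $\Phi(z,r) := A + E + C + D^{2/3} + \cI_1 + \cI_2$ is sufficiently small at some scale $r$, then $\Phi(z,\theta r) \leq \tfrac{1}{2}\Phi(z,r) + C_\theta \bigl(\cI_1(z,r) + \cI_2(z,r)\bigr)$, provided $\theta$ is chosen small and then $\varepsilon_{\mathrm{CKN}}$ smaller still. Because $\cI_i(z,\theta r) \leq \theta^\delta\cI_i(z,r)$ with $\delta > 0$, iterating yields $\Phi(z,\theta^k) \to 0$ as $k \to \infty$ at a geometric rate. Standard parabolic Morrey embedding then converts this Campanato decay into $v \in C^{\alpha}_{\mathrm{par}}(Q(z,R/2))$ with the stated bound $\norm{v}_{L^\infty}\leq c_0/R$; the Hölder exponent $\alpha$ depends on $\delta$.

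Once $f \equiv 0$ and $F \equiv 0$, higher regularity is purely linear bootstrapping: $v$ is now bounded on $Q(z,R/2)$, so the Navier--Stokes equations become a perturbed heat equation with a smooth drift, and classical Schauder/$L^p$ parabolic regularity theory (as in Serrin~\cite{serrinsmoothness}, implemented in~\cite{necas,sereginnotes}) yields $\nabla^\ell v \in C^\alpha_{\mathrm{par}}(Q(z,R/2))$ with the claimed scale-invariant constants $c_{0,\ell}/R^{\ell+1}$.

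The main obstacle is carefully verifying the pressure split. One must check that in the absorption step, the \emph{global} contribution to the pressure at scale $\theta r$ (coming from $v\otimes v$ and $F$ outside $B(x_0,r)$) truly carries a factor $\theta^\beta$ for some $\beta > 0$ after harmonic extension, so it can be beaten by the geometric decay at the next step. The rest is routine H\"older/Young bookkeeping, made cleaner by the assumption $\delta > 0$ strictly.
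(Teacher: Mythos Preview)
Your outline is correct and matches the paper's approach exactly: the paper does not write out a proof of this proposition at all, but simply states that it ``may be proven by copying the scheme of Ladyzhenskaya and Seregin in~\cite{ladyzhenskayaseregin}'' and that the higher-regularity statement for $f=F=0$ follows from Serrin's arguments~\cite{serrinsmoothness} as implemented in~\cite{necas}. Your sketch is precisely that scheme, with the subcritical scaling of $\cI_1,\cI_2$ playing the expected role in the Campanato iteration.
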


The following lemma was proven without forcing terms by F. H. Lin in~\cite[Theorem 2.2]{lin}. In our situation, the local energy inequality~\eqref{localenergyineqappendix} for the limit solution must be obtained in a slightly indirect way which is similar to the proof of Proposition~\ref{pro:stability}, see below.
\begin{lemma}[Weak--$\ast$ stability for suitable weak solutions]\label{suitablestability}
Let $(v^{(n)},q^{(n)})_{n \in \N}$ be a sequence of suitable weak solutions on $Q := Q(z,r)$ with respective forcing terms $f^{(n)} + \div F^{(n)}$, $n \in \N$, for some $z \in \R^{3+1}$ and $r > 0$. Furthermore, suppose that
	\begin{equation}
		v^{(n)} \wstar v \text{ in } L^\infty_t L^2_x(Q), \quad \nabla v^{(n)} \wto \nabla v \text{ in } L^2(Q),
		\label{}
	\end{equation}
	\begin{equation}
		v^{(n)} \to v \text{ in } L^3(Q), \quad q^{(n)} \wto q \text{ in } L^{\frac{3}{2}}(Q),
		\label{}
	\end{equation}
	\begin{equation}
		f^{(n)} \wto f \text{ in } L^{\frac{3}{2}}(Q), \quad F^{(n)} \wto F \text{ in } L^2(Q).
		\label{}
	\end{equation}
	Let $p_1,q_1,p_2,q_2 \in ]1,\infty]$ such that
	\begin{equation}
		\frac{2}{q_1} + \frac{3}{p_1} < 3, \quad \frac{2}{q_2} + \frac{3}{p_2} < 2.
		\label{fFspaces}
	\end{equation}
	Finally, suppose that
	\begin{equation}
		f^{(n)} \wstar f \text{ in } L^{q_1}_t L^{p_1}_x(Q), \quad F^{(n)} \wstar F \text{ in } L^{q_2}_t L^{p_2}_x(Q).
		\label{}
	\end{equation}
	Then there exists $q \in L^{\frac{3}{2}}_\loc(Q)$ such that $(v,q)$ is a suitable weak solution on $Q$ with forcing term $f + \div F$.
\end{lemma}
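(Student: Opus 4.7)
The plan is to mimic Step~3 of Proposition~\ref{pro:stability} in the ``$v$-formulation'' rather than the ``$u$-formulation,'' and to establish the two assertions separately: (a) $(v,q)$ satisfies the Navier--Stokes equations on $Q$ distributionally with forcing $f+\div F$; (b) $(v,q)$ satisfies the local energy inequality \eqref{localenergyineqappendix}. For (a), the linear terms pass to the limit from the stated weak convergences of $v^{(n)},\nabla v^{(n)},q^{(n)},f^{(n)},F^{(n)}$, while the nonlinear term $\int v^{(n)}\otimes v^{(n)}:\nabla\varphi$ passes via the strong $L^3(Q)$ convergence of $v^{(n)}$. This is routine and essentially identical to what is carried out in the proof of Proposition~\ref{pro:stability}.

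The substance is (b). For each fixed $0\leq\varphi\in C^\infty_0(Q)$ and a.e.\ $t$ in the time slab, write the inequality \eqref{localenergyineqappendix} for $(v^{(n)},q^{(n)})$ as $\mathrm{LHS}^{(n)}\leq \mathrm{RHS}^{(n)}$ and show that $\mathrm{LHS}\leq\liminf \mathrm{LHS}^{(n)}$ and $\mathrm{RHS}^{(n)}\to \mathrm{RHS}$, so that the desired inequality follows. For the LHS, the kinetic energy term $\int |v^{(n)}(x,t)|^2\varphi\,dx$ is first integrated against a Steklov-type cutoff $\eta_\varepsilon$ in time as in the proof of Proposition~\ref{pro:vlocalglobal}, then passed to the limit using strong $L^2_\loc$ convergence on the time slice, and finally $\varepsilon\dto 0$; the dissipation $\int\varphi|\nabla v^{(n)}|^2$ contributes a non-negative defect measure $\mu\geq 0$ via lower semicontinuity, since $\nabla v^{(n)}\wto\nabla v$ in $L^2$ and $\varphi\geq 0$, and $\mu$ may be moved to the RHS and discarded. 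For the RHS, the terms $\int|v^{(n)}|^2(\p_t\varphi+\Delta\varphi)$, $\int(|v^{(n)}|^2+2q^{(n)})v^{(n)}\cdot\nabla\varphi$ and $\int f^{(n)}\cdot(\varphi v^{(n)})$ all converge by combining the strong $L^3(Q)$ convergence of $v^{(n)}$ with the weak $L^{3/2}$ convergences of $q^{(n)}$ and $f^{(n)}$ (the subcritical convergence of $f^{(n)}$ in $L^{q_1}_tL^{p_1}_x$ is a gift that makes this passage even easier).

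The main obstacle is the last RHS term,
\[
 \int F^{(n)}:\nabla(\varphi v^{(n)})\,dx\,dt = \int (F^{(n)})^T\nabla\varphi\cdot v^{(n)}\,dx\,dt + \int \varphi\, F^{(n)}:\nabla v^{(n)}\,dx\,dt.
\]
The first piece passes to the limit since $v^{(n)}\to v$ strongly in $L^3_\loc$ while $F^{(n)}\wto F$ in $L^2$; but the second is a product of two sequences that \emph{only} converge weakly in $L^2$, so its limit cannot be identified by soft methods. This is the analogue of the difficulty finessed by the change of unknown $u=v-P_k$ in Proposition~\ref{pro:stability}, which is unavailable here. The plan to overcome this is to upgrade the convergence of $\nabla v^{(n)}$ on a large subset of $Q$ using the $\varepsilon$-regularity criterion of Proposition~\ref{pro:epsilonregularity}. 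The subcritical scaling hypothesis \eqref{fFspaces} is precisely what makes Proposition~\ref{pro:epsilonregularity} applicable to the sequence $(v^{(n)},q^{(n)})$ with forcings $f^{(n)},F^{(n)}$. A standard covering and ``bad ball'' argument in the spirit of \cite{ckn,lin,escauriazasereginsverak} then shows that, after extracting a subsequence, for each $\sigma>0$ there exists a closed set $E_\sigma\subset Q$ with $|E_\sigma|<\sigma$ off which $v^{(n)}$ is \emph{uniformly} smooth, and hence $\nabla v^{(n)}\to\nabla v$ uniformly (and thus strongly in $L^2$) on every compact subset of $Q\setminus E_\sigma$. On $\{\varphi>0\}\cap(Q\setminus E_\sigma)$ the integrand $\varphi F^{(n)}:\nabla v^{(n)}$ then passes to the limit by combining this strong convergence with the weak $L^2$ convergence of $F^{(n)}$. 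The contribution of $E_\sigma$ is dominated by $\|\varphi\|_\infty\|F^{(n)}\|_{L^2(E_\sigma)}\|\nabla v^{(n)}\|_{L^2(Q)}$, which is uniformly small by absolute continuity of the $L^2$-integral and the uniform $L^2$ bound on $F^{(n)}$; letting $\sigma\dto 0$ concludes the convergence. With all RHS terms accounted for and the defect measure discarded, the local energy inequality \eqref{localenergyineqappendix} for $(v,q)$ with forcing $f+\div F$ is proved.
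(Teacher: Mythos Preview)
Your route diverges from the paper's, and the divergence matters. The paper does \emph{not} try to pass to the limit directly in the $v$-formulation. Instead it extends $f^{(n)},F^{(n)}$ by zero to all of $\R^3\times\R_+$, defines the linear Stokes evolution
\[
V^{(n)}(\cdot,t)=\int_0^t S(t-s)\bP f^{(n)}(\cdot,s)\,ds+\int_0^t S(t-s)\bP\div F^{(n)}(\cdot,s)\,ds,
\]
and writes $v^{(n)}=V^{(n)}+u^{(n)}$. The local energy inequality is transferred from $v^{(n)}$ to $u^{(n)}$ (Proposition~\ref{pro:vlocalglobal}), passed to the limit in the $u$-formulation (Proposition~\ref{pro:stability}), and transferred back (Remark~\ref{otherdirectionlocalenergy}). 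The point is that in the $u^{(n)}$-inequality the troublesome pairing $F^{(n)}:\nabla v^{(n)}$ is replaced by terms built from $V^{(n)}$, and the subcritical hypothesis \eqref{fFspaces} guarantees that $V^{(n)}\to V$ \emph{strongly} in $L^r_{\loc}$ (heat smoothing), so no weak$\times$weak product ever arises.

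Your direct strategy has a genuine gap at the absolute--continuity step. You claim $\|F^{(n)}\|_{L^2(E_\sigma)}$ is uniformly small because $|E_\sigma|<\sigma$ and $F^{(n)}$ is bounded in $L^2$. That is false in general: weak $L^2$ convergence gives no equiintegrability, so a bounded sequence can concentrate all of its $L^2$ mass on sets of arbitrarily small measure. The extra bound $F^{(n)}\in L^{q_2}_tL^{p_2}_x$ does not automatically rescue this either, since \eqref{fFspaces} allows $\min(q_2,p_2)<2$ (e.g.\ $q_2=3/2$, $p_2=5$), in which case $L^{q_2}_tL^{p_2}_x$ does not embed into $L^2$ on small sets with a gain. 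The same objection applies if you try to shift the burden to $\|\nabla v^{(n)}\|_{L^2(E_\sigma)}$. A second, smaller issue: your ``common bad set $E_\sigma$ for all $n$'' claim requires care, because the $\varepsilon$-regularity smallness involves $\int|q^{(n)}|^{3/2}$, and $q^{(n)}$ converges only weakly in $L^{3/2}$, so the bad sets for different $n$ need not stabilize. These difficulties are exactly why the paper reroutes through $V^{(n)}$: the decomposition converts the only weak$\times$weak pairing into a strong$\times$weak one, and no partial regularity is needed at this stage.
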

We may assume that $Q \subset \R^3 \times \R_+$. To prove Lemma~\ref{suitablestability}, one extends $f^{(n)}, F^{(n)}$ by zero to the whole space and defines
\begin{equation}
	V^{(n)}(\cdot,t) := \int_0^t S(t-s) \bP f^{(n)}(\cdot,s) + S(t-s) \bP \div F^{(n)}(\cdot,s) \, ds
	\label{linearevolutionappendix}
\end{equation}
for each $n \in \N$.
Each suitable weak solution is decomposed into the linear solution above and a correction term: $v^{(n)} = V^{(n)} + u^{(n)}$. 
Similarly, one writes $v = V + u$. Next, one ``transfers'' the local energy inequality~\eqref{localenergyineqappendix} from the velocity field $v^{(n)}$ to obtain a local energy inequality for the correction $u^{(n)}$ on $Q$ with lower order terms and a forcing term which converges locally strongly. This is described in the proof of Proposition~\ref{pro:vlocalglobal}. The local energy inequality for the corrections $u^{(n)}$ is stable under the limiting procedure, see the proof of Proposition~\ref{pro:stability}. Finally, one transfers the local energy inequality from the limit correction $u$ to obtain~\eqref{localenergyineqappendix} for the velocity field $v$, see Remark~\ref{otherdirectionlocalenergy}.

The final proposition may be proved as in~\cite[Lemma 2.1]{rusinsverakminimal}.

\begin{pro}[Persistence of singularity]\label{persistenceofsingularity}
	Assume the hypotheses of Lemma~\ref{suitablestability}.
	Moreover, assume that $z$ is a singular point of $v^{(n)}$ for each $n \in \N$.
	Then $z$ is a singular point of $v$.
\end{pro}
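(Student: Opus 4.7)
The plan is to argue by contradiction, combining the $\varepsilon$-regularity criterion (Proposition \ref{pro:epsilonregularity}) with the locally compact convergence granted by Lemma \ref{suitablestability}. Suppose for contradiction that $z = (x_0, t_0)$ is a regular point of the limit $v$, so that $v \in L^\infty(Q(z, R_0))$ for some $R_0 > 0$. From the hypotheses of Lemma~\ref{suitablestability}, the sequences $(v^{(n)}, q^{(n)}, f^{(n)}, F^{(n)})$ are uniformly bounded in $L^\infty_t L^2_x \cap L^2_t H^1_x$, $L^{3/2}$, $L^{q_1}_t L^{p_1}_x$, and $L^{q_2}_t L^{p_2}_x$ on $Q$; call the common bound $M$. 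By \eqref{fFspaces}, there exists $\delta > 0$ such that $2/q_1 + 3/p_1 \leq 3 - \delta$ and $2/q_2 + 3/p_2 \leq 2 - \delta$, so that for $r < R_0/2$,
\begin{equation}
r^{\delta}\norm{f^{(n)}}_{L^{q_1}_t L^{p_1}_x(Q(z,r))} + r^{\delta}\norm{F^{(n)}}_{L^{q_2}_t L^{p_2}_x(Q(z,r))} \leq 2 r^\delta M,
\end{equation}
which is small uniformly in $n$ as $r \dto 0$. Moreover, the strong convergence $v^{(n)} \to v$ in $L^3(Q)$ combined with the $L^\infty$ bound on $v$ yields
\begin{equation}
\limsup_{n\to\infty} \frac{1}{r^2}\int_{Q(z,r)}|v^{(n)}|^3 \, dx \, dt \leq C r^3 \norm{v}_{L^\infty(Q(z,R_0))}^3,
\end{equation}
which is also small for $r \dto 0$.

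The main obstacle is controlling the pressure term, since the weak convergence $q^{(n)} \wto q$ in $L^{3/2}$ gives only lower semicontinuity of the norm, and the trivial bound $r^{-2}\norm{q^{(n)}}_{L^{3/2}(Q(z,r))}^{3/2} \leq r^{-2} M^{3/2}$ does not decay with $r$. To overcome this, I would invoke the pressure decomposition familiar from the regularity theory of suitable weak solutions. Fix a cutoff $\chi \in C^\infty_0(B(x_0, R_0))$ with $\chi \equiv 1$ on $B(x_0, R_0/2)$, and write $q^{(n)} = q^{(n)}_{\mathrm{loc}} + q^{(n)}_{\mathrm{h}}$, where
\begin{equation}
q^{(n)}_{\mathrm{loc}} := (-\Delta)^{-1}\bigl[\p_i\p_j(\chi \, v^{(n)}_i v^{(n)}_j) - \div(\chi f^{(n)}) - \p_i\p_j(\chi F^{(n)}_{ij})\bigr].
\end{equation}
A direct computation using $-\Delta q^{(n)} = \p_i\p_j(v^{(n)}_i v^{(n)}_j) - \div f^{(n)} - \p_i\p_j F^{(n)}_{ij}$ shows that $q^{(n)}_{\mathrm{h}}$ is harmonic in $B(x_0, R_0/2)$ for a.e.\ $t$. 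By Calder\'on-Zygmund estimates, the strong $L^3$-convergence of $v^{(n)}$ and the weak convergences of $f^{(n)}, F^{(n)}$ transfer to strong convergence $q^{(n)}_{\mathrm{loc}} \to q_{\mathrm{loc}}$ in $L^{3/2}(\R^3 \times I)$ for any bounded time interval $I$. Interior estimates for harmonic functions, together with the uniform $L^{3/2}$-bound on $q^{(n)}$ (hence on $q^{(n)}_{\mathrm{h}}$), yield a uniform $L^\infty$-bound for $q^{(n)}_{\mathrm{h}}$ on $Q(z, R_0/4)$.

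Armed with this decomposition, I would first choose $r \in ]0, R_0/4[$ sufficiently small that the contributions from $v$, $q_{\mathrm{loc}}$, the uniform $L^\infty$ harmonic bound (which contributes $\leq C r^3$), and the $r^\delta M$ forcing terms are all strictly smaller than $\varepsilon_{\mathrm{CKN}}/8$. Then for all $n$ sufficiently large, the strong convergences $v^{(n)} \to v$ in $L^3$ and $q^{(n)}_{\mathrm{loc}} \to q_{\mathrm{loc}}$ in $L^{3/2}$ guarantee that the full $\varepsilon$-regularity condition
\begin{equation}
\frac{1}{r^2}\int_{Q(z,r)} |v^{(n)}|^3 + |q^{(n)}|^{3/2} \, dx \, dt + r^\delta \norm{f^{(n)}}_{L^{q_1}_t L^{p_1}_x(Q(z,r))} + r^\delta \norm{F^{(n)}}_{L^{q_2}_t L^{p_2}_x(Q(z,r))} < \varepsilon_{\mathrm{CKN}}
\end{equation}
holds at the scale $r$ around $z$. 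Proposition \ref{pro:epsilonregularity} then implies that $v^{(n)} \in L^\infty(Q(z, r/2))$ for these $n$, contradicting the assumption that $z$ is a singular point of every $v^{(n)}$. Hence $z$ must be singular for $v$, completing the proof.
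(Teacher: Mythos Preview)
Your approach is exactly the one the paper points to: the paper gives no proof of its own and simply refers to \cite[Lemma~2.1]{rusinsverakminimal}, whose argument is precisely the contradiction via $\varepsilon$-regularity together with the local pressure decomposition you outline. So the strategy is correct and matches the intended proof.

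There is one inaccuracy. You assert that ``the strong $L^3$-convergence of $v^{(n)}$ and the weak convergences of $f^{(n)},F^{(n)}$ transfer to strong convergence $q^{(n)}_{\mathrm{loc}}\to q_{\mathrm{loc}}$ in $L^{3/2}$.'' The operators $(-\Delta)^{-1}\div(\chi\,\cdot)$ and $(-\Delta)^{-1}\p_i\p_j(\chi\,\cdot)$ are bounded linear operators, and a bounded linear operator sends weak convergence to weak convergence, not to strong convergence. So the forcing pieces of $q^{(n)}_{\mathrm{loc}}$ need not converge strongly. The easy fix is to split $q^{(n)}_{\mathrm{loc}}$ into its nonlinear part (which does converge strongly in $L^{3/2}$ since $v^{(n)}\to v$ in $L^3$) and the two forcing parts. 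By Calder\'on--Zygmund and Sobolev embedding, the forcing parts are bounded \emph{uniformly in $n$} in spaces $L^{q}_tL^{p}_x$ with $2/q+3/p=2-\delta$ (the piece from $F^{(n)}$ stays in $L^{q_2}_tL^{p_2}_x$; the piece from $f^{(n)}$ gains one derivative and lands in a space with the same scaling). H\"older's inequality then gives
\[
\frac{1}{r^2}\int_{Q(z,r)}\bigl|\text{forcing part of }q^{(n)}_{\mathrm{loc}}\bigr|^{3/2}\,dx\,dt \leq C\,r^{\frac{3\delta}{2}}M^{3/2}
\]
uniformly in $n$, which can be made small by choosing $r$ small \emph{before} sending $n\to\infty$. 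With this adjustment your argument goes through.
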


\subsubsection*{Acknowledgments}
The first author was partially supported by the NDSEG Graduate Fellowship. He also thanks his advisor, Vladim{\'i}r {\v S}ver{\'a}k, as well as Simon Bortz and Raghavendra Venkatraman for helpful suggestions. The second author was supported by an EPSRC Doctoral Prize award. We also thank P.~G. Lemari{\'e}-Rieusset for pointing out the reference~\cite{krepkogorskii}.

\bibliographystyle{plain}
\bibliography{arxivsubmission}

\end{document}